\theoremstyle{plain}
\newtheorem{thm}{Theorem}[section]
\newtheorem{lem}[thm]{Lemma}
\newtheorem{prop}[thm]{Proposition}
\newtheorem{cor}[thm]{Corollary}
\newtheorem*{claim*}{Claim}
\newtheorem*{theorem*}{Main}
\theoremstyle{definition}
\newtheorem{defn}[thm]{Definition}
\newtheorem{rem}[thm]{Remark}
\newtheorem{assum}[thm]{Assumption}
\newtheorem*{conj*}{Conjecture}
\newtheorem*{rem*}{Remark}
\newtheorem{notn}[thm]{Notation}
\numberwithin{equation}{section}
\newcommand{\thmref}[1]{Theorem~\ref{#1}}
\newcommand{\propref}[1]{Proposition~\ref{#1}}
\newcommand{\lemref}[1]{Lemma~\ref{#1}}
\newcommand{\corref}[1]{Corollary~\ref{#1}}
\newcommand{\C}[1]{{\Bbb C_{#1}}}
\newcommand{\I}{P}
\newcommand{\cA}{{\cal A}}
\newcommand{\cU}{{\mathcal U}}
\newcommand{\cW}{{\cal W}}
\newcommand{\cV}{{\cal V}}
\newcommand{\cF}{{\cal F}}
\newcommand{\cB}{{\cal B}}
\newcommand{\cT}{{\cal T}}
\newcommand{\cP}{{\cal P}}
\newcommand{\cC}{{\cal C}}
\newcommand{\cR}{{\cal R}}
\newcommand{\cL}{{\cal L}}
\newcommand{\cE}{{\cal E}}
\newcommand{\cS}{{\cal S}}
\newcommand{\cK}{{\cal K}}
\renewcommand{\deg}{\operatorname{deg}}
\newcommand{\matsp}[1]{\hspace{5mm} \text{#1} \hspace{5mm}}
\def\cal{\mathcal}
\def\C{{\mathbb C}}
\def\D{{\mathbb D}}
\def\I{{\mathcal I}}
\def\Q{{\mathbb Q}}
\def\R{{{\mathbb R}}}
 \def\U{{\mathcal U}}
\def\V{{\mathcal V}}
\def\Z{{\mathbb Z}}
\def\pB{{\mathcal B}}
 \def\epsilon{{\varepsilon}}
\newcommand{\bbC}{\mathbb C}
\newcommand{\bbD}{\mathbb D}
\newcommand{\bbN}{\mathbb N}
\newcommand{\bbQ}{\mathbb Q}
\newcommand{\bbR}{\mathbb R}
\newcommand{\bbZ}{\mathbb Z}
\newcommand{\bfB}{\mathbf B}
\newcommand{\bfH}{\mathbf H}
\newcommand{\bfP}{\mathbf P}
\newcommand{\bfT}{\mathbf T}
\newcommand{\rt}{\operatorname{root}}
\newcommand{\gen}{\operatorname{gen}}
\newcommand{{\q}}{q_{\theta}}
\newcommand{\bB}{{{\bf B}}}
\newcommand{\bR}{R^{\bB}}
\newcommand{{\Zak}}{\mathcal{Z}_{\theta}}
\def\b{{b}}
\newcommand{\cZ}{{\cal Z}}
\newcommand{\hc}{\hat c}
\newcommand{\Imp}{\operatorname{Imp}}
\newcommand{\Arc}{\operatorname{Arc}}
\newcommand{\fun}{\operatorname{fun}}
\newcommand{\jy}[1]{{\noindent\color{blue}  [#1]}}
\newcommand{\rz}[1]{{\noindent\color{red}  [#1]}}
\title{Rigidity of bounded type cubic Siegel polynomials}
\author{Jonguk Yang, Runze Zhang}
\begin{document}

\begin{abstract}
We prove that if two non-renormalizable cubic Siegel polynomials with bounded type rotation numbers are combinatorially equivalent, then they are also conformally equivalent. As a consequence, we show that in the one-parameter slice of cubic Siegel polynomials considered by Zakeri \cite{Za2}, the locus of non-renormalizable maps is homeomorphic to a double-copy of a quadratic Siegel filled Julia set (minus the Siegel disk) glued along the Siegel boundary. This verifies the the conjecture of Blokh-Oversteegen-Ptacek-Timorin \cite{BlOvPtTi} for bounded type rotation numbers.
\end{abstract}
\maketitle
\section{Introduction}

Two dynamical systems are said to be {\it combinatorially equivalent} if they are both semi-conjugate to the same topological model. For certain classes of maps, this weak notion of equivalence is enough to guarantee a much stronger relation between the two systems: namely, the existence of a conjugacy with good regularity (ideally, as good as the maps themselves). This phenomenon is referred to as {\it (combinatorial) rigidity}.

Rigidity is a central theme in holomorphic dynamics. The most famous longstanding conjecture in the field is that rigidity holds in the class of quadratic polynomials. Since this implies the local connectivity of the Mandelbrot set (see \cite[Prop. 4.2]{Ly2}), the conjecture is often referred to as the MLC conjecture (which stands for ``Mandelbrot set is Locally Connected'').

The first major breakthrough in this direction was due to Yoccoz (cf. \cite{Hu}). He proved that {\it non-renormalizable} quadratic polynomials (see Definition \ref{defn.ren}) are combinatorially rigid, and consequently, that the Mandelbrot set is locally connected at the corresponding parameters. His work pioneered the use of {\it puzzles}: a sequence of partitions in the dynamical and parameter spaces that reveal the underlying combinatorial structure of the dynamics.

One of the most difficult challenges to overcome when studying the problem of rigidity is the presence of irrationally indifferent periodic orbits. Let $f : \C \to \C$ be a polynomial of degree $\deg(f) \geq 2$ with a fixed point at $0$. Denote its Julia set and Fatou set by $J(f)$ and $F(f) := \C \setminus J(f)$ respectively. We say that $0$ is {\it irrationally indifferent} if:
$$
f'(0) = e^{i 2\pi \theta}
\matsp{for some}
\theta \in (\R \setminus \Q)/\Z.
$$
The angle $\theta$ is referred to as the {\it rotation number}. We say that $0$ is a {\it Siegel point} if $f$ is locally linearizable near $0$ (or equivalently, if $0 \in F(f)$). Otherwise, $0$ is called a {\it Cremer point}. In the former case, the linearizing conjugacy has a maximal extension to a conformal map from the Fatou component $D(f) \subset F(f)$ containing $0$ to the standard disk $\bbD$. The set $D(f)$ is referred to as a {\it Siegel disk}. If $f$ has a Siegel fixed point at $0$, it is referred to as a {\it Siegel polynomial (with rotation number $\theta$)}.

The nature of an irrationally indifferent point depends strongly on the arithmetic properties of its rotation number. Let $p_n/q_n$ be the $n$th convergent of the continued fraction of $\theta$. We say that $\theta$ is {\it Diophantine (of order $k \geq 2$)} if there exists $C > 0$ such that
$$
q_{n+1} < C q_n^k
\matsp{for all}
n \in \bbN.
$$
If $k=2$, then $\theta$ is said to be {\it of bounded type}. Siegel proved that if $\theta$ is Diophantine, then $0$ is a Siegel point \cite{Si}.

The quadratic Siegel polynomial with rotation number $\theta\in\mathbb{R}/\mathbb{Z}$, after normalization, is given by:
\begin{equation}\label{eq.quadratic}
\q(z) := e^{2\pi i \theta}z\left(1-\frac{1}{2}z\right).
\end{equation}

The first seminal result on Siegel polynomials was due to Douady-Ghys-Herman-Shishikura (see \cite{He}). When $\theta$ is of bounded type, using quasiconformal surgery, they constructed a Blaschke product model of $\q$, and thereby were able to conclude that the Siegel boundary $\partial D(\q)$ is a quasi-circle containing the unique critical point $1$.

In this paper, we are interested in studying Siegel polynomials of higher degree. To this end, consider the one-parameter family $\mathcal{P}^{cm}(\theta)$ of cubic polynomials of the form:
\begin{equation}\label{eq.family.fb}
    f_b(z) := e^{2\pi i \theta} z \left(1 - \frac{1}{2}\left(\b+\frac{1}{\b}\right)z + \frac{1}{3} z^2 \right) \hspace{5mm} \text{for } b \in \mathbb{C}^*.
\end{equation}
Observe that $f_b$ has two critical points $1/b$ and $b$. The superscript $cm$ in $\cP^{cm}(\theta)$ indicates the fact that these critical points are considered {\it with markings}. It is easy to verify that every cubic Siegel polynomial with rotation number $\theta$ is conjugate to $f_b = f_{1/b}$ and $f_{-b} = f_{-1/b}$ (and no other map in $\mathcal{P}^{cm}(\theta)$). In particular, the family $\cP^{cm}(\theta)$ is symmetric under the involutions $b \mapsto 1/b$ and $b\mapsto-b$. The {\it cubic Siegel connectedness locus} is defined as
\begin{equation}
    \mathcal{C}(\theta) := \{b\in \mathbb{C}^* \, | \, J(f_b) \text{ is connected}\}.
\end{equation}

\begin{figure}[ht]
\centering 
\includegraphics[width=0.5\textwidth]{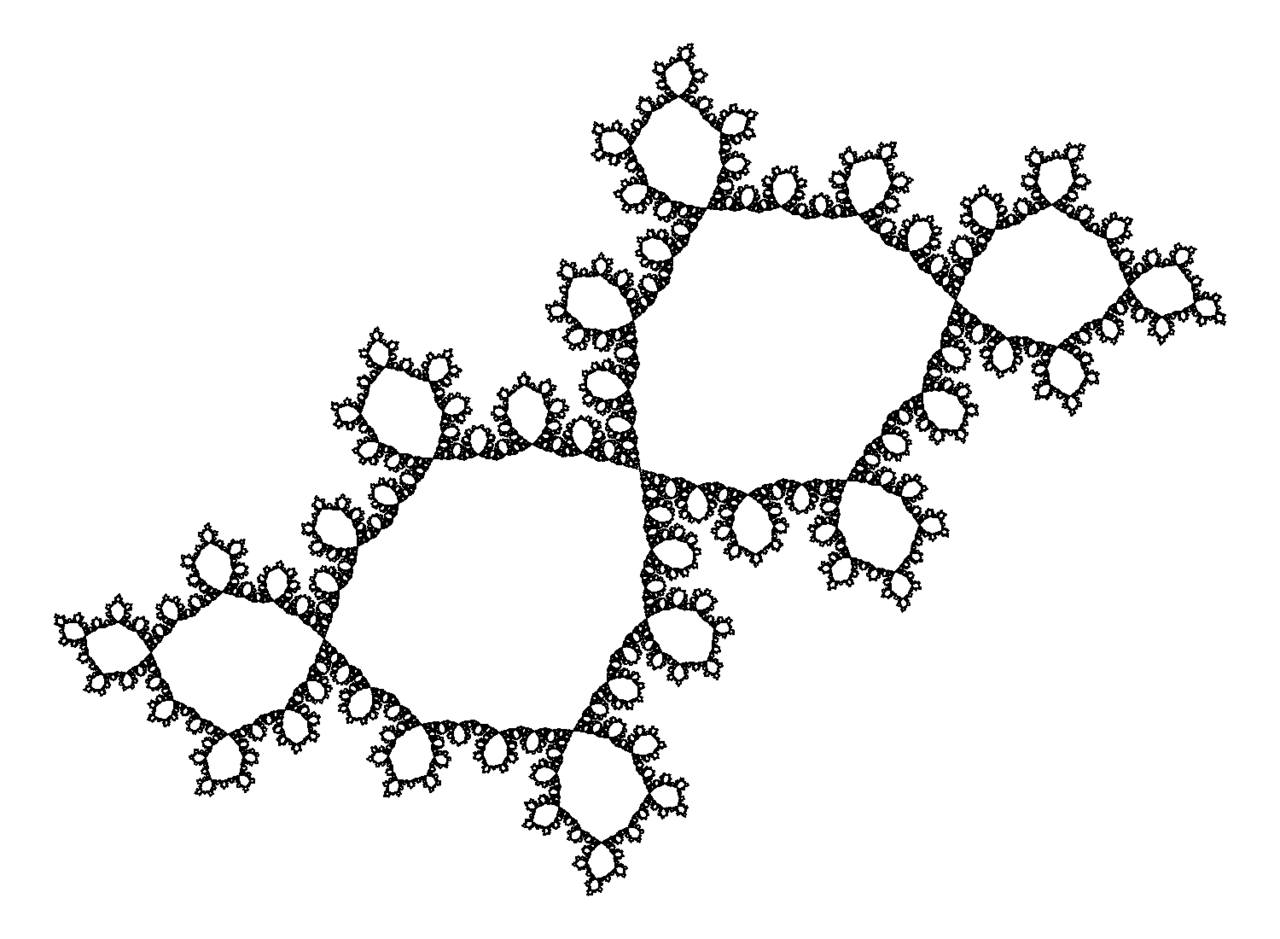} 
\caption{The Julia set $J(\q)$ of the quadratic polynomial $q_\theta(z) = e^{2\pi i\theta}z+ z^2$ with the inverse golden mean rotation number $\theta \approx 0.618$.} 
\label{fig.juliaset} 
\end{figure}

\begin{figure}[ht]
\centering 
\includegraphics[width=0.6\textwidth]{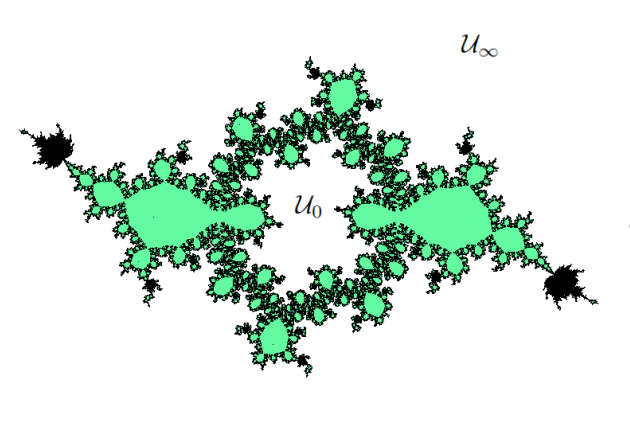} 
\caption{The connected locus $\mathcal{C}(\theta)$ with $\theta=$ the golden mean for the family (\ref{eq.family.fb}). The Zakeri curve is the black quasi-circle that separates $0$ and $\infty$.} 
\label{fig.connectedlocus_b} 
\end{figure}

Now suppose that $\theta$ is of bounded type. Then $0$ becomes a Siegel fixed point with rotation number $\theta$ by a classical result of Siegel \cite{Si}. In \cite{Za2}, Zakeri generalizes the result of Douady et. al. by proving that every $f_b \in \cP^{cm}(\theta)$ has a Blaschke product model (see Proposition \ref{prop.surgery}). Consequently, the Siegel boundary $\partial D(f_b)$ is a quasi-circle which contains at least one critical point. In the quadratic case, this traps the only available critical orbit, and thus, completely fixes the dynamics of the map.  However, for cubic Siegel polynomials, the additional critical point provides a one-dimensional degree of freedom represented by the family $\cP^{cm}(\theta)$. This allows us to investigate how the free critical point interacts with a persistently irrationally indifferent fixed point, and how this interaction is manifested in the topological and geometric properties of the parameter space.

Zakeri also proves that the following set of parameter values:
$$
\Zak := \{c\in \mathbb{C}^* \, | \, b, 1/b \in \partial D(f_b)\}
$$
is a Jordan curve separating $0$ and $\infty$ \cite[Thm. 14.3]{Za2}. We refer to $\Zak$ as the {\it Zakeri curve}. The complement of $ {\Zak}$ consists of two connected components: 
\begin{equation}\label{eq.U0-Uinf}
    \U_0\sqcup \U_\infty := \mathbb{C}^* \setminus {\Zak},
\end{equation}
where $\U_0$ is bounded and $\U_\infty$ is unbounded. For $b\in \U_\infty$, we have $1/b \in \partial D(f_b)$, and $b$ is the free critical point. However, when the value of $b$ crosses $\Zak$ and enters $\U_0$, the role of $b$ and $1/b$ flips: $b$ is now trapped in $\partial D(f_b)$ and $1/b$ becomes the free critical point.

Let us recall the notion of polynomial-like renormalization:
\begin{defn}[\cite{DoHu2}]\label{defn.ren}
    We say that $f_b : \bbC \to \bbC$ is {\it renormalizable}, if there exists two topological disks $U'\Subset U \subset \bbC$ and an integer $p \geq 1$ such that the following properties hold for $g:= f_b^p|_{U'}$:
\begin{enumerate}
\item[$(\mathrm{\romannumeral1})$.] $g:U' \to U$ is a degree two proper map;
\item[$(\mathrm{\romannumeral2})$.] $\bigcap_{n\geq0}g^{-n}(U)$ is connected.
\end{enumerate}
\end{defn}

The {\it central locus $\cK(\theta)$} is the closure of the set of all non-renormalizable cubic Siegel polynomials in $\cP^{cm}(\theta)$. Additionally, let
\begin{equation}\label{eq.central_locus}
    \cK_\infty(\theta) := \cK(\theta) \cap (\cU_\infty\cup \Zak)
\matsp{and}
\cK_0(\theta) := \cK(\theta) \cap (\cU_0\cup \Zak)
\end{equation}
be the {\it external} and {\it internal central locus} respectively. In \S \ref{sec:combina.Zak} and \S \ref{sec:combinatorial.rigidity}, we define a natural notion of combinatorial equivalence between cubic Siegel polynomials. We show that:

\begin{thm}[Rigidity Theorem]\label{thm.main}
Let $\theta \in (\bbR\setminus \bbQ)/\bbZ$ be of bounded type. If $f_b \in \partial \cK(\theta)$, then $f_b$ is combinatorially rigid: if $f_b$ and $f_{b'}$ are combinatorially equivalent, then $f_b$ and $f_{b'}$ are conformally conjugate.
\end{thm}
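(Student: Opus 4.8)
The plan is to run a puzzle-based rigidity argument in the spirit of Yoccoz, adapted to the Siegel setting via the Blaschke product models provided by Proposition~\ref{prop.surgery}. The first step is to promote the combinatorial equivalence between $f_b$ and $f_{b'}$ to a \emph{pointwise} dynamical correspondence on the Julia set. Using the Douady--Ghys--Herman--Shishikura surgery for the fixed Siegel disk together with the puzzle partitions defined in \S\ref{sec:combina.Zak} and \S\ref{sec:combinatorial.rigidity}, one builds a nested sequence of puzzle pieces $\{P_n^{(b)}(x)\}$ around each point $x \in J(f_b)$ whose boundaries are made of external rays, equipotentials, and arcs landing on $\partial D(f_b)$. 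Combinatorial equivalence gives a bijection between the puzzle pieces of $f_b$ and those of $f_{b'}$ respecting inclusions and the dynamics. The heart of the matter is to show that this combinatorial bijection is realized by an actual homeomorphism $h : J(f_b) \to J(f_{b'})$ conjugating the dynamics; this requires showing that puzzle pieces shrink to points, i.e.\ $\bigcap_n P_n^{(b)}(x) = \{x\}$ for every $x$.

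The shrinking of puzzle pieces is where the main work (and the main obstacle) lies, and it splits according to the location of $x$ relative to the Siegel disk. Away from the closure of the orbit of $D(f_b)$, one expects the classical Yoccoz machinery to apply: since $f_b \in \partial\cK(\theta)$ is non-renormalizable, every critical puzzle piece is eventually non-periodic and one gets geometric control either directly, via the Yoccoz $\tau$-function combinatorial counting argument, or, at recurrent critical points, via a Kahn--Lyubich-type ``covering lemma'' / quasi-additivity argument to recover the needed moduli bounds. The genuinely new difficulty is the behavior of puzzle pieces that accumulate on the Siegel boundary $\partial D(f_b)$, which is a quasicircle carrying one of the critical points: here one cannot shrink pieces by modulus growth alone, because the dynamics on $\partial D(f_b)$ is conjugate to an irrational rotation and has no expansion. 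The remedy is to exploit that $\theta$ is of bounded type: the near-parabolic/rotation renormalization of the Blaschke model is uniformly bounded, so the Siegel boundary has controlled geometry (it is a $K$-quasicircle with $K$ independent of depth), and one can transfer this to a \emph{rigidity along the Siegel boundary} statement, namely that the quasiconformal surgeries for $f_b$ and $f_{b'}$ can be matched on a neighborhood of $\overline{D(f_b)}$. Combining the Siegel-boundary rigidity with the puzzle-shrinking away from it, one extends $h$ to all of $J(f_b)$; since $J(f_b)$ is locally connected by the same puzzle argument, $h$ extends to a homeomorphism of $\bbC$ conjugating $f_b$ to $f_{b'}$.

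The final step upgrades the topological conjugacy to a conformal one. Here one uses the standard Bers--Royden / pullback argument: the topological conjugacy $h$, combined with the conformal structure transported from the Blaschke model across the Siegel disks and across the (countably many) bounded Fatou components, is improved to a quasiconformal conjugacy $\phi$ by an interpolation and a qc-criterion on the Julia set (which has measure zero in the non-renormalizable case, or is otherwise handled by the Bers--Royden holomorphic motion of puzzle-piece boundaries). Then the pullback/spreading-around argument of Sullivan and McMullen--Sullivan shows that the Beltrami coefficient of $\phi$ is $f_b$-invariant and supported on $J(f_b)$; absence of invariant line fields on $J(f_b)$ for a non-renormalizable map — which follows from the shrinking of puzzle pieces together with the analysis on $\partial D(f_b)$ — forces $\phi$ to be conformal, hence affine, hence (after normalization) the identity on parameter space, so $b$ and $b'$ give conformally conjugate maps. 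The step I expect to be the main obstacle is the control of puzzle pieces accumulating on $\partial D(f_b)$ and the associated Siegel-boundary rigidity: this is precisely the point where higher-degree Siegel dynamics departs from Yoccoz's quadratic picture, and it is where the bounded-type hypothesis is essential.
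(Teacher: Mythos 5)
Your overall roadmap is compatible with the paper's: build a pseudo-conjugacy from the combinatorial data by a pull-back argument, control its dilatation by a priori bounds, pass to a limiting qc conjugacy, and then invoke a rigidity principle in parameter space. You also correctly identify that the a priori bounds at the free critical point should come from a Kahn--Lyubich type argument rather than from Yoccoz's degree-two modulus estimates; indeed the paper uses the Avila--Kahn--Lyubich--Shen favorite nest (Theorem~\ref{thm.teichmuller}). (Your first suggestion, the ``Yoccoz $\tau$-function'' counting, does not apply to cubics.) And you are right that the obstruction the paper has to clear is the behavior of puzzle pieces that accumulate on $\partial D(f_b)$.

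The genuine gap is in the step you call ``rigidity along the Siegel boundary.'' Knowing that $\partial D(f_b)$ is a uniform quasicircle and that the Blaschke surgeries can be matched near $\overline{D(f_b)}$ does not, by itself, control the geometry of puzzle pieces around points of $J(f_b)$ whose orbits recur to $\partial D(f_b)$. The paper's actual mechanism is rather delicate: the Yoccoz graph here is built out of \emph{bubble rays} lying inside the Julia set (\S\ref{sec:combinatorial-tools}), and the refinement near $\partial D$ is done with Yang's \emph{puzzle disks} $D^n$, which carry a priori moduli bounds (Proposition~\ref{prop.puzzle.disk}). But the classical tableau rule fails for puzzle disks because their combinatorics is far more involved than Branner--Hubbard--Yoccoz puzzle pieces; the paper replaces it with the ``first hit map to puzzle disks has bounded degree'' lemma (Lemmas~\ref{lem.bounded.degree.alongD}, \ref{lem.bounded.first.hit}), together with the modified puzzle disks $C^n$ of Definition~\ref{def.modified-disks} and the comparison $C^{n+3}\subset D^n\subset C^{n-2}$ (Lemma~\ref{lem.Cn-Dn}) needed to align the two kinds of combinatorics. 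This is exactly what feeds into Lemma~\ref{lem.zero.measure} (local connectivity and \emph{zero Lebesgue measure} of the set of points whose orbit avoids a critical piece), which is indispensable in \emph{both} the recurrent case (to make the pullback pseudo-conjugacies $G_n$ uniformly $K$-qc on $\C$, not just on the ``escaping to $P_n(b)$'' set) and the non-recurrent case (to apply the Kozlovski--Shen--van Strien qc-extension criterion, Lemma~\ref{lem.xiaoguang}, with shape control from Lemma~\ref{lem.bounded.shape-Cf}). Your proposal has no stand-in for this analytic kernel, and ``the quasiconformal surgeries can be matched'' does not supply one.

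A secondary, non-fatal, difference: your closing step proves conformality via an ``absence of invariant line fields'' argument, whereas the paper goes directly through Zakeri's quasiconformal rigidity of $\partial\mathcal{C}(\theta)$ (Proposition~\ref{prop.quasiconformal.rigid}). Your route is in principle viable, but ruling out an invariant line field on $J(f_b)$ requires essentially the same shape/turning/modulus estimates near $\partial D(f_b)$ that you have not supplied, and in the recurrent case you also need Lemma~\ref{lem.zero.measure} (since $J(f_b)$ may a priori have positive area). So even on your route, the missing lemma is the same.
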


As a byproduct of the Rigidity Theorem, we obtain the following topological model of $\cK(\theta)$:

\begin{thm}\label{thm.main.siegel}
The external central locus quotient by the involution $b\mapsto-b$ is homeomorphic to the filled Julia set of the quadratic Siegel polynomial $\q$ minus its Siegel disk:
$$
\cK_\infty(\theta)/(b\sim-b) \simeq K(\q) \setminus D(\q).
$$
In particular, $\cK(\theta)$ is locally connected.
\end{thm}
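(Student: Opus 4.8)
The plan is to build the homeomorphism $\cK_\infty(\theta)/(b\sim -b) \simeq K(\q)\setminus D(\q)$ by exhibiting a natural continuous bijection between these two compact spaces and then invoking compactness. The key structural input is that a non-renormalizable cubic Siegel polynomial $f_b$ with $b$ in the external region has its ``trapped'' critical point $1/b$ pinned on the Siegel boundary $\partial D(f_b)$, while the free critical point $b$ behaves like the single critical point of a quadratic Siegel map. Concretely, I would first use the Blaschke product model of Proposition \ref{prop.surgery} (Zakeri's surgery) to produce, for each $f_b\in\cK_\infty(\theta)$, a quadratic-like restriction $g_b = f_b^{?}|_{U'}$ of the appropriate degree away from the Siegel disk — or, more precisely, to identify the dynamics of the free critical orbit of $f_b$ with the dynamics of the free critical orbit of some quadratic Siegel polynomial conjugate to $\q$. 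Since we are on the non-renormalizable locus, Definition \ref{defn.ren} guarantees no degree-two polynomial-like restriction with connected small Julia set exists, so the straightening of this quadratic-like germ lands at the ``center'' point, and the position of the critical value records a point of $K(\q)$; because $b$ is never trapped for $b\in\cU_\infty$, that point lies outside $D(\q)$.

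Next I would define the map $\Phi : \cK_\infty(\theta)/(b\sim -b)\to K(\q)\setminus D(\q)$ by sending (the class of) $f_b$ to the location, in the linearizing coordinate of $D(\q)$'s complement, of the free critical value of $f_b$ transported through the conjugacy furnished by the surgery. Continuity of $\Phi$ should follow from continuous dependence of the Blaschke model and of straightening on parameters (this is where the bounded-type hypothesis is essential, since it makes the Siegel boundaries uniform quasi-circles and the surgery quasiconformally controlled). Injectivity is exactly where the Rigidity Theorem \ref{thm.main} enters: if $\Phi(f_b)=\Phi(f_{b'})$ then $f_b$ and $f_{b'}$ have the same combinatorial data relative to the puzzle structure built in \S\ref{sec:combina.Zak} and \S\ref{sec:combinatorial.rigidity} (the free critical orbits match and both trapped critical points sit on the Siegel boundary the same way), so $f_b\in\partial\cK(\theta)$ combined with Theorem \ref{thm.main} gives a conformal conjugacy, hence $b' \in \{b, 1/b, -b, -1/b\}$, i.e. $f_b$ and $f_{b'}$ represent the same class. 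Surjectivity comes from realizing every prescribed free-critical-value position: given $w\in K(\q)\setminus D(\q)$, one uses the quasiconformal surgery in reverse — glue the quadratic Siegel dynamics with critical value at $w$ onto a rotation on the model Siegel disk — to manufacture a cubic Siegel polynomial with the required combinatorics, and a transversality/continuity argument (degree of $\Phi$ on the boundary sphere, or a direct parametrization of $\cU_\infty$ by the free critical value) shows the realized parameter lies in $\cK_\infty(\theta)$.

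Finally, $\Phi$ being a continuous bijection between compact Hausdorff spaces is automatically a homeomorphism, and since $K(\q)\setminus D(\q)$ is locally connected (the Siegel Julia set $J(\q)$ is locally connected for bounded type by Petersen, and removing the open Jordan disk $D(\q)$ preserves this), we conclude $\cK(\theta) = \cK_\infty(\theta)\cup\cK_0(\theta)$ is locally connected, the internal piece being handled symmetrically (or by the same model applied after the involution $b\mapsto 1/b$). The main obstacle I anticipate is \emph{surjectivity together with the proof that the realized parameter is non-renormalizable}: the reverse surgery readily produces \emph{a} cubic Siegel polynomial with the right free-critical combinatorics, but one must rule out that the gluing accidentally creates a degree-two renormalization (it cannot, because the target $w$ lies outside $D(\q)$ and thus the small Julia set cannot be connected), and one must check the parameter lands in the closure $\cK_\infty(\theta)$ rather than merely in $\cU_\infty$ — this requires the continuity of the whole surgery family up to the Zakeri curve $\Zak$, which is the delicate quasiconformal estimate underlying everything.
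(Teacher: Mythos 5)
Your high-level plan (produce a continuous bijection $\cK_\infty(\theta)/(b\sim -b)\to K(\q)\setminus D(\q)$, show injectivity via rigidity, conclude by compactness and locally connectedness of $K(\q)$) captures the shape of the argument. However, the mechanism you propose for building the map is wrong in a way that cannot be patched, and the main technical content of the paper's proof is missing.

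The map you call $\Phi$ is supposed to be produced by extracting a quadratic-like restriction of $f_b$ and straightening it. But a non-renormalizable $f_b$ has, by the very definition (Definition \ref{defn.ren}), \emph{no} degree-two polynomial-like restriction with connected filled Julia set --- so there is no quadratic-like germ to straighten. Your sentence ``the straightening of this quadratic-like germ lands at the center point'' has no content: without a renormalization there is no germ and no straightening. The Blaschke surgery of Proposition \ref{prop.surgery} conjugates $f_b$ to a Blaschke product $Bl$, not to $\q$; it does not by itself produce a point of $K(\q)$. What actually gives a well-defined position in $T(\q)\subset K(\q)$ is the bubble-tree conjugacy $\psi_b : T(f_b)\to T(\q)$ from Theorem \ref{cubic tree model}, and the paper's map is exactly $\Psi(b) := \psi_b(co_b)$ (Proposition \ref{unif cap}), subsequently extended in Proposition \ref{limb ext} and Theorem \ref{paratree} to a two-to-one covering $\cT_\infty(\theta)\to T(\q)\setminus D(\q)$. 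Nothing in your plan identifies this mechanism.

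The second missing ingredient is how the map is extended from the (dense) parameter tree $\cT_\infty(\theta)$ to its closure $\cK_\infty(\theta)$ as a homeomorphism. This is the step where rigidity is genuinely used, and it is not a bare invocation of Theorem \ref{thm.main}: the paper builds a hierarchy of parameter puzzle pieces $\cP^n_i$ (\S\ref{sec:parameterplane}) and shows, via Proposition \ref{match puzzle} and Theorem \ref{thm.rigidity}, that nested parameter puzzle pieces shrink to points once renormalizable wakes and isolated parabolic parameters are removed. Surjectivity onto $K(\q)\setminus D(\q)$ and continuity of the extension then follow together from this shrinking; there is no need for (and the paper does not attempt) a reverse-surgery realization argument. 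Your proposal, by contrast, tries to get surjectivity by constructing parameters by hand and then worrying about whether they are non-renormalizable, but never engages with the puzzle structure that makes the limiting correspondence work. Without identifying the bubble-tree conjugacy as the source of $\Phi$, and without the parameter puzzle shrinking argument, the proof cannot be completed along the lines you describe.
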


Given a polynomial $f : \bbC \to \bbC$, denote its conformal conjugacy class by $[f]$. Similarly, given a set of polynomials $\cF$, denote
$
[\cF] := \{[f] \; | \; f \in \cF\}.
$
Let
$$
\bfP^3:= \{[f] \; | \; f \; \text{is a cubic polynomial}\}.
$$
The {\it main hyperbolic component}
$\mathbf{H}^3\subset\bfP^3$, which is defined to be the hyperbolic component containing $[z^3]$. In \cite{BlOvPtTi}, Blokh et al. made the following conjecture:
\begin{conj*}
    For all $\theta\in\R/\Z$, $\overline{\bfH^3}\cap[\mathcal{P}^{cm}(\theta)] = [\cK(\theta)].$
\end{conj*}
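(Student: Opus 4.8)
We prove the equality in the case when $\theta$ is of bounded type. The plan is to establish the two inclusions separately — one by a surgery construction, the other by persistence of renormalization — and to invoke the Rigidity Theorem (Theorem~\ref{thm.main}) together with Theorem~\ref{thm.main.siegel} to handle the degenerate parameters. Recall that $[\cP^{cm}(\theta)]$ sits inside the locus $\mathrm{Per}_1(e^{2\pi i\theta})$ of cubic conjugacy classes having a fixed point of multiplier $e^{2\pi i\theta}$, that $\bfH^3\cap\mathrm{Per}_1(\lambda)$ for $|\lambda|<1$ is precisely the set of classes whose two critical points both lie in the immediate basin of the fixed point of multiplier $\lambda$, and that — since $\theta$ is of bounded type — Proposition~\ref{prop.surgery} endows every $f_b$ with a Blaschke-product model. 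As $\cK(\theta)$ is by definition the closure inside $\cP^{cm}(\theta)$ of the non-renormalizable parameters and $\overline{\bfH^3}$ is closed, it is enough to prove: (a) every non-renormalizable $f_b$ has $[f_b]\in\overline{\bfH^3}$; and (b) every renormalizable $f_b$ admits a neighborhood in $\bfP^3$ disjoint from $\bfH^3$.

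For (a), I would deform the Siegel fixed point of a non-renormalizable $f_b$ into an attracting one using $\wtl f_{b,\lambda}(z):=\lambda z\bigl(1-\tfrac12(b+\tfrac{1}{b})z+\tfrac13 z^2\bigr)$ with $\lambda=re^{2\pi i\theta}$, $r\uparrow 1$. In the Blaschke model of Proposition~\ref{prop.surgery} the Siegel disk of $f_b$ is uniformized onto $\DD$, the model acts there by $z\mapsto e^{2\pi i\theta}z$, and the trapped critical point is pinched onto $\partial\DD$; replacing that rigid rotation by a hyperbolic Blaschke self-map of $\DD$ with an interior attracting fixed point of multiplier $\lambda$ and transferring back through the quasiconformal surgery produces hyperbolic cubics $\wtl f_{b(\lambda),\lambda}$, with $b(\lambda)\to b$, whose attracting fixed point near $0$ already absorbs the formerly trapped critical point. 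Non-renormalizability is exactly what prevents the free critical orbit from hiding in a secondary quadratic-like Julia set; since turning the Siegel disk into an attracting basin makes every orbit accumulating on $\partial D(f_b)$ fall into that basin, the free critical point is drawn into the immediate basin too, so $\wtl f_{b(\lambda),\lambda}\in\bfH^3$, and letting $r\uparrow1$ gives $[f_b]\in\overline{\bfH^3}$.

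For (b), suppose $f_b$ is renormalizable (Definition~\ref{defn.ren}). Since $f_b$ already carries the Siegel-disk structure around $0$, the renormalization is a secondary one, capturing the free critical point in the little filled Julia set $K(g)$ of a quadratic-like restriction $g=f_b^p|_{U'}$ with $\ov{D(f_b)}\cap K(g)=\varnothing$. By structural stability of polynomial-like maps \cite{DoHu2}, this restriction persists throughout a neighborhood $N$ of $[f_b]$ in $\bfP^3$: for $[g']\in N$, either $K(g')$ stays connected and the free critical point is caught by a cycle disjoint from $\{0\}$, or $K(g')$ disconnects and the free critical point escapes to $\infty$; in neither case are both critical points attracted to a common fixed point, so $N\cap\bfH^3=\varnothing$ and $[f_b]\notin\overline{\bfH^3}$. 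Thus (b) yields $\overline{\bfH^3}\cap[\cP^{cm}(\theta)]\subseteq[\cK(\theta)]$, and (a) together with the closedness of $\overline{\bfH^3}$ yields $[\cK(\theta)]\subseteq\overline{\bfH^3}\cap[\cP^{cm}(\theta)]$.

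The main obstacle is carrying (a) through uniformly at the degenerate parameters: those on $\partial\cK(\theta)$, in particular those on the Zakeri curve $\Zak$ where both critical points are trapped on the Siegel boundary, and the non-renormalizable roots of renormalization windows, where the free critical orbit is only barely under control. Here the Rigidity Theorem guarantees that the combinatorial data fed into the Blaschke surgery is determined by the combinatorial invariants and varies continuously across $\cK(\theta)$, while Theorem~\ref{thm.main.siegel}, by identifying $\cK_\infty(\theta)/(b\sim-b)$ with $K(\q)\setminus D(\q)$ — hence $\cK(\theta)$ with a double of it glued along $\Zak$ — forces the Hausdorff limit of the hyperbolic windows $\bfH^3\cap\mathrm{Per}_1(\lambda)$, as $\lambda\to e^{2\pi i\theta}$, to be exactly $\cK(\theta)$ rather than some strictly larger closed set. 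In this way the rigidity results supply precisely the continuity and non-degeneracy needed to make the surgery of (a) and the persistence of (b) fit together, giving the claimed equality.
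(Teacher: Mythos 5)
Your argument for (b) matches the paper's easy direction (parameters in $\bfH^3$ are non-renormalizable, and the quadratic-like restriction persists near a renormalizable $f_b$). Your argument for (a), however, takes a genuinely different route from the paper and contains a real gap.

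The paper's proof of $[\cK(\theta)]\subseteq\overline{\bfH^3}$ is indirect and short: Theorem~\ref{thm.main.siegel} implies that every parameter of $\cK_\infty(\theta)$ is accumulated by capture components (parabubbles, where the free critical orbit eventually falls into $D(f_b)$), the cited result [BlChOvTi, Thm.~A] places every such capture component on $\partial\bfH^3$, and closedness of $\overline{\bfH^3}$ then finishes it. Your direct-surgery approach would give a more self-contained proof if it worked, but its central assertion --- that after replacing the rotation on $\DD$ by an attracting Blaschke model the free critical orbit is drawn into the basin of $0$ --- is not justified, and your stated heuristic (``turning the Siegel disk into an attracting basin makes every orbit accumulating on $\partial D(f_b)$ fall into that basin'') does not cover the non-capture parameters. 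By Proposition~\ref{nonsep}, for a non-separable $f_b$ with $b\notin T(f_b)$ the free critical point is either Misiurewicz --- eventually periodic to a repelling cycle that is disjoint from $\partial D(f_b)$ (which carries no periodic points), so its orbit does not accumulate on the Siegel boundary at all --- or contained in the wake impression of an irrational bubble ray. In the Blaschke model the corresponding orbit of $c_b$ stays in $\CC\setminus\overline{\DD}$, so the surgery you perform inside $\DD$ never touches it, and there is no a priori reason the surgered polynomial should lie in $\bfH^3$. The closing appeal to Theorem~\ref{thm.main.siegel} to ``force the Hausdorff limit of the hyperbolic windows to be exactly $\cK(\theta)$'' is circular: that Hausdorff-limit statement is essentially a restatement of the conjecture, not an available hypothesis. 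The paper's route through density of parabubbles and [BlChOvTi] is precisely how it avoids having to control the non-capture critical orbits under such a deformation.
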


We are able to verify the conjecture for all bounded type $\theta$:
\begin{cor}\label{cor.main2}
If $\theta$ is of bounded type, then
$
\overline{\bfH^3}\cap[\mathcal{P}^{cm}(\theta)] = [\cK(\theta)].
$
\end{cor}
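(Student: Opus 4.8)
The plan is to prove the two inclusions $[\cK(\theta)]\subseteq\overline{\bfH^3}\cap[\mathcal{P}^{cm}(\theta)]$ and $\overline{\bfH^3}\cap[\mathcal{P}^{cm}(\theta)]\subseteq[\cK(\theta)]$ separately. Since $f_b$ and $f_{1/b}$ are conjugate we have $[\cK(\theta)]=[\cK_\infty(\theta)]$, so in the first inclusion it is enough to treat $b\in\cK_\infty(\theta)$; and for $|\lambda|<1$ I write $f_{b,\lambda}(z):=\lambda\,z\bigl(1-\tfrac12(b+\tfrac1b)z+\tfrac13z^2\bigr)$ for the cubic obtained from $f_b$ by pulling the multiplier at $0$ into the disk (its critical points are still $b,1/b$).

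For the first inclusion, since $\overline{\bfH^3}$ is closed it suffices to show $[f_b]\in\overline{\bfH^3}$ for $b$ in a dense subset of $\cK_\infty(\theta)$, and the convenient choice is the set of \emph{capture parameters}, i.e.\ those $b$ for which some iterate $f_b^{\,k}$ carries the free critical point into $D(f_b)$. These are dense in $\cK_\infty(\theta)$: under the homeomorphism of Theorem~\ref{thm.main.siegel} they correspond to $\bigcup_{n\ge1}\q^{-n}(D(\q))$, which is dense in $K(\q)\setminus D(\q)$ because $J(\q)$ has empty interior (classical for bounded type $\theta$). For such a $b$ I would let $|\lambda|\uparrow1$ towards $e^{2\pi i\theta}$: then $f_{b,\lambda}$ has an attracting fixed point at $0$ whose immediate basin converges, in the sense of Carathéodory kernels based at $0$, to $D(f_b)$ and — by the Blaschke-model surgery of Proposition~\ref{prop.surgery} run with an attracting germ in place of a rotation — contains the trapped critical point once $\lambda$ is close to $e^{2\pi i\theta}$; since $f_{b,\lambda}^{\,k}$ of the free critical point then lies in a compact subset of $D(f_b)$, hence in that basin, the free critical point is attracted to $0$ as well. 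Thus both critical points of $f_{b,\lambda}$ are attracted to $0$, so $f_{b,\lambda}\in\bfH^3$; letting $\lambda\to e^{2\pi i\theta}$ gives $[f_b]\in\overline{\bfH^3}$.

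For the second inclusion, let $b\in\mathbb{C}^*\setminus\cK(\theta)$. If $J(f_b)$ is disconnected then $[f_b]$ lies outside the connectedness locus of cubic polynomials and hence outside $\overline{\bfH^3}$, so assume $J(f_b)$ is connected; since a neighbourhood of $b$ contains no non-renormalizable parameter, $f_b$ is renormalizable, and by the renormalization theory for this family it lies in the interior of a baby Mandelbrot copy: there is a persistent polynomial-like renormalization $g_{b'}=f_{b'}^{\,p}|_{U'_{b'}}\colon U'_{b'}\to U_{b'}$ of fixed period $p$ defined near $b$, straightening to a homeomorphism onto the Mandelbrot set $\cM$, whose straightening invariant $c(\cdot)$ at $b$ lies in the interior of $\cM$. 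In particular the small filled Julia set of $g_b$ is connected and, the renormalization being of the free critical point, disjoint from $\overline{D(f_b)}$, so after shrinking we may take $0\notin\overline{U_b}$. Suppose now $[f_b]\in\overline{\bfH^3}$ and pick $P_n\to f_b$ with $P_n\in\bfH^3$, say with attracting fixed point $\zeta_n\to0$ whose basin contains both critical points of $P_n$. For $n$ large, $P_n^{\,p}$ is polynomial-like of degree $2$ on a domain close to $U'_b$, with straightening invariant $c(P_n)\to c(b)\in\Int\cM$ by continuity, hence $c(P_n)\in\cM$; then the small filled Julia set of this polynomial-like map is connected and contains its critical point, which is a $P_n$-preimage of a critical point of $P_n$ and is therefore attracted to $\zeta_n$. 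But a point attracted to $\zeta_n\to0\notin\overline{U_b}$ cannot have its $P_n^{\,p}$-orbit contained in $U'_b$ once $n$ is large — a contradiction. Hence $[f_b]\notin\overline{\bfH^3}$, and this completes the proof.

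I expect the main obstacle to be the first inclusion, specifically turning the statement "$D(f_b)$ deforms to the immediate basin of $0$ and swallows the trapped critical point" into something quantitative as $|\lambda|\uparrow1$. This is a continuity statement for the quasiconformal surgery of Proposition~\ref{prop.surgery} in the multiplier $\lambda$; it is conceptually clear but requires the surgery's dilatation estimates to be uniform near $|\lambda|=1$ — one might alternatively bypass it by working directly with the Blaschke models and straightening only at the end. The one non‑routine ingredient of the second inclusion, that each bounded component of $\mathbb{C}^*\setminus\cK(\theta)$ is the interior of a genuine baby Mandelbrot copy with a holomorphic straightening onto $\cM$, is part of the renormalization apparatus developed in the proof of Theorem~\ref{thm.main}.
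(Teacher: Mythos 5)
Your argument has the same skeleton as the paper's (two inclusions; density of capture parameters via Theorem~\ref{thm.main.siegel}), but you replace the paper's key citation and one-liner with direct constructions, and both substitutions carry real gaps.

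For $[\cK(\theta)]\subseteq\overline{\bfH^3}$, the paper simply invokes \cite[Thm.~A]{BlChOvTi}, which asserts that every capture component of $\cK_\infty(\theta)$ lies on $\partial\bfH^3$. Your multiplier deformation $f_{b,\lambda}$, $|\lambda|\uparrow 1$, is in effect an attempt to re-derive that theorem, and the step you flag is indeed the crux — but it is not merely a matter of ``uniform dilatation estimates.'' Carath\'eodory kernel convergence of the immediate basin to $D(f_b)$ (itself non-trivial even for bounded type) only controls compact subsets of $D(f_b)$, hence says nothing about the trapped critical point $1/b\in\partial D(f_b)$. And running the surgery of Proposition~\ref{prop.surgery} with an attracting germ in place of a rotation yields a cubic whose distinguished critical point sits \emph{on the boundary} of the attracting basin — a $\partial\bfH^3$ parameter, not an element of $\bfH^3$ — and there is no reason a priori that this cubic equals $f_{b,\lambda}$ with the same $b$: the surgery produces a second moduli coordinate that must be matched. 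So whether the radial segment terminally lies in $\bfH^3$ is exactly the delicate point, and the sketch does not settle it; it is precisely what the cited reference supplies.

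For $\overline{\bfH^3}\cap[\cP^{cm}(\theta)]\subseteq[\cK(\theta)]$ (the paper's ``easy'' inclusion, dismissed there with the remark that $\bfH^3$ parameters are non-renormalizable), your contradiction via persistence of the polynomial-like restriction under $P_n\to f_b$ and $\zeta_n\to 0\notin\overline{U_b}$ is a sound mechanism and more explicit than the paper's. But its entry point — that every $b\in\cC(\theta)\setminus\cK(\theta)$ lies in the interior of a baby Mandelbrot copy with straightening invariant in $\Int\cM$ — is not established in the paper: Propositions~\ref{prop.separate} and \ref{para separable} stop at the existence of a quadratic-like restriction and renormalizable wakes, well short of a straightening homeomorphism onto $\cM$. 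So the structural outline of your proposal is right and genuinely different in its details, but neither direction is actually closed: one gap is the content of the external reference the paper cites, and the other is an appeal to renormalization structure the paper does not develop.
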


\subsection{History and strategy of the proof.}
As presented before, the study of the rigidity for non renormalizable (or finitely renormalizable) polynomials somehow begins implicitly with Yoccoz in the 90s.
Using the combinatorial tool called ”puzzles”, he showed that the non-renormalizable
quadratic polynomial $z^2 + c$ without indifferent periodic point is combinatorially rigid (cf. \cite{Hu}). However, the method
of Yoccoz can not be generalised to polynomials of higher degree since the estimate on the modulus of annulus surrounding the critical point relies essentially on the condition degree 2. The breakthroughs are the rigidity for real
polynomials of any degree \cite{KoShvS} and the rigidity for non-renormalizable unicritical polynomials $z^d + c$ \cite{AvKaLySh}, where new combinatorial and analytic tools are
invented. Since then, the rigidity for many other rational map families with more
complicated combinatorics has been studied 
\cite{Ro, QiRoWaYi, Wa, DrSc, Z}.

It is noteworthy that none of the families mentioned above possesses a stable irrationally indifferent fixed point. To our best knowledge, our result provides the
first family of this kind where combinatorial rigidity is justified for non renormalizable parameters. Our proof generally
follows the strategy in \cite{AvKaLySh}. However, in contrast to the unicritical family,
the graphs that we use to construct Yoccoz puzzles consist of bubble rays, which
are entirely contained in the Julia set. A technical point to apply the strategy in
\cite{AvKaLySh} to our situation is that we need to analyse carefully the orbits that accumulate to the graph, especially, to the Siegel disk boundary. For this, we need a more
recent analytic tool: the a priori bounds for puzzle disks \cite{Y} to control the geometry
of the Julia set near the Siegel disk boundary.

\subsection{Outline of the paper.} In \S \ref{sec:quadratic Siegel} and \S \ref{sec:cubic Siegel} we recall respectively some classical results for the quadratic Siegel polynomial and the cubic Siegel family. We construct dynamical bubble rays for the cubic Siegel polynomials in \S \ref{sec:dym bubbleray}. In \S \ref{sec:combina.Zak} we prove the combinatorial rigidity in a special case where the free critical point belongs to a bubble ray. We establish in \S \ref{sec:ext rays} and \ref{sec:para bubble rays} some basic properties of dynamical external rays and parabubble rays. The core part of this paper is \S \ref{sec:combinatorial-tools} and \S \ref{sec:combinatorial.rigidity}, where we use combinatorial tools to prove the combinatorial rigidity in the recurrent and non recurrent case (Theorem \ref{thm.rigidity}). The two key ingredients in the proof are Lemma \ref{lem.bounded.degree.alongD} and Lemma \ref{lem.zero.measure}. We will use them to control the shape distortion of the Julia set near the Siegel disk boundary. In \S \ref{sec:parameterplane}, we finish the proof of Theorem \ref{thm.main}, \ref{thm.main.siegel} and Corollary \ref{cor.main2}.\\
\paragraph{\textbf{Acknowledgements.}} We would like to thank Michael Yampolsky for introducing us
to this project, and for the many helpful suggestions.

We would also like to thank Xiaoguang Wang and Fei Yang for kindly sharing with
us their manuscript \cite{WaYan} during the writing of this paper.

\section{The Quadratic Siegel Polynomial}\label{sec:quadratic Siegel}

Let $\theta \in (0, 1)$ be an irrational angle of bounded type, and consider the quadratic Siegel polynomial $q_\theta$ given in \eqref{eq.quadratic}. The map $\q$ is characterized by having a Siegel fixed point at $0$ with rotation number $\theta$, and its critical point at $1$. Denote the Siegel disc, the Julia set, the filled Julia set, and the attracting basin of infinity for ${{\q}}$ by $D({{\q}})$, $J({{\q}})$, $K({{\q}})$, and $B_\infty({{\q}})$ respectively. Let
$\phi^0_{\q}: D(\q)\to \D$
be the the linearization map on $D(\q)$, normalized by $\phi^0_{\q}(1)=1$. 

A connected component $B$ of $q^{-n}_\theta(D(\q))$ for some $n \geq 0$ is called a {\it bubble}. The {\it generation} of $B$, denoted by gen$(B)$, is defined to be the smallest number $n \geq 0$ such that $q^{n}_\theta(B) = D(\q)$. An iterated preimage $x$ of the critical point $1$ is called a {\it joint}. The {\it generation} of $x$, denoted by gen$(x)$, is defined to be the smallest number $n \geq 0$ such that $q^{n}_\theta(x) = 1$. Every bubble $B$ of generation $n\geq1$ contains a unique joint $x$ of generation $n-1$ in its boundary. We refer to $x$ as the {\it root} of $B$, and denote $\rt(B) := x$.

For $N \in \bbN \cup \{\infty\}$, consider a sequence of bubbles $\{B_i\}_{i=0}^N$ such that $B_0 = D(\q)$, and
$$
\partial B_{i-1} \cap \partial B_i = \{\rt(B_i)\}
\matsp{for}
1 \leq i \leq N.
$$
Then the set
$$
R^\bB := \bigcup \limits_{i=0}^N \overline{B_i}
$$
is called a {\it bubble ray} for ${{\q}}$. We use the notation $R^\bB \sim \{B_i\}_{i=0}^N$ to indicate the sequence of bubbles that make up $R^\bB$.

Let $\bR \sim \{\overline{B_i}\}_{i=0}^\infty$ be an infinite bubble ray. We say that $\bR$ {\it lands} at $z \in J({{\q}})$ if the sequence of closed bubbles $\overline{B_i}$ converges to $z$ as $i \to \infty$ in the Hausdorff topology. We call $z$ the {\it landing point} of ${R}^{\bB}$. An infinite bubble ray is said to be {\it rational} if it is (pre-)periodic. 

The following theorem is due to Petersen \cite{Pe}. Later Yampolsky \cite{Yam} gave an alternative proof using complex a priori bounds.

\begin{thm}\label{quadratic lc}
The Julia set $J({{\q}})$ is locally connected.
\end{thm}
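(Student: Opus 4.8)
The plan is to run Petersen's argument, with the quantitative input organized as in Yampolsky's later proof: transport everything to a Blaschke model, use complex \emph{a priori} bounds to control the geometry of $J(\q)$ along $\partial D(\q)$, and then prove shrinking of Yoccoz puzzle pieces. First, by the Douady--Ghys--Herman--Shishikura surgery, $\q$ is quasiconformally conjugate on $\CC$ to a map $\widehat B$ obtained from a degree-$3$ Blaschke product $B=B_\theta$ by a quasiconformal modification inside $\DD$; the conjugacy carries $\DD$ onto $D(\q)$, and $\widehat B|_{\partial\DD}=B|_{\partial\DD}$ is a critical circle map with rotation number $\theta$ whose critical point corresponds to $1\in\partial D(\q)$. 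Since $\theta$ is of bounded type, this critical circle map has real \emph{a priori} bounds (Herman, Swiatek): the dynamical partitions of $\partial\DD$ have uniformly bounded geometry, hence so do the bubbles of $\q$ attached along $\partial D(\q)$ and their iterated preimages. Feeding this into the complex \emph{a priori} bounds for critical circle maps (Yampolsky) yields a constant $\mu_0>0$ such that, at arbitrarily small scales along $\partial D(\q)$ and along each of its preimages, there is a topological annulus of modulus $\geq\mu_0$ separating the corresponding copy of $\overline{D(\q)}$ from the rest of $J(\q)$.

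Next I would set up the puzzle. Because $\q$ has connected filled Julia set, all rational external rays land at repelling (pre)periodic points (Douady's landing theorem). Fix an equipotential and finitely many such rays, and close up the resulting graph by arcs contained in $\overline{D(\q)}$, namely the images under the linearizing coordinate $\phi^0_{\q}$ of a fundamental partition of $\DD$ together with its $\q$-iterates; these internal arcs are precisely what separates the critical point $1\in\partial D(\q)$ within the puzzle. Pulling back under $\q$ produces a nested sequence of puzzle partitions $\{\mathcal P_n\}_{n\geq 0}$; write $P_n(z)$ for the depth-$n$ piece containing $z\in J(\q)$.

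It then remains to show $\diam P_n(z)\to 0$ for every $z\in J(\q)$; granting this, the connected sets $\overline{P_n(z)}\cap J(\q)$ form a basis of connected neighborhoods of $z$, so $J(\q)$ --- which is connected, its critical orbit being bounded --- is locally connected. I would split according to the forward orbit of $z$. If $\omega(z)\cap\partial D(\q)=\emptyset$, then, the postcritical set of $\q$ being exactly $\partial D(\q)$, Ma\~n\'e's theorem shows that $\omega(z)$ is a hyperbolic (expanding) set; standard expansion/Koebe estimates for the pullbacks $\q^n : P_n(z)\to P_0(\q^n z)$ then give $\diam P_n(z)\to 0$. If $\omega(z)$ meets $\partial D(\q)$, then for arbitrarily deep $n$ some forward image of $P_n(z)$ sits inside a small neighborhood of $\partial D(\q)$, where the modulus-$\geq\mu_0$ annuli of the first step surround it; a Yoccoz-type pullback argument, with distortion controlled by Koebe away from the critical point and by the bounded geometry of the bubbles near it, transports this modulus back and again forces $\diam P_n(z)\to 0$.

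The step I expect to be the crux is this last one --- controlling the shape of the puzzle pieces that hug the Siegel boundary --- which is the technical heart of both Petersen's and Yampolsky's proofs. It is also where bounded type is indispensable: if $J(\q)$ were locally connected then $\partial D(\q)$ would necessarily be a Jordan curve, whereas Herman exhibited irrational rotation numbers for which it is not; so no purely combinatorial argument avoiding quantitative control near $\partial D(\q)$ can succeed.
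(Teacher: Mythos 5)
The paper does not prove this theorem; it simply cites Petersen \cite{Pe} and Yampolsky \cite{Yam}. Your sketch faithfully reproduces the Petersen--Yampolsky strategy --- surgery to a Blaschke model, real and complex a priori bounds along $\partial D(\q)$ for bounded type rotation numbers, a Yoccoz puzzle supplemented by arcs through the Siegel disk, Ma\~n\'e's theorem for orbits avoiding the postcritical set, and modulus/Koebe control for orbits accumulating on $\partial D(\q)$ --- and it correctly isolates the geometric control of puzzle pieces near the Siegel boundary as the technical crux, so it is consistent with the cited proof.
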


The {\it tree of bubbles} $T(\q) \subset K(\q)$ for $\q$ is defined to be the union of all bubble rays for $\q$. We record the following consequence of \thmref{quadratic lc}.

\begin{cor}\label{cor.landing.quadratic}
Every infinite bubble ray for $\q$ lands. Moreover, we have
$$
K(\q) = T(\q) \sqcup \{\emph{landing points of infinite bubble rays}\}.
$$
\end{cor}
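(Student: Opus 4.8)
The plan is to read off both assertions from local connectivity of $J(\q)$ ($\thmref{quadratic lc}$), the combinatorics of bubbles and joints, and one quantitative input: since $\theta$ is of bounded type, the bubbles shrink at a uniform geometric rate, i.e. there are $C>0$ and $\lambda\in(0,1)$ with $\diam(B)\le C\lambda^{\gen(B)}$ for every bubble $B$. (This is where bounded type is essential; it is a standard consequence of the Blaschke product model for $\q$ and the resulting control on the geometry of $J(\q)$.) I also record three routine structural facts: (a) the bubbles are exactly the bounded Fatou components of $\q$ — there is no attracting or parabolic cycle, so every bounded Fatou component iterates onto $D(\q)$ — hence they are pairwise disjoint Jordan domains (a bubble of generation $n$ is an entire component of $\q^{-m}(D(\q))$ for every $m\ge n$) and $T(\q)=\bigcup_B\overline B$; (b) along any bubble ray $\bR\sim\{\overline{B_i}\}_i$ the generations are strictly increasing, so $\gen(B_i)\ge i$ (the root $\rt(B_i)$ lies on $\partial B_{i-1}$, and analysing $\q^{\gen(B_{i-1})}$ near it forces $\gen(B_{i-1})<\gen(B_i)$); (c) near a joint $x$ of generation $g$ the orbit $x,\dots,\q^{g-1}(x)$ avoids the critical point $1$, so $\q^g$ is a local homeomorphism carrying $x$ to $1$, and only the two bubbles $D(\q)$ and the (unique) generation-$1$ bubble meet a neighbourhood of $1$; pulling back, every joint lies in the closure of exactly two bubbles, and every other boundary point of a bubble in exactly one.

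For landing, let $\bR\sim\{\overline{B_i}\}_{i=0}^\infty$ be an infinite bubble ray. By (b) and the geometric estimate $\sum_i\diam(B_i)\le\sum_iC\lambda^i<\infty$; since consecutive closed bubbles meet, $d_{\mathrm H}(\overline{B_i},\overline{B_j})\le\sum_{i\le\ell<j}\diam(B_\ell)$, so $\{\overline{B_i}\}$ is Cauchy in the Hausdorff metric and, the diameters tending to $0$, converges to a single point $z$. As $\rt(B_i)\in\overline{B_i}$ and joints lie in $J(\q)$ (they are iterated preimages of $1\in\partial D(\q)$), $z=\lim_i\rt(B_i)\in J(\q)$; thus $\bR$ lands. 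The inclusion $T(\q)\cup\{\text{landing points}\}\subseteq K(\q)$ is now immediate. For the reverse inclusion, note first that $\bigcup_BB=\Int K(\q)$ is dense in $K(\q)$: for $z\in J(\q)$ and any neighbourhood $U$, the blow-up property gives $n$ with $\q^n(U)\cap D(\q)\ne\emptyset$, so $U$ meets $\q^{-n}(D(\q))$, a union of bubbles. Hence $T(\q)$ is dense.

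So it suffices to show each $z\in K(\q)\setminus T(\q)$ is the landing point of an infinite bubble ray. Such a $z$ is a limit $z=\lim_kz_k$ with $z_k\in B^{(k)}$, the bubbles $B^{(k)}$ being (after discarding repeats) distinct of generation $\to\infty$; let $D(\q)=A^{(k)}_0,A^{(k)}_1,\dots,A^{(k)}_{L_k}=B^{(k)}$ be the unique ancestral chain of $B^{(k)}$. For each fixed level $j$ the ancestors $A^{(k)}_j$ must have bounded generation, and $L_k\to\infty$: otherwise the roots $\rt(A^{(k)}_j)$ would be joints of unbounded generation on the boundary of a fixed common ancestor, hence — $\q$ being conjugate to an irrational rotation on that boundary — dense in it, and by the geometric estimate $z$ would then lie in the closure of a fixed bubble, contradicting $z\notin T(\q)$. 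Thus at each level only finitely many values of $A^{(k)}_j$ occur, and a diagonal extraction produces an infinite bubble ray $C_0=D(\q),C_1,C_2,\dots$ with $A^{(k)}_j=C_j$ for all $j$ up to a prefix length $\to\infty$ along a subsequence; by the previous paragraph $\{C_j\}$ lands at some $w$. For the $k$'s in that subsequence with prefix length $\ge j$, the tail $\overline{A^{(k)}_j}\cup\dots\cup\overline{A^{(k)}_{L_k}}$ of the chain is connected, contains $z_k$ and $\rt(C_j)$, and by (b) and the geometric estimate has diameter $\le\sum_{i\ge j}C\lambda^i\to0$; since $\rt(C_j)\to w$, we get $z=w$.

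Finally I would verify disjointness: the landing point $z$ of $\bR\sim\{\overline{B_i}\}$ cannot lie in any $\overline B$. If $z\in\overline B$, then by (c) $z$ is in at most two bubble closures, so $z\notin\overline{B_i}$ and $B_i\ne B$ (hence $\overline{B_i}\cap\overline B$ empty or a single joint) for all large $i$. If $\overline{B_i}\cap\overline B\ne\emptyset$ infinitely often, the resulting joints converge to $z$ along $\partial B$ and are not eventually constant (a joint lies in only two bubble closures); by (c) the two bubbles at each is $\{B,B_i\}$, forcing either $B$ to be the $\bR$-parent of $B_i$ (so $B=B_{i-1}$ for infinitely many $i$, impossible) or all these joints to have generation $\gen(B)-1$ (of which $\partial B$ carries only one), a contradiction; the remaining case $\overline{B_i}\cap\overline B=\emptyset$ for all large $i$ I would exclude using the cyclic order of accesses to $z$ in $J(\q)$, and I expect this to be the most delicate point of the combinatorial bookkeeping. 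The true heart of the matter, however, is the geometric input: it is the uniform geometric decay of the bubbles that makes every infinite bubble ray actually converge to a point and that pins the constructed ray down to the prescribed $z$, and this is exactly where the bounded-type hypothesis is indispensable; granted it, the facts (a)--(c) and the disjointness step are elementary but require care.
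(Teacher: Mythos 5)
The paper states Corollary~\ref{cor.landing.quadratic} as an immediate consequence of local connectivity (Theorem~\ref{quadratic lc}) and offers no separate proof; the intended route is the standard one through the Carath\'eodory extension of the B\"ottcher coordinate and the shrinking of wake impressions. Your argument instead rests on the quantitative assertion $\diam(B)\le C\lambda^{\gen(B)}$, which you present as a ``standard consequence of the Blaschke product model.'' This is the gap: that estimate is not standard, and it is in fact false. In the Douady--Ghys--Herman--Shishikura model the relevant map has a critical point on the invariant circle, so there is no uniform expansion near $\partial D(\q)$. What the a~priori bounds actually give is geometric decay in the \emph{renormalization level} (i.e.\ the annuli $D^n\setminus\overline{D^{n+2}}$ of the paper's \S 7 have definite modulus), and the renormalization level $n$ corresponds to generation roughly $q_n$, which grows exponentially in $n$. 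Converting, one only obtains a polynomial bound $\diam(B)\lesssim \gen(B)^{-\beta}$; the bubbles attached at $c_{-q_n}$ are a concrete family for which the exponential bound in generation cannot hold. Since your landing argument requires $\sum_i\diam(B_i)<\infty$ to make the roots Cauchy, and your diagonal extraction in the second half again invokes the geometric tail $\sum_{i\ge j}C\lambda^i$, both halves of the argument lose their engine once the unsupported estimate is removed. Separately, you explicitly acknowledge that the disjointness of $T(\q)$ from the set of landing points is left unfinished in the case $\overline{B_i}\cap\overline{B}=\emptyset$ for all large $i$.

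The intended proof requires neither summability nor any explicit decay rate. Local connectivity of $J(\q)$ (hence of $K(\q)$) gives: (i) the B\"ottcher map extends continuously to the unit circle, so every external ray lands and the map $t\mapsto$ (landing point of $R^\infty_t$) is continuous; (ii) for an infinite bubble ray $\bR\sim\{B_i\}$, the nested wakes $W_{s_i,t_i}$ based at the roots $\rt(B_i)$ have external-angle arcs $[s_i,t_i]$ that shrink to a single angle $t_*$ (otherwise one would produce a whole nondegenerate arc of rays landing at the same point, contradicting local connectivity together with the fact that the only multiply accessible points of $K(\q)$ are the joints, which are precisely bi-accessible); (iii) by continuity the impressions $\overline{W_{s_i,t_i}}\cap K(\q)$ shrink to the single point $\gamma(t_*)$, which contains all $\overline{B_j}$ with $j>i$, so $\bR$ lands at $\gamma(t_*)$, and since $\gamma(t_*)$ is not a joint it lies outside $T(\q)$. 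The reverse inclusion and disjointness then follow from the density of $T(\q)$ in $K(\q)$ (which you do argue correctly) together with the same uni/bi-accessibility dichotomy. If you want to keep a quantitative flavour, the correct input to cite is the bound on moduli of nested puzzle annuli at successive renormalization levels, not a geometric rate in the generation.
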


Let $\bR$ be an infinite bubble ray landing at $x \in J(\q)$. When convenient, we will denote $\bR = \bR_t$, where $t \in \bbR/\bbZ$ is the external angle of the unique external ray that lands at $x$. We denote by $\mathscr{T}\subset\mathbb{R}/\mathbb{Z}$ the collection of all the external angles that are realized by some bubble ray. Additionally, if the fact that $\bR_t$ is a bubble ray in the parameter plane for $\q$ needs to be emphasized, we sometimes denote $\bR_t(\q)$. For example, $\bR_0(\q)$ is the unique fixed bubble ray, and $\bR_{1/2}(\q)$ its unique distinct preimage.


\section{The Cubic Siegel Polynomials}\label{sec:cubic Siegel}

Let $\theta \in (0, 1)$ be an irrational angle of bounded type, and consider a cubic Siegel polynomial $f_b \in \cP^{cm}(\theta)$ given in \eqref{eq.family.fb}. Since $f_b = f_{1/b}$, we may assume without loss of generality that $b \in \overline{\U_\infty}$ (see \ref{eq.U0-Uinf}).

The map $f_b$ is characterized by having a Siegel fixed point at $0$ with rotation number $\theta$, and its critical points at $b$ and $1/b$. Denote the Siegel disk centered at $0$, the Julia set, the filled-in Julia set, and the attracting basin of infinity for $f_b$ by $D(f_b)$, $J(f_b)$, $K(f_b)$, and $B_\infty(f_b)$ respectively. Then we have $1/b \in \partial D(f_b)$. Let $v_b := f_b(b)$ be the free critical value of $f_b$. The {\it cocritical point $co_b$ of $f_b$} is defined as the non-critical preimage of $v_b$ (i.e. $f_b(co_b) = v_b$ and $co_b \neq b$). Lastly, let $\phi^0_b: D(f_b)\to \D$ be the linearization map on $D(f_b)$, normalized by $\phi^0_b(1/b)=1$. 

\begin{prop}[{\cite[Cor. 13.6]{Za2}}]\label{prop.surgery}
For $b\in\overline{\mathcal{U}_\infty}$, there exist
\begin{itemize}
    \item a degree 5 Blaschke product $Bl$ of the following form 
\[Bl:z\mapsto e^{2\pi it}z^3\left(\frac{z-p}{1-\overline{p}z}\right)\left(\frac{z-q}{1-\overline{q}z}\right),\quad |p|,|q|>1\]
such that $1$ is a double critical point of $Bl$.
\item a quasi-regular polynomial $P$ with $P=Bl$ on $\mathbb{C}\setminus\overline{\D}$, $P = h^{-1} \circ R_\theta\circ h$ on $\overline{\D}$, where $h:\overline{\D}\to\overline{\D}$ is continuous, quasi-symmetric on $\partial\D$ and quasi-conformal on $\D$; $R_\theta$ is the rigid rotation of angle $\theta$.
\item a quasi-conformal homeomorphism $\varphi:\mathbb{C}\to\mathbb{C}$ such that $\varphi(1/b) = 1$, $\varphi$ is conformal on $B_\infty(f_b)$, $\varphi^{-1}\circ h^{-1}$ is conformal on $\D$, and $f_b = \varphi^{-1}\circ P\circ \varphi$.
\end{itemize}
\end{prop}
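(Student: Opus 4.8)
The plan is to realize $f_b$ by a quasiconformal surgery that replaces its Siegel disk with a rigid rotation, following the Douady--Ghys--Herman--Shishikura construction (cf.\ \cite{He}) in the form adapted to the cubic family by Zakeri. The first step is to produce the Blaschke candidate: a degree-$5$ rational map $Bl$ of the stated form for which $0$ is a zero and $\infty$ a pole, each of multiplicity $3$, the circle $\partial\D$ is invariant, $z=1$ is a double critical point lying on $\partial\D$, and the restriction $Bl|_{\partial\D}$ is a real-analytic critical circle map of degree one with its unique (cubic) critical point at $1$ and with rotation number $\theta$, while the remaining free critical point in $\D$ --- equivalently, the position of its critical value or of the cocritical point --- sits in the conformal location dictated by the marked critical point of $f_b$. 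Here $t\in\RR/\ZZ$ together with the two zeros $p,q\in\CC\setminus\overline{\D}$ form a finite-dimensional parameter family; imposing that $1$ be a double critical point removes part of the freedom, and one then adjusts the rest so that, first, the rotation number of $Bl|_{\partial\D}$ equals $\theta$ --- using continuity and monotonicity of the rotation number along a family of critical circle maps --- and, second, the free critical orbit data lands in the prescribed place, by a degree or intermediate-value argument.

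Granting such a model, the surgery itself is standard. Since $\theta$ is of bounded type, the real-analytic critical circle map $Bl|_{\partial\D}$ is quasisymmetrically conjugate to the rigid rotation $R_\theta$; let $h_0\colon\partial\D\to\partial\D$ be this conjugacy, so $h_0\circ Bl=R_\theta\circ h_0$. Extend $h_0$ to a quasiconformal homeomorphism $h\colon\overline{\D}\to\overline{\D}$, and define $P$ to equal $Bl$ on $\CC\setminus\overline{\D}$ and $h^{-1}\circ R_\theta\circ h$ on $\overline{\D}$; the two formulas agree on $\partial\D$, so $P\colon\CC\to\CC$ is a well-defined quasiregular map, holomorphic off $\overline{\D}$, with $\overline{\D}$ forward invariant and $P|_{\overline{\D}}$ quasiconformally conjugate to a rotation. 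A count of preimages gives $\deg P=3$: the two zeros $p,q$ of $Bl$ lie outside $\overline{\D}$ while the two poles $1/\overline{p},1/\overline{q}$ lie inside, and precisely this excess covering is removed when the $\D$-dynamics is replaced by the homeomorphism $h^{-1}\circ R_\theta\circ h$. One then builds a $P$-invariant Beltrami form $\mu$ by transporting the standard complex structure by $h$ on $\D$, spreading it along the holomorphic inverse branches of $P$ over $\bigcup_{n\ge0}P^{-n}(\D)$, and setting $\mu\equiv0$ elsewhere; because pullback by holomorphic maps preserves dilatation one has $\|\mu\|_\infty<1$, and invariance of $\D$ yields invariance of $\mu$. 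The Measurable Riemann Mapping Theorem then furnishes a quasiconformal $\varphi\colon\CC\to\CC$, normalized to fix $0$ and $\infty$ and carrying the standard structure to $\mu$, so that $f:=\varphi^{-1}\circ P\circ\varphi$ is holomorphic; being a degree-$3$ proper branched self-cover of $\widehat{\CC}$ fixing $\infty$, it is a cubic polynomial.

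It then remains to identify $f$ with $f_b$ and to verify the side conditions. After composing $\varphi$ with an affine map, $f$ lies in $\cP^{cm}(\theta)$ with a Siegel fixed point at $0$ of rotation number $\theta$, and the placement arranged in the first step forces $f=f_b$, i.e.\ $f_b=\varphi^{-1}\circ P\circ\varphi$. The map $\varphi$ is conformal on $B_\infty(f_b)$ because an orbit escaping to $\infty$ never enters the forward-invariant disk $\D$, so $\mu$ vanishes on $B_\infty(P)=\varphi(B_\infty(f_b))$; and $\varphi^{-1}\circ h^{-1}\colon\D\to D(f_b)$ is conformal, being a linearizing coordinate for the Siegel disk (it conjugates $R_\theta$ to $f_b|_{D(f_b)}$). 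A final post-composition of $\varphi$ with a rotation arranges $\varphi(1/b)=1$, matching the normalization $\phi^0_b(1/b)=1$. The genuinely hard point is the first step: showing that the Blaschke combinatorics can be prescribed so that the surgery recovers precisely the marked map $f_b$, and that this can be done for every $b\in\overline{\U_\infty}$. This amounts to understanding how the model's free critical value moves as the Blaschke parameters vary, while the bounded-type hypothesis keeps the boundary circle dynamics rigid, and it is essentially the content of Zakeri's analysis in \cite{Za2}.
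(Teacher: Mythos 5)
The paper does not prove this proposition; it quotes Zakeri's Corollary~13.6 from \cite{Za2}, so there is no internal argument to compare against. Your reconstruction of the Douady--Ghys--Herman--Shishikura surgery is the strategy Zakeri follows: a degree-$5$ Blaschke model with invariant circle, rotation number $\theta$, and a double critical point at $1$; Herman's quasisymmetric linearization of the boundary circle map (where the bounded-type hypothesis is essential); a quasiconformal extension $h$ and gluing to form a degree-$3$ quasiregular map $P$; an invariant Beltrami form of norm $<1$ obtained by spreading the Beltrami coefficient of $h$ backward along the holomorphic inverse branches of $P$ over $\bigcup_n P^{-n}(\D)$; and straightening via the Measurable Riemann Mapping Theorem, with conformality on $B_\infty$ and on the Siegel disk read off from where the Beltrami form vanishes or where it equals $h^*0$. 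You have also correctly and honestly flagged the one step that carries the real weight: arranging the Blaschke parameters so that the straightened cubic is exactly $f_b$, for every $b\in\overline{\cU_\infty}$ --- that is, the surjectivity of the surgery map onto the closed exterior region with the correct critical marking. The only caution is that this step is rather more involved than ``a degree or intermediate-value argument'' suggests: Zakeri needs continuity and properness of the surgery map together with a holomorphic-motions analysis to reach $\partial\cU_\infty$. But as an outline, yours is faithful to the construction the paper is citing.
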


\begin{cor}\label{cor.quasi Siegel boundary}
For $b\in\overline{\mathcal{U}_\infty}$, the Siegel boundary $\partial D(f_b)$ of $f_b$ is a quasi-circle containing at least one critical point. Consequently, the linearization map $\phi^0_b$ extends to a homeomorphism from $\overline{D(f_b)}$ to $\overline{\bbD}$.
\end{cor}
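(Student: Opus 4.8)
The plan is to push all of the geometry and dynamics through the Blaschke model furnished by Proposition \ref{prop.surgery}. Write $\Delta:=\varphi^{-1}(\mathbb{D})$. Since $P$ restricts to a homeomorphism of $\overline{\mathbb{D}}$ onto itself, it maps $\mathbb{D}$ bijectively onto $\mathbb{D}$, so $f_b=\varphi^{-1}\circ P\circ\varphi$ maps $\Delta$ bijectively onto $\Delta$; moreover $\varphi^{-1}\circ h^{-1}\colon\mathbb{D}\to\Delta$ is conformal and conjugates $R_\theta$ to $f_b|_\Delta$. Thus $\Delta$ is a simply connected domain containing the Siegel point $0$ on which $f_b$ is conformally linearizable, and hence $\Delta\subseteq D(f_b)$. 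The first — and essentially only substantive — step is to upgrade this inclusion to an equality.

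To do so I would pass to the linearizing coordinate $\ell\colon D(f_b)\to\mathbb{D}$ of the Siegel disk (normalized by $\ell(0)=0$, $\ell\circ f_b=R_\theta\circ\ell$). Since $f_b$ maps $\Delta$ bijectively onto itself, $\ell(\Delta)$ is an open, connected, $R_\theta$-invariant subset of $\mathbb{D}$ containing $0$; by minimality of the irrational rotation $R_\theta$ on each circle $\{|z|=r\}$, any such set is a round disk $\rho\mathbb{D}$ with $\rho\in(0,1]$. If $\rho<1$, then $\overline{\Delta}=\ell^{-1}(\overline{\rho\mathbb{D}})$ is a compact subset of $D(f_b)$; in particular $1/b=\varphi^{-1}(1)$ (using $\varphi(1/b)=1$) lies in $\partial\Delta=\varphi^{-1}(\partial\mathbb{D})\subset D(f_b)$. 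But $1/b$ is a critical point of $f_b$, whereas the Siegel disk $D(f_b)$ contains no critical points ($f_b$ is injective there). This contradiction forces $\rho=1$, hence $\Delta=D(f_b)$ and $\partial D(f_b)=\varphi^{-1}(\partial\mathbb{D})$.

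The three conclusions then follow quickly. First, $\partial\mathbb{D}$ is a quasicircle and $\varphi$ is a global quasiconformal homeomorphism of $\mathbb{C}$, and such maps carry quasicircles to quasicircles, so $\partial D(f_b)=\varphi^{-1}(\partial\mathbb{D})$ is a quasicircle. Second, this quasicircle contains the critical point $1/b=\varphi^{-1}(1)$ (and also $b$ when $b$ lies on the Zakeri curve, so ``at least one'' is sharp). Third, $\varphi^{-1}\circ h^{-1}\colon\overline{\mathbb{D}}\to\overline{D(f_b)}$ is a homeomorphism that is conformal on $\mathbb{D}$ and conjugates $R_\theta$ to $f_b|_{D(f_b)}$; its inverse $h\circ\varphi$ is therefore a homeomorphism $\overline{D(f_b)}\to\overline{\mathbb{D}}$ which is conformal on $D(f_b)$ and linearizes $f_b$ there. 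By uniqueness of the linearizing coordinate up to a rotation of $\mathbb{D}$, $\phi^0_b$ equals $h\circ\varphi$ post-composed with the rotation that restores the normalization $\phi^0_b(1/b)=1$; since a rotation is a homeomorphism of $\overline{\mathbb{D}}$, we conclude that $\phi^0_b$ extends to a homeomorphism $\overline{D(f_b)}\to\overline{\mathbb{D}}$.

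I expect the main obstacle to be precisely the equality $\Delta=D(f_b)$ — that is, confirming that the surgery coordinate $\varphi$ sends the Siegel disk exactly onto the model disk $\mathbb{D}$, leaving no collar of $D(f_b)$ outside $\Delta$. Everything else is formal manipulation of the relation $f_b=\varphi^{-1}\circ P\circ\varphi$ together with standard facts about quasiconformal maps and linearizations; the non-formal input is that the marked critical point $1/b$ sits over the point $1\in\partial\mathbb{D}$ of the model, which is what pins the size of $\Delta$ at the maximum.
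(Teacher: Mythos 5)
Your argument is correct and is essentially the derivation the paper leaves implicit: the only non-formal input is the identification $\varphi^{-1}(\mathbb{D})=D(f_b)$, and you pin that down the right way by linearizing and using that the critical point $\varphi^{-1}(1)=1/b$ cannot lie in the Siegel disk (one could also invoke directly that $\Delta$ is the entire linearization domain because its boundary contains a critical point, which forces $\ell(\Delta)=\mathbb{D}$ by maximality of the Siegel disk). The rest — that a global quasiconformal map carries $\partial\mathbb{D}$ to a quasicircle, and that $h\circ\varphi$ differs from $\phi^0_b$ by a rotation and extends to the closures — is exactly the intended chain of consequences of Proposition~\ref{prop.surgery}.
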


\begin{prop}[{\cite[Remark on pp. 229]{Za2}}]\label{holo motion}
For $b\in\overline{\mathcal{U}_\infty}$, the closed Siegel disk $\overline{ {D(f_b)}}$ moves holomorphically. For a fixed starting point $b_0 \in\U_\infty$, the holomorphic motion is given by $(\phi^0_b)^{-1}\circ \phi^0_{b_0}$.
\end{prop}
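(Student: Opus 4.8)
The plan is to build the holomorphic motion from the family of linearization maps $\phi^0_b$ and then invoke the $\lambda$-lemma to promote it from a motion of $\partial D(f_b)$ to a motion of $\overline{D(f_b)}$, finally checking the formula $(\phi^0_b)^{-1}\circ\phi^0_{b_0}$ is the right one. First I would fix the base point $b_0\in\U_\infty$ and define, for $b$ in a neighborhood of $b_0$ (or on all of $\U_\infty$), the candidate motion $H(b,\cdot):\overline{D(f_{b_0})}\to\overline{D(f_b)}$ by $H(b,z):=(\phi^0_b)^{-1}\circ\phi^0_{b_0}(z)$. By Corollary \ref{cor.quasi Siegel boundary} each $\phi^0_b$ is a homeomorphism $\overline{D(f_b)}\to\overline{\bbD}$ conformal on the interior and normalized by $\phi^0_b(1/b)=1$, so $H(b,\cdot)$ is a well-defined homeomorphism onto $\overline{D(f_b)}$ fixing the critical point $1/b$ on the boundary; at $b=b_0$ it is the identity.

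The crux is holomorphy in the parameter $b$ for each fixed $z$. For $z\in D(f_{b_0})$ this follows because $(\phi^0_b)^{-1}$ is the unique conformal isomorphism $\bbD\to D(f_b)$ sending $0\mapsto 0$ with derivative determined by the linearizing normalization (equivalently, the map conjugating $R_\theta$ to $f_b$ on the Siegel disk), and such normalized uniformizations depend holomorphically on $b$ — concretely one can see this via the surgery description in Proposition \ref{prop.surgery}: $\varphi^{-1}\circ h^{-1}$ is conformal on $\bbD$ and the Beltrami data of $\varphi$ depend holomorphically on $b$, so the uniformization of $D(f_b)$ moves holomorphically by the measurable Riemann mapping theorem with parameters. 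Thus $b\mapsto H(b,z)$ is holomorphic on $D(f_{b_0})$ for each fixed $z$, and the family $\{H(b,\cdot)\}$ restricted to the (open, nonempty) set $D(f_{b_0})$ is automatically injective for each $b$, so it is a holomorphic motion of $D(f_{b_0})$ over $\U_\infty$ (or a neighborhood of $b_0$). By the $\lambda$-lemma (Mañé–Sad–Sullivan / Słodkowski), this extends to a holomorphic motion of the closure $\overline{D(f_{b_0})}$, and the extension is continuous in $(b,z)$ jointly.

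It remains to identify this $\lambda$-lemma extension with $H$ on the boundary, i.e.\ to check that $b\mapsto (\phi^0_b)^{-1}\circ\phi^0_{b_0}(z)$ is already holomorphic for $z\in\partial D(f_{b_0})$ and agrees with the abstract extension. For this I would argue that the boundary values are forced: the extension of a holomorphic motion of $D(f_{b_0})$ to $\partial D(f_{b_0})$ is unique (any two extensions agree because points of $\partial D(f_{b_0})$ are limits of points of $D(f_{b_0})$ and the motions are continuous), and $H$ itself is continuous on $\overline{D(f_{b_0})}$ since each $\phi^0_b$ extends continuously to the closure (Corollary \ref{cor.quasi Siegel boundary}) and these extensions vary continuously with $b$ — the latter because $\partial D(f_b)$ is a uniform quasicircle (uniform in $b$ on compact subsets, again from Proposition \ref{prop.surgery}), so the boundary extensions have a modulus of continuity uniform in $b$, and a normal-families argument pins down the limit. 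Hence $H$ is the $\lambda$-lemma extension, which proves both that $\overline{D(f_b)}$ moves holomorphically and that the motion is given by the stated formula.

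The main obstacle I anticipate is the joint continuity/holomorphy of the boundary uniformizations $\phi^0_b$ near $\partial D(f_b)$: one must rule out that the quasicircles $\partial D(f_b)$ degenerate or that the welding maps $h$ lose quasisymmetry control as $b$ varies. This is exactly where the uniform a priori control coming from Zakeri's surgery (Proposition \ref{prop.surgery}) — uniform boundedness of the quasiconformal dilatations and of the quasisymmetry constants of $h$ on compact parameter sets — is essential; with that in hand the normal-families argument closes the gap, but without it the continuity of the extension to $\partial D(f_{b_0})$ is not automatic.
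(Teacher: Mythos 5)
The paper gives no proof of Proposition~\ref{holo motion}: it is stated as a quotation of the Remark on p.~229 of Zakeri~\cite{Za2}, so there is no ``paper's own proof'' to compare against. I can only assess your reconstruction on its merits.

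Your overall scheme---holomorphy in $b$ on the interior, $\lambda$-lemma to the closure, then continuity to identify the boundary extension with $(\phi^0_b)^{-1}\circ\phi^0_{b_0}$---is the natural one, and Steps~2 and~3 are fine. The weak link is Step~1, and specifically the sentence ``the Beltrami data of $\varphi$ depend holomorphically on $b$.'' In Zakeri's surgery the Beltrami coefficient on $\bbD$ is $h_b^*\mu_0$, where $h_b$ is the quasi-symmetric conjugacy of $Bl_b|_{\partial\bbD}$ to $R_\theta$; the Blaschke product $Bl_b$ itself varies with $b$ (its free critical point varies), and $h_b$ is obtained by real-dynamical means, so there is no reason for $b\mapsto h_b^*\mu_0$ to be holomorphic in $L^\infty$. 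Without that, MRMT-with-parameters does not directly deliver holomorphy of $b\mapsto(\phi^0_b)^{-1}(\zeta)$. Relatedly, the normalization $\phi^0_b(1/b)=1$ is a \emph{boundary} normalization: if $\psi_b$ is the interior-normalized linearizer (say $\psi_b'(0)=1$), whose power series coefficients do depend holomorphically on $b$, then $(\phi^0_b)^{-1}(\zeta)=\psi_b\bigl(\lambda_b\zeta\bigr)$ with $\lambda_b=\psi_b^{-1}(1/b)$ a \emph{boundary value} of $\psi_b^{-1}$. That $\lambda_b$ is holomorphic in $b$ (equivalently, that $\log r_b$ is harmonic, $r_b$ the conformal radius) is exactly the non-trivial content here, and you assume it rather than prove it.

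A route that avoids this: first show the backward critical orbit $c_{-n}(b):=(\phi^0_b)^{-1}(e^{-2\pi in\theta})$ on $\partial D(f_b)$ moves holomorphically---this is elementary, since $c_{-n}(b)$ is a root of $f_b^n(z)=1/b$ and Zakeri's continuity of $\overline{D(f_b)}$ pins down which root it is. The $c_{-n}(b)$ are dense in $\partial D(f_b)$, so the $\lambda$-lemma gives a holomorphic motion of $\partial D(f_{b_0})$ equal there to $(\phi^0_b)^{-1}\circ\phi^0_{b_0}$. Then use Slodkowski to extend to a quasiconformal motion $L_b$ of $\hat\bbC$; straighten the Beltrami coefficient of $L_b|_{D(f_{b_0})}$ (now genuinely holomorphic in $b$, by the definition of holomorphic motion) via MRMT with parameters by a map $N_b:\bbD\to\bbD$ fixing $0$ and $1$, and observe that $L_b\circ(\phi^0_{b_0})^{-1}\circ N_b^{-1}$ is the conformal map $\bbD\to D(f_b)$ sending $0\mapsto 0$, $1\mapsto 1/b$, hence equals $(\phi^0_b)^{-1}$; holomorphy in $b$ then follows. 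This fills the gap, at the cost of doing the boundary first rather than the interior first.
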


\subsection{Dynamical Bubble rays}\label{sec:dym bubbleray} 

A connected component $B$ of $f_b^{-n}(D(f_b))$ for some $n \geq 0$ is called a {\it bubble}. An iterated preimage $x$ of the critical point $1/b$ is called a {\it joint}. The {\it generations} of $B$ and $x$ are defined the same way as for the quadratic Siegel polynomial $\q$.

We record the following obvious property about bubbles.

\begin{lem}\label{root}
Let $B$ be a bubble of generation $n \geq 1$. Suppose that $f_b^i(B) \not\ni b$ for $i \geq 0$. Then $\partial B$ contains a unique joint $x$ with the smallest generation, which is either $n$ or $n-1$.
\end{lem}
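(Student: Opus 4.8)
The plan is to trace through iterates carefully, using the fact that $B$ is a bubble of generation $n$ (so $f_b^n(B) = D(f_b)$ and $n$ is minimal with this property) together with the hypothesis that $b \notin f_b^i(B)$ for all $i \geq 0$. First I would recall that a bubble is a connected component of $f_b^{-n}(D(f_b))$, and that each $f_b^i(B)$ is itself a bubble; the assumption $b \notin f_b^i(B)$ means that on each bubble $f_b^i(B)$ the map $f_b$ restricted to it is either a conformal homeomorphism onto the next bubble, or a degree-two branched cover branched at the critical point $1/b$ (when $1/b$ lies in $f_b^i(B)$, which can happen since $1/b \in \partial D(f_b)$ and hence $1/b$ lies on the boundary of infinitely many bubbles). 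The key point is that $1/b$ on the boundary is the only critical point we can encounter along the forward orbit of $B$, precisely because $b$ is excluded by hypothesis.

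The main step is to analyze the boundary $\partial B$ and the joints (preimages of $1/b$) it contains. Since $f_b^n(B) = D(f_b)$, the critical point $1/b$ lies on $\partial D(f_b) = \partial(f_b^n(B))$. Pulling back along the branch $f_b^n|_{\overline{B}} \to \overline{D(f_b)}$: if none of the intermediate bubbles $f_b^i(B)$, $0 \le i \le n-1$, contains $1/b$ on its boundary in a way that is "hit" at the branched step, then $1/b$ pulls back to a single joint $x$ of generation $n$ on $\partial B$, and I would argue uniqueness of the joint of minimal generation by noting that any other joint $y \in \partial B$ has $f_b^n(y) \in \partial D(f_b)$ equal to some preimage of $1/b$ inside $\overline{D(f_b)}$; but inside $D(f_b)$ the only preimage-of-$1/b$ structure is governed by the rotation dynamics, and on $\partial D(f_b)$ the critical point $1/b$ is the unique point whose whole orbit stays on $\partial D(f_b)$ with the right combinatorics — so $f_b^n(y) = 1/b$ forces $y$ to have generation $\le n$, and minimality plus the local branching picture (at most degree two, branched only at $1/b$) forces exactly one such $y$, with generation either $n$ (generic pull-back) or $n-1$ (if the last pull-back $f_b : f_b^{n-1}(B) \to D(f_b)$ is the branched one and $x$ is the critical point $1/b$ itself sitting on $\partial f_b^{n-1}(B)$). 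The two cases $n$ and $n-1$ are exactly the dichotomy in the statement.

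I would organize the argument as follows: (1) observe each $f_b^i(\overline B)$ is a closed bubble and $f_b|_{\overline{f_b^i(B)}}$ has degree $1$ or $2$, degree $2$ only if $1/b \in \overline{f_b^i(B)}$; (2) note $\partial D(f_b)$ is a quasi-circle (Corollary \ref{cor.quasi Siegel boundary}) so the boundary of every bubble is a Jordan curve and preimages of boundary points under the relevant branches are well-defined finite sets; (3) let $x \in \partial B$ be a joint of minimal generation $m \le n$; show $m \ge n-1$ by observing that if $m \le n-2$ then $f_b^m(x) = 1/b \in \partial D(f_b)$ while $f_b^m(B)$ is a bubble with $f_b^{n-m}(f_b^m(B)) = D(f_b)$ and $n - m \ge 2$, and $1/b$ on its boundary would have to be a critical point encountered along this orbit — consistent — but then pulling $D(f_b)$ back by the remaining $n-m$ steps and using connectedness of $\partial B$ I'd derive that $x$ actually has generation $n$ or $n-1$ after all, contradicting $m \le n-2$; (4) uniqueness: two joints of minimal generation on $\partial B$ would, after applying $f_b^m$, give two points of $\partial D(f_b)$ mapping to $1/b$ under the residual map, but the branched structure is simple enough (one critical point $1/b$, degree $\le 2$) that $\partial B$ meets the full preimage configuration in a single point.

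The hard part will be step (3)–(4): making precise why no joint of generation $\le n-2$ can lie on $\partial B$, and why the minimal-generation joint is unique. This requires a clean inductive bookkeeping of how the "root" structure propagates under pull-back when one of the steps is branched at $1/b \in \partial D(f_b)$, and it relies essentially on $b \notin f_b^i(B)$ so that the free critical point never interferes — every branched pull-back is at $1/b$, which sits on the Siegel boundary, so the combinatorics mirror exactly the quadratic bubble-root picture recalled in \S\ref{sec:quadratic Siegel}. I expect the proof in the paper is short precisely because it reduces to that quadratic picture plus the degree-counting observation in step (1).
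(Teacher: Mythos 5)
The paper states this lemma without proof, so there is no paper argument to compare against; judged on its own merits, your proposal identifies the right framework but leaves the decisive steps unproved and contains a couple of factual errors that obscure the picture.

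First, two incorrect claims in your step~(1). The critical point $1/b$ lies on $\partial D(f_b)\subset J(f_b)$, so it is never in the \emph{interior} of any bubble $f_b^i(B)$; consequently every map $f_b|_{B_i}:B_i\to B_{i+1}$ along the forward orbit is a conformal isomorphism, and the ``degree-two branched cover'' alternative you envisage simply never occurs for the interior maps (nor, in fact, for the closure maps: even when $1/b\in\partial B_i$, the set $\overline{B_i}$ occupies only one of the two local preimage sectors of $\overline{B_{i+1}}$, so $f_b|_{\overline{B_i}}$ remains injective). Second, $1/b$ lies on the boundary of exactly two bubbles, $D(f_b)$ and the unique generation-one bubble $D'$ rooted at $1/b$, not ``infinitely many'': since $f_b|_{\partial D(f_b)}$ is conjugate to an irrational rotation, $c_1:=f_b(1/b)$ is not a joint, so pulling back $f_b^{-k}(\partial D(f_b))$ at $1/b$ never produces new boundary arcs beyond those of $D(f_b)$ and $D'$. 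This fact is precisely what your steps~(3)--(4) need, and your sketch does not supply it.

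With it, the argument closes as follows. The map $\phi:=f_b^n|_{\overline B}:\overline B\to\overline{D(f_b)}$ is a homeomorphism. The joints on $\partial D(f_b)$ are exactly the points $c_{-k}:=(f_b|_{\partial D(f_b)})^{-k}(1/b)$, $k\geq 0$, with $\gen(c_{-k})=k$. A point $y\in\partial B$ is a joint if and only if $f_b^m(y)=1/b$ for some $m\geq 0$. If $m\geq n$ then $\phi(y)=c_{-(m-n)}$, so $y=\phi^{-1}(c_{-(m-n)})$ and a short computation using the injectivity of the rotation gives $\gen(y)=m$. If $m<n$, then $1/b\in\partial B_m$ with $\gen(B_m)=n-m\geq 1$, which by the two-bubble fact forces $B_m=D'$, hence $n-m=1$, i.e.\ $m=n-1$. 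Thus every joint on $\partial B$ has generation $\geq n-1$, while $\phi^{-1}(1/b)$ is always a joint of generation at most $n$, so the minimum is $n-1$ (when $B_{n-1}=D'$) or $n$ (otherwise); uniqueness follows because $\phi$ is injective and there is a single $c_{-k}$ of any given generation. Your announcement ``I'd derive that $x$ actually has generation $n$ or $n-1$'' in step~(3) and your appeal to ``the branched structure is simple enough'' in step~(4) are exactly where this content belongs; as written the proposal defers the entire proof to those steps without carrying them out.
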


Let $B$ be a bubble as in \lemref{root}. If gen$(x) =n-1$ and $f_b^i(x) \neq b$ for $i \geq 0$, we refer to $x$ as the {\it root} of $B$, and denote $\rt(B) := x$. For $N \in \bbN \cup \{\infty\}$, a {\it bubble ray $R^\bB \sim \{B_i\}_{i=0}^N$ for $f_b$} is defined the same way as for $\q$. When the fact that $\bR$ is a bubble ray in the dynamical plane for $f_b$ needs to be emphasized, we denote $\bR = \bR(f_b)$. 

\begin{prop}[{\cite[Prop. 4.2]{Y}}]\label{prop.rational-bubble}
Rational bubble rays land. The landing points are (pre-)periodic.
\end{prop}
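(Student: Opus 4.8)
The plan is to reduce to the periodic case by a standard pullback argument, then prove that a periodic bubble ray lands at a repelling or parabolic periodic point. Suppose first that $R^\bB \sim \{B_i\}_{i=0}^\infty$ is periodic of period $p$, meaning $f_b^p$ maps the bubble ray into itself with a shift by $p$ bubbles; equivalently $f_b^{p}(B_{i+p}) = B_i$ and $f_b^p$ carries the "tail" $\bigcup_{i \geq p}\overline{B_i}$ into $\bigcup_{i\geq 0}\overline{B_i}$ homeomorphically onto its image (away from any critical bubbles, which by the hypothesis in Lemma \ref{root} do not occur along the ray). First I would show the ray is contained in a compact piece of $K(f_b)$ and has connected impression: since $f_b$ acts on the space of bubble rays compatibly with the rotation-number combinatorics, and since the diameters of the bubbles $B_i$ are summable along a periodic ray (this uses hyperbolic-contraction/Koebe estimates for the inverse branches of $f_b^p$ fixing the ray, exactly as in the classical Douady–Hubbard argument for periodic external rays), the closed bubbles $\overline{B_i}$ form a Cauchy sequence in the Hausdorff topology. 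Hence the ray lands at a single point $z \in J(f_b)$.

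Next I would identify $z$ as a periodic point. The landing point satisfies $f_b^p(z) = z$ because $f_b^p$ sends the tail of the ray to a cofinal sub-ray of the same ray (the first bubble $B_0 = D(f_b)$ is periodic under $f_b$ by its very definition, or the relevant shift closes up combinatorially), and a sequence converging to $z$ is mapped by the continuous map $f_b^p$ to a sequence converging to $f_b^p(z)$, which by construction is again $z$. So $z$ is a periodic point in $J(f_b)$, hence it is repelling or parabolic. For a preperiodic bubble ray $R^\bB$, there is some iterate $f_b^k$ carrying $R^\bB$ onto a periodic bubble ray $R^\bB_*$; by the periodic case $R^\bB_*$ lands at a periodic point $z_*$. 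One then pulls back: $R^\bB$ is obtained from $R^\bB_*$ by a finite chain of inverse branches, each of which (being defined on a neighborhood of the landed sub-ray, or extendable across it since the bubbles along the ray avoid critical points) extends continuously to the landing point; this shows $R^\bB$ lands at a preimage $z$ of $z_*$, which is (pre-)periodic.

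The main obstacle, and the place where I would expect to spend the real work, is the summability of bubble diameters along a periodic ray — i.e., the a priori geometric control. Unlike external rays, bubble rays lie inside the Julia set, and near $\partial D(f_b)$ the return dynamics is governed by the irrationally indifferent fixed point, so the naive hyperbolic-metric contraction for $f_b^p$ can degenerate if the ray accumulates on (or passes very close to) the Siegel boundary. Here I would invoke the a priori bounds for puzzle disks from \cite{Y}: these give uniform control on the shape and shrinking of the relevant puzzle pieces enclosing successive bubbles, even for pieces abutting $\partial D(f_b)$, which is precisely the tool that replaces the degree-$2$ modulus estimate of Yoccoz in this Siegel setting. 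A secondary technical point is verifying that a periodic bubble ray never contains a bubble whose forward orbit hits the free critical point $b$ (so that Lemma \ref{root} applies and the inverse branches are genuinely univalent along the ray); this follows because a critical bubble in a periodic ray would force the free critical orbit to be periodic and trapped in the tree of bubbles, contradicting the standing configuration, but it should be stated carefully.
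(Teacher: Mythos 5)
The paper does not give a proof here: \propref{prop.rational-bubble} is imported verbatim from \cite[Prop.~4.2]{Y}, so I am judging your sketch on its own. The skeleton is the right one: reduce pre-periodic to periodic by pulling back along a finite chain of inverse branches, show a $p$-periodic ray lands because the chain of bubbles shrinks, then observe $f_b^p(z)=z$ by continuity since $f_b^p$ shifts the tail cofinally into itself. You are also right to flag the need to check that the forward orbit of no bubble in the ray meets $b$ (so the relevant pullbacks are univalent and \lemref{root} applies) before invoking any distortion estimate.

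The genuine gap is in the key shrinking step, and specifically in what you propose to invoke to fill it. The a priori bounds for puzzle disks in \cite{Y} are built on the dynamical graph \eqref{eq.dyna.graph}, whose very definition uses $R^\bB_0$, $R^\bB_{1/2}$ and the external rays \emph{landing at their endpoints}; those graphs are only well-defined once one already knows that rational bubble rays land, and accordingly landing of rational bubble rays (Prop.~4.2 in \cite{Y}) sits logically \emph{before} the puzzle-disk machinery. So your plan to ``invoke the a priori bounds for puzzle disks'' is circular at exactly the point where you place the hard work. Moreover those bounds address a different difficulty (shrinking of pieces attached to $\partial D(f_b)$), whereas what you actually need is the complementary statement that the impression of a periodic bubble ray stays \emph{away} from $\partial D(f_b)$. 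That is the missing idea, and it is elementary: the tail impression $X$ is compact, connected and $f_b^p$-invariant; if $X$ met $\partial D(f_b)$, then invariance together with the minimality of the irrational rotation $f_b^p|_{\partial D(f_b)}$ would force $X\supset\partial D(f_b)$, which is impossible since $X$ lies in the closure of a tail $\bigcup_{i\ge N}\overline{B_i}$ rooted at a single limb of $K(f_b)$. Once $X$ is compactly contained in $\Omega:=\C\setminus\overline{D(f_b)}$, the hyperbolic metric of $\Omega$ is uniformly expanded by $f_b$ on a neighborhood of $X$ (since $f_b^{-1}(\Omega)\subsetneq\Omega$), and the finiteness of repelling/parabolic points of period $p$ then collapses $X$ to a single periodic point by the standard Douady--Hubbard argument, no puzzle-disk bounds required.
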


The {\it tree of bubbles $T(f_b) \subset K(f_b)$ for $f_b$} is defined to be the union of all bubble rays for $f_b$. The following result identifies a tree of bubbles for a cubic Siegel polynomial as a part of the filled Julia set for the quadratic Siegel polynomial.

\begin{thm}\label{cubic tree model}
There exists a canonical continuous injective map $\psi_b : T(f_b) \to T({\q})$, conformal on the interior of $T(f_b)$, such that
$$
f_b|_{T(f_b)}= \psi_b^{-1} \circ \q \circ \psi_b.
$$
Moreover, if $\psi_b$ is not surjective, then one of the following cases hold:
\begin{enumerate}[i)]
\item $b \in \partial T(f_b)$ and $b$ is a joint; or
\item $b$ is contained in a bubble $B$ whose boundary intersect $T(f_b)$ at a joint.
\end{enumerate}
In either case, the critical value $v_b$ and the co-critical point $co_b$ are contained in $T(f_b)$.
\end{thm}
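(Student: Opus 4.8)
The plan is to build the map $\psi_b$ bubble-by-bubble, using the rigidity of the internal dynamics supplied by the Blaschke model. First I would set up the correspondence on Siegel disks: by Corollary \ref{cor.quasi Siegel boundary} the linearization maps $\phi^0_b : \overline{D(f_b)} \to \overline{\bbD}$ and $\phi^0_{\q} : \overline{D(\q)} \to \overline{\bbD}$ are homeomorphisms conjugating $f_b|_{\overline{D(f_b)}}$ and $\q|_{\overline{D(\q)}}$ to the rigid rotation $R_\theta$; hence $(\phi^0_{\q})^{-1} \circ \phi^0_b$ is a homeomorphism $\overline{D(f_b)} \to \overline{D(\q)}$, conformal on the interior, which conjugates $f_b$ to $\q$ and — because of the normalizations $\phi^0_b(1/b) = 1 = \phi^0_{\q}(1)$ — sends the root joint $1/b$ to the critical point $1$. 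This defines $\psi_b$ on $B_0 = D(f_b)$. Next I would extend $\psi_b$ inductively over bubbles of successive generations: given a bubble $B'$ of generation $n$ attached along its root $\rt(B') = x$ to an already-treated bubble $B$ of generation $n-1$, the map $f_b^{n-1}$ sends $B'$ univalently onto $D(f_b)$ (since $f_b^i(B')$, $i < n$, avoids the free critical point $b$ by the hypothesis guaranteeing $\rt(B')$ exists in the sense of Lemma \ref{root}), and likewise $\q^{n-1}$ sends the corresponding target bubble univalently onto $D(\q)$; so we may define $\psi_b|_{B'}$ as the unique lift $\q^{-(n-1)} \circ \psi_b|_{D(f_b)} \circ f_b^{n-1}$ that matches the already-defined value at $x$. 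One must check this is well-defined (independent of the chain of bubbles connecting to $B_0$, which holds because the bubble tree is a tree) and continuous across the joints; the conjugacy relation $f_b|_{T(f_b)} = \psi_b^{-1}\circ\q\circ\psi_b$ then holds by construction, and conformality on the interior of each bubble is immediate.

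The injectivity and continuity of $\psi_b$ on all of $T(f_b)$ — not merely on each bubble — is where the combinatorial/topological content enters. For injectivity I would argue that two distinct points of $T(f_b)$ lying in different bubbles are separated by a joint, and the combinatorial structure of bubble rays (which bubble is attached to which, at which root) is preserved under $\psi_b$ by the inductive construction; since the analogous structure in $T(\q)$ is also a tree, no identifications can occur. For continuity at a point $z$ that is a landing point of an infinite bubble ray $\bR \sim \{B_i\}$, I would use that $\overline{B_i} \to z$ in the Hausdorff topology (this is where local connectivity of $J(\q)$, via Theorem \ref{quadratic lc} and Corollary \ref{cor.landing.quadratic}, is used on the target side to guarantee the image ray also lands), together with a shrinking-of-bubbles estimate on the source side. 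The shrinking of the diameters of $B_i$ as $i \to \infty$ should follow from the a priori bounds for puzzle disks \cite{Y} together with Proposition \ref{prop.rational-bubble}; I expect this to require care precisely near $\partial D(f_b)$, where bubbles of high generation can return close to the Siegel boundary.

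The last assertion — the dichotomy (i)/(ii) when $\psi_b$ is not surjective, and the conclusion that $v_b, co_b \in T(f_b)$ — I would obtain by tracking where surjectivity can fail. The inductive extension of $\psi_b$ over a bubble $B'$ breaks down exactly when the hypothesis of Lemma \ref{root} fails for $B'$, i.e. when some forward iterate $f_b^i(B')$ contains the free critical point $b$, or when $b$ itself is a joint lying in $\partial T(f_b)$; these are precisely cases (ii) and (i). In either case, $f_b(b) = v_b$: if $b$ is a joint of generation $m$ then $v_b$ is a joint of generation $m-1$, hence $v_b \in T(f_b)$; and the cocritical point $co_b$, being the other preimage of $v_b$, lies in the bubble (or on the joint) mapping to the bubble of $v_b$, hence also in $T(f_b)$. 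The main obstacle will be the continuity argument at landing points of infinite bubble rays: one has to show simultaneously that the source-side bubbles shrink and that the image bubble ray genuinely lands, and controlling the geometry near $\partial D(f_b)$ — where the Siegel rotation can pile up preimages — is the delicate point, handled via the a priori bounds of \cite{Y}.
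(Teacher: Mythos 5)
Your construction of $\psi_b$ follows the same approach as the paper: base case on $\overline{D(f_b)}$ via the normalized linearizing maps, inductive extension bubble-by-bubble, with the dichotomy (i)/(ii) arising exactly when the inductive step fails because the free critical point $b$ interferes. The paper's inductive step pulls back one generation at a time via $\psi_b|_{\overline{B}} := (\q|_{\overline{B'}})^{-1}\circ \psi_b \circ f_b|_{\overline{B}}$, rather than composing all the way down to $D(f_b)$ as you do, but the two are equivalent.

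One point in your discussion is misplaced, though it does not affect the overall argument: you worry about continuity of $\psi_b$ at the landing points of infinite bubble rays, and invoke Theorem~\ref{quadratic lc} and the a priori bounds of \cite{Y} to control their geometry. But landing points are \emph{not} in the domain $T(f_b)$: by definition $T(f_b)$ is the union of the (closures of the) bubbles, and Corollary~\ref{cor.landing.quadratic} and Proposition~\ref{prop.wake-impression} record the decomposition $K = T \sqcup \{\text{landing points / wake impressions}\}$, so landing points lie strictly outside $T(f_b)$. The continuity you actually need to check is across joints and at non-joint boundary points of bubbles where small bubbles accumulate; the former is guaranteed by matching the lift at the root in the inductive step, and the latter does use local connectivity at Siegel boundaries (Theorem~\ref{yang lc} / Corollary~\ref{tree loc conn}) --- so your instinct to bring in \cite{Y} is sound, just aimed at the wrong points. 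The paper's proof omits the continuity/injectivity discussion entirely and treats it as implicit in the construction; your version is more explicit on this, which is fine.
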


\begin{proof}
The conjugacy $\psi_b$ is defined inductively as follows. For $n \geq 0$, let $\bfT_n(f_b) \subset T(f_b)$ and $\bfT_n(\q) \subset T(\q)$ be the unions of closures of bubbles of generation at most $n$ for $f_b$ and $\q$ respectively. On $\bfT_0(f_b) = \overline{D(f_b)}$, define
$$
\psi_b|_{\overline{D(f_b)}} := (\phi^0_{q_\theta})^{-1}\circ \phi^0_b.
$$

Suppose that $\psi_b$ is defined on $\bfT_n(f_b)$. Suppose $v_b$ is not a joint in $\partial \bfT_n(f_b)$, nor is contained in a bubble in $\bfT_n(f_b)$ of generation $n$. Let $B$ be a bubble of generation $n+1$ such that $f_b(B) \subset \bfT_n(f_b)$. Then one of three cases must hold:
\begin{enumerate}[i)]
\item $\overline{B} \cap \bfT_n(f_b) = \varnothing$;
\item $\overline{B} \cap \bfT_n(f_b) = \{x\}$, where $x$ is an iterated preimage of $b \in \partial \bfT_n(f_b)$; or
\item $\overline{B} \cap \bfT_n(f_b) = \rt(B)$.
\end{enumerate}
In the first two cases, $B$ is not contained in $\bfT_{n+1}(f_b)$. In the latter case, there exists a unique bubble $B' \subset \bfT_{n+1}(\q)$ such that
$$
\partial B' \cap \bfT_n(\q) = \{\psi_b(\rt(B))\}.
$$
Hence, $\psi_b$ extends to $\overline{B}$ via:
$$
\psi_b|_{\overline{B}} := (\q|_{\overline{B'}})^{-1}\circ \psi_b \circ f_b|_{\overline{B}}.
$$
\end{proof}

Let $\bR(f_b)$ be an infinite bubble ray for $f_b$ corresponding to the bubble ray
$$
\bR_t(\q) := \psi_b(\bR(f_b))
$$
for $\q$, where $t \in \bbR/\bbZ$ is the external angle of the unique external ray for $\q$ that co-lands with $\bR_t(\q)$. When convenient, we will denote $\bR(f_b) = \bR_t(f_b)$.

The main result in \cite{Y} is the following.

\begin{thm}\label{yang lc}
    Let $P$ be a polynomial of any degree with a connected Julia set $J(P)$. If $P$ has a bounded type Siegel disk $D(P)$, then $J(P)$ is locally connected at $\partial D(P)$.  
\end{thm}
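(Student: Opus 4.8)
The plan is to reduce the statement to a modulus estimate controlled by the renormalization theory of critical circle maps of bounded type. First I would normalize so that the Siegel fixed point is $0$ with multiplier $e^{2\pi i\theta}$. Since local connectivity of a Julia set at a point is invariant under quasiconformal conjugacy, I would replace $P$ near $\overline{D(P)}$ by a Blaschke--Siegel model: excise $\overline{D(P)}$ and glue in a Blaschke reflection as in the Douady--Ghys--Herman--Shishikura surgery (the analogue for a cubic family is recorded in \propref{prop.surgery}), obtaining a quasiregular map $\wtl P$, quasiconformally conjugate to $P$, whose restriction to $\partial D(P)$ becomes quasisymmetrically conjugate to the circle restriction of a real-analytic critical circle map $g$ with rotation number $\theta$. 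As $\theta$ is of bounded type, $g$ has bounded combinatorial type, so by Herman's theorem $\partial D(P)$ is a quasicircle containing a critical point, and it suffices to prove that $J(\wtl P)$ is locally connected at every point of $\partial D(P)$.

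Next I would invoke the complex a priori bounds for critical circle maps of bounded type (de Faria--de Melo, Yampolsky): the renormalizations $\mathcal{R}^n g$ extend holomorphically to a fixed complex neighborhood of the circle with uniformly bounded geometry, with the relevant first returns being essentially the iterates $\wtl P^{q_n}$, $q_n$ the $n$-th continued-fraction denominator of $\theta$. These bounds must then be transported into the dynamical plane via the bubble structure: the components of $\wtl P^{-k}(D(P))$ attached along $\partial D(P)$ organize into bubble rays, and using the $n$-th renormalization level one builds around each $z\in\partial D(P)$ a \emph{puzzle disk} $\Delta_n(z)$, bounded by two bubble rays landing at the ``combinatorial endpoints at scale $q_n$'' of $z$ together with an arc of a high equipotential closing it off away from $D(P)$. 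The combinatorics of $g$ arranges that $\wtl P^{q_n}$ carries $\Delta_{n+1}(z)$ into a level-$n$ puzzle disk, and the bounded geometry forces this return map to have uniformly bounded degree in a neighborhood of $\partial D(P)$.

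From bounded geometry together with bounded degree of the return maps, the target estimate is $\mod\big(\Delta_n(z)\setminus\overline{\Delta_{n+1}(z)}\big)\geq\delta>0$, uniformly in $n$ and $z$. Granting this, the Gr\"otzsch inequality (summing the moduli of the disjoint annuli between every other puzzle disk) yields $\diam\Delta_n(z)\to 0$, so $\{\Delta_n(z)\cap J(\wtl P)\}_n$ is a shrinking basis of connected neighborhoods of $z$; pulling back through the quasiconformal conjugacy then shows that $J(P)$ is locally connected at every point of $\partial D(P)$, as asserted.

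The hard part will be the uniform lower bound on the separating moduli when $\deg P>2$, i.e.\ the a priori bounds for the puzzle disks themselves. In the quadratic Siegel setting this reduces essentially to Yampolsky's complex bounds, but for higher degree the inverse branches used to build deep puzzle disks may pass close to the \emph{other} critical points of $P$, distorting the disks and threatening to collapse their moduli. Controlling this requires showing that, at each combinatorial scale $q_n$, the relevant pullback branches avoid the non-Siegel critical points except on boundedly many bubbles, so that a Koebe-type distortion estimate still applies along the bubble rays — and, when the free critical orbit accumulates on $\partial D(P)$, that it does so slowly enough relative to the scales $q_n$. This is the analytic heart of the argument.
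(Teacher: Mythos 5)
Theorem~\ref{yang lc} is not proved in this paper — it is imported from \cite{Y} — but your sketch reproduces exactly the strategy of \cite{Y} as recapitulated in \S\ref{subsec.puzzle-disks}: surgery to a Blaschke model (Proposition~\ref{prop.surgery}), quasicircle boundary via Herman, complex a priori bounds at closest-return times to build puzzle disks with uniformly bounded return degree and definite separating moduli (Definition~\ref{def.puzzle-disks} and Proposition~\ref{prop.puzzle.disk}), Gr\"otzsch, and qc pull-back. The one small inaccuracy is that the puzzle disks are constructed at the critical point on $\partial\bbD$ and then transported to other boundary points via bounded-degree first-hit maps (cf.\ Theorem~\ref{thm.jonguk} and Lemma~\ref{lem.bounded.degree.alongD}) rather than being built ab initio at every $z\in\partial D(P)$; with that adjustment your outline matches the cited proof, and you have correctly identified the analytic crux (controlling pullbacks past the non-Siegel critical points).
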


We record the following immediate corollary of \thmref{yang lc}.

\begin{cor}\label{tree loc conn}
The Julia set $J(f_b)$ is locally connected at every point in $T(f_b)$.
\end{cor}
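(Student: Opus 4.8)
The plan is to derive the statement directly from \thmref{yang lc} by pulling local connectivity back along the dynamics. Observe first that the only points of $T(f_b)$ that lie on the Julia set are boundary points of bubbles: if $z \in T(f_b) \cap J(f_b)$, then $z$ lies in $\overline{B}$ for some bubble $B$, and since $B$ is a Fatou component of $f_b$ we get $z \in \overline{B}\cap J(f_b) = \partial B$; at the remaining points of $T(f_b)$, which lie in Fatou components, there is nothing to check. So it suffices to prove that $J(f_b)$ is locally connected at an arbitrary $z \in \partial B$ with $B$ a bubble of generation $n\geq 0$. When $n = 0$ we have $B = D(f_b)$ and $z \in \partial D(f_b)$; since $D(f_b)$ is a bounded type Siegel disk and $J(f_b)$ is connected, this is exactly the content of \thmref{yang lc}.

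For $n \geq 1$, set $g := f_b^{\,n}$. By definition of the generation, $g$ carries $\overline{B}$ properly onto $\overline{D(f_b)}$; moreover $z \in \partial B \subset J(f_b)$ forces $z' := g(z) \in \partial D(f_b)$ (a boundary point of $B$ cannot map into the Fatou component $D(f_b)$, nor outside $\overline{D(f_b)}$). By the $n=0$ case, $J(f_b)$ is locally connected at $z'$; using this together with the a priori bounds for puzzle disks of \cite{Y}, fix a nested sequence of puzzle disks $\Delta_k$ around $z'$ with $\Delta_k \cap J(f_b)$ connected and $\bigcap_k \Delta_k = \{z'\}$. For $k$ large let $W_k$ be the component of $g^{-1}(\Delta_k)$ containing $z$; since $g$ is a nonconstant polynomial with discrete fibers, $W_k$ shrinks to $z$, and $g : W_k \to \Delta_k$ is a proper branched covering whose only branch value is $z'$ (all other critical values of $g$ are excluded once $\Delta_k$ is small). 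By complete invariance of $J(f_b)$ we have $W_k \cap J(f_b) = (g|_{W_k})^{-1}\!\big(\Delta_k \cap J(f_b)\big)$, and since $z'$ lies in the connected set $\Delta_k \cap J(f_b)$, the preimage of a connected set \emph{containing the branch value} under the local model $w \mapsto w^{\deg_z g}$ is again connected; hence $W_k \cap J(f_b)$ is a connected neighbourhood of $z$ in $J(f_b)$. Letting $k \to \infty$ gives local connectivity of $J(f_b)$ at $z$.

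Essentially all of the content is carried by \thmref{yang lc}, so I do not anticipate a genuine obstacle. The only step that is not purely formal is the last one: when $z$ is a joint, some $f_b^{\,i}(z)$ coincides with the critical point $1/b \in \partial D(f_b)$, so $g$ is genuinely ramified at $z$, and one must know that pulling a connected puzzle piece back through a ramification point keeps it connected. This is the elementary observation that, in the model $w \mapsto w^d$, the $d$ preimage branches of a connected set through the image of $0$ all emanate from $0$; it is standard in the Yoccoz puzzle literature, and phrasing the pullback in terms of puzzle disks (rather than arbitrary connected neighbourhoods, via \cite{Y}) makes it essentially formal.
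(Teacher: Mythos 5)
Your argument is correct and is exactly the pullback that makes this corollary ``immediate'' from \thmref{yang lc}: since $T(f_b)\cap J(f_b)$ consists of bubble boundaries, one pushes a point $z\in\partial B$ forward by $f_b^{\gen(B)}$ onto $\partial D(f_b)$, applies \thmref{yang lc} there, and pulls local connectivity back. The only substantive ingredient, which you rightly single out, is the standard fact that for a proper degree-$d$ map between Jordan disks whose unique branch value lies in a compact connected set $C$, the preimage $g^{-1}(C)$ is connected; the rest is routine.
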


\subsection{Combinatorial Rigidity: Special case}\label{sec:combina.Zak}

\begin{prop}[{\cite[Cor. 5.2]{Za2}}]\label{prop.quasiconformal.rigid}
The boundary of the cubic Siegel connectedness locus $\mathcal{C}(\theta)$ is quasiconformally rigid. More precisely, if $f_{b_1}$ and $f_{b_2}$ are quasiconformally conjugate with $b_1,b_2\in\partial\mathcal{C}(\theta)$, then either $b_1 = \pm b_2$ or $b_1 = \pm1/b_2$.
\end{prop}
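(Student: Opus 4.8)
Although this is a result of Zakeri \cite{Za2}, let me indicate the strategy I would follow. Let $h\colon\mathbb{C}\to\mathbb{C}$ be a quasiconformal homeomorphism with $h\circ f_{b_1}=f_{b_2}\circ h$; replacing $b_i$ by $1/b_i$, which does not change the polynomial $f_{b_i}$, we may assume $b_1,b_2\in\overline{\mathcal{U}_\infty}$ so that \corref{cor.quasi Siegel boundary} and \propref{holo motion} apply. The plan is to upgrade $h$ to an \emph{affine} conjugacy: once we know $h(z)=\alpha z+\beta$, writing out the polynomial identity $h\circ f_{b_1}=f_{b_2}\circ h$ with the explicit form \eqref{eq.family.fb} and matching the coefficients of $z^{3}$, $z^{2}$ and $z$ forces $\alpha^{2}=1$ and $b_{1}+b_{1}^{-1}=\pm(b_{2}+b_{2}^{-1})$; factoring the latter then gives $b_{1}\in\{\,b_{2},\,1/b_{2},\,-b_{2},\,-1/b_{2}\,\}$, which is the assertion. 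So everything reduces to showing that $h$ is conformal, i.e.\ (by the measurable Riemann mapping theorem) that its $f_{b_1}$-invariant Beltrami coefficient $\mu_{h}=\bar\partial h/\partial h$ vanishes a.e., which I would establish region by region.

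On the basin of infinity, $\mu_h$ transported to B\"ottcher coordinates (global, since $b_1,b_2\in\mathcal{C}(\theta)$ makes the Julia sets connected) is a $z\mapsto z^{3}$-invariant Beltrami coefficient on $\mathbb{C}\setminus\overline{\mathbb{D}}$; extending it by zero across $\overline{\mathbb{D}}$ and integrating produces a holomorphic self-conjugacy of $z\mapsto z^{3}$ that is linear near $\infty$, and the expanding dynamics then forces the coefficient, hence $\mu_h$, to vanish on $B_{\infty}(f_{b_{1}})$. On the Siegel disk and its preimages, $\mu_h$ corresponds via the linearizations (extended to the quasicircle boundary by \corref{cor.quasi Siegel boundary}) to an $R_{\theta}$-invariant Beltrami coefficient on $\mathbb{D}$; this need not vanish, but since a Siegel disk is conformally rigid such a deformation does not change the conformal conjugacy class, so, using the holomorphic motion of $\overline{D(f_{b_1})}$ of \propref{holo motion}, we may replace $h$ by a quasiconformal conjugacy that is conformal on $\overline{D(f_{b_1})}$ and on every bubble. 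Parabolic basins, if any, contribute $\mu_h=0$ because their quotient by the dynamics is a rigid cylinder; and $f_{b_1}$ has no attracting cycle in $\mathbb{C}$, since one would furnish a nontrivial holomorphic family of quasiconformal deformations of $f_{b_1}$ inside $\mathcal{P}^{cm}(\theta)$ — parametrized by the modulus of the quotient torus of the basin — hence a disk of parameters $b$ around $b_1$ with $J(f_b)$ connected, contradicting $b_1\in\partial\mathcal{C}(\theta)$. Thus $\mu_h$ is supported on $J(f_{b_1})$.

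The last and, I expect, hardest step — and the only one where $b_1\in\partial\mathcal{C}(\theta)$ is genuinely used — is to show that $f_{b_1}$ carries no nonzero measurable invariant line field on $J(f_{b_1})$, which then gives $\mu_h\equiv 0$ and finishes the proof. Again the mechanism is a deformation argument: a nonzero invariant line field on a positive-measure subset of $J(f_{b_1})$, integrated over $t\mu_h$ for small complex $t$, yields a non-constant holomorphic family $t\mapsto b(t)$ of parameters quasiconformally conjugate to $f_{b_1}$ (quasiconformal conjugacy preserving both the rotation number at the Siegel fixed point and the connectivity of the Julia set, so the deformed maps stay in $\mathcal{P}^{cm}(\theta)\cap\mathcal{C}(\theta)$), whose image, by the open mapping theorem, is a neighborhood of $b_1$ in $\mathcal{C}(\theta)$ — again contradicting $b_1\in\partial\mathcal{C}(\theta)$. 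The genuine difficulty is in making this a contradiction, i.e.\ in knowing that for bounded type $\theta$ no such line field on $J(f_{b_1})$ can exist; this requires controlling the geometry of $J(f_{b_1})$ where it accumulates on $\partial D(f_{b_1})$, and the relevant analytic inputs are the a priori bounds for puzzle disks of \cite{Y} together with the bubble-tree model of \thmref{cubic tree model} — essentially the machinery the rest of this paper develops. Once $\mu_h\equiv 0$ is in hand, $h$ is conformal, hence affine, and the coefficient comparison in the first paragraph concludes the argument.
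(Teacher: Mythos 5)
The paper does not reprove this proposition; it is quoted verbatim from Zakeri \cite[Cor.\ 5.2]{Za2}, so there is no in-paper argument to compare against. Your sketch is, in outline, the standard Sullivan/McMullen-style deformation argument that Zakeri's proof rests on: normalize $h$ to be conformal off the Julia set (B\"ottcher coordinates on $B_\infty$, linearization on the Siegel disk and its preimages, rigidity of the parabolic quotient cylinder), then run a Beltrami-disk deformation on the remaining $f_{b_1}$-invariant coefficient supported on $J(f_{b_1})$ and invoke the open mapping theorem against $b_1\in\partial\mathcal{C}(\theta)$, and finally read off $b_1\in\{\pm b_2,\pm1/b_2\}$ from the coefficient comparison for $h(z)=\alpha z$ with $\alpha^2=1$. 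That much is correct.

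Your final paragraph, however, is internally circular and historically backwards. The deformation/open-mapping argument you describe \emph{is} the complete proof that no $f_{b_1}$-invariant line field exists on $J(f_{b_1})$ when $b_1\in\partial\mathcal{C}(\theta)$; there is no leftover ``genuine difficulty'' of independently ``knowing that no such line field can exist,'' and in particular nothing here calls for the a priori bounds of \cite{Y} or the bubble-tree model of \thmref{cubic tree model}. The only point you leave unjustified is why $t\mapsto b(t)$ is non-constant when $\mu_h\not\equiv 0$ on $J(f_{b_1})$, and this is soft: if $b(t)\equiv b_1$ then each $h_t$ commutes with $f_{b_1}$, is conformal on $B_\infty(f_{b_1})$, and in B\"ottcher coordinates commutes with $w\mapsto w^3$, hence $h_t|_{B_\infty}=\pm\mathrm{id}$; by continuity in $t$ with $h_0=\mathrm{id}$ it is the identity on $\overline{B_\infty}\supset J(f_{b_1})$, hence (being conformal with identity boundary values) on every bounded Fatou component, hence on $\mathbb{C}$, contradicting $\mu_{h_t}=t\mu_h\neq 0$. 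No control on the geometry of $J$ near $\partial D(f_{b_1})$ is used. Indeed Zakeri proved this in 1999, two decades before \cite{Y}; the machinery you invoke is what \emph{this} paper needs for combinatorial rigidity (Theorem \ref{thm.rigidity}), not for quasiconformal rigidity at boundary parameters. You should simply delete the appeal to \cite{Y} and \thmref{cubic tree model} and insert the non-constancy argument above. One minor further remark: you only use $b_1\in\partial\mathcal{C}(\theta)$, which is fine — $b_2\in\mathcal{C}(\theta)$ comes for free since qc conjugacy preserves connectivity of the Julia set.
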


\begin{notn}
    For $i=1,2$, let $W_i,X_i$ be path connected topological spaces, $p_i:W_i\to X_i$ be covering maps. Let $f:X_1\to X_2$ be a continuous map. A continuous map $\Tilde{f}:W_1\to W_2$ is called a {\it$(p_1,p_2)$-lifting of $f$} if $p_2\circ\Tilde{f} = f\circ p_1$. 
\end{notn}

Let us recall from basic topological covering theory that for $z_1\in W_1$ and $z_2\in W_2$ with $f(p_1(z_1)) = p_2(z_2)$, $f$ admits a $(p_1,p_2)$-lifting $\Tilde{f}$ such that $\Tilde{f}(z_1) = z_2$, if and only if 
\begin{equation}\label{eq.lifting-condition}
(f\circ{p_1})_*\pi_1(W_1,z_1)\subset {p_2}_*\pi_1(W_2,z_2).
\end{equation}
Here $\pi_1$ denotes the fundamental group.

Let $b_1,b_2\in\overline{\mathcal{U}_\infty}\cap\partial\mathcal{C}(\theta)$. Suppose that ${co_{b_i}} \in T(f_{b_i})$ for $i \in \{1,2\}$. We say that $f_{b_1}$ and $f_{b_2}$ are {\it combinatorially equivalent}, if $\psi_{b_1}(co_{b_1}) = \psi_{b_2}(co_{b_2})$, where $\psi_{b_i}$ is given in \thmref{cubic tree model}. The following result states that  combinatorial rigidity holds in this special case:

\begin{thm}\label{thm.rigid.Zak}
Let $b_1,b_2\in\overline{\mathcal{U}_\infty}\cap\partial\mathcal{C}(\theta)$ satisfy the hypothesis above, then $f_{b_1}$ and $f_{b_2}$ are combinatorially equivalent if and only if $b_1=\pm b_2$.
\end{thm}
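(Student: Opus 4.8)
The "if" direction is immediate: if $b_1 = \pm b_2$, then $f_{b_1}$ and $f_{b_2}$ are affinely conjugate (by $z\mapsto -z$ when $b_2=-b_1$, trivially when $b_2=b_1$), and the conjugacy carries $T(f_{b_1})$ to $T(f_{b_2})$ and intertwines the maps $\psi_{b_i}$ with $\q$ by uniqueness in \thmref{cubic tree model}, hence preserves $\psi_{b_i}(co_{b_i})$. So the content is the "only if" direction: assuming $\psi_{b_1}(co_{b_1}) = \psi_{b_2}(co_{b_2}) =: w \in T(\q)$, produce a quasiconformal conjugacy between $f_{b_1}$ and $f_{b_2}$ and then invoke \propref{prop.quasiconformal.rigid} (noting that since $b_i\in\overline{\cU_\infty}$, the alternative $b_1=\pm 1/b_2$ forces $b_1=\pm b_2$ anyway, as $1/b\in\overline{\cU_\infty}$ only on $\Zak$ where $b=1/b$ up to sign).

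The plan is to build the conjugacy by pulling back a Beltrami coefficient along the dynamics, using the tree-model conjugacy $\Psi := \psi_{b_2}^{-1}\circ\psi_{b_1} : T(f_{b_1}) \to T(f_{b_2})$ as the "boundary data" on the tree, and the Blaschke model of \propref{prop.surgery} to handle the Siegel disk and the escaping dynamics. Concretely: first I would use \propref{prop.surgery} to write $f_{b_i} = \varphi_i^{-1}\circ P_i\circ\varphi_i$ with $P_i$ agreeing with a Blaschke product outside $\overline\DD$ and being a rotation model inside; the Böttcher coordinate at $\infty$ and the holomorphic motion of $\overline{D(f_{b_i})}$ (\propref{holo motion}) give a canonical quasiconformal identification of a neighborhood of $B_\infty(f_{b_1})\cup\overline{D(f_{b_1})}$ with the corresponding set for $f_{b_2}$, conjugating the maps there. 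The key point is that the combinatorial hypothesis $\psi_{b_1}(co_{b_1})=\psi_{b_2}(co_{b_2})$ pins down where the free critical value $v_{b_i}$ and cocritical point $co_{b_i}$ sit in the tree, so that $\Psi$ extends this identification consistently across the whole tree of bubbles $T(f_{b_1})$: by \thmref{cubic tree model}, $\Psi$ is conjugating $f_{b_1}|_{T(f_{b_1})}$ to $f_{b_2}|_{T(f_{b_2})}$ and is conformal on the interior of the bubbles, hence the only possible obstruction to gluing is at the bubble containing (or the joint equal to) $b_i$ — and that is exactly the data recorded by $\psi_{b_i}(co_{b_i})$.

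Having the conjugacy on $T(f_{b_1})\cup\overline{D(f_{b_1})}\cup B_\infty(f_{b_1})$, I would extend it quasiconformally to all of $\CC$ (the complement consists of the bounded Fatou components and the part of $J(f_{b_1})$ off the tree; using local connectivity of $J(f_{b_1})$ at $T(f_{b_1})$ from \corref{tree loc conn}, and the fact that off-tree Fatou components are iterated preimages of pieces already handled, one extends component by component), then run the standard Sullivan–Thurston pullback argument: pull the Beltrami coefficient of the initial quasiconformal map back under $f_{b_1}$, average, and check it is $f_{b_1}$-invariant with bounded dilatation; solving the Beltrami equation and correcting by Möbius transformations yields a quasiconformal conjugacy $\Phi$ with $\Phi\circ f_{b_1} = f_{b_2}\circ\Phi$. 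Then \propref{prop.quasiconformal.rigid} gives $b_1=\pm b_2$ (or $\pm 1/b_2$, hence $\pm b_2$ as noted). The main obstacle I anticipate is the verification that $\Psi$ genuinely glues to the Blaschke/Böttcher data into a single quasiconformal map across $J(f_{b_1})$ — i.e., controlling the geometry of the Julia set near $\partial D(f_{b_1})$ and near the free critical bubble — but the a priori bounds from \thmref{yang lc} / \corref{tree loc conn} and the quasi-circle property of $\partial D(f_{b_i})$ (\corref{cor.quasi Siegel boundary}) are exactly what is needed to push this through.
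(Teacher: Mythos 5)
Your high-level plan — build a quasiconformal conjugacy and invoke \propref{prop.quasiconformal.rigid} — is correct, and your observation that the combinatorial hypothesis pins down the free critical data in the tree is a genuine insight. But your route has a gap that the paper's proof deliberately avoids. The paper runs a Lyubich-style pull-back (cf.~\cite[\S 38.5]{Ly2}) directly on the Blaschke/quasi-regular models $P_i$ of \propref{prop.surgery}: starting from a qc extension of the boundary match $h_2^{-1}\circ h_1|_{\partial\DD}$ sending $v_{cp_1}\mapsto v_{cp_2}$ and isotopic rel $\{v_{cp_1}\}\cup\overline{\DD}$ to the combinatorial identification, it lifts finitely many times up to the depth $N$ at which the critical value first lands in $\overline\DD$ (this is precisely where the hypothesis $co_{b_i}\in T(f_{b_i})$ is used), transfers back to $f_{b_i}$ via $\varphi_i$, modifies once near $\infty$ to match B\"ottcher coordinates, and then lifts indefinitely. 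Each lift is by a holomorphic map, hence dilatation-neutral; the sequence $\tilde H_n$ has uniformly bounded dilatation, a subsequence converges by compactness, and the limit conjugates on $\CC\setminus J(f_{b_1})$ hence on $\CC$ by continuity. No global candidate conjugacy is constructed upfront, no control on $J(f_{b_1})$ off the tree is required, and \thmref{yang lc}/\corref{tree loc conn} are never invoked.

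Your proposal, by contrast, wants to first match the tree/linearization/B\"ottcher data and then ``extend it quasiconformally'' across the off-tree part of $J(f_{b_1})$ before invariantizing a Beltrami coefficient. This is exactly the gap you flag as the ``main obstacle,'' but the fix you propose does not suffice: \corref{tree loc conn} gives local connectivity at the tree only, and quasiconformal control of off-tree impressions is precisely the hard content that the paper only obtains later with puzzle disks and a priori bounds — it is not available here and, as the paper's proof shows, not needed. The iterative lifting avoids the issue entirely because one only ever modifies the map on pieces that the next lift swallows. Also, ``pull the Beltrami coefficient back, average, check it is $f_{b_1}$-invariant'' is not a well-defined procedure in this setting; what is wanted is Thurston-style lifting of maps, not Sullivan-style Beltrami averaging. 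A minor side remark: your claim that $b_1=\pm 1/b_2$ with $b_1,b_2\in\overline{\cU_\infty}$ forces $b_1=\pm b_2$ because on $\Zak$ one has ``$b=1/b$ up to sign'' is not correct — on $\Zak$ the condition is $b,1/b\in\partial D(f_b)$, which does not imply $b=\pm 1/b$.
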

\begin{proof}
    We will use the "pull-back argument" presented in \cite[38.5]{Ly2}. Let ${Bl}_i,P_i,h_i,\varphi_i$ be as in 
 Proposition \ref{prop.surgery} associated to $f_{b_i}$. Notice that $\varphi_i\circ h_i^{-1} = \phi^{-1}_{b_i}$, $i=1,2$. Denote by $cp_i,v_{cp_i},{cop}_i$ respectively $\varphi^{-1}_i(b_i),\varphi^{-1}_i(v_{b_i}),\varphi^{-1}_i({c}_{b_i})$.
 Let $N\geq 0$ be the smallest integer such that $v_{cp_1}\in P_1^{-N}(\overline{\D})$. Since $\psi_{b_1}(co_{b_1}) = \psi_{b_2}(co_{b_2})$, by Theorem \ref{cubic tree model}, there exists a homeomorphism $\psi:\mathbb{C}\to\mathbb{C}$ such that $\psi\circ P_1 = P_2\circ \psi$ on $P^{-N-1}_1(\overline{\D})$. 
 
 On the other hand, since $h^{-1}_2\circ h_1|_{\partial\D}$ is quasi-symmetric, by Ahlfors-Beurling extension theorem, $h^{-1}_2\circ h_1$ admits a quasiconformal extension $H_0:{\C}\to{\C}$ such that $H_0(v_{cp_1}) = v_{cp_2}$. We modify quasiconformally $H_0$ on $\mathbb{C}\setminus\overline{\mathbb{D}}$ so that it is isotopic to $\psi$ rel $\{v_{cp_1}\}\cup\overline{\D}$. Suppose that $H_{n-1}$ is constructed such that $H_{n-1}$ is isotopic to $\psi$ rel $\{v_{cp_1}\}\cup P^{-(n-1)}_1({\overline{\D}})$. We construct inductively $H_n$ for $n\leq N$. One can check the lifting condition (\ref{eq.lifting-condition}):
    \[(H_{n-1}\circ{P_1})_*\pi_1(\C\setminus P_1^{-1}(\{P_1(1),v_{cp_1}\}),0)\subset {P_2}_*\pi_1(\C\setminus P_2^{-1}(\{P_2(1),v_{cp_2}\}),0).\]
To see this, firstly by $\psi\circ P_1 = P_2\circ \psi$ on $P^{-N-1}_1(\overline{\D})$, we have 
  $$(\psi\circ P_1)_*\pi_1(\C\setminus P_1^{-1}(\{P_1(1),v_{cp_1}\}),0) = (P_2)_*\pi_1(\C\setminus P_2^{-1}(\{P_2(1),v_{cp_2}\}),0).$$ 
  Since by construction, $H_{n-1}$ is isotopic to $\psi$ rel $\{v_{cp_1}\}\cup P^{-(n-1)}_1({\overline{\D}})$. Thus we can replace in the above equality $\psi$ by $H_{n-1}$.
    
   Thus $H_{n-1}$ admits a $(P_{1},P_{2})$-lifting $H_n$ such that $H_n(0) = 0$, $H_n \circ P_{1}= P_{2}\circ H_n$ on $P_1^{-n}(\overline{\D})$. Modify quasiconformally $H_n$ on $\mathbb{C}\setminus P_1^{-n}(\overline{\D})$ such that $H_n(v_{cp_1})=v_{cp_2}$ and $H_n$ is isotopic to $\psi$ rel $\{v_{cp_1}\}\cup P_1^{-n}(\overline{\D})$. We finish the construction of $H_n$.
    
    Let $\Tilde{H}_N := \varphi_2\circ H_N\circ\varphi^{-1}_1$. By construction, $\Tilde{H}_N \circ f_{b_1}= f_{b_2}\circ \Tilde{H}_N$ on $f_{b_1}^{-N}(\overline{D(f_{b_1})})$.  By hypothesis $\psi_{b_1}(co_{b_1}) = \psi_{b_2}(co_{b_2})$, $[v_{{b}_1}]=[v_{{b}_2}]$, thus we necessarily have $\Tilde{H}_N(v_{b_1})=v_{b_2}$. Let $\phi^\infty_{b_i}:B_\infty(f_{b_i})\to\C\setminus\overline{\D}$ be the Böttcher coordinate of $f_{b_i}$. Fix some $r>1$. Modify quasiconformally $\Tilde{H}_N$ on $(\phi^\infty_{b_1})^{-1}(\{z;\,|z|> r\})$ such that $\Tilde{H}_N = (\phi^\infty_{b_2})^{-1}\circ\phi^\infty_{b_1}$ on $(\phi^\infty_{b_1})^{-1}(\{z;\,|z|> r^3\})$. Again $\Tilde{H}_N$ admits a $(f_{b_1},f_{b_2})$-lifting $\Tilde{H}_{N+1}$ such that $\Tilde{H}_{N+1}(0) = 0$ (the lifting condition is satisfied since $\psi_{b_1}(co_{b_1}) = \psi_{b_2}(co_{b_2})$). Notice that since $f_{b_i}$ are conformal, $\Tilde{H}_{N+1}$ has the same dilatation as $\Tilde{H}_N$. Moreover $\Tilde{H}_{N+1}(v_{b_1}) = v_{b_2}$. Thus we can lift it again to $\Tilde{H}_{N+2}$. Repeat this procedure we get a sequence of quasiconformal map $\{\Tilde{H}_n\}_{n\geq N}$ with bounded dilatation. By compactness of quasiconformal maps with bounded dilatation, there is a subsequence converging to some quasiconformal map $\Tilde{H}$. By construction, $f_{b_1}\circ\Tilde{H}=\Tilde{H}\circ f_{b_2}$ on $\C\setminus J(f_{b_1})$, hence on $\C$ by continuity. By Proposition \ref{prop.quasiconformal.rigid}, $b_1=\pm b_2$.
\end{proof}

\section{Dynamical External Rays}\label{sec:ext rays}

Consider a cubic Siegel polynomial $f_b \in \cP^{cm}(\theta)$ for some $b \in \overline{\cU_\infty}$. On a suitable neighborhood $V_b \subset \hat\bbC$ of $\infty$, the dynamics of $f_b$ uniformizes to that of $w \mapsto w^3$ via the B\"ottcher map $\phi^\infty_b : V_b \to \bbC \setminus \overline{\bbD}$ (normalized by $(\phi^\infty_b)'(\infty) > 0$).

For $t > 0$, denote $\bbD_t := \{|z| < t\}.$ Recall that if $b \not\in B_\infty(f_b)$, then $\phi^\infty_b$ extends to a conformal map between $B_\infty(f_b)$ and $\mathbb{C} \setminus \overline{\mathbb{D}}$. Otherwise, there exists a maximal radius $\rho_b >1$ such that $(\phi^\infty_b)^{-1}$ extends to a conformal map on $\mathbb{C} \setminus \overline{\bbD_{\rho_b}}$. The set $(\phi^\infty_b)^{-1}(\partial \bbD_{\rho_b})$ is a figure-eight curve which contains $b$ at the self-intersection point. While $\phi^\infty_b$ is not well-defined at $b$, it does extend conformally to a neighborhood of the cocritical point $co_b$.

Let $R^\infty_t$ and $E_l^\infty$ be the external ray with external angle $t \in \bbR/\bbZ$ and the equipotential at level $l > 1$ respectively. Then we have
$$
f_b(R^\infty_t) = R^\infty_{3t}
\matsp{and}
f_b(E^\infty_l) = E^\infty_{l^3}.
$$
When the fact that $R^\infty_t$ is an external ray in the dynamical plane for $f_b$ needs to be emphasized, we will denote $R^\infty_t = R^\infty_t(f_b)$. For example, there are exactly two fixed external rays $R^\infty_0(f_b)$ and $R^\infty_{1/2}(f_b)$ for $f_b$. A point $x \in J(f_b)$ is said to be {\it $n$-accessible} if exactly $n$ external rays co-land at $x$.

We record the following consequence of \corref{tree loc conn}.

\begin{cor}\label{access to tree}
Every point $x \in T(f_b)$ is uni-accessible, unless $x$ is a root point, in which case it is bi-accessible.
\end{cor}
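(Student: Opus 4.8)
\textbf{Proof plan for Corollary~\ref{access to tree}.}
The statement is that every $x \in T(f_b)$ is uni-accessible unless $x$ is a root point, in which case it is bi-accessible. The plan is to combine local connectivity of $J(f_b)$ at points of $T(f_b)$ (Corollary~\ref{tree loc conn}) with the well-known structure of the tree of bubbles, transported from the quadratic case via the conjugacy $\psi_b$ of Theorem~\ref{cubic tree model}. First I would recall that, by local connectivity at $x \in T(f_b)$, the number of external rays co-landing at $x$ equals the number of complementary components of $\{x\}$ in a small connected neighborhood basis of $x$ in $J(f_b)$ minus $\{x\}$ — equivalently, the number of ``prime-end access directions'' to $x$ from $B_\infty(f_b)$. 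So the whole problem reduces to counting the local separation of $J(f_b)$ at $x$.

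Next I would distinguish the two cases according to where $x$ sits inside the tree of bubbles. If $x$ is not a root point, then $x$ lies either in the interior of a single bubble closure $\overline B$ (i.e. on $\partial B$ but not at $\rt(B)$, if $B$ has generation $\ge 1$), or $x = \rt(B)$ but $x$ is not in the boundary of the parent bubble $B_{i-1}$ — but by definition of the tree and of roots this second situation does not occur, so $x$ lies on exactly one bubble boundary $\partial B$ at a non-root point. Since $\partial B$ is a quasi-circle (Corollary~\ref{cor.quasi Siegel boundary} pushed down by $f_b^{\gen(B)}$, using that $f_b^{\gen(B)}: B \to D(f_b)$ is a conformal iso extending to the closure), locally near $x$ the Julia set consists of the single arc $\partial B$ together with the bubbles of one-higher generation rooted along it; but these sub-bubbles are attached at isolated joints accumulating only in Cantor-like fashion, so a sufficiently small connected neighborhood of $x$ in $J(f_b)$ has $\{x\}$ separating it into exactly two pieces — hence $x$ is bi-accessible from within $K(f_b)$, but only \emph{one} of these two local sides faces $B_\infty(f_b)$ (the other side is shielded by $\overline B$ itself, which is in the filled Julia set). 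This gives uni-accessibility. If instead $x = \rt(B)$ is a root point, then $x$ lies on exactly \emph{two} bubble boundaries, $\partial B_{i-1}$ and $\partial B_i$, meeting only at $x$; the two bubbles lie on opposite local sides, so each contributes one access direction to $\infty$, giving bi-accessibility.

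To make the counting rigorous rather than heuristic, I would transport the statement through $\psi_b$. The map $\psi_b: T(f_b) \to T(\q)$ is a homeomorphism onto its image (continuous and injective, hence a homeomorphism on the compact tree), conjugating $f_b|_{T(f_b)}$ to $\q|_{T(\q)}$, and it sends roots to roots and bubble boundaries to bubble boundaries. For the quadratic Siegel polynomial the access structure to $T(\q)$ is classical and follows from Theorem~\ref{quadratic lc} and Corollary~\ref{cor.landing.quadratic}: each point of $T(\q)$ that is not a root is uni-accessible and each root is bi-accessible (this is exactly the statement that the external rays to the tree of bubbles of $\q$ realize a lamination whose complementary gaps are the bubbles). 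Since accessibility is a purely topological notion of the pair $(\overline{\bbC} \setminus K, T)$ near a point, and since local connectivity of both Julia sets at the relevant points (Corollary~\ref{tree loc conn} for $f_b$, Theorem~\ref{quadratic lc} for $\q$) lets one read accessibility off the local topology of the Julia set, the count transfers: the number of local complementary components facing the basin of $\infty$ at $x \in T(f_b)$ equals that at $\psi_b(x) \in T(\q)$.

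The main obstacle I anticipate is the transfer step in the last paragraph: $\psi_b$ is only defined \emph{on} $T(f_b)$, not on a neighborhood of it in $\bbC$, so one cannot directly pull back external rays or prime ends through $\psi_b$. The careful argument must instead use local connectivity of $J(f_b)$ at $x$ to produce, for each $\eps>0$, a connected open set $U \ni x$ with $\operatorname{diam} U < \eps$ and $\partial U \cap J(f_b)$ finite (a ``puzzle-piece''-type neighborhood), and then count the components of $U \cap B_\infty(f_b)$ whose closure contains $x$; one must check this count stabilizes as $\eps \to 0$ and equals $1$ (resp. $2$) by examining how bubbles of high generation cluster near $x$ — i.e., that they accumulate at $x$ only from the ``tree side'' and not from the ``basin side''. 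This clustering fact is where one genuinely uses that $b \in \overline{\cU_\infty}$ together with Theorem~\ref{cubic tree model} (so that, away from the exceptional cases i)--ii) where $b$ or $v_b$ interferes, the local bubble structure at $x$ is a faithful copy of that in the quadratic model); handling, or ruling out, the exceptional configuration where the free critical point or critical value sits at or adjacent to $x$ is the delicate point, and there one falls back on the explicit description in Theorem~\ref{cubic tree model} of where $b$, $v_b$, $co_b$ can lie relative to $T(f_b)$.
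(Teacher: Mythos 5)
The paper states this corollary with no proof at all --- the phrase ``we record the following consequence of \corref{tree loc conn}'' signals that the authors regard it as a standard deduction from local connectivity of $J(f_b)$ at $T(f_b)$, in the spirit of Petersen--Yampolsky. So there is no proof in the paper to compare against word-for-word; what I can say is how your proposal matches what the paper must intend.

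Your first two paragraphs are the right approach and are essentially what is meant: local connectivity at $x$ makes external rays land, and the number of rays landing at $x$ is governed by how many accesses $B_\infty(f_b)$ has to $x$. But the bookkeeping step as you phrased it (``$\{x\}$ separates a small connected neighborhood of $x$ in $J(f_b)$ into exactly two pieces, only one of which faces $B_\infty$'') conflates two different counts: how many pieces $J(f_b)$ splits into near $x$ is not the same as how many components of $B_\infty$ abut $x$. The clean formulation, once $K(f_b)$ is known to be locally connected at $x$, is to count connected components of $K(f_b)\setminus\{x\}$: for $x$ on $\partial B$ and not a root, $\partial B\setminus\{x\}$ is connected and everything else hangs off $\partial B$ through joints distinct from $x$, so $K(f_b)\setminus\{x\}$ is connected, giving one ray; for $x=\rt(B)$, removing $x$ cuts exactly the sublimb through $B$ from the rest, giving two components and hence two rays (cf.\ \corref{cor.landing.two.external}). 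The ``Cantor-like accumulation'' language in your second paragraph is exactly the part that is not rigorous and that the component count replaces.

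Your third paragraph --- the transfer through $\psi_b$ --- is the genuinely problematic part, and you correctly diagnose why: $\psi_b$ is defined only on $T(f_b)$ and carries no information about prime-end structure in $\bbC\setminus T(f_b)$, so accessibility counts cannot be pulled back through it. That detour is unnecessary once the direct component count above is made; you should simply drop it. The concern you raise at the end, about exceptional configurations where $b$ or $v_b$ sits at or near $x$, is a real one and is precisely where the definition of ``root'' in \S\ref{sec:dym bubbleray} matters: a joint whose forward orbit meets $b$ is deliberately \emph{not} called a root, and if $b$ itself is a joint then the local degree $2$ of $f_b$ at $b$ changes the local bubble configuration, so one must either argue separately there or observe that such parameters are handled elsewhere (e.g.\ via Assumption~\ref{assum.well-defined}). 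You flag this but do not resolve it; making your proposal into an actual proof would require settling that case, not just appealing to the cases of \thmref{cubic tree model}.

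Summary: the plan contains the correct core argument (local connectivity plus a tree-separation count), but the separation count needs to be stated as a count of components of $K(f_b)\setminus\{x\}$ rather than of $J(f_b)$ near $x$; the $\psi_b$-transfer argument should be removed; and the exceptional configuration where $b$ is a joint remains an open gap in the write-up that you flag but do not close.
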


Suppose that there are two distinct external rays $R^\infty_s$ and $R^\infty_t$ that co-land at a point $x \in J(f_b)$. The connected component of $\bbC \setminus (R^\infty_s \cup R^\infty_t \cup \{x\})$ not containing $D(f_b)$ is called a {\it dynamical wake}, and is denoted $W_{s,t}(f_b)$. The point $x$ is referred to as the {\it base point of $W_{s,t}(f_b)$}.

Let $R^{\bfB}\sim \{B_i\}_{i=0}^\infty$ be an infinite bubble ray for $f_b$. For $i \geq 1$, let $W_{s_i, t_i}(f_b)$ be the dynamical wake whose base point is given by $x_i := \rt(B_i)$. Note that $\{W_{s_i, t_i}(f_b)\}_{i=1}^\infty$ is nested. The set
$$
\Imp_W(R^{\bfB}) := \bigcap_{i=1}^\infty W_{s_i, t_i}(f_b) \cap K(f)
$$
is called the {\it wake impression of $R^{\bfB}$}. As $i \to \infty$, the external angles $s_i$ and $t_i$ converge to $s_*$ and $t_*$ respectively. We refer to
$$
\Arc_W(R^{\bfB}) := [s_*, t_*]\subset \bbR/\bbZ
$$
as the {\it wake arc of $R^\bfB$}. We say that $\Arc_W(R^{\bfB})$ is {\it trivial} if it is a singleton.

\begin{prop}\label{prop.wake-impression}
Suppose $co_b \not\in T(f_b)$. Then we have
$$
K(f_b) = T(f_b) \sqcup \{\emph{wake impressions of infinite bubble rays}\}.
$$
\end{prop}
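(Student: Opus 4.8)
The plan is to prove the decomposition $K(f_b) = T(f_b) \sqcup \{\text{wake impressions of infinite bubble rays}\}$ under the hypothesis $co_b \notin T(f_b)$ by transporting the analogous structure from the quadratic picture via the conjugacy $\psi_b$ of \thmref{cubic tree model}, and then controlling the parts of $K(f_b)$ that lie outside $T(f_b)$ using external rays. First I would observe that since $co_b \notin T(f_b)$, we are in the situation where $\psi_b$ is not surjective (indeed, if $\psi_b$ were surjective we would have $T(f_b) = K(f_b)$, and moreover $co_b \in T(f_b)$ always holds in the non-surjective case by the last sentence of \thmref{cubic tree model}; so the hypothesis $co_b \notin T(f_b)$ forces $\psi_b$ to be surjective — wait, I must double-check this). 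Actually the cleaner reading is: \thmref{cubic tree model} says that if $\psi_b$ is \emph{not} surjective then $v_b, co_b \in T(f_b)$; contrapositively, $co_b \notin T(f_b)$ implies $\psi_b$ is surjective, i.e. $T(f_b) \to T(\q)$ is onto. Combined with \corref{landing.quadratic}, which writes $K(\q)$ as $T(\q)$ together with the landing points of infinite bubble rays, this gives a firm combinatorial skeleton to imitate.

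The main body of the argument is then to show every $z \in K(f_b) \setminus T(f_b)$ lies in exactly one wake impression $\Imp_W(R^{\bfB})$ of an infinite bubble ray. For existence: given such a $z$, I would use \corref{access to tree} together with local connectivity of $J(f_b)$ at $T(f_b)$ (\corref{tree loc conn}) to see that the external rays landing at root points cut the plane into the dynamical wakes $W_{s_i,t_i}(f_b)$, and that the wakes attached along a fixed infinite bubble ray $R^{\bfB} \sim \{B_i\}$ are nested with base points $\rt(B_i)$. The claim is that the complement $K(f_b) \setminus T(f_b)$ is exhausted by the open wakes $W_{s_1,t_1}(f_b)$ attached at \emph{first-generation} root points that are not in $T(f_b)$'s "trunk," and that iterating this down the bubble ray the point $z$ gets trapped: $z \in \bigcap_i W_{s_i,t_i}(f_b)$ for a unique choice of bubble ray, since at each generation $z$ must lie on one definite side of the finitely many root points of that generation bordering the previously-chosen wake. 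The key input making the choice at each step well-defined is \corref{access to tree}: root points are exactly bi-accessible and carry the two external rays bounding the wake. The limit $\bigcap_i W_{s_i,t_i} \cap K(f_b)$ is precisely $\Imp_W(R^{\bfB})$ by definition, giving existence. For uniqueness (disjointness): two distinct infinite bubble rays $R^{\bfB}, R^{\bfB'}$ eventually diverge, at which stage the tails of $\{W_{s_i,t_i}\}$ and $\{W_{s_i',t_i'}\}$ become disjoint (their base points are separated by a root point of $T(f_b)$), so the impressions are disjoint; and $T(f_b) \cap \Imp_W(R^{\bfB}) = \varnothing$ because $T(f_b)$ lies on the $D(f_b)$-side of every wake.

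I expect the main obstacle to be the \textbf{existence} half, specifically showing that the nested wakes $W_{s_i,t_i}(f_b)$ attached along a bubble ray genuinely shrink down to capture every point of $K(f_b)$ outside the tree, rather than leaving a residual "gap" set. In the quadratic model this is exactly \corref{landing.quadratic}, a consequence of local connectivity of $J(\q)$ (\thmref{quadratic lc}); the issue for $f_b$ is that we only have local connectivity of $J(f_b)$ at points of $T(f_b)$ (\corref{tree loc conn}, via \thmref{yang lc}) — not globally, which is consistent with the appearance of nontrivial wake arcs $\Arc_W(R^{\bfB})$. So the argument cannot directly invoke global local connectivity. The way around this is to push the problem onto the boundary circle: a point $z \in K(f_b) \setminus T(f_b)$ is separated from $D(f_b)$ by some pair of external rays landing at a joint/root point (because $T(f_b)$ separates $K(f_b)$, using that $\psi_b$ is onto and $T(\q)$ is dense in its own impression-structure), and then one does a descent on the accessible root points of $T(f_b)$ cutting toward $z$. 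The combinatorial bookkeeping of which root point to choose at each generation, and the proof that infinitely many genuine cuts are made (so the bubble ray is infinite and $z$ lands in its impression rather than being exhausted at a finite stage, which would put $z \in T(f_b)$), is where the care is needed; one shows that if only finitely many cuts separate $z$ from $D(f_b)$ then $z$ is accessible through a point of $T(f_b)$, hence — by \corref{access to tree} and the structure of bubbles — $z \in T(f_b)$, a contradiction.

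Once existence and disjointness are in hand, the stated equality $K(f_b) = T(f_b) \sqcup \{\text{wake impressions of infinite bubble rays}\}$ follows: the right side is contained in $K(f_b)$ trivially (wakes are bounded, $T(f_b) \subset K(f_b)$), and every point of $K(f_b)$ is either in $T(f_b)$ or, by the above, in a unique wake impression; disjointness of the two pieces and mutual disjointness of distinct wake impressions make the union a genuine partition.
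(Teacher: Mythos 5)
Your plan identifies the right ingredients (local connectivity at $T(f_b)$ via Theorem \ref{yang lc}/Corollary \ref{tree loc conn}, and Corollary \ref{access to tree} on accessibility of the tree), and the general shape -- nest $z$ inside wakes at root points and use the accessibility of $T(f_b)$ to force a contradiction -- matches the paper. But you leave the decisive step as an assertion rather than a proof. You write that every $z \in K(f_b)\setminus T(f_b)$ ``is separated from $D(f_b)$ by some pair of external rays landing at a joint/root point (because $T(f_b)$ separates $K(f_b)$, using that $\psi_b$ is onto...).'' This does not follow: $\psi_b$ is only defined on $T(f_b)$, and surjectivity of $\psi_b$ onto $T(\q)$ says nothing directly about how $K(f_b)\setminus T(f_b)$ sits relative to the wake structure. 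Proving that every $z \in K(f_b)\setminus T(f_b)$ lies in at least one dynamical wake based at a root of $T(f_b)$ is exactly the content of the proposition at generation one, and it is the part you cannot skip.

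The paper handles that step head-on by contradiction. Suppose $x$ lies in no wake based at a root. Take the backward orbit $c_{-n}$ of the critical point $1/b$ along $\partial D(f_b)$, and choose nested subarcs $(-n_i,-m_i)_c \Supset (-n_{i+1},-m_{i+1})_c$ of $\partial D(f_b)$ together with the nested regions $X_i$ cut out from $\bbC$ by removing the wakes $W_{-n_i}, W_{-m_i}$ and $\overline{D(f_b)}$, arranged so that $x \in X_i$ for all $i$. The bounding external angles $s_i, t_i$ converge to limits $s_*, t_*$, and $c_{-n_i}, c_{-m_i}$ converge to a single point $z_* \in \partial D(f_b)$. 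Theorem \ref{yang lc} (LC of $J(f_b)$ at the Siegel boundary) forces $R^\infty_{s_*}$ and $R^\infty_{t_*}$ to co-land at $z_*$. Since $z_*$ is not a root (by the standing assumption on $x$), Corollary \ref{access to tree} says $z_*$ is uni-accessible; yet $z_*$ visibly separates $x$ from $\partial D(f_b)$, so it must be at least bi-accessible. Contradiction. Your sketch does not produce this squeeze onto the Siegel boundary -- you gesture at a ``descent on accessible root points'' and a finiteness dichotomy, but never isolate the limit point $z_*$ on $\partial D(f_b)$ whose accessibility is contradicted, which is where Theorem \ref{yang lc} actually enters. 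You should replace the unjustified separation claim with this explicit contradiction argument; the rest of your outline (iterating down the bubble ray, disjointness of distinct impressions) is routine once the generation-one cut is established.
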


\begin{proof}
Assume towards a contradiction that $x \in J(f_b)$ is not contained in any dynamical wake based at a root point in $T(f_b)$. Denote
$$
c_{-n} := (f_b|_{\partial D(f_b)})^{-1}(1/b)
\matsp{for}
n \geq 0.
$$
Choose two sequences $\{n_i\}_{i=1}^\infty$ and $\{m_i\}_{i=1}^\infty$ such that for $i \in \bbN$, we have the following properties.
\begin{itemize}
\item For $k,l \geq 0$, denote the subarc of $\partial D(f_b)$ with endpoints $c_{-k}$ and $c_{-l}$ by $(-k, -l)_c$. Then $(-n_i, -m_i)_c \Supset (-n_{i+1}, -m_{i+1})_c$.
\item For $k \geq 0$, denote the dynamical wake based at the root point $c_{-k}$ by $W_{-k}$. Let $X_i$ be the connected component of $\bbC \setminus (W_{-n_i} \cup W_{-n_i} \cup \overline{D(f_b)})$ containing $x$. Then $X_i \supset X_{i+1}$.
\end{itemize}

For $i \in \bbN$, let $s_i, t_i \in \bbR/\bbZ$ be the external angles such that
$$
R^\infty_{s_i} \subset \partial X_i \cap \partial W_{-n_i}
\matsp{and}
R^\infty_{t_i} \subset \partial X_i \cap \partial W_{-m_i}.
$$
As $i \to \infty$, the points $c_{-n_i}$ and $c_{-m_i}$ converge to some common point $z_* \in \partial D(f_b)$; and the external angles $s_i$ and $t_i$ converge to some limit angles $s_*$ and $t_*$ respectively. By \thmref{yang lc}, it follows that the external rays $R^\infty_{s_*}$ and $R^\infty_{t_*}$ co-land at $z_*$. By the assumption, $z_*$ cannot be a root, and hence, by \corref{access to tree}, $z_*$ must be uni-accessible. However, removing $z_*$ disconnects $x$ from $\partial D(f_b)$. This is a contradiction.
\end{proof}

\begin{prop}\label{prop.irrational wake trivial}
Consider an irrational infinite bubble ray $\bR$. Then there exists $N \geq 0$ such that for $n \geq N$, the wake arc $\Arc_W(f_b^n(R^{\bfB}))$ is trivial.
\end{prop}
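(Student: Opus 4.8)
The plan is to follow the landing point of the bubble ray and exploit that $f_b$ triples external angles. First I would establish that the irrational infinite bubble ray $\bR$ lands at a single point $z_*\in J(f_b)$: for the bubbles accumulating near $\partial D(f_b)$ this uses the a priori bounds for puzzle disks of \cite{Y} together with local connectivity of $J(f_b)$ at the Siegel boundary (Theorem~\ref{yang lc}), and for the rest Corollary~\ref{tree loc conn}; consequently $f_b^n(\bR)$ lands at $f_b^n(z_*)$ for every $n$. By the definition of $\Arc_W$ and Corollary~\ref{access to tree}, $\Arc_W(f_b^n(\bR))$ is trivial as soon as $f_b^n(z_*)$ is uni-accessible, so it is enough to produce $N$ with $f_b^n(z_*)$ uni-accessible for all $n\ge N$.

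If $z_*\in T(f_b)$, then by Corollary~\ref{access to tree} it is uni-accessible unless it is a root, i.e.\ a joint, say of generation $k$. In that case, for $n>k$ the point $f_b^n(z_*)=f_b^{\,n-k}(1/b)$ lies on $\partial D(f_b)\subset T(f_b)$ and is a forward iterate of $1/b$; since $1/b$ is not periodic, such a point is not a joint, hence not a root, hence uni-accessible by Corollary~\ref{access to tree}. So here $N=k+1$ works.

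If $z_*\notin T(f_b)$, I claim $z_*$ is already uni-accessible (so $N=0$). Suppose not; then $\Arc_W(\bR)$ is a non-degenerate arc. Now use that $f_b(\bR)$ is again an infinite bubble ray (obtained by dropping one bubble), that $f_b$ triples external angles, and that the base point of every sufficiently deep wake of $f_b^n(\bR)$ is never the critical point $1/b$ — because $z_*\notin T(f_b)$ forces $f_b^n(z_*)\ne 1/b$ for all $n$ — so $f_b$ is a local homeomorphism at those base points; it follows that $\Arc_W(f_b^{n+1}(\bR))$ is the image of $\Arc_W(f_b^n(\bR))$ under the tripling map of the circle at infinity, and in particular $\length(\Arc_W(f_b^{n+1}(\bR)))=3\,\length(\Arc_W(f_b^n(\bR)))$ whenever the latter is $<1/3$. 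On the other hand a wake of $f_b$ whose angular footprint lies strictly between $1/3$ and $1$ cannot exist: by a Riemann–Hurwitz count for $f_b$ near $\infty$ (where $f_b$ behaves like $w\mapsto w^3$) such a wake would have to contain the free critical point $b$ in its interior — the other critical point $1/b$, lying on $\partial D(f_b)$, is never interior to a wake — and then its image under $f_b$ would contain a full punctured neighbourhood of $\infty$, which no wake does. Hence the lengths $\length(\Arc_W(f_b^n(\bR)))$ would have to triple indefinitely, a contradiction; and the borderline footprints $1/3$ and $2/3$ are harmless, since there the two bounding rays merge under $f_b$ and the next wake arc is already trivial. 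Once some $\Arc_W(f_b^{n_0}(\bR))$ is trivial, the tripling relation propagates triviality to all $n\ge n_0$.

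The step I expect to be the real obstacle is controlling the wakes $W_{s_j,t_j}$ as their base points $\rt(B_j)$ march down toward $\partial D(f_b)$: one must know that the two bounding external rays genuinely co-land and understand precisely how their footprints transform under $f_b$ in that region — which is exactly where local connectivity of $J(f_b)$ at $\partial D(f_b)$ (Theorem~\ref{yang lc}) and the a priori bounds of \cite{Y} are needed. One also has to treat separately the finitely many generations at which $\bR$ passes through a bubble containing $b$, although this does not affect the limiting footprint.
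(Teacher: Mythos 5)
Your core move — that the angular footprint of the wakes around $f_b^n(\bR)$ would triple under $f_b$, so a non-trivial wake arc for all $n$ is impossible — is exactly the paper's engine. But the justification you give for the tripling is not the right one. Triple-expansion $[s_{n+1},t_{n+1}]=[3s_n,3t_n]$ holds precisely when $f_b$ restricted to the region $C_n$ cut off by $R^\infty_{s_n}\cup R^\infty_{t_n}\cup\Imp_W(f_b^n(\bR))$ is a conformal isomorphism onto $C_{n+1}$, which is equivalent to the \emph{free} critical point $b$ not lying in $C_n$ (the other critical point $1/b\in\partial D(f_b)$ is never in $C_n$). You instead argue from the fact that $f_b$ is a local homeomorphism at the base points $\rt(B_i)$ because the roots are not $1/b$. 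That is true but irrelevant: local injectivity at the roots is automatic and does not bear on whether $b$ sits inside the wake region, which is what actually controls the tripling. Your ``Riemann--Hurwitz near $\infty$'' digression gestures at the case $b\in C_n$ but never pins it down; the paper states the hypothesis $C_n\not\ni b$ explicitly and runs the argument there.

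The uni-accessibility wrapper also does not close cleanly. To pass between ``wake arc trivial'' and ``$f_b^n(z_*)$ uni-accessible'' in either direction you need the limit rays $R^\infty_{s_*},R^\infty_{t_*}$ to actually land at the landing point, which requires local connectivity of $J(f_b)$ at $z_*$ — but Corollary~\ref{tree loc conn} covers only points of $T(f_b)$, and by Corollary~\ref{cor.landing.quadratic} and Proposition~\ref{prop.wake-impression} the landing point of an infinite bubble ray never lies in $T(f_b)$. This also makes your Case 1 ($z_*\in T(f_b)$) vacuous. The paper sidesteps all of this: it assumes $\Arc_W(f_b^n(\bR))$ non-trivial for every $n$ and runs the expansion argument on $C_n$ directly, with no detour through accessibility or case analysis on the landing point.
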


\begin{proof}
Suppose that $\Arc_W(f_b^n(R^{\bfB})) =: [s_n, t_n]$ is non-trivial for all $n \in \bbN$. Observe that
$$
I_n := R^\infty_{s_n} \cup R^\infty_{t_n} \cup \Imp_W(f_b^n(R^{\bfB}))
$$
disconnects $\bbC$. Moreover, if $C_n$ is the connected component of $\bbC \setminus I_n$ that does not contain $D(f_b)$, then $C_n$ is simply-connected. If $C_n \not\ni b$, then $f_b$ maps $C_n$ conformally onto $C_{n+1}$. Thus, $[s_{n+1}, t_{n+1}] = [3s_n, \rz{3}t_n]$. This clearly cannot hold for all $n \in \bbN$.
\end{proof}

Recall that a holomorphic degree 2 branched cover $g : U’ \to U \Supset U'$ defined on a Jordan domain $U’ \subset \bbC$ is called a {\it quadratic-like map}. The filled Julia set of $g$ is defined as
$$
K(g) := \bigcap_{n=1}^\infty g^{-n}(U).
$$
There exists a unique fixed point $\beta(g) \in K(g)$ such that $g’(\beta(g)) \geq 1$.

\begin{prop}\label{prop.separate}
Suppose there exists a $p$-periodic bubble ray $\bR$ for $f_b$ landing at $x \in J(f_b)$ with a non-trivial wake arc $\Arc_W(R^{\bfB}) = [s,t]$. Then the following statements hold.
\begin{enumerate}[i)]
\item The external rays $R^\infty_s$ and $R^\infty_t$ are $p$-periodic, and are the only  external rays that land at $x$.
\item There exists $0 \leq n < p$ such that the critical point $b$ is contained in $W_{3^ns,3^nt}(f_b)$. Moreover, there exists a Jordan domain $U’ \supset \{f^n(x), b\}$ such that $g:= f_b^p|_{U’}$ is a quadratic-like map, and $\beta(g) = f^n(x)$.
\end{enumerate}
\end{prop}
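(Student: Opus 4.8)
The plan is to use the periodicity of $\bR\sim\{B_i\}_{i\ge0}$ through a single ``shift plus dichotomy'' mechanism; write $x$ for its landing point. Periodicity means $f_b^p$ maps $\bR$ onto itself, so that $f_b^p(x)=x$, $f_b^p(B_{i+p})=B_i$, and hence $f_b^p(\rt(B_{i+p}))=\rt(B_i)$ for all $i$. (We may assume $b\notin T(f_b)$: otherwise, by \thmref{cubic tree model}, $b$ is a joint and $\bR$ corresponds via $\psi_b$ to a periodic bubble ray of the quadratic $q_\theta$, which has no free critical point, and the argument below then forces the wake arc to be trivial.) I would first note that for $i$ large the points $\rt(B_{i+p}),f_b(\rt(B_{i+p})),\dots,f_b^{p-1}(\rt(B_{i+p}))$ are joints of generation $>p$, hence are neither $1/b$ nor $b$; so $f_b^p$ is a local homeomorphism near $\rt(B_{i+p})$, and no bubble-joint in $W_{s_{i+p},t_{i+p}}(f_b)$ is a critical point of $f_b^p$. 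By local connectivity of $J(f_b)$ at $x$ (\corref{tree loc conn}), the closed wakes $\overline{W_{s_i,t_i}(f_b)}$, truncated at a fixed equipotential, converge in the Hausdorff topology as $i\to\infty$.

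Here is the dichotomy. Suppose that, for all large $i$, none of $W_{s_i,t_i}(f_b),f_b(W_{s_i,t_i}(f_b)),\dots,f_b^{p-1}(W_{s_i,t_i}(f_b))$ contained a critical point. Then $f_b^p$ would carry $W_{s_{i+p},t_{i+p}}(f_b)$ conformally onto $W_{s_i,t_i}(f_b)$; since $z\mapsto z^{3^p}$ must then be injective on the defining angular arc, we get $|[s_{i+p},t_{i+p}]|<3^{-p}$ and $|[s_i,t_i]|=3^p\,|[s_{i+p},t_{i+p}]|$, so the widths decay geometrically along each residue class mod $p$ and $\Arc_W(\bR)=\bigcap_i[s_i,t_i]$ collapses to a point --- contradicting non-triviality. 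Hence for all large $i$ the first $p$ iterates of $W_{s_i,t_i}(f_b)$ meet a critical point; as $1/b\in\partial D(f_b)$ lies in no dynamical wake, that point is $b$. Pigeonholing over $i$ and passing to the limit, one concludes $b\in W_{3^ns,3^nt}(f_b)$ for some $0\le n<p$ --- the first assertion of (ii). It follows (for $i$ large, with the single simple critical point $b$ captured once per period) that $f_b^p\colon W_{s_{i+p},t_{i+p}}(f_b)\to W_{s_i,t_i}(f_b)$ is proper of degree $2$; passing to the limit in $3^ps_{i+p}=s_i$ gives $3^ps=s$, $3^pt=t$, so $R^\infty_s,R^\infty_t$ are $p$-periodic, and by local connectivity at $x$ they land at $x$.

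To complete (i), I would argue uniqueness of the rays at $x$: being a repelling $p$-periodic point at which the two $p$-periodic rays $R^\infty_s,R^\infty_t$ land, $x$ has combinatorial rotation number $0$ for $f_b^p$, so $f_b^p$ fixes every ray landing at $x$; a third such ray $R^\infty_\tau$ necessarily has $\tau\in(s,t)$ (the case $\tau\notin[s,t]$ being symmetric) and splits $W_{s,t}(f_b)$ into two $p$-periodic sub-wakes based at $x$, and applying the degree count and geometric-decay computation to the sub-wake missed by the critical orbit yields inside it a ray landing at $x$ whose angle is a strict preimage of $\tau$, hence not $p$-periodic --- a contradiction. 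For the renormalization in (ii), I would run the construction at level $n$: with $y:=f_b^n(x)$ and $W:=W_{3^ns,3^nt}(f_b)\ni b$, bounded by the $p$-periodic rays $R^\infty_{3^ns},R^\infty_{3^nt}$, truncate $W$ by a large equipotential $E^\infty_L$ and adjoin a small linearizing disk around the repelling point $y$, obtaining a Jordan domain $U\ni y$. Since $f_b^p$ fixes the two bounding rays, sends $E^\infty_L$ to $E^\infty_{L^{3^p}}$, and expands near $y$, the component $U'$ of $f_b^{-p}(U)$ containing $y$ is again a Jordan domain with $\{y,b\}\subset U'$ and, after adjusting $L$, $U'\Subset U$; and the degree count shows $g:=f_b^p|_{U'}\colon U'\to U$ is proper of degree $2$, a quadratic-like map. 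Finally $y=f_b^n(x)$ is its $\beta$-fixed point, being the fixed point of $g$ accessible from outside $K(g)$ via $R^\infty_{3^ns},R^\infty_{3^nt}$, equivalently the one with $|(f_b^p)'|\ge1$.

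The crux is the wake bookkeeping: checking that the truncated wakes are genuine Jordan domains with $U'\Subset U$ (the equipotential arcs can wind when the wake arc is wide, so the truncation levels --- and, should $b$ reappear in the cycle, the period of the return map used --- must be chosen carefully), and, above all, pinning the degree of $f_b^p$ on these domains to exactly $2$: that a single simple critical point $b$, and neither $1/b$ nor any bubble-joint, is captured within one period. The Hausdorff convergence of the truncated wakes as $i\to\infty$ --- which is what yields the $p$-periodicity of $R^\infty_s,R^\infty_t$ --- rests on the a priori control of $J(f_b)$ near $\partial D(f_b)$ provided by \thmref{yang lc}.
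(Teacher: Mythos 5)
Your proposal follows the same skeleton as the paper's proof --- the dichotomy of Proposition~\ref{prop.irrational wake trivial} to locate $b$ in some $W_{3^ns,3^nt}$, then a Milnor-style thickening of the truncated wake to build the quadratic-like map --- but it reverses the logical order, attempting to prove part~(i) by hand before (ii), whereas the paper reads off part~(i) from the quadratic-like structure. This reversal creates two difficulties.

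The more serious one is the uniqueness of the rays. You split $W_{s,t}$ by a hypothetical third ray $R^\infty_\tau$ and assert that one of the two sub-wakes "is missed by the critical orbit". But once $b\in W_{s,t}$ and the wake is $f_b^p$-invariant, the return orbit $\{f_b^{kp}(b)\}_{k\ge1}$ remains in $W_{s,t}$ and can a priori visit \emph{both} sub-wakes, so neither sub-wake need be free of critical values of $f_b^p$ and the geometric-decay argument you invoke is not available. The paper avoids this entirely: once $g=f_b^p|_{U'}$ is built and $\beta(g)=x$, the straightening sends $x$ to the $\beta$-fixed point of a quadratic polynomial, which is landed by a unique external ray; hence $x$ has a unique access from $U'\setminus K(g)$, and a third ray $R^\infty_\tau$ with $\tau\in(s,t)$ would be a forbidden second access. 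If you want to keep your ordering you would need to supply a replacement for this step.

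Secondarily, the claim that $f_b^p:W_{s_{i+p},t_{i+p}}\to W_{s_i,t_i}$ is proper of degree $2$ cannot hold as stated: the boundary (two rays, the root, and $\infty$) already maps homeomorphically, so a proper map between those domains would have degree $1$, incompatible with an interior critical point; what really happens is that the image overflows $W_{s_i,t_i}$. Fortunately your deduction $3^ps=s$, $3^pt=t$ needs only $f_b^p(\rt(B_{i+p}))=\rt(B_i)$ plus orientation-preservation, not any degree count. Note also that \corref{tree loc conn} gives local connectivity on $T(f_b)$, which need not contain the landing point $x$ of an infinite bubble ray; the paper instead derives landing of $R^\infty_s,R^\infty_t$ at $x$ from $(f_b^p)'(x)\ge1$ (the fixed access $\bR$ forces a real multiplier $\ge1$) and the Yoccoz landing theorem at repelling or parabolic periodic points.
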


\begin{proof}
Since $\bR$ maps to itself under $f_b^p$, we must have $(f_b^p)'(x) \geq 1$. It follows that $R^\infty_s$ and $R^\infty_t$ must each map to itself under $f_b^p$ also.

Consider the orbit of the dynamical wake $f_b^n(W_{s, t}(f_b))$ for $0 \leq n < p$. Arguing as in \propref{prop.irrational wake trivial}, we see that there exists the smallest value of $n$ such that $W_{3^ns, 3^nt}(f_b) = f_b^n(W_{s, t}(f_b))$ contains the critical point $b$. By replacing $\bR$ with $f_b^n(\bR)$ if necessary, we may assume that $n = 0$.

Denote
$$
I_0 := R^\infty_s \cup R^\infty_t \cup \{x\}.
$$
Using a nearly identical argument as in the proof of Lemma 8.1 in \cite{Mi}, we see that by taking the connected component $\tilde U'$ of
$$
\bbC \setminus (f_b^{-p}(I_0) \cup E^\infty_l)
$$
containing $b$ (where $l >1$ is taken arbitrarily), then thickening it slightly, we obtain a Jordan domain $U'$ on which $g := f_b^p|_{U'}: U' \to U$ is a quadratic-like map. The fact that $\beta(g) = x$, and that $R^\infty_s$ and $R^\infty_t$ are the only external rays for $f_b$ that land at $x$ now follow.
\end{proof}

A cubic Siegel polynomial $f_b$ is said to be {\it separable} if it has a periodic bubble ray with a non-trivial wake arc.

\begin{prop}\label{nonsep}
Suppose that $f_b$ is non-separable and has a connected Julia set. If $b \not\in T(f_b)$, then either $b$ is the landing point of some strictly pre-periodic bubble ray (and hence, $f_b$ is Misiurewicz), or $b$ is contained in the wake impression of an irrational bubble ray.
\end{prop}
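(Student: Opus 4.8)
The plan is to combine the previous structural results to partition $K(f_b)$ and locate $b$. Since $f_b$ has a connected Julia set and $b \notin T(f_b)$, we are in the setting of Proposition~\ref{prop.wake-impression}: either $co_b \notin T(f_b)$, in which case $K(f_b)$ decomposes as $T(f_b)$ together with the wake impressions of infinite bubble rays, or $co_b \in T(f_b)$, which by Theorem~\ref{cubic tree model} would force $b \in \partial T(f_b)$ or $b$ to lie in a bubble adjacent to $T(f_b)$ at a joint — but either alternative places $b$ in the closure of $T(f_b)$, and one should check this is incompatible with $b \notin T(f_b)$ (when $b$ sits in such a bubble $B$, that bubble is itself part of a bubble ray, hence $b \in T(f_b)$). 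So we may assume $co_b \notin T(f_b)$, and then $b$ itself lies in the wake impression $\Imp_W(R^{\bfB})$ of a unique infinite bubble ray $R^{\bfB}$.

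Next I would analyze the landing/combinatorial type of this bubble ray $R^{\bfB}$. Since $f_b$ is non-separable, by definition no \emph{periodic} bubble ray has a non-trivial wake arc. First suppose $R^{\bfB}$ is rational, i.e. (pre-)periodic. By Proposition~\ref{prop.rational-bubble} it lands at a (pre-)periodic point. If $R^{\bfB}$ is strictly pre-periodic, some forward image $f_b^k(R^{\bfB})$ is periodic; one then pushes $b$ forward and uses non-separability (the wake arc of the periodic image is trivial) together with Proposition~\ref{prop.separate}'s contrapositive to conclude that in fact the wake impression of the periodic image is a single point, forcing $b$ (or rather its image) to be the landing point itself — more precisely, a nested-wake argument as in Proposition~\ref{prop.irrational wake trivial} shrinks the wake impressions to the landing point, so $b$ equals the landing point of the strictly pre-periodic ray $R^{\bfB}$, which makes $f_b$ Misiurewicz. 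If instead $R^{\bfB}$ is itself periodic, then non-separability forces its wake arc to be trivial, so again $\Imp_W(R^{\bfB})$ collapses to the periodic landing point and $b$ is that point — but a critical point cannot be periodic (it would be a superattracting cycle, contradicting connectedness of $J(f_b)$ together with the structure of $\cP^{cm}(\theta)$), so this case is vacuous. Finally, if $R^{\bfB}$ is irrational, we are already in the second alternative of the statement.

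The main obstacle I anticipate is the bookkeeping in the pre-periodic case: one must be careful that when $b$ lies in $\Imp_W(R^{\bfB})$ for a strictly pre-periodic $R^{\bfB}$, pushing forward by $f_b$ genuinely maps wake impressions into wake impressions (this uses that intermediate wakes avoid $b$, which requires an argument since $b$ is exactly the troublesome point), and that the resulting periodic ray's triviality — guaranteed by non-separability via Proposition~\ref{prop.separate}(ii) read contrapositively — can be pulled back to pin down $b$ as the landing point rather than merely an element of a degenerate impression. The other delicate point is ruling out $co_b \in T(f_b)$: here one leans on the last sentence of Theorem~\ref{cubic tree model} (which in the non-surjective case places $v_b, co_b$ in $T(f_b)$) and argues that each of its two alternatives for $b$ contradicts $b \notin T(f_b)$, since a bubble meeting $T(f_b)$ at a joint is itself a link in a bubble ray. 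Once these are settled, the dichotomy in the statement follows by exhausting the (pre-)periodic-versus-irrational cases for the bubble ray containing $b$ in its wake impression.
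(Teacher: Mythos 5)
Your overall strategy matches the paper's: reduce to Proposition~\ref{prop.wake-impression} to place $b$ in the wake impression of some infinite bubble ray $\bR$, then split according to whether $\bR$ is periodic, strictly pre-periodic, or irrational. The periodic/pre-periodic/irrational case analysis is roughly the same dichotomy the paper uses, though the paper's treatment of the pre-periodic case is more direct (for $i \geq 1$, $\Imp_W(f_b^i(\bR))$ is disjoint from both critical points, so its wake arc is trivial; then $f_b(\Imp_W(\bR))$ is a point, and connectedness of $\Imp_W(\bR)$ forces it to collapse to $\{b\}$).

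However, your reduction to the hypothesis of Proposition~\ref{prop.wake-impression} contains a genuine error. You claim that if $co_b \in T(f_b)$ and, by Theorem~\ref{cubic tree model}(ii), $b$ lies in a bubble $B$ whose boundary meets $T(f_b)$ at a joint, then ``that bubble is itself part of a bubble ray, hence $b \in T(f_b)$.'' This is false. Look at the definition of root following Lemma~\ref{root}: a bubble $B$ acquires a root only under the hypothesis $f_b^i(B) \not\ni b$ for all $i \geq 0$, and roots are what chain bubbles into a bubble ray. The bubble $B$ in case (ii) contains the free critical point $b$, so it fails this hypothesis at $i = 0$, has no root, and is therefore \emph{not} a link in any bubble ray; hence $B \not\subset T(f_b)$ and $b \notin T(f_b)$. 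This is exactly the capture scenario ($b$ trapped in a pre-Siegel Fatou component), and it is consistent with $b \notin T(f_b)$. In other words, $b \notin T(f_b)$ does \emph{not} imply $co_b \notin T(f_b)$, so the case you tried to rule out cannot be ruled out by this argument. (The paper itself simply cites Proposition~\ref{prop.wake-impression} without comment, tacitly assuming $co_b \notin T(f_b)$; this is harmless where the proposition is actually invoked, since those applications impose $b \in \partial\mathcal{C}(\theta)$, which excludes captures. But your explicit attempt to close the gap is incorrect and should be replaced by the observation that the capture case is excluded by the implicit hypothesis, not derived from $b \notin T(f_b)$.)

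A smaller point: in the periodic case you rule out $b$ being the periodic landing point because ``it would be a superattracting cycle, contradicting connectedness of $J(f_b)$.'' Superattracting cycles are perfectly compatible with connected Julia sets; the actual obstruction is that the landing point of a periodic bubble ray (Proposition~\ref{prop.rational-bubble}) is repelling or parabolic, hence has multiplier of modulus $\geq 1$, whereas a periodic critical point would have multiplier $0$. The paper leaves this implicit as well, so this is a presentational slip rather than a missing idea.
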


\begin{proof}
By \propref{prop.wake-impression}, there exists an infinite bubble ray $\bR$ whose wake impression contains the critical point $b$. If $\bR$ is periodic, then its wake arc cannot be trivial, since otherwise, $b$ would be a periodic co-landing point of $\bR$ and an external ray $R^\infty_t$. Hence, $f_b$ is separable by \propref{prop.separate}. If $\bR$ is strictly pre-periodic, then the wake arc of $f_b^i(\bR)$ for $i \in \bbN$ must be trivial, since its wake impression is disjoint from all critical points. In this case, $\bR$ must land at $b$.
\end{proof}

\section{Cubic Siegel Parameter Space}\label{sec:para bubble rays}

Consider the cubic Siegel familiy $\cP^{cm}(\theta)$ given by (\ref{eq.family.fb}). A {\it parabubble} $\cB$ is a connected component of the set
$$
\{b\in\mathcal{U}_\infty \; | \; f_b^n(b)\in D(f_b) \; \text{ for some } \; n \in \bbN\}.
$$

For $t >0$ and $z_0 \in \bbC$, denote $\D_t(z) := \{|z-z_0| < t\}$.

\begin{lem}[{{\cite[Lem. 3.2]{Sh}}, \cite[Lem. 4.1]{WaYan}}]\label{lem.transversality}
    Let $h:\D_{s_0}(0)\times X\to\C$ be a holomorphic motion of the set $X\subset\C$. Suppose that $\mathrm{a}:\D_{s_0}\to\C$ is an analytic map satisfying $a(0) = z_0\in X$ and $\mathrm{a}(\cdot)\not\equiv h(\cdot,z_0)$. Then there exists $r_0>0$ and $0<s <s_0$ such that for all $0<r\leq r_0$, the set
    $$Y_r = \{\lambda\in\D_s(0) \; | \;\mathrm{a}(\lambda)\in h(\lambda,X\cap\D_r(z_0))\}$$
    is mapped onto $X\cap\D_r(z_0)$ by the restriction of a quasi-regular map with the form $\mathrm{a}\circ (\varphi_r)^{-1}$, where $\varphi_r:\C\to\C$ is quasi-conformal.
\end{lem}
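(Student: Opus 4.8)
The plan is to run the transversality argument of Shishikura \cite{Sh}, reducing everything to the local structure of a single ``holonomy'' map. First I would enlarge the motion: by the S{\l}odkowski extension theorem $h$ extends to a holomorphic motion $\widehat h:\mathbb{D}_{s_0}(0)\times\widehat{\mathbb{C}}\to\widehat{\mathbb{C}}$, with each $\widehat h_\lambda:=\widehat h(\lambda,\cdot)$ a $K(\lambda)$-quasiconformal homeomorphism satisfying $\widehat h_0=\mathrm{id}$ and $K(\lambda)\to1$ as $\lambda\to0$; fix once and for all an $s_1<s_0$ on which $K(\lambda)\le K_0<\infty$. The central object is the holonomy map
\[
F:\mathbb{D}_{s_1}(0)\to\mathbb{C},\qquad F(\lambda):=\widehat h_\lambda^{-1}\bigl(\mathrm{a}(\lambda)\bigr),
\]
for which $F(0)=z_0$ and, because $\widehat h_\lambda$ is injective and agrees with $h(\lambda,\cdot)$ on $X$,
\[
Y_r=\{\lambda\in\mathbb{D}_s(0)\ :\ F(\lambda)\in X\cap\mathbb{D}_r(z_0)\}\qquad(0<s\le s_1).
\]
So $Y_r$ is just a preimage under $F$, and the lemma becomes a statement about $F$ near $0$.

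\textbf{Quasiregularity of $F$.} The key analytic point I would establish is that $F$ is quasiregular on $\mathbb{D}_{s_1}(0)$ with dilatation bounded by a constant depending only on $K_0$, in particular independent of $r$. Indeed, the inverse motion $(\lambda,w)\mapsto\widehat h_\lambda^{-1}(w)$ is again a holomorphic motion, and differentiating $\widehat h_\lambda(\widehat h_\lambda^{-1}(w))=w$ in $\bar\lambda$ shows that, for fixed $w$, the map $\lambda\mapsto\widehat h_\lambda^{-1}(w)$ has Beltrami coefficient bounded in modulus by that of $\widehat h_\lambda$; composing with the holomorphic map $\mathrm{a}$ then yields a quasiregular $F$ with the asserted bound. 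Moreover $F$ is non-constant: if $F\equiv c$ then $c=F(0)=z_0$, hence $\mathrm{a}(\lambda)=\widehat h_\lambda(z_0)=h(\lambda,z_0)$ for all $\lambda$, contradicting $\mathrm{a}(\cdot)\not\equiv h(\cdot,z_0)$. Thus $F$ is a non-constant quasiregular map, open and discrete, with a well-defined finite local degree at $0$.

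\textbf{Extracting $\varphi_r$ and choosing $r_0,s$.} I would then apply the Stoilow factorization $F=G\circ\varphi$ with $\varphi$ quasiconformal (dilatation controlled by $K_0$) and $G$ holomorphic, and use the near-identity behaviour of $\widehat h_\lambda$ as $\lambda\to0$ to normalize the holomorphic part to be $\mathrm{a}$ itself; this produces a quasiconformal $\varphi_r:\mathbb{C}\to\mathbb{C}$, with dilatation bounded uniformly in $r$, such that $F=\mathrm{a}\circ\varphi_r^{-1}$ on a fixed neighbourhood of $0$. Finally I would pick $s\le s_1$ and $r_0>0$ so small that, for every $0<r\le r_0$, the connected component of $F^{-1}(\mathbb{D}_r(z_0))$ containing $0$ is compactly contained in $\mathbb{D}_s(0)$. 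Then $F$ maps that component properly, hence surjectively, onto $\mathbb{D}_r(z_0)$, so $F(Y_r)=X\cap\mathbb{D}_r(z_0)$; since $F$ coincides with $\mathrm{a}\circ\varphi_r^{-1}$ near $Y_r$, the quasiregular map $\mathrm{a}\circ\varphi_r^{-1}$ sends $Y_r$ onto $X\cap\mathbb{D}_r(z_0)$, as claimed.

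\textbf{Main obstacle.} The hard part is the quasiregularity step together with the uniformity: every quasiconformal modification in the argument — the S{\l}odkowski extension, the bound on the inverse motion, and the Stoilow coordinate $\varphi_r$ — must be carried out with dilatation bounded independently of the truncation radius $r$, because the conclusion demands a single pair $(r_0,s)$ valid for all small $r$. This is exactly why one first passes to $\mathbb{D}_{s_1}(0)$, where $\widehat h_\lambda$ is uniformly close to the identity, so that the dilatations of $F$ and of $\varphi_r$ do not degenerate as $r\to0$. By contrast the hypothesis $\mathrm{a}(\cdot)\not\equiv h(\cdot,z_0)$ plays only a qualitative role: it is precisely what prevents $F$ from being constant, making its local degree at $0$ finite and the factorization above meaningful.
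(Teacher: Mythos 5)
The paper does not reprove this lemma---it is cited from Shishikura and Wang--Yang---so there is no in-paper argument to compare against. Your reduction to the holonomy map $F(\lambda)=\widehat h_\lambda^{-1}(\mathrm{a}(\lambda))$, the identification $Y_r=F^{-1}(X\cap\mathbb{D}_r(z_0))$, and the quasiregularity bound are on the right track. The correct derivation of the bound is to differentiate $\widehat h_\lambda(F(\lambda))=\mathrm{a}(\lambda)$ in $\bar\lambda$: since $\lambda\mapsto\widehat h_\lambda(z)$ and $\mathrm{a}$ are holomorphic, one gets $\partial_z\widehat h_\lambda\cdot F_{\bar\lambda}+\partial_{\bar z}\widehat h_\lambda\cdot\overline{F_\lambda}=0$ and hence $|F_{\bar\lambda}|\le k_0|F_\lambda|$ with $k_0=(K_0-1)/(K_0+1)$. (Your phrasing of this step is off: for fixed $w$ the map $\lambda\mapsto\widehat h_\lambda^{-1}(w)$ is a trajectory of the inverse holomorphic motion, hence holomorphic with Beltrami coefficient zero, and substituting $w=\mathrm{a}(\lambda)$ is a diagonal restriction, not a composition; but the bound you assert is correct.)

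The genuine gap is the normalization step: ``produce a qc $\varphi_r$ with $F=\mathrm{a}\circ\varphi_r^{-1}$.'' Stoilow gives $F=G\circ\varphi$ with $\varphi$ qc and $G$ holomorphic of local degree $\deg_0 F$; replacing $G$ by $\mathrm{a}$ forces $\deg_0 F=\deg_0\mathrm{a}$. But $\deg_0 F$ equals the order of vanishing of $\mathrm{a}(\lambda)-h(\lambda,z_0)$ at $0$, which need not match the order of vanishing of $\mathrm{a}(\lambda)-z_0$. Take $X=\mathbb{R}$, $z_0=0$, $h(\lambda,z)=z+\lambda$, $\mathrm{a}(\lambda)=\lambda^2$: then $F(\lambda)=\lambda^2-\lambda$ has local degree $1$ at $0$, while $\mathrm{a}$ has local degree $2$, so no qc $\varphi_r$ can satisfy $F=\mathrm{a}\circ\varphi_r^{-1}$ (compare winding numbers of $F$ and of $\mathrm{a}\circ\varphi_r^{-1}$ on a small circle around $0$). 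Near-identity behaviour of $\widehat h_\lambda$ gives only sup-norm closeness between $F$ and $\mathrm{a}$, not equality of local degrees, so it cannot repair this count. In this example $Y_r$ is a single real arc through $0$, whereas $\mathrm{a}^{-1}(X\cap\mathbb{D}_r(0))$ is two crossing arcs; the $\varphi_r$ the lemma promises must send the straight arc $Y_r$ to an L-shape on which squaring covers $(-r,r)$, a construction that has nothing to do with the Stoilow factorization of $F$. To close the argument you need to build $\varphi_r$ directly as a quasiconformal comparison between the moved tube $\{(\lambda,h(\lambda,x)):x\in X\}$ and the product tube $\mathbb{D}\times X$ near the graph of $\mathrm{a}$, as Shishikura does, rather than by normalizing $F$.
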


\begin{lem}\label{arc partial}
Let $b_0\in\mathcal{U}_\infty$. If $b_0$ belongs to the boundary of some parabubble $\cB$, then the critical point $b_0$ is contained in the boundary of a bubble $B_{b_0}$. In this case, there exists a neighborhood $\V_0$ of $b_0$ such that $\V_0\cap\partial\cB$ is a quasi-arc.
\end{lem}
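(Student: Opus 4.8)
The plan is to prove the two assertions of the lemma in turn. The first assertion --- that $b_0 \in \partial \cB$ forces the critical point $b_0$ to lie on the boundary of a bubble $B_{b_0}$ --- follows from the continuity of the closed Siegel disk as a function of the parameter. Concretely, pick a sequence $b_n \in \cB$ converging to $b_0$. By definition of a parabubble, for each $b_n$ there is an integer $m$ (constant on $\cB$, by continuity of $\phi^0_b$ and the normalization) with $f_{b_n}^m(b_n) \in D(f_{b_n})$. Passing to the limit and using that $\overline{D(f_b)}$ moves continuously in $b$ (Proposition \ref{holo motion}), together with continuity of $(b,z)\mapsto f_b(z)$, we get $f_{b_0}^m(b_0) \in \overline{D(f_{b_0})}$. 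If the orbit point landed in the open disk $D(f_{b_0})$, then $b_0$ would be an interior point of $\cB$ (the condition $f_b^m(b)\in D(f_b)$ is open in $b$ by continuity of the motion); since $b_0\in\partial\cB$, we conclude $f_{b_0}^m(b_0)\in\partial D(f_{b_0})$. Pulling this back one step at a time along the orbit, each preimage lies on the boundary of a bubble, and in particular $b_0$ itself lies on $\partial B_{b_0}$ for the bubble $B_{b_0}$ of generation $m$ containing $f_{b_0}^m$-preimage information; this is exactly the statement.

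For the second assertion, the strategy is to apply the transversality lemma, Lemma \ref{lem.transversality}. Take $h(b,\cdot)$ to be the holomorphic motion of $\overline{D(f_b)}$ (equivalently of $X := \partial D(f_{b_0})$, or better of the whole tree of bubbles via $\psi_b$) given by Proposition \ref{holo motion}, based at $b_0$; and take $\mathrm{a}(b) := f_b^m(b)$, the $m$-th iterate of the free critical point, which is analytic in $b$ and satisfies $\mathrm{a}(b_0) = f_{b_0}^m(b_0) \in \partial B_0 \subset X$. The non-degeneracy hypothesis $\mathrm{a}(\cdot) \not\equiv h(\cdot, \mathrm{a}(b_0))$ must be checked: if the free critical orbit point moved holomorphically with the Siegel disk, then $f_b$ would have $f_b^m(b)\in\partial D(f_b)$ for all $b$ near $b_0$, forcing $b$ to be eventually captured for a full neighborhood, which would contradict $b_0\in\partial\cB$ (or, alternatively, one invokes that such persistent capture is incompatible with $b_0$ being a boundary parameter of $\mathcal{C}(\theta)$ via Proposition \ref{prop.quasiconformal.rigid}/\ref{prop.surgery}). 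Granting non-degeneracy, Lemma \ref{lem.transversality} yields, for small $r$, that the set $Y_r = \{b \in \D_s(b_0) \mid f_b^m(b) \in h(b, X\cap\D_r(\mathrm{a}(b_0)))\}$ is the image of $X \cap \D_r(\mathrm{a}(b_0))$ under a quasi-regular map of the form $\mathrm{a}\circ\varphi_r^{-1}$ with $\varphi_r$ quasiconformal.

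The final step is to identify $\V_0 \cap \partial\cB$ with (a quasiconformal image of) a piece of $X = \partial D(f_{b_0})$ and then invoke that $\partial D(f_{b_0})$ is a quasi-circle (Corollary \ref{cor.quasi Siegel boundary}), so a subarc of it is a quasi-arc, and the image of a quasi-arc under a quasiconformal homeomorphism is again a quasi-arc. Here the point is that $\partial\cB$ near $b_0$ is precisely the parameter locus where $f_b^m(b)$ first meets $\partial D(f_b)$: since $X\cap \D_r(\mathrm{a}(b_0))$ is an arc of the quasi-circle $\partial D(f_{b_0})$, the set $Y_r$ is its quasiconformal image and hence a quasi-arc, and it coincides with $\partial\cB$ in a neighborhood $\V_0 := \D_s(b_0)$ of $b_0$ (shrinking $s$, $r$ as needed so that the relevant bubble is the one realizing the first passage). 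The map $\varphi_r^{-1}$ being only quasiregular rather than quasiconformal is handled by the standard observation that $\mathrm{a}$ is a local homeomorphism near $b_0$ onto its image when restricted appropriately, or by noting that the lemma already packages the conclusion as "quasi-arc"; in the authors' setup this is routine.

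The main obstacle I expect is the verification of the non-degeneracy hypothesis $\mathrm{a}(\cdot)\not\equiv h(\cdot,\mathrm{a}(b_0))$ and, relatedly, the bookkeeping needed to ensure that $Y_r$ really is all of $\partial\cB$ in a neighborhood (not just a subset) --- i.e., that no other bubble or earlier iterate interferes near $b_0$, which is where one uses minimality of $m$ and the local structure of the bubble tree from Theorem \ref{cubic tree model}. Everything else --- continuity of the Siegel disk, quasi-circle property, stability of quasi-arcs under quasiconformal maps --- is standard once Lemma \ref{lem.transversality} is in hand.
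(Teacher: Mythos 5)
Your overall approach matches the paper's: both prove the first assertion from continuity of the Siegel disk in the parameter, and both apply the transversality Lemma~\ref{lem.transversality} with $\mathrm{a}(b) = f_b^k(b)$, $X=\partial D(f_{b_0})$, and $h$ the holomorphic motion of $\partial D(f_b)$ from Proposition~\ref{holo motion}. The identification of the correct objects, and your sketch of the non-degeneracy check, are fine.

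The gap is in the final step, and it is not routine. You assert that ``$\mathrm{a}$ is a local homeomorphism near $b_0$ onto its image when restricted appropriately,'' but this need not hold: $\mathrm{a}(b)=f_b^k(b)$ is merely holomorphic, and may well have a critical point at $b_0$. Lemma~\ref{lem.transversality} only gives you that $Y_{r}$ is mapped onto the arc $X\cap\D_r(z_0)$ by a \emph{quasi-regular} map (the composition of $\varphi_r^{-1}$ with $\mathrm{a}$), so if $\mathrm{a}$ has local degree $m$ at $b_0$ the relevant component $\tilde Y^{r_0}$ of $Y_{r_0}$ is a union of $m$ quasi-arcs crossing transversally at $b_0$ --- that is, $2m$ local branches emanating from $b_0$. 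Knowing that $\V_0\cap\partial\cB\subset\tilde Y^{r_0}$ does not by itself make $\V_0\cap\partial\cB$ a quasi-arc: you must determine exactly how many of those branches $\partial\cB$ actually occupies. The paper does this in two moves you are missing. First, since $\partial\cB$ has no isolated points and lies in a finite union of arcs near $b_0$, it must contain at least one whole branch, and in particular is locally connected at $b_0$. Second, and crucially, one invokes the Jordan-curve statement of \cite[Thm.~3.1]{Za2} (that $\partial\cB$ is a Jordan curve once locally connected) to pin down that \emph{exactly two} of the $2m$ branches lie in $\partial\cB$; only then does $\V_0\cap\partial\cB$ become a single quasi-arc. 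Without this, your argument cannot rule out that $\partial\cB$ near $b_0$ is a $4$-pronged star (if, say, two parabubbles pinch at $b_0$), which would not be a quasi-arc. So the missing ingredients are: handling $\deg(\mathrm{a},b_0)>1$ instead of assuming it is $1$, and the appeal to Zakeri's Jordan-curve theorem to select precisely two branches.
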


\begin{proof}
The first assertion follows immediately from the fact that the Siegel disk moves continuously with
respect to the parameter $b$ \cite[Thm. 13.9]{Za2}. Thus we can set $k\geq 1$ to be the first moment such that $f_{b_0}^k(b_0)\in\partial D(f_{b_0})$. We can apply Lemma \ref{lem.transversality} with $\mathrm{a}(b) = f^k_b(b)$, $X = \partial D(f_{b_0})$, $h$ being the holomorphic motion of $\partial D(f_b)$ with base point $b_0$ (Proposition \ref{holo motion}). Let $z_0 = \mathrm{a}(b_0)$, $\tilde{X}$ be the connected component of $X\cap D(z_0,r_0)$ containing $z_0$, $\Tilde{Y}^{r_0}$ the connected component of $Y^{r_0}$ containing $b_0$. Since $\tilde{X}$ is a quasi-arc, $\Tilde{Y}^{r_0}$ is the union of $m$ quasi-arcs intersecting transversally at $b_0$, where $m$ is the degree of $\mathrm{a}$ at $b_0$. Take $\mathcal{V}_0$ small enough such that $\mathcal{V}_0\cap\partial\mathcal{B}\subset \Tilde{Y}^{r_0}$. Notice that $\partial\mathcal{B}$ has no isolated point (since $\partial\pB$ is simply connected), $\mathcal{V}_0\cap\partial\mathcal{B}$ is the union of at least 1 components of $\Tilde{Y}^{r_0}\setminus b_0$, in particular, $\partial\mathcal{B}$ is locally connected at $b_0$. By \cite[Thm. 3.1]{Za2}, the number of these components is exactly 2, hence $\mathcal{V}_0\cap\partial\mathcal{B}$ is a quasi-arc. Thus if $\partial\pB\cap\Zak = \emptyset$, $\partial\pB$ is a quasi-circle.
\end{proof}

\begin{prop}\label{unif cap}
Let $\cB \subset \cU_\infty$ be a parabubble so that for $b \in \cB$, we have $co_b \in T(f_b)$. Define $\Psi : \cB \to \Psi(\cB) \subset T(\q)$ by $\Psi(b) := \psi_b(co_b)$, where $co_b$ is the co-critical point and $\psi_b$ is given in \thmref{cubic tree model}. Then $\Psi$ is a conformal map between $\cB$ and the bubble $\Psi(\cB)$ for $\q$. Moreover, $\Psi$ extends to a homeomorphism between $\overline{\cB}$ and $\overline{\Psi(\cB)}$. 
\end{prop}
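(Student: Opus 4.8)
The plan is to show that $\Psi$ is a proper holomorphic map onto a single bubble of $\q$, that it has degree one, and then to push the conclusion to the boundary using Lemma \ref{arc partial}.

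Fix a basepoint $b_0 \in \cB$. Since $\cB$ is connected (and simply connected, by \cite[Thm. 3.1]{Za2}), $b \mapsto co_b$ is algebraic, and the tree $T(f_b)$ moves continuously in $b$ (Proposition \ref{holo motion} and \thmref{cubic tree model}), the combinatorial address of the bubble $B_b \ni co_b$ of $f_b$ is the same for every $b \in \cB$. Here I would first observe that for the minimal $k$ with $f_b^k(b) \in D(f_b)$ one in fact has $f_b^k(b) \in \Int D(f_b)$ (otherwise $b$ would lie on a boundary locus rather than in the open set $\cB$), so that $v_b$, and hence $co_b$, sits in the interior of $B_b$; thus $\tilde B := \psi_{b_0}(B_{b_0})$ is a bubble for $\q$ and $\Psi(\cB) \subseteq \tilde B$. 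Holomorphy of $\Psi$ follows from the inductive formula of \thmref{cubic tree model}: on a fixed combinatorial bubble, $\psi_b$ is a finite composition of the holomorphically varying map $(\phi^0_\q)^{-1}\circ\phi^0_b$ with fixed inverse branches of $\q$ and with $f_b$, so $(b,z)\mapsto\psi_b(z)$ is holomorphic, and therefore $\Psi(b) = \psi_b(co_b)$ is holomorphic on $\cB$. (That $\Psi$ is non-constant — which says only that its degree is finite and $\geq 1$ — follows from a transversality argument in the spirit of Lemma \ref{lem.transversality}.)

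Next I would prove that $\Psi \colon \cB \to \tilde B$ is proper. The maps $(\phi^0_b)^{-1}\circ\phi^0_{b_0}$, the bubbles, and the conjugacies $\psi_b$ all extend continuously to $b \in \partial\cB$ (Proposition \ref{holo motion}, \corref{tree loc conn}, \thmref{yang lc}). For $b \in \partial\cB$ the free critical orbit reaches $\partial D(f_b)$ (or one is at the degenerate Zakeri locus), so $co_b \in \partial B_b$ and $\psi_b(co_b) \in \partial\tilde B$; conversely, if $b_n \to b_0 \in \partial\cB$ while $\Psi(b_n)$ stayed in a compact subset of $\tilde B$, then by continuity $f_{b_0}^k(b_0) \in \Int D(f_{b_0})$, forcing $b_0 \in \cB$ — a contradiction. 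Hence $\Psi$ is proper, so a branched covering of $\tilde B$ of some degree $d \geq 1$, and in particular onto $\tilde B$.

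It remains to see that $d = 1$, i.e. that $\Psi$ is injective. If $\Psi(b_1) = \Psi(b_2)$, then $f_{b_1}$ and $f_{b_2}$ have their co-critical values, hence their critical values, at the same combinatorial position, so the pull-back surgery of \thmref{thm.rigid.Zak}, based on the Blaschke models of Proposition \ref{prop.surgery}, produces a quasiconformal $\tilde H$ with $\tilde H \circ f_{b_1} = f_{b_2} \circ \tilde H$, conformal on $B_\infty(f_{b_1})$ and on $D(f_{b_1})$. As the free critical orbit of a parabubble map is absorbed into the Siegel disk after finitely many steps, $\tilde H$ may be taken conformal on the whole interior of $K(f_{b_1})$, so $\bar\partial\tilde H$ is supported on $J(f_{b_1})$, which has zero area (Lemma \ref{lem.zero.measure}; see also \cite{WaYan}). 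Thus $\tilde H$ is conformal, $f_{b_1}$ and $f_{b_2}$ are conformally conjugate respecting the markings, and $b_1 = b_2$; so $\Psi$ is a conformal isomorphism $\cB \to \tilde B = \Psi(\cB)$. Finally, bubbles of $\q$ are Jordan domains (by local connectivity, \thmref{quadratic lc}), and $\partial\cB$ is locally a quasi-arc by Lemma \ref{arc partial} — hence a Jordan curve, $\cB$ being simply connected — so $\Psi$ extends to a homeomorphism $\overline{\cB} \to \overline{\Psi(\cB)}$ by Carathéodory's theorem. I expect the main obstacles to be the continuity of the entire picture up to $\partial\cB$ (particularly where $\partial\cB$ meets the Zakeri curve) in the properness step, and the zero-area input in the injectivity step; these are precisely the places where the a priori bounds of \cite{Y} and the measure estimate Lemma \ref{lem.zero.measure} enter.
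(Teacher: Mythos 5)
Your overall architecture matches the paper's (properness and holomorphy of $\Psi\colon\cB\to B$, then injectivity via the pull-back surgery, then a boundary extension), and your direct proof of properness and your zero-area route to injectivity are reasonable substitutes for the paper's citations to Zakeri's Lemma~5.4 and to \thmref{thm.rigid.Zak}. Two things need attention, one cosmetic, one real.

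The cosmetic one: Lemma~\ref{lem.zero.measure} is stated under Assumption~\ref{assum.well-defined}, which requires $b\in\cS_{\fun}\cap\partial\cC(\theta)$ and non-separability, so it does not apply to a capture parameter $b_1\in\cB\subset\Int\cC(\theta)$. The zero-area statement you want is true for capture parameters, but the correct reference is the one you already give in parentheses, not Lemma~\ref{lem.zero.measure}.

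The real gap is the boundary extension. You write ``$\partial\cB$ is locally a quasi-arc by Lemma~\ref{arc partial} --- hence a Jordan curve''. But Lemma~\ref{arc partial} is stated only for $b_0\in\cU_\infty$; it gives no information about $\partial\cB$ at a point of the Zakeri curve $\cZ_\theta$. Exactly when $\cB$ corresponds to a bubble $B$ of $\q$ rooted on $\partial D(\q)$, one has $\partial\cB\cap\cZ_\theta\neq\varnothing$, and then your chain ``locally quasi-arc everywhere $\Rightarrow$ Jordan curve $\Rightarrow$ Carath\'eodory'' is not established. The paper's second paragraph is precisely the missing work: it introduces parameter internal rays $\cR^\cB_t=\Psi^{-1}(R^B_t)$, observes that for $b\in\cR^\cB_t$ one has $co_b\in R^{B_b}_t$, then shows that a parameter $b_t\in\omega(\cR^\cB_t)\cap\cZ_\theta$ forces $co_{b_t}\in\partial D'(f_{b_t})$, which (by comparing generations) can happen only for $t=0$; combined with the discreteness of the solution set this gives $\cZ_\theta\cap\partial\cB=\{b_0\}$, after which Lemma~\ref{arc partial} handles the remaining arc. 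You flag this as the expected obstacle, but flagging is not closing; as written the proof of the homeomorphic extension is incomplete in exactly this case.
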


\begin{proof}
For $b \in \cB$, the co-critical point $co_b$ is contained in some bubble $B_b \subset T(f_b)$. Moreover, there exists a bubble $B$ for $\q$ independent of $b$ such that $\psi_b(B_b) = B$. It follows from the proof of Lemma 5.4 in \cite{Za2} that $\Psi|_{\cB}$ is a proper holomorphic map from $\cB$ onto $B$. Additionally, \thmref{thm.rigid.Zak} implies that $\Psi|_{\cB}$ must be injective. It remains to show that $\Psi$ extends continuously to $\partial \cB$. If $\partial \cB \cap \cZ_\theta = \varnothing$, then the result follows immediately from \lemref{arc partial}.

Suppose that $\partial \cB \cap \cZ_\theta \neq \varnothing$. Denote the internal rays of angle $t \in \bbR/\bbZ$ in $B \subset T(\q)$ and $B_b \subset T(f_b)$ by $R^B_t \subset B$ and $R^{B_b}_t \subset B_b$ respectively. Define the parameter internal ray of angle $t$ as $\cR^{\cB}_t := \Psi^{-1}(R^B_t)$. If $b\in \cR^{\cB}_t$, then in the dynamical plane for $f_b$, the co-critical point $co_b$ is contained $R^{B_b}_t$.

Let $t$ be an internal angle such that the intersection between the accumulation set $\omega(\cR^{\cB}_t)$ and $\cZ_\theta$ is non-empty, and hence, contains some parameter $b_t$. By the properness of $\Psi$, it follows that in the dynamical plane for $f_{b_t}$, the co-critical point $co_{b_t}$ is contained in the boundary of the bubble $B_{b_t} := \psi_{b_t}^{-1}(B)$. Let $D'(f_{b_t})$ be the unique bubble of generation $1$ rooted at $1/b_t$ for $f_{b_t}$. Evidently, $b_t \in \partial D(f_{b_t})$ if and only if $co_{b_t} \in \partial D'(f_{b_t})$. Moreover, the latter is true if and only if $t=0$ and
$$
f_{b_t}^{n-1}(co_{b_t}) = f_{b_t}^{n-1}(b_t) = 1/b_t,
$$
where $n = \gen(B_{b_t}) = \gen(B)$. It follows that $\omega(\cR^{\cB}_0)$ is a discrete set, and hence, a singleton $\{b_0\}$. Moreover, if $t \neq 0$, then $b_t \not\in \cZ_\theta$, and $\cR^{\cB}_t$ lands at $b_t$ by \lemref{arc partial}. We conclude that $\cZ_\theta \cap \partial \cB = \{b_0\}$, and the result follows.
\end{proof}

Consider the filled Julia set $K(\q)$ of the quadratic Siegel polynomial $\q$. Recall that the critical point of $\q$ is at $1$. For $m \in \bbZ$, denote
$$
c_m(\q) := \q|_{\partial D(\q)}^m(1).
$$
For $n \geq 0$, define the {\it limb $L_{-n}(\q)$ rooted at $c_{-n}(\q)$} as the union of $\{c_{-n}(\q)\}$ and the connected component of $T(\q) \setminus c_{-n}(\q)$ that does not contain $D(\q)$.

\begin{prop}\label{limb ext}
Let $b_0 \in \cU_\infty$ be a parameter such that $co_{b_0} \in T(f_{b_0})$. Let $n \geq 0$ be the integer such that $\psi_{b_0}(co_{b_0}) \in L_{-n}(\q)$. Then there exists a homeomorphism
$$
\Psi^{-1} : L_{-n}(\q) \to \cL_{-n} \subset \cU_\infty \sqcup \cZ_\theta
$$
such that for any bubble $B \subset L_{-n}(\q)$ for $\q$, the set $\cB := \Psi^{-1}(B)$ is a parabubble, and $\Psi|_{\overline{\cB}}$ coincides with the map given in \propref{unif cap}. Additionally, we have
$$
\{\zeta_{-n} := \Psi^{-1}(c_{-n}(\q))\} = \cL_{-n} \cap \cZ_\theta,
$$
and for $b_* := \zeta_{-n}$, we have $f_{b_*}^n(b_*) = 1/b_*$.
\end{prop}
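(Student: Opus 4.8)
The plan is to carry the bubble--tree structure of $L_{-n}(\q)$ into parameter space one bubble at a time, uniformizing each piece by \propref{unif cap} and gluing the pieces along the combinatorial adjacencies of the tree. The hypothesis puts $\psi_{b_0}(co_{b_0})$ in a closed bubble $\overline{B^{j_0}}\subset L_{-n}(\q)$ for $\q$ (a point of $T(f_{b_0})$ lies in a closed bubble), so $b_0$ lies in $\overline{\cB^{j_0}}$ for the parabubble $\cB^{j_0}$ over $B^{j_0}$, on which $co_b\in T(f_b)$, and \propref{unif cap} yields a conformal $\Psi:\cB^{j_0}\to B^{j_0}$ extending homeomorphically to $\overline{\cB^{j_0}}\to\overline{B^{j_0}}$. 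The propagation step would be: given $\cB^i$ already uniformized onto $B^i$, and a bubble $B\subset L_{-n}(\q)$ adjacent to $B^i$ (its parent or a child) with $\{x\}=\partial B\cap\partial B^i$, the point $\eta:=(\Psi|_{\overline{\cB^i}})^{-1}(x)\in\partial\cB^i$ is a parameter at which $co_\eta$ sits at the joint $\psi_\eta^{-1}(x)$ on $\partial(\psi_\eta^{-1}(B^i))$ --- so $co_\eta$ is an iterated preimage of $1/\eta$ and, by \thmref{cubic tree model}, the bubble configuration of $f_b$ around $co_b$ for $b$ near $\eta$ models that of $\q$ around $x$. Arguing as in \lemref{arc partial} --- applying \lemref{lem.transversality} at $\eta$ with $\mathrm{a}(b)=f_b^{\gen(B)}(b)$, $X=\partial D(f_\eta)$ near $f_\eta^{\gen(B)}(\eta)$, and $h$ the holomorphic motion of $\partial D(f_b)$ --- one obtains a quasi-arc through $\eta$ bounding a parabubble $\cB$ attached to $\cB^i$ at $\eta$, which \propref{unif cap} uniformizes onto $B$. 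Because the bubbles of $L_{-n}(\q)$ form a connected tree, iterating this realizes a parabubble $\cB^j$ over every bubble $B^j\subset L_{-n}(\q)$, the $\overline{\cB^j}$ meeting pairwise only in their common boundary points over joints; I would set $\cL_{-n}:=\overline{\bigcup_j\cB^j}$ and let $\Psi^{-1}$ be the common extension of the maps $(\Psi|_{\overline{\cB^j}})^{-1}$.

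Propagating towards the root reaches, in finitely many steps, the first bubble $B^0=B_{-n}$, for which $\rt(B^0)=c_{-n}(\q)$ and $\gen(B^0)=n+1$, together with its parabubble $\cB^0$. The case analysis of $\omega(\cR^{\cB}_0)$ in the proof of \propref{unif cap}, run with $\gen(B^0)=n+1$, identifies the boundary point of $\cB^0$ over $\rt(B^0)=c_{-n}(\q)$ as the parameter $b_*$ with $f_{b_*}^{n}(b_*)=1/b_*$; both critical points then lie on $\partial D(f_{b_*})$, so $b_*\in\cZ_\theta$, and I would set $\zeta_{-n}:=b_*=\Psi^{-1}(c_{-n}(\q))$. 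The same analysis gives $\partial\cB^j\cap\cZ_\theta=\varnothing$ for every deeper bubble $B^j\subset L_{-n}(\q)$, $j\geq 1$: its root $\rt(B^j)$ is an interior joint of the limb rather than a point $c_{-k}(\q)$ on $\partial D(\q)$, while the $\cZ_\theta$--criterion requires the latter. Hence $\cL_{-n}\cap\cZ_\theta=\{\zeta_{-n}\}$ and $\cL_{-n}\subset\cU_\infty\sqcup\cZ_\theta$.

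It remains to verify that $\Psi^{-1}$ is a homeomorphism onto $\cL_{-n}$. First, $\cL_{-n}$ is compact: for $b$ in any parabubble $\cB^j$ one has $co_b\in T(f_b)\subset K(f_b)$, hence $v_b=f_b(co_b)\in K(f_b)$, so the orbit of the critical point $b$ stays in $K(f_b)$ and $b\in\cC(\theta)$; since $\cC(\theta)$ is compact and $\cL_{-n}=\overline{\bigcup_j\cB^j}$, so is $\cL_{-n}$. The map $\Psi:\cL_{-n}\to L_{-n}(\q)$ is continuous (holomorphic on each parabubble, continuous up to the boundary by \propref{unif cap}, compatible on overlaps) and is onto the union of closed bubbles, which is dense in $L_{-n}(\q)$ since $K(\q)$ is locally connected (\thmref{quadratic lc}, \corref{cor.landing.quadratic}); for an infinite bubble ray $\{B^{j_k}\}$ in $L_{-n}(\q)$ landing at $z$, one declares $\Psi^{-1}(z)$ to be the unique parameter in $\bigcap_k\overline{\cB^{j_k}}$, so $\Psi$ is surjective. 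Injectivity follows from \thmref{thm.rigid.Zak}: $\Psi(b)=\Psi(b')$ forces $f_b$ and $f_{b'}$ to be combinatorially equivalent, hence $b=\pm b'$, and the two possibilities cannot both occur in $\cL_{-n}$ since $b\mapsto -b$ carries $\cL_{-n}$ onto the disjoint limb $-\cL_{-n}$ (with distinct roots $\pm\zeta_{-n}$). A continuous bijection from a compact space onto a Hausdorff space is a homeomorphism; the relation $f_{\zeta_{-n}}^{n}(\zeta_{-n})=1/\zeta_{-n}$ was recorded along the way.

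I expect two steps to carry the real weight. The main obstacle is showing that the nested closed parabubbles $\overline{\cB^{j_k}}$ shrink to single points, which underlies both the continuity of $\Psi^{-1}$ and its surjectivity onto the landing points: the quadratic bubbles $\overline{B^{j_k}}$ shrink by local connectivity of $K(\q)$, but transporting this across the uniformizations calls for uniform geometric control of the boundaries $\partial\cB^j$, combining the quasi-arc structure of \lemref{arc partial}, the quasiconformal estimates inside \lemref{lem.transversality}, and the rigidity of \thmref{thm.rigid.Zak}. The second is the behaviour of the uniformization near $\cZ_\theta$: at $\zeta_{-n}$ the free critical point meets the rotational orbit of the marked one on the Siegel boundary, the naive assignment $b\mapsto\psi_b(co_b)$ becomes discontinuous, and \lemref{arc partial} (stated only for $b\in\cU_\infty$) must be replaced by a direct transversality argument with $\mathrm{a}(b)=f_b^{n+1}(b)$ that handles this degeneration --- the required continuous extension of $\Psi$ across $\zeta_{-n}$ being the one supplied by \propref{unif cap}.
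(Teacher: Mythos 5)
Your proof follows essentially the same route as the paper's: use \lemref{arc partial} to translate ``$b_0$ in a parabubble closure'' to ``$co_{b_0}$ in a bubble closure'', uniformize each parabubble via \propref{unif cap}, and propagate across the joints of the bubble tree with uniqueness supplied by \thmref{thm.rigid.Zak}. The paper executes only one propagation step and says ``the result follows''; you make the induction explicit and add a verification of the homeomorphism claim and of the $\cZ_\theta$-intersection. Your identification of $\zeta_{-n}$ from the case analysis in \propref{unif cap} (with $\gen(B_{-n})=n+1$) and the relation $f_{b_*}^n(b_*)=1/b_*$ are both derived correctly.

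One piece of over-engineering: you set $\cL_{-n}:=\overline{\bigcup_j\cB^j}$ and then worry about assigning $\Psi^{-1}(z)$ to landing points $z$ of infinite bubble rays. But $L_{-n}(\q)$ is, by definition, a subset of $T(\q)$ (the union of closed bubbles), and by \corref{cor.landing.quadratic} this is \emph{disjoint} from the landing points of infinite bubble rays. So the correct target is $\cL_{-n}=\Psi^{-1}\bigl(L_{-n}(\q)\bigr)=\bigcup_j\overline{\cB^j}$ \emph{without} taking the closure, the surjectivity onto landing points is not needed here, and the ``compact domain $\to$ Hausdorff codomain'' shortcut does not apply since neither $L_{-n}(\q)$ nor $\cL_{-n}$ is compact. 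What does need checking is bi-continuity of the glued map at a boundary point of a bubble that is accumulated by infinitely many joints (each carrying a smaller bubble); this is exactly the uniform control over parabubble diameters you correctly flag in your final paragraph. The paper's terse proof also elides this point, so you have not introduced a new gap — you have just phrased the genuine subtlety in terms of landing points (which belong in \thmref{thm.main.siegel}'s proof) rather than in terms of continuity at accumulating joints inside $L_{-n}(\q)$, where it actually lives for this proposition.
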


\begin{proof}
By \lemref{arc partial}, the parameter $b_0$ is contained in the closure of a parabubble $\cB_0 \subset \cU_\infty$ if and only if in the dynamical plane for $f_{b_0}$, the co-critical point $co_{b_0}$ is contained in the closure of some bubble $B_{b_0} \subset T(f_{b_0}) \setminus D(f_{b_0})$.

Consider the bubble
$$
B_0 :=\Psi(\cB_0) = \psi_{b_0}(B_{b_0}) \subset T(\q)
$$
for $\q$. Let $x_1 \in \partial B_0$ be a root point for $\q$. Then there exists a unique bubble $B_1 \subset T(\q)$ such that $\partial B_0 \cap \partial B_1 = \{x_1\}$. Denote $b_1 := \Psi^{-1}(x_1)$.

We have $\partial \Psi_{b_1}^{-1}(B_0) \cap \partial \Psi_{b_1}^{-1}(B_1) = \{b_1\}$. If $B_1 \neq D(\q)$, then by the above observation, there exists a parabubble $\cB_1 \subset \cU_\infty$ such that $\partial \cB_0 \cap \partial \cB_1 = \{b_1\}$. Moreover, $\cB_1$ must be unique by \thmref{thm.rigid.Zak}. The result follows.
\end{proof}

\subsection{Parameter external rays}

The {\it cubic Siegel escape locus} is the set $\cA(\theta) := \bbC^* \setminus \cC(\theta)$. We denote the unbounded component of $\cA(\theta)$ by $\cA_\infty(\theta)$.

\begin{thm}[\cite{Za1} Theorem 6.1]\label{uniformization of escape}
Let $\Phi^\infty : \cA_\infty(\theta) \to \mathbb{C} \setminus \overline{\mathbb{D}}$ be a map defined by
$$
\Phi^\infty(\b) := \phi^\infty_\b(co_\b),
$$
where $co_\b$ is the co-critical point of $f_b$. Then $\Phi^\infty$ is a biholomorphism.
\end{thm}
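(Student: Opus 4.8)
The plan is to run the classical Douady--Hubbard argument for uniformizing the complement of a connectedness locus: we show that $\Phi^\infty$ is well-defined and holomorphic, then that it is proper, then that it has degree one, and conclude that it is a biholomorphism. For $b\in\cA_\infty(\theta)\subset\cU_\infty$ the Julia set $J(f_b)$ is disconnected; since the critical point $1/b$ lies on $\partial D(f_b)$ and therefore never escapes, the other critical point $b$ must lie in $B_\infty(f_b)$, and hence so does the cocritical point $co_b$ (because $f_b(co_b)=v_b=f_b(b)$). By the discussion in \S\ref{sec:ext rays}, $\phi^\infty_b$ is undefined only at the self-intersection point $b$ of the figure-eight $(\phi^\infty_b)^{-1}(\partial\bbD_{\rho_b})$, while it extends conformally to a neighbourhood of $co_b$, so $\Phi^\infty(b)=\phi^\infty_b(co_b)$ makes sense. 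Writing $G=G_{f_b}$ for the Green's function, the identities $G\circ f_b=3G$ and $f_b(co_b)=f_b(b)$ give $G(co_b)=G(b)$, so $|\Phi^\infty(b)|=e^{G(b)}>1$ and $\Phi^\infty$ indeed takes values in $\CC\setminus\overline{\DD}$. For holomorphy, observe that $v_b=f_b(co_b)$ satisfies $G(v_b)=3G(b)>G(b)=\log\rho_b$, so $v_b$ lies strictly inside the region where $\phi^\infty_b$ is univalent; there $b\mapsto\phi^\infty_b(v_b)$ is holomorphic by holomorphic dependence of the B\"ottcher coordinate on the polynomial, and $\Phi^\infty$ is the continuously chosen cube root of it, hence holomorphic.

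Next, properness. Suppose $b_n\in\cA_\infty(\theta)$ leaves every compact subset; passing to closures in $\hat\CC$, either $b_n$ accumulates on $\partial\cC(\theta)$ or $b_n\to\infty$ (it cannot accumulate at $0$, since $\cC(\theta)$ separates $0$ from $\infty$). In the first case, joint continuity of $(b,z)\mapsto G_{f_b}(z)$ together with $G_{f_{b_*}}(b_*)=0$ for $b_*\in\partial\cC(\theta)$ forces $|\Phi^\infty(b_n)|=e^{G_{f_{b_n}}(b_n)}\to 1$. In the second case, $co_{b_n}=(3-b_n^2)/(2b_n)\sim-b_n/2\to\infty$, and the estimate $G_{f_b}(z)=\log|z|+O(1)$ for $z$ large (uniform in $b$, since the $f_b$ share the leading coefficient $e^{2\pi i\theta}/3$) gives $|\Phi^\infty(b_n)|\to\infty$. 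Either way $\Phi^\infty(b_n)$ leaves every compact subset of $\CC\setminus\overline{\DD}$, so $\Phi^\infty$ is proper. Being proper, non-constant and holomorphic between connected Riemann surfaces, $\Phi^\infty$ is surjective and is a branched covering of some finite degree $d$.

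To see that $d=1$, note that $\phi^\infty_b(z)\sim Cz$ as $z\to\infty$ for a nonzero constant $C$ independent of $b$ (indeed $C^2=e^{2\pi i\theta}/3$, the common leading coefficient), so $\Phi^\infty(b)=\phi^\infty_b(co_b)\sim-\tfrac{C}{2}\,b$ as $b\to\infty$. In particular $\Phi^\infty$ is univalent near the $\infty$-end of $\cA_\infty(\theta)$, hence that end maps to the $\infty$-end of $\CC\setminus\overline{\DD}$ by a one-sheeted cover; since $d$ is the same over every fibre, $d=1$. (Equivalently: for $|w|$ large the equation $\Phi^\infty(b)=w$ has in $\cA_\infty(\theta)$ only the solution $b\approx-2w/C$, the other branch $b\approx 3C/(2w)$ lying near $0$.) A degree-one proper holomorphic map has no branch points, so $\Phi^\infty$ is a biholomorphism.

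The two genuinely delicate inputs are (i) that $co_b$ always lies in a region to which $\phi^\infty_b$ extends conformally and holomorphically in $b$ --- this is exactly where one exploits the symmetry of the cubic about its cocritical point together with the structure of the figure-eight through the escaping critical point $b$ --- and (ii) the boundary behaviour on $\partial\cC(\theta)$, namely that $G_{f_b}(b)\to 0$ there, which rests on joint continuity of the Green's function and on $b$ being the \emph{unique} escaping critical point, so that the escaping geometry is controlled by $b$ alone. I expect (ii) --- establishing properness at the inner boundary --- to be the principal obstacle.
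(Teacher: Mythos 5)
The paper does not give its own proof of this statement---it cites it as Theorem~6.1 from Zakeri's preprint [Za1]---and your proposal is precisely the classical Douady--Hubbard uniformization argument that Zakeri runs there (well-definedness via $G(co_b)=G(b)>0$, holomorphy, properness at both ends, then degree one by the asymptotics at $\infty$), so the approach matches. One small slip worth flagging: the asymptotic $\Phi^\infty(b)\sim -\tfrac{C}{2}b$ is off, because the first Laurent coefficient $\beta_1$ of $\phi^\infty_b(z)=Cz\bigl(1+\beta_1/z+\cdots\bigr)$ is itself $\beta_1=-\tfrac12(b+1/b)=O(b)$, so the crude approximation $\phi^\infty_b(z)\approx Cz$ is not legitimate at $z=co_b=O(b)$; carrying the $\beta_1$ term gives $\Phi^\infty(b)= C\,co_b+C\beta_1+O(1/b)=-Cb+O(1/b)$. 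This doubles your constant and invalidates the ``$b\approx 3C/(2w)$'' formula, but it does not affect the logic, since all you need is that $\Phi^\infty$ is univalent and tends to $\infty$ linearly near the $\infty$-end, which still holds.
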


For $t \in \bbR/\bbZ$ and $l \geq 1$, the curves
$$
\mathcal{R}^{\infty}_t := \{\text{arg}(\Phi^\infty) = t\}
\matsp{and}
\cE^\infty_l := \{|\Phi^\infty)| = l\}
$$
are called the {\it parameter external ray with external angle $t$} and the {\it parameter equipotential at level $l$} respectively.

Suppose that there are two distinct parameter external rays $\cR^\infty_s$ and $\cR^\infty_t$ that co-land at a parameter $x \in \cC(\theta)$. The connected component of $\bbC \setminus (\cR^\infty_s \cup \cR^\infty_t \cup \{x\})$ not containing $\cZ_\theta$ is called a {\it parameter wake}, and is denoted $\cW_{s,t}$. The point $x$ is referred to as the {\it base point of $\cW_{s,t}$}.

For $(s,t) \subset \bbR/\bbZ$, denote
$$
\cV_{s,t} := \bigcup_{u \in (s,t)} \cR^\infty_u \subset \cA_\infty(\theta).
$$

We say that $b$ and $f_b$ are {\it parabolic} if $f_b$ has a parabolic periodic orbit. Similarly, we say that $ b$ and $f_b$ are {\it Misiurewicz} if $b$ is strictly pre-periodic under $f_b$.

\begin{lem}\label{fund wakes}
The parameter external rays $\cR^\infty_{2/3}$ and $\cR^\infty_{5/6}$ co-land at some parabolic parameter $b_* \in \cC_\infty(\theta)$, creating a parameter wake $\cW_{\frac{2}{3}, \frac{5}{6}}$. Similarly, $\cR^\infty_{1/6}$ and $\cR^\infty_{1/3}$ co-land at $-b_*$, creating a parameter wake $\cW_{\frac{1}{6}, \frac{1}{3}}$. Furthermore, we have the following landing patterns in the dynamical plane.
\begin{enumerate}[i)]
\item For $b \in \cW_{\frac{1}{6}, \frac{1}{3}} \cup \cW_{\frac{2}{3}, \frac{5}{6}}$, the bubble ray $\bR_0(f_b)$ and the external rays $R^\infty_0(f_b)$ and $R^\infty_{1/2}(f_b)$ co-land at a bi-accessible repelling fixed point.
\item For $b \in \cV_{\frac{5}{6}, \frac{1}{6}}$, the bubble ray $\bR_0(f_b)$ and the external ray $R^\infty_0(f_b)$ co-land at a uni-accessible repelling fixed point.
\item For $b \in \cV_{\frac{1}{3}, \frac{2}{3}}$, the bubble ray $\bR_0(f_b)$ and the external ray $R^\infty_{1/2}(f_b)$ co-land at a uni-accessible repelling fixed point.
\end{enumerate}
\end{lem}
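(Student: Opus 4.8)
The plan is to analyze the dynamical plane for parameters near $\infty$ in $\cA_\infty(\theta)$, transport the landing information via the B\"ottcher-type uniformization $\Phi^\infty$ of Theorem~\ref{uniformization of escape}, and then track what happens as parameters cross into $\cC(\theta)$. First I would recall that for $b$ in $\cA_\infty(\theta)$ the two fixed external rays $R^\infty_0(f_b)$ and $R^\infty_{1/2}(f_b)$ are well-defined, and that $0$ and $1/2$ are the only fixed angles under angle-tripling. The fixed bubble ray $\bR_0(f_b)$ co-lands with the fixed external ray $R^\infty_0(f_b)$ by definition of the indexing (cf.\ the construction after \thmref{cubic tree model}), at one of the fixed points of $f_b$; similarly $\bR_{1/2}(f_b)$ is its unique distinct preimage bubble ray. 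The key structural fact is that the cocritical-point map $b \mapsto co_b$ is conjugate, via $\Phi^\infty$, to $w \mapsto w^3$ on a neighborhood of $\infty$, so the combinatorics of the cocritical orbit relative to the two fixed rays in the dynamical plane are governed by where $\arg \Phi^\infty(b)$ sits relative to the fixed points $0,1/2$ and their preimages $1/6, 1/3, 2/3, 5/6$ under tripling.

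Next I would set up the dichotomy at the fixed point. For $b$ deep in $\cA_\infty(\theta)$, $f_b$ has exactly three fixed points: $0$ (the Siegel point, in $D(f_b)$) and two others. Whether the two fixed external rays $R^\infty_0$ and $R^\infty_{1/2}$ land at the \emph{same} repelling fixed point (making it bi-accessible, and hence separating $D(f_b)$ from $B_\infty(f_b)$ in a way that forces the free critical point $b$ into a wake) or at \emph{two distinct} fixed points depends precisely on which sector of $\cA_\infty(\theta)$ the parameter lies in. I would argue this by a standard holomorphic-motion / perturbation argument: the pattern of rays landing at repelling fixed points is locally constant on $\cA_\infty(\theta)\setminus\{\text{parabolic parameters}\}$ and changes exactly across parameters where a fixed point becomes parabolic (with multiplier a root of unity; here the relevant multiplier is a cube root of unity for a parabolic fixed point whose rays triple-permute). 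The arithmetic of tripling shows the relevant pre-fixed angles are $\{1/6,1/3\}$ bounding one arc and $\{2/3,5/6\}$ bounding another; at the parameters $\pm b_*$ where $\cR^\infty_{2/3},\cR^\infty_{5/6}$ (resp.\ $\cR^\infty_{1/6},\cR^\infty_{1/3}$) co-land, $f_b$ acquires a parabolic fixed point with two distinct attracting petals, and these parameters lie in $\cC_\infty(\theta)$ because the free critical point is captured by the parabolic basin. This gives the first two sentences of the lemma, and identifies $b_*$ and $-b_*$ (the symmetry $b\mapsto -b$ matching the symmetry of the family).

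Then I would establish the three landing patterns i)–iii) by tracking a path of parameters from $\infty$ toward each region and using continuity of the rays that land at repelling fixed points. In the wakes $\cW_{1/6,1/3}$ and $\cW_{2/3,5/6}$ the separating fixed point is parabolic at the base point $\pm b_*$ and becomes repelling and bi-accessible (with $R^\infty_0, R^\infty_{1/2}$ co-landing there together with the fixed bubble ray $\bR_0(f_b)$, since the two fixed bubble rays must also collide onto it by local connectivity at $T(f_b)$, \corref{access to tree}) as one enters the wake; in the complementary sectors $\cV_{5/6,1/6}$ and $\cV_{1/3,2/3}$ the two fixed external rays land at \emph{distinct} repelling fixed points, and $\bR_0(f_b)$ co-lands with whichever one carries the angle $0$, namely with $R^\infty_0(f_b)$ in $\cV_{5/6,1/6}$ and with $R^\infty_{1/2}(f_b)$ in $\cV_{1/3,2/3}$ (after matching the bubble-ray indexing, noting $\bR_{1/2}$ is the preimage bubble ray). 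The main obstacle, I expect, is rigorously pinning down the landing of the \emph{parameter} external rays $\cR^\infty_{2/3},\cR^\infty_{5/6}$ (and their reflections) at genuine parabolic parameters and showing these parameters lie in $\cC_\infty(\theta)$ rather than merely that the rays accumulate on an arc: this requires either a parameter-space Yoccoz-type inequality argument or the standard transfer principle that a parabolic bifurcation in the dynamical plane (here a fixed point splitting a bi-accessible configuration into two uni-accessible ones across angle-tripling data $\{1/6,1/3\}$, $\{2/3,5/6\}$) is detected by co-landing parameter rays at the bifurcation parameter, together with the connectedness of $J(f_{\pm b_*})$ which follows because the free critical orbit lands in the parabolic cycle's immediate basin. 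Everything else is a continuity/monodromy bookkeeping over the simply connected sectors $\cV$ and $\cW$, using that the fixed-point ray portrait is locally constant away from parabolics.
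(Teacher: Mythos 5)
Your approach is genuinely different from the paper's. The paper leans on Buff--Henriksen \cite{BuHe}: Proposition~8 there gives the co-landing of $\cR^\infty_{2/3},\cR^\infty_{5/6}$ (resp.\ $\cR^\infty_{1/6},\cR^\infty_{1/3}$) at the parabolic parameters $\pm b_*$, and Proposition~9 gives the co-landing of the two dynamical fixed rays inside the wakes. The paper then only has to (a) append the fixed bubble ray $\bR_0(f_b)$ to the ray portrait (any fixed rational ray landing on the repelling fixed point that $\bR_0$ already lands on must be $R^\infty_0$ or $R^\infty_{1/2}$), and (b) rule out bi-accessibility in the $\cV$-sectors using \propref{prop.separate}~ii), which says a non-trivial periodic wake must trap the free critical point --- impossible for $b\in\cV_{5/6,1/6}$ or $\cV_{1/3,2/3}$. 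You instead try to re-derive the Buff--Henriksen input from scratch via a perturbation/monodromy argument, which, if carried out, would make the lemma more self-contained.

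However, there are concrete gaps in your version. First, the obstacle you flag yourself --- landing of the parameter rays at a genuine parabolic parameter in $\cC_\infty(\theta)$ and the ``transfer principle'' you invoke --- is exactly the nontrivial content; it is not standard without a cited argument, and in this paper it is resolved precisely by the Buff--Henriksen references. Second, your monodromy bookkeeping is anchored on ``locally constant on $\cA_\infty(\theta)\setminus\{\text{parabolic parameters}\}$,'' but $\cA_\infty(\theta)$ contains \emph{no} parabolic parameters: for escaping $b$ one critical orbit escapes and the other is trapped in $\partial D(f_b)$, leaving no critical orbit to feed a parabolic basin. The ray portrait changes in $\cA_\infty(\theta)$ not at parabolics but at the parameter rays where a dynamical ray crashes on the free critical point (e.g.\ $R^\infty_{1/2}(f_{b_0})$ crashes on $b_0$ for $b_0\in\cR^\infty_{1/6}$); this is the mechanism the paper uses for parts~ii) and~iii). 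Third, the parabolic fixed point at $b_*$ has multiplier~$1$ (rotation number $0/1$), since both $R^\infty_0$ and $R^\infty_{1/2}$ are fixed under angle tripling --- not a cube root of unity as you suggest. Finally, your claim that in $\cV_{5/6,1/6}$, $\cV_{1/3,2/3}$ the two fixed rays land at distinct fixed points is asserted rather than proved; the paper closes this by the \propref{prop.separate}~ii) argument, which you would need some substitute for.
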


\begin{proof}
The most part of the Lemma is due to Buff-Henriksen \cite{BuHe}. The colanding of the parameter external rays is justified by \cite[Prop. 8]{BuHe}. When $b$ belongs to $\mathcal{W}_{\frac{1}{6},\frac{1}{3}}$ or $\mathcal{W}_{\frac{2}{3},\frac{5}{6}}$,  the colanding of the corresponding two dynamical external rays is justified by \cite[Prop. 9]{BuHe}. Notice that for $b\in(\mathcal{W}_{\frac{1}{6},\frac{1}{3}}\cup\mathcal{W}_{\frac{2}{3},\frac{5}{6}})\cap\mathcal{C}(\theta)$, $R^\bB_{0}(f_b)$ is well-defined and lands at a repelling fixed point. This fixed point is landed at least by one rational external ray which is fixed under the action of $f_b$, since $R^\bB_{0}(f_b)$ is fixed by $f_b$. Thus the only possible external rays are $R^\infty_0(f_b)$ and $R^\infty_{1/2}(f_b)$. This proves \romannumeral1).

Next we prove \romannumeral2) and it will be the same for \romannumeral3). Notice that for $b_0\in\mathcal{R}^\infty_{1/6}$, the dynamical ray $R^\infty_{1/2}(f_{b_0})$ crashes on the critical point $b_0$, while $R^\infty_{0}(f_{b_0})$ still colands with $R^\bB_{0}(f_{b_0})$ at a repelling fixed point. By the stability of the landing property at a repelling fixed point, we conclude that for $b\in\mathcal{V}_{\frac{5}{6},\frac{1}{6}}$, $R^\infty_{0}(f_b)$ colands with $R^\bB_{0}(f_b)$ at the repelling fixed point $x_b$. It remains to verify that for $b\in\mathcal{V}_{\frac{5}{6},\frac{1}{6}}$, $R^\infty_{1/2}(f_b)$ does not land at $x_b$. Indeed, otherwise $R^\infty_{1/2}(f_b)$ and $R^\infty_{0}(f_b)$ would cut a wake $W$ which does not contain any critical point of $f_b$. This contradicts Proposition \ref{prop.separate} \romannumeral2).
\end{proof}

\begin{thm}\label{paratree}
The map $\Psi$ given in \propref{limb ext} extends to a two-to-one covering map from a set $\cT_\infty(\theta) \subset \cC_\infty(\theta)$ to $T(\q) \setminus D(\q)$. In fact, we have
$$
\cT_\infty(\theta)/[b \sim -b] \stackrel{\Psi}{\simeq} T(\q) \setminus D(\q).
$$ 
\end{thm}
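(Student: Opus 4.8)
The plan is to combine the local parabubble model of Proposition~\ref{limb ext} (and the uniformization of Proposition~\ref{unif cap}) with a global two-to-one structure coming from the symmetry $b \mapsto -b$ of the family $\cP^{cm}(\theta)$ and from the combinatorics of the two fundamental parameter wakes identified in Lemma~\ref{fund wakes}. Recall that $T(\q) \setminus D(\q)$ decomposes as a disjoint union $\bigsqcup_{n \geq 1} L_{-n}(\q) \setminus \{c_{-n}(\q)\}$ of limbs together with the limb $L_0(\q)$ rooted at the critical point $1 = c_0(\q)$; each such limb, minus its root, is a union of bubbles of the quadratic Siegel polynomial. The idea is to define $\cT_\infty(\theta)$ as the preimage, under the (to-be-constructed) map $\Psi$, of $T(\q) \setminus D(\q)$, and to show that over each limb $L_{-n}(\q)$ with $n \geq 1$ there is \emph{exactly one} copy of the parameter limb $\cL_{-n}$ inside $\cU_\infty \sqcup \cZ_\theta$ provided by Proposition~\ref{limb ext}, while over $L_0(\q)$ (the limb rooted at the critical point itself) there are exactly \emph{two} copies, interchanged by $b \mapsto -b$. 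This $2:1$-over-$L_0$, $1:1$-over-each-$L_{-n}$ picture is exactly what is needed: the involution $b\mapsto -b$ acts on $\cP^{cm}(\theta)$, fixes the limbs $\cL_{-n}$ for $n\ge1$ setwise, and swaps the two $L_0$-copies, so that after quotienting one obtains a single copy of the whole tree.

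The key steps, in order, would be: (1) Use Lemma~\ref{fund wakes} to pin down the fixed bubble ray $\bR_0(f_b)$ and the two fixed external rays, and thereby identify the two ``root'' parabubbles in parameter space, namely the components of parameter space where the free critical value $v_b$ first enters $D(f_b)$; these live in the parameter wakes $\cW_{\frac23,\frac56}$ and $\cW_{\frac16,\frac13}$ and are exchanged by $b\mapsto -b$. This provides the base case $n=0$ of the combinatorial induction. (2) Run the inductive bubble-by-bubble extension of $\Psi^{-1}$ exactly as in Proposition~\ref{limb ext}: given a parabubble $\cB$ with $\Psi(\cB) = B$ a bubble for $\q$, for each root point $x_1 \in \partial B$ with $B' \ni x_1$ the adjacent bubble, one produces a unique adjacent parabubble $\cB'$ with $\partial\cB\cap\partial\cB' = \{\Psi^{-1}(x_1)\}$, using Proposition~\ref{unif cap} for the conformal extension across $\cB'$ and Theorem~\ref{thm.rigid.Zak} (combinatorial rigidity in the special case) for \emph{uniqueness}. (3) Check that this inductive procedure covers \emph{all} of $\cC_\infty(\theta)$: any $b \in \cC_\infty(\theta)$ that is not in the closure of a parabubble must have $co_b \notin T(f_b)$, and one shows using Proposition~\ref{nonsep}, Proposition~\ref{prop.separate} and non-separability (valid on $\cC_\infty(\theta)$, away from the copies of quadratic Siegel Julia sets coming from renormalization) that such $b$ lies in the wake impression of an irrational bubble ray, hence is a landing point of a parameter bubble ray --- matching the landing points of $T(\q)\setminus D(\q)$ under Corollary~\ref{cor.landing.quadratic}. (4) Assemble: show $\Psi$ so constructed is continuous, proper, surjective onto $T(\q)\setminus D(\q)$, and exactly $2:1$, with fibers being $\{b,-b\}$; conclude $\cT_\infty(\theta)/[b\sim-b] \simeq T(\q)\setminus D(\q)$ and that this is a (branched, at the root) double cover, equivalently an honest homeomorphism after the quotient.

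For the topological conclusion I would argue as follows: $\Psi$ restricted to each parabubble is a homeomorphism onto a bubble of $\q$ (Proposition~\ref{unif cap}), these glue along root points by construction, and the gluing pattern of parabubbles is forced to be isomorphic to that of the bubbles of $\q$ by uniqueness (Step~2); hence $\Psi$ is a homeomorphism from $\cT_\infty(\theta)\setminus\{\text{the two root parabubbles}\}$ onto $(T(\q)\setminus D(\q))\setminus L_0(\q)$, and is $2:1$ over the remaining limb $L_0(\q)$, branched only at $c_0(\q)=1$ where the two parameter roots $b_*$, $-b_*$ of Lemma~\ref{fund wakes} sit. Passing to the quotient by $b\sim -b$ folds the two $L_0$-copies together and identifies $b_*\sim -b_*$, producing a genuine homeomorphism onto all of $T(\q)\setminus D(\q)$. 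Local connectivity and the limit behaviour at non-landing bubble rays is inherited from the quadratic side via Corollary~\ref{cor.landing.quadratic} and Theorem~\ref{yang lc}.

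The main obstacle I anticipate is Step~3: proving that every parameter in $\cC_\infty(\theta)$ is reached, i.e.\ that there are no ``ghost'' components of the central locus invisible to the bubble-ray combinatorics. This is the parameter-plane analogue of Proposition~\ref{prop.wake-impression}, and making it rigorous requires transporting the dynamical-plane dichotomy (either $co_b\in T(f_b)$, or $b$ is Misiurewicz on a pre-periodic bubble ray, or $b$ lies in an irrational wake impression) to parameter space via a parameter-wake / parabubble-ray argument, together with controlling accumulation on $\cZ_\theta$ --- precisely the point where the a priori bounds of Theorem~\ref{yang lc} and the continuity of the Siegel disk (Proposition~\ref{holo motion}) are indispensable, and where separability/renormalization must be carefully excluded on $\cC_\infty(\theta)$.
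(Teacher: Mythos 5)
Your proposal has two genuine gaps, one of which reflects a misreading of the covering structure.

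First, the degree count is wrong. You claim that over each limb $L_{-n}(\q)$ with $n\ge 1$ there is exactly one parameter limb $\cL_{-n}$, while over $L_0(\q)$ there are two, so that $\Psi$ is a branched double cover branched at $c_0(\q)=1$. This cannot be: the involution $b\mapsto -b$ acts freely on $\bbC^*$, and since $f_{-b}$ is conjugate to $f_b$ via $z\mapsto -z$ one has $\psi_{-b}=\psi_b\circ(-\mathrm{id})$ and $co_{-b}=-co_b$, hence $\Psi(-b)=\Psi(b)$ for \emph{every} $b$. In particular $-\cL_{-n}$ maps onto $L_{-n}(\q)$ as well, and $-\cL_{-n}\ne\cL_{-n}$ (they are attached to $\cZ_\theta$ at antipodal points $\zeta_{-n}$ and $-\zeta_{-n}$). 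So there are exactly two parameter copies over \emph{every} limb, including $L_0(\q)$, and $\Psi$ is an honest unbranched two-to-one cover -- which is what the statement of the theorem asserts. Your ``$2{:}1$ over $L_0$, homeomorphism elsewhere'' picture would give a map that is not two-to-one, contradicting the theorem.

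Second, and more seriously, you have no argument that \emph{all} of the limbs $L_{-n}(\q)$, $n\ge 0$, are actually realized in parameter space. The inductive ``bubble-by-bubble'' extension of $\Psi^{-1}$ in Proposition~\ref{limb ext} propagates across root points of $\q$-bubbles \emph{within a fixed limb}: as soon as the adjacent bubble $B_1$ equals $D(\q)$ (i.e.\ the root point is $c_{-n}(\q)$ on the Siegel boundary), the recursion terminates at $\zeta_{-n}\in\cZ_\theta$ and does not cross over to the neighboring limb $L_{-(n+1)}(\q)$. Hence Proposition~\ref{limb ext} produces a parameter limb $\cL_{-n}$ only \emph{given} the prior knowledge that some $b_0\in\cU_\infty$ has $\psi_{b_0}(co_{b_0})\in L_{-n}(\q)$. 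Your Step~1 supplies at most one such seed parameter, and Step~2 does not generate any others. The paper supplies the seeds for every $n$ by a ``circle at infinity'' argument: it tracks the co-critical point $co_{b_t}$ for $b_t=(\Phi^\infty)^{-1}(Re^{2\pi i t})$ as $t$ runs over $(-1/6,1/6)$, using Lemma~\ref{fund wakes} and the stability of the landing pattern to show that for each $n\ge 0$ there is a unique $t_{-n}$ for which $co_{b_t}$ lies on the external ray landing at the marked root $\hc_{-n}(f_{b_t})$; the parameter ray $\cR^\infty_{t_{-n}}$ then lands at the Misiurewicz parameter that seeds $\cL_{-n}$. This global monodromy-type argument is the missing ingredient in your plan. (Incidentally, your use of Lemma~\ref{fund wakes} to place ``root parabubbles'' inside the wakes $\cW_{2/3,5/6}$, $\cW_{1/6,1/3}$ is also off: those wakes are excluded from the fundamental region, and when $v_b\in D(f_b)$ one in fact has $co_b\in D(f_b)$, so $\Psi(b)\in D(\q)$, which is \emph{outside} the target $T(\q)\setminus D(\q)$.)

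Finally, your Step~3 -- proving that the construction exhausts all of $\cC_\infty(\theta)$ -- is not needed for Theorem~\ref{paratree}, which only asserts $\cT_\infty(\theta)\subset\cC_\infty(\theta)$; that surjectivity/completeness question is the content of Theorem~\ref{thm.main.siegel} and is handled there via the Rigidity Theorem.
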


\begin{proof}
We first choose a root point from each limb $L_{-n}(\q)$ for $n \geq 0$ that is not contained in $\partial D(\q)$. Let $B_{-n}(\q) \subset L_{-n}(\q)$ be the unique bubble whose boundary intersect $\partial D(\q)$ at an $n$th iterated preimage $c_{-n}(\q)$ of the critical point $1$. Let $\hc_{-n}(\q)$ be the unique root point in $\partial B_{-n}(\q)$ of generation $n+1$. In the dynamical plane for $f_b$ with $b\in \cU_\infty$, the corresponding objects for $f_b$ are defined as the pullbacks under $\psi_b$ (when well-defined).

Consider a circle of parameters near infinity:
$$
b_t := (\Phi^\infty)^{-1}(Re^{2\pi it})
\matsp{for}
t \in \bbR/\bbZ,
$$
where $R > 1$ is fixed. In the dynamical plane for $f_{b_t}$, denote the external rays that land at a root point $x \in T(f_{b_t})$ by $R^\infty_{s^l_t(x)}(f_{b_t})$ and $R^\infty_{s^r_t(x)}(f_{b_t})$ (the former approaches $x$ from the left and the latter from the right).

Note that $b_t \in \cS_{\fun}$ if and only if $t \in (-1/6, 1/6)$. Partition $\partial D(\q)$ into the disjoint union of two open subarcs $A_+(\q)$ and $A_-(\q)$ whose common endpoints are $c_0(\q)$ and $c_{-1}(\q)$, such that $c_1(\q) \in A_+(\q)$. We claim that for all $n \geq 0$, there exists a unique value of $t\in (-1/6, 1/6)$ such that one of the following possibilities hold.
\begin{enumerate}[i)]
\item If $n =0$ or $c_{-n}(\q) \in A_-(\q)$, then $t \in (0, 1/6)$, and the co-critical point $co_{b_t}$ is contained in the external ray landing at the root point $\hc_{-n}(f_{b_t})$ from the left:
$$
co_{b_t} \in R^\infty_{s^l_t(\hc_{-n}(f_{b_t}))}(f_{b_t}).
$$
\item If $n=1$ or $c_{-n}(\q) \in A_+(\q)$, then $t \in (-1/6, 0)$, and the co-critical point $co_{b_t}$ is contained in the external ray landing at the root point $\hc_{-n}(f_{b_t})$ from the right:
$$
co_{b_t} \in R^\infty_{s^r_t(\hc_{-n}(f_{b_t}))}(f_{b_t}).
$$
\end{enumerate}

First, consider the case $t \in (0, 1/6)$. In the dynamical plane for $f_{b_t}$, the external rays that co-land at $\bR_0(f_{b_t})$ and $\bR_{1/2}(f_{b_t})$ are $R^\infty_0(f_{b_t})$ and $R^\infty_{1/6}(f_{b_t})$ respectively (see \lemref{fund wakes}). The union of these two bubble rays and external rays separate the dynamical plane into two components: one to the left of $\bR_0(f_{b_t})$ and another to the right. Since the co-critical point $co_{b_t}$ must be contained in the former, the critical point $b_t$ must be contained in the latter.

To start, consider the dynamical plane for $f_{b_\epsilon}$ for some $0 < \epsilon \ll 1$. The points $co_{b_\epsilon}$ and $b_\epsilon$ are contained in the dynamical wake based at $\hc_0(f_{b_\epsilon})$ and $\hc_{-1}(f_{b_\epsilon})$ respectively. Now, as $t$ increases to $1/6$ from $\epsilon$, the critical point $b_t$ does not hit the external rays landing at $c_0(f_{b_t})$ or $\hc_0(f_{b_t})$ from the left. Hence, the landings of these rays persist, and it follows that
$$
s^l_t(c_0(f_{b_t})) \equiv s^l_\epsilon(c_0(f_{b_\epsilon})) =:  \tau_0
\matsp{and}
s^l_t(\hc_0(f_{b_t})) \equiv s^l_\epsilon(\hc_0(f_{b_\epsilon})) =:  t_0
$$
for all $\epsilon \leq t < 1/6$. Thus, $t_0$ is the unique value of $t\in (0, 1/6)$ such that claim i) holds for $n=0$.

Next consider $t \in (t_0, \tau_0)$. Then $co_{b_t}$ is contained in the dynamical wake based at $c_0(f_{b_t})$, but disjoint from the dynamical wake based at $\hc_0(f_{b_t})$. Thus, it does not hit an external ray landing at $\hc_{-n}(f_{b_t})$ from the left for any $n \geq 0$.

Lastly, consider $t \in (\tau_0, 1/6)$. Then in the dynamical plane for $f_{b_t}$, the critical point $b_t$ is confined to the dynamical wake based at $c_0(f_{b_t})$. Thus, $b_t$ does not interfere with the external ray landing at $\hc_{-n}(f_{b_t})$ from the left for any $n \geq 0$. Therefore, we have
$$
s^l_t(\hc_{-n}(f_{b_t})) \equiv s^l_{\tau_1+\epsilon}(\hc_{-n}(f_{b_{\tau_1+\epsilon}})) =: t_{-n}
\matsp{for}
\tau_1 < t < 1/6.
$$
Thus, $t_{-n}$ is the unique value of $t \in (0, 1/6)$ such that claim $i)$ holds for $n > 1$.

Claim ii) follows from an analogous argument.

Now, fix $n \geq 0$. For all $b \in \cR^\infty_{t_{-n}}$, the landing of the external ray $R^\infty_{t_{-n}}(f_{b_t})$ at $\hc_{-n}(f_{b_t})$ persists. Hence, it follows that $\cR^\infty_{t_{-n}}$ lands at a parameter $\hat b$, for which we have $co_{\hat b} = \hc_{-n}(f_{\hat b})$. The result now follows from \propref{limb ext}.
\end{proof}

The set $\cT_\infty(\theta)$ given in \thmref{paratree} is called the {\it (exterior) parameter tree}.  Let $\bR_t(\q)$ be a bubble ray for $\q$. The set
$$
\hat\cR_t^{\bfB} := \Psi^{-1}(\bR_t(\q) \setminus D(\q))
$$
is called a {\it parameter bubble dual-ray}. Let $\{\cB_i\}_{i=1}^\infty$ be a sequence of parabubbles such that
$$
\hat\cR^{\bfB}_t =\cZ_\theta \cup \bigcup_{i=1}^\infty (\cB_i \cup -\cB_i),
$$
and $\partial \cB_i \cap \partial \cB_{i+1} \neq \varnothing$ for $i \in \bbN$. We say that $\hat\cR^{\bfB}_t$ {\it lands at $x \in \cC_\infty(\theta)$} if $\cB_i$ or $-\cB_i$ converges to $x$ in the Hausdorff topology as $i \to \infty$. We refer to $\hat\cR^{\bfB}_{1/2}$ as the {\it parameter bubble spine}.

\begin{lem}\label{lem.fund region}
The parameter bubble spine $\hat\cR^{\bfB}_{1/2}$ lands at the base points $\{b_*, -b_*\}$ of the parameter wakes $\cW_{\frac{1}{6}, \frac{1}{3}}$ and $\cW_{\frac{2}{3}, \frac{5}{6}}$.
\end{lem}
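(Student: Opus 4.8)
The plan is to read off the dynamical meaning of the parameter bubble spine through the tree conjugacies $\psi_b$, to show that the parabubbles constituting it shrink and hence accumulate at a single boundary parameter on each of its two ends, and finally to identify that parameter with $b_*$ (resp. $-b_*$) using the landing configurations recorded in Lemma~\ref{fund wakes}.

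First I would describe what the spine ``points at'' dynamically. Since $\q$ maps $\bR_{1/2}(\q)$ onto the fixed bubble ray $\bR_0(\q)$, the ray $\bR_{1/2}(\q)$ is strictly preperiodic; by Corollary~\ref{cor.landing.quadratic} it lands at the unique point $p\in K(\q)\setminus T(\q)$ that is a $\q$-preimage, other than itself, of the landing point of $\bR_0(\q)$ (the repelling fixed point of $\q$), and the closed bubbles $\overline{B_i}$ of $\bR_{1/2}(\q)$ shrink to $p$ in the Hausdorff topology. Transporting through $\psi_b$ (Theorem~\ref{cubic tree model}), for every $b$ with $co_b\in T(f_b)$ the ray $\bR_{1/2}(f_b):=\psi_b^{-1}(\bR_{1/2}(\q))$ lands at the preimage $y_b\neq x_b$ of $x_b:=$ the landing point of $\bR_0(f_b)$; unwinding Lemma~\ref{fund wakes}(i) by one iterate of $f_b$, the external rays landing at $y_b$ lie among $R^\infty_{1/6},R^\infty_{1/3},R^\infty_{2/3},R^\infty_{5/6}$.

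Next I would prove that the constituent parabubbles shrink and pass to the limit. Writing $\hat\cR^{\bfB}_{1/2}=\cZ_\theta\cup\bigcup_i(\cB_i\cup-\cB_i)$, the $\cB_i$ are pairwise disjoint Jordan domains, $\Psi$ restricts to a homeomorphism of each $\overline{\cB_i}$ onto $\overline{B_i}$ (Proposition~\ref{unif cap}), and $\diam B_i\to0$. Together with the continuous dependence of $\overline{D(f_b)}$, $co_b$ and $\psi_b$ on $b$ (Proposition~\ref{holo motion}, Lemma~\ref{arc partial}), this forces $\diam\cB_i\to0$, so every subsequential Hausdorff limit of $\{\overline{\cB_i}\}$ is a single parameter $x_\infty\in\partial\cC_\infty(\theta)$ (it cannot lie inside a parabubble, nor inside $\cT_\infty(\theta)$, by local injectivity of $\Psi$). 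For $b\in\cB_i$ with $i$ large, $co_b$ sits in the $i$-th bubble of $\bR_{1/2}(f_b)$, hence arbitrarily deep along $\bR_{1/2}(f_b)$; letting $i\to\infty$ and invoking local connectivity at the Siegel boundary (Theorem~\ref{yang lc}) to make the dynamical picture converge, we obtain $co_{x_\infty}=y_{x_\infty}$, equivalently $v_{x_\infty}=f_{x_\infty}(co_{x_\infty})=x_{x_\infty}$, and this common value is co-landed by the two fixed external rays $R^\infty_0(f_{x_\infty})$ and $R^\infty_{1/2}(f_{x_\infty})$.

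Finally I would match $x_\infty$ with the fundamental wakes: a parameter $b\in\overline{\cU_\infty}$ for which $R^\infty_0(f_b)$ and $R^\infty_{1/2}(f_b)$ co-land at the critical value $v_b$ must lie on the boundary of the fundamental wakes, and by Lemma~\ref{fund wakes} the only such parameters are $b_*$ and $-b_*$; moreover the angle bookkeeping in the proof of Theorem~\ref{paratree} (via the arcs $A_\pm(\q)$) places one whisker $\bigcup_i\cB_i$ of the spine inside $\overline{\cW_{2/3,5/6}}$ and its mirror image $\bigcup_i(-\cB_i)$ inside $\overline{\cW_{1/6,1/3}}$. As each whisker is a connected chain of parabubbles with diameters tending to $0$, it has a unique Hausdorff limit: $b_*$ for the first and $-b_*$ for the second, so $\hat\cR^{\bfB}_{1/2}$ lands at $\{b_*,-b_*\}$. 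The delicate step is the implication $\diam B_i\to0\Rightarrow\diam\cB_i\to0$: one must rule out that the parabubbles stay of definite size and accumulate on a nondegenerate continuum, which is exactly where the a priori bounds near $\partial D(f_b)$ (Theorem~\ref{yang lc} and Lemma~\ref{arc partial}) together with the transversality Lemma~\ref{lem.transversality} are needed, to upgrade the boundary homeomorphism $\Psi$ to a map with controlled distortion along the whole chain; pinning $x_\infty$ to $b_*$ rather than to some other boundary parabolic or Misiurewicz parameter then reduces to the explicit combinatorics of Lemma~\ref{fund wakes}.
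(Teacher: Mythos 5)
Your plan (read the spine dynamically, show the parabubbles shrink, identify the limit via Lemma~\ref{fund wakes}) is a reasonable outline, but the crucial middle step is a genuine gap and the dynamical reading of the limit is actually incorrect.

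\textbf{The shrinking step is not established.} You acknowledge that ``$\diam B_i\to 0\Rightarrow\diam\cB_i\to 0$'' is delicate and suggest upgrading $\Psi$ to a map ``with controlled distortion along the whole chain'' via Theorem~\ref{yang lc}, Lemma~\ref{arc partial}, and Lemma~\ref{lem.transversality}. None of those results provides such control: Lemma~\ref{arc partial} and Lemma~\ref{lem.transversality} are local (near a single boundary parameter), and Theorem~\ref{yang lc} is a dynamical statement about one fixed polynomial, not a parameter--dynamical comparison along a chain of parabubbles. Producing uniform distortion bounds for $\Psi$ is essentially equivalent to the landing statement you are trying to prove, so this is circular, not a proof. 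The paper sidesteps this entirely: it takes an arbitrary $b_0\in\omega(\hat\cR^{\bfB}_{1/2})$, shows that $\bR_0(f_{b_0})$ cannot land at a \emph{repelling} fixed point (otherwise persistence of the landing would contradict $co_b\in\bR_{1/2}(f_b)$ for nearby $b$ in the spine), concludes $b_0$ is parabolic, and then uses discreteness of the parabolic locus to upgrade the accumulation set to a single landing point per whisker. No shrinking of parabubbles is needed.

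\textbf{Your limit picture is wrong.} From ``$co_{x_\infty}=y_{x_\infty}$'' you deduce $v_{x_\infty}=x_{x_\infty}$, i.e.\ the free critical value equals the $\alpha$-fixed point, which is a Misiurewicz-type relation. But by Lemma~\ref{fund wakes} the parameter $b_*$ is \emph{parabolic}, not Misiurewicz, and for the parabolic root of the fundamental wake the free critical orbit converges to the parabolic point through a petal rather than landing on it. The dynamical picture does not converge naively as $i\to\infty$: at $b_*$ the landing point of $\bR_0$ degenerates from repelling to parabolic, which is precisely why one cannot just ``pass to the limit'' in the conjugacies $\psi_{b_i}$. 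This discontinuity is both the reason your shrinking argument would be hard and the hinge of the paper's argument. Finally, after concluding $b_0$ is parabolic with a nontrivial fixed wake arc, the paper still needs \propref{prop.separate} (to get that the bounding external rays are fixed, hence have angles $0,1/2$) plus a Milnor-type path argument feeding into Lemma~\ref{fund wakes} to pin $b_0=\pm b_*$; quoting Lemma~\ref{fund wakes} alone, as you do, is not quite enough.
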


\begin{proof}
Let $b_0$ be a parameter in the accumulation set $\omega(\hat\cR^{\bfB}_{1/2})$ of $\hat\cR^{\bfB}_{1/2}$. If $\bR_0(f_{b_0})$ lands at a repelling fixed point, then $\bR_0(f_b)$ would persistently land for $b$ in a small neighborhood $V_0$ of $b_0$. However, $\bR_0(f_b)$ cannot land whenever $co_b \in \bR_{1/2}(f_b)$ and $b \in \hat\cR^{\bfB}_{1/2}$. This is a contradiction. Since the set of parabolic parabolic parameters is a discrete set, this implies that $\hat\cR^{\bfB}_{1/2}$ lands at $\{b_0, -b_0\}$.

Note that $f_{b_0}$ must be separable, as $\bR_0(f_{b_0})$ has a non-trivial wake arc. By \propref{prop.separate}, the external rays bounding the wake must be fixed, and hence, their external angles must be $0$ and $1/2$.

An argument analogous to the one given in the proof of Theorem 4.1 in \cite{Mi} shows that there exists a smooth path $\gamma$ extending from $b_0$ such that for $b \in \gamma \setminus \{b_0\}$, the fixed bubble ray and the fixed external rays of $f_b$ persistently co-land at a repelling fixed point. By \lemref{fund wakes}, we have $\gamma \setminus \{b_0\} \subset \cW_{\frac{1}{6}, \frac{1}{3}} \cup \cW_{\frac{2}{3}, \frac{5}{6}}$, and $b_0 = b_*$ or $-b_*$.
\end{proof}

\begin{defn}[Fundamental region]\label{def.fundamentalregion}
The {\it fundamental region} $\cS_{\fun}$ is the connected component of
$$
\cU_\infty \setminus \left(\cW_{\frac{1}{6}, \frac{1}{3}} \cup \cW_{\frac{2}{3}, \frac{5}{6}} \cup \hat\cR^{\bfB}_{1/2} \cup \{b_*, -b_*\}\right)
$$
containing $\cV_{\frac{5}{6}, \frac{1}{6}}$ (and in particular, $\cR^\infty_0$). See Figure \ref{fig.fundamental}.
\end{defn}

We will need the following lemma in \S \ref{subsec.recurrentcase}.
\begin{lem}\label{lem.holo.motion}
   Fix $b_0\in \mathcal{S}_{\mathrm{fun}}$ and $r>1$. Set
   $$
   \mathcal{E}^\infty_{<r} := \bigcup_{1<r'<r}\mathcal{E}^\infty_{r'};
   $$
  and
  $$
  I_0^\infty := R^\infty_0(f_{b_0}) \cup R^\infty_{1/3}(f_{b_0})\cup E^\infty_r(f_{b_0})
  \matsp{and}
    I_0^\bB:= R^\bB_0(f_{b_0})\cup R^\bB_{1/2}(f_{b_0}).
  $$
Then
$$
H:(\mathcal{S}_{\mathrm{fun}}\cap\mathcal{E}^\infty_{<r})\times I_0 \longrightarrow \C
$$
given by
  $$
  H(b,z) := \left\{
\begin{array}{cl}
(\phi^\infty_b)^{-1}\circ\phi^\infty_{b_0}(z) &: z \in I_0^\infty\\
\psi_b^{-1}\circ\psi_{b_0}(z) &: z \in I_0^\bB.
\end{array}
\right.
  $$
is a holomorphic motion.
\end{lem}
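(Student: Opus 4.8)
The plan is to verify the three defining properties of a holomorphic motion for the map $H$: (1) $H(b_0, \cdot) = \mathrm{id}$; (2) for each fixed $z \in I_0$, the map $b \mapsto H(b,z)$ is holomorphic on $\mathcal{S}_{\mathrm{fun}} \cap \mathcal{E}^\infty_{<r}$; and (3) for each fixed $b$, the map $z \mapsto H(b,z)$ is injective on $I_0 = I_0^\infty \cup I_0^{\bfB}$. The normalizations in Proposition \ref{prop.surgery} and Theorem \ref{cubic tree model} make property (1) immediate, since $(\phi^\infty_{b_0})^{-1}\circ\phi^\infty_{b_0}$ and $\psi_{b_0}^{-1}\circ\psi_{b_0}$ are both the identity. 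The bulk of the argument is properties (2) and (3), and the main subtlety is that $H$ is defined piecewise on two sets $I_0^\infty$ and $I_0^{\bfB}$ that are glued along the fixed points of $f_b$: one must check that the two definitions agree on the overlap and that the combined map is injective despite the gluing.

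First I would treat holomorphy separately on the two pieces. On $I_0^\infty$, the Böttcher coordinate $\phi^\infty_b$ depends holomorphically on $b$ as long as the relevant rays and equipotential do not crash into the free critical point $b$ or the cocritical point $co_b$; the restriction to $b \in \mathcal{S}_{\mathrm{fun}} \cap \mathcal{E}^\infty_{<r}$ is exactly what guarantees this — on $\mathcal{S}_{\mathrm{fun}}$ the rays $R^\infty_0$, $R^\infty_{1/3}$ are unobstructed by Lemma \ref{fund wakes} (ii)--(iii), and staying inside $\mathcal{E}^\infty_{<r}$ keeps the potential of $co_b$ above level $r$ so the equipotential $E^\infty_r$ is a genuine Jordan curve moving holomorphically. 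On $I_0^{\bfB}$, the bubble rays $R^{\bfB}_0$ and $R^{\bfB}_{1/2}$ lie in $T(f_b)$, and $\psi_b$ is conformal on the interior of $T(f_b)$ and continuous up to the boundary by Theorem \ref{cubic tree model}; its dependence on $b$ is holomorphic because $\psi_b$ is built inductively from the linearization $(\phi^0_{q_\theta})^{-1}\circ\phi^0_b$ (which moves holomorphically by Proposition \ref{holo motion}) by pulling back through branches of $q_\theta$ determined by the fixed combinatorics. One then checks compatibility at the shared fixed point: for $b \in \mathcal{S}_{\mathrm{fun}}$, Lemma \ref{fund wakes} (ii) says $R^\infty_0(f_b)$ and $R^{\bfB}_0(f_b)$ co-land at the same repelling fixed point $x_b$, which moves holomorphically, so the two formulas for $H(b,\cdot)$ assign the same image to the common landing point.

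For injectivity (property 3) at fixed $b$, the map $z \maps\psi_b^{-1}\circ\psi_{b_0}(z)$ is injective on $I_0^{\bfB}$ because $\psi_{b_0}$ and $\psi_b$ are injective on $T(f_{b_0})$, $T(f_b)$ respectively; likewise $(\phi^\infty_b)^{-1}\circ\phi^\infty_{b_0}$ is injective on $I_0^\infty$ since Böttcher coordinates are injective off the critical escaping set. It remains to see that the image of $I_0^\infty$ meets the image of $I_0^{\bfB}$ only at the single common fixed point: this is because $I_0^\infty \subset \overline{B_\infty(f_b)}$ while $I_0^{\bfB} \subset T(f_b) \subset K(f_b)$, and $\overline{B_\infty(f_b)} \cap K(f_b) = J(f_b)$ meets the two rays exactly at their common landing point $x_b$ (here $R^\infty_{1/3}$ and $R^{\bfB}_{1/2}$ play no role in the overlap — only the fixed rays through $x_b$ do). The hardest step is this last compatibility/gluing analysis, i.e.\ controlling precisely how $R^\infty_0(f_b)$ and $R^{\bfB}_0(f_b)$ land together across all of $\mathcal{S}_{\mathrm{fun}}$ and verifying the images do not otherwise collide; everything else reduces to the standard holomorphic dependence of Böttcher coordinates and of $\psi_b$, which are already in hand from Proposition \ref{prop.surgery}, Proposition \ref{holo motion}, Theorem \ref{cubic tree model}, and Lemma \ref{fund wakes}.
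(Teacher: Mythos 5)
Your proposal takes a genuinely different route from the paper. You verify the three axioms of a holomorphic motion directly, whereas the paper's proof is a short analytic-continuation argument: invoke Lemma \ref{fund wakes}(ii) to get the motion in a small neighbourhood of some $b_0'\in\cV_{\frac{5}{6},\frac{1}{6}}$, then continue it analytically, noting that the continuation can only be obstructed by the moved objects crashing on the free critical point $b$, which the very definition of $\cS_{\fun}\cap\cE^\infty_{<r}$ (removing the two wakes, the parameter bubble spine and its landing points, and restricting the potential of $co_b$ below $r$) is designed to preclude. The continuation argument is more economical because it shifts the burden from exhibiting global persistence of the graph to merely identifying what could obstruct it; your direct approach has to justify persistence for every $b$ in the region at once.

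Within your argument there are two concrete slips worth flagging. First, you claim the images of $I_0^\infty$ and $I_0^{\bfB}$ under $H(b,\cdot)$ meet only at ``the single common fixed point $x_b$'' and that ``$R^\infty_{1/3}$ and $R^{\bfB}_{1/2}$ play no role in the overlap.'' That is not so: $R^\infty_{1/3}(f_b)$ and $R^{\bfB}_{1/2}(f_b)$ co-land at the distinguished non-fixed preimage of $x_b$, so there are two gluing points, and the compatibility at the second one needs the same check you perform at $x_b$ (it follows for the same reason, but it must be stated). Second, you invoke Lemma \ref{fund wakes}(ii)--(iii) to say the rays are unobstructed ``on $\cS_{\fun}$,'' but part (iii) concerns $\cV_{\frac{1}{3},\frac{2}{3}}$, which lies on the other side of the parameter bubble spine and is not in $\cS_{\fun}$ at all; only (ii) applies, and even then it is stated for $\cV_{\frac{5}{6},\frac{1}{6}}$, not for the whole of $\cS_{\fun}\cap\cE^\infty_{<r}$ (which also contains escape parameters inside renormalizable wakes). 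Supplying the missing global-persistence step is precisely what the paper's analytic continuation accomplishes; if you prefer the direct route, you need an explicit argument that for every $b\in\cS_{\fun}\cap\cE^\infty_{<r}$ the crashing angles $\arg\Phi^\infty(b)\pm\tfrac{1}{3}$ avoid $\{0,\tfrac{1}{3}\}$ down to potential $r$, rather than the citation as given.
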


\begin{proof}
    By Lemma \ref{fund wakes} \romannumeral2), the prescribed holomorphic motion exists at least in a small neighborhood of some $b_0'\in \mathcal{V}_{\frac{5}{6},\frac{1}{6}}$. It can be extended by analytic continuation and will stop only when the dynamical objects crash on the free critical point $b$. Thus the motion can be extended at least to $\mathcal{S}_{\mathrm{fun}}\cap\mathcal{E}^\infty_{<r}$. 
\end{proof}

\begin{figure}
\centering 
\includegraphics[width=0.6\textwidth]{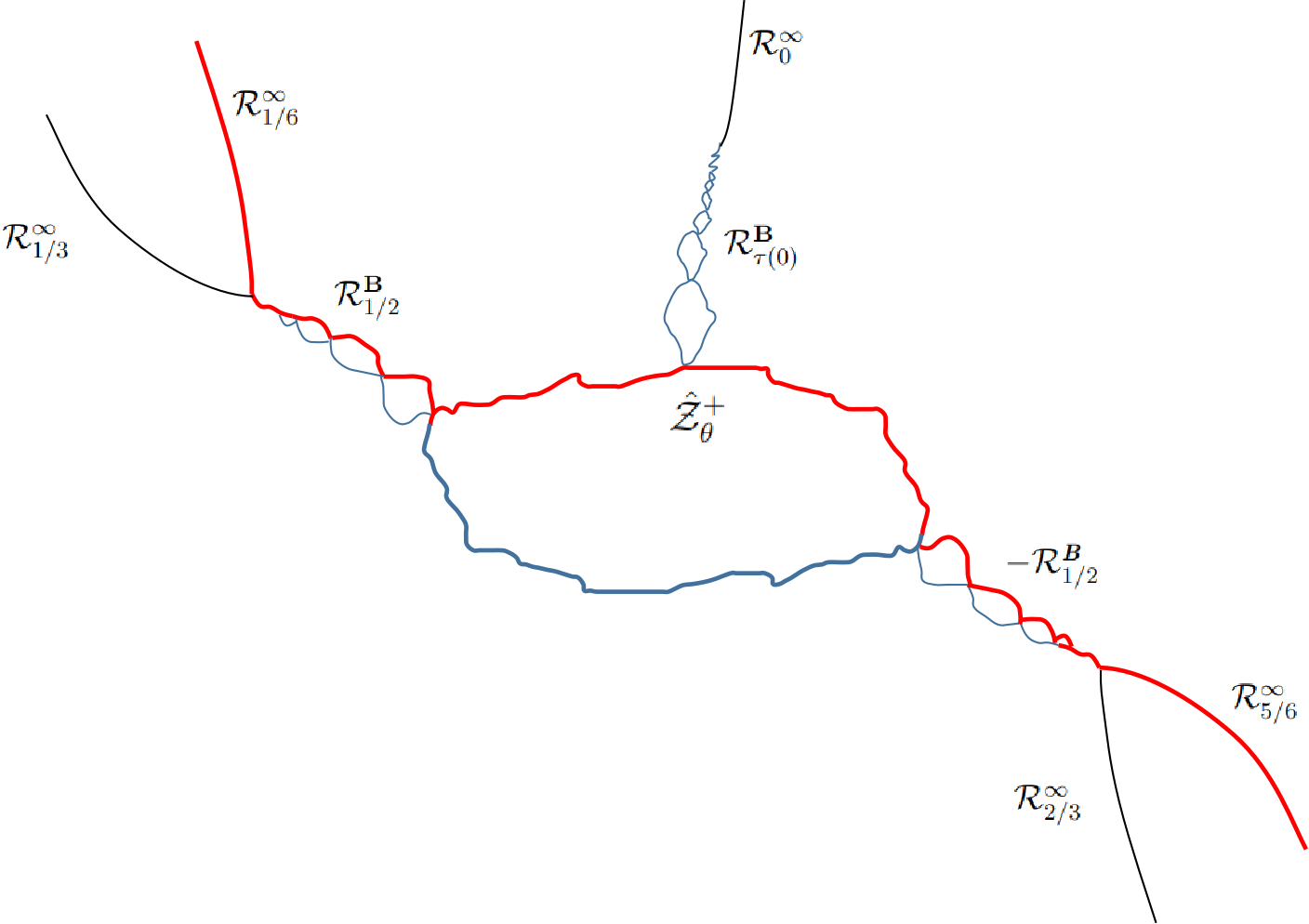} 
\caption{A schematic picture of $\mathcal{S}_{\mathrm{fun}}$. Its boundary is marked out by the red curve.} 
\label{fig.fundamental} 
\end{figure}

Let $\hat\cR^{\bfB}_t$ with $t\neq 0$ be a parameter bubble dual-ray. The set
\begin{equation}\label{eq.parabubray}
\cR^{\bfB}_t := \hat\cR^{\bfB}_t\cap \cS_{\fun}
\end{equation}
is called a {\it parabubble ray}. We say that $\cR^{\bfB}_t$ is {\it periodic} or {\it pre-periodic} if the angle $t+1/2$ is periodic or pre-periodic respectively under the angle doubling map. Additionally, we say that $\cR^{\bfB}_t$ {\it lands at $x \in \cS_{\fun}$} if $\hat\cR^{\bfB}_t$ does.

\begin{prop}\label{per para ray}
Every periodic parabubble ray lands at a parabolic parameter.
\end{prop}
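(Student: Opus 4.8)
The plan is to mimic the dynamical analogue (Proposition~\ref{prop.separate}) in the parameter plane, transferring periodicity of the internal angle to parabolicity of the landing parameter via the standard landing-of-periodic-rays theorem together with the structure of parabubble rays established above. First I would use Theorem~\ref{paratree} and the definition \eqref{eq.parabubray} to note that a periodic parabubble ray $\cR^{\bfB}_t$ is, by construction, $\Psi$-conjugate to the dynamical bubble ray $\bR_{t+1/2}(\q)$ inside the quadratic Siegel tree, and hence it corresponds to a genuinely periodic combinatorial object: the sequence of parabubbles $\{\cB_i\}$ making up $\hat\cR^{\bfB}_t$ is eventually invariant (up to the sign involution $b\mapsto -b$) under the appropriate iterate of the map induced on parameter space. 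Since $t+1/2$ is periodic under doubling, say of period $p$, the relevant self-map is "$p$-fold iteration" in the dynamical plane of the landing parameter.

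Next I would invoke Lemma~\ref{lem.fund region} and Lemma~\ref{fund wakes} for the base case $t=1/2$ (the parameter bubble spine), which already lands at the parabolic parameters $\{b_*,-b_*\}$; this anchors the induction. For a general periodic $\cR^{\bfB}_t$, let $b_0\in\omega(\cR^{\bfB}_t)$ be any accumulation parameter of the landing set. In the dynamical plane of $f_{b_0}$ the periodic bubble ray $\bR_{t+1/2}(f_{b_0})$ (well-defined because $co_{b_0}$ is a limit of co-critical points lying on $R^{B_b}_\bullet$, so $\bR_{t+1/2}(f_{b_0})$ lies in $T(f_{b_0})$) is mapped to itself by $f_{b_0}^p$. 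If this bubble ray landed at a \emph{repelling} periodic point, then by stability of landing at repelling cycles the landing would persist on a whole neighborhood $V_0$ of $b_0$; but whenever $b\in\hat\cR^{\bfB}_t$ the free critical point $co_b$ sits on $\bR_{t+1/2}(f_b)$, which obstructs the bubble ray from landing — exactly the contradiction used in the proof of Lemma~\ref{lem.fund region}. Hence $\bR_{t+1/2}(f_{b_0})$ lands at a non-repelling periodic point, and since $\bR_{t+1/2}(f_{b_0})$ has a non-trivial wake arc (its wake contains the critical point $b_0$, cf. Proposition~\ref{prop.separate}\,ii)), the periodic point cannot be irrationally indifferent or a Siegel point disjoint from the tree; combined with Proposition~\ref{prop.separate} and the fact that the only indifferent fixed point of $f_{b_0}$ is the Siegel point at $0$ (which has a trivial wake), the multiplier must satisfy $(f_{b_0}^p)'=1$, i.e.\ $b_0$ is parabolic. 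Because the set of parabolic parameters in $\cC_\infty(\theta)$ is discrete (each parabolic $f_b$ is determined by finitely much combinatorial data, and they cannot accumulate inside the $J$-stable region $\cU_\infty$), the accumulation set $\omega(\cR^{\bfB}_t)$ is a single point, so $\cR^{\bfB}_t$ lands, and it lands at a parabolic parameter.

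To finish I would run a Milnor-style argument (as in the proof of Theorem~4.1 of \cite{Mi}, invoked already in Lemma~\ref{lem.fund region}) to confirm that the landing point $b_0$ really is parabolic rather than merely a boundary point where the ray limps in: one produces a path $\gamma$ emanating from $b_0$ along which the periodic bubble ray and the bounding external rays of its wake co-land persistently at a repelling cycle, forcing $b_0$ onto the parabolic parameter at the root of the corresponding hyperbolic-like component. The main obstacle, I expect, is the step showing that the limit object $\bR_{t+1/2}(f_{b_0})$ is still a well-defined (periodic) bubble ray landing at a single periodic point of $f_{b_0}$ — i.e.\ controlling the behavior of the bubble ray as the free critical point collides with it in the limit. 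Here one must use Theorem~\ref{cubic tree model} (so that the bubble ray, as a subset of $T(f_{b_0})$, persists and carries the conjugacy to $\q$), Corollary~\ref{tree loc conn} / Theorem~\ref{yang lc} (local connectivity of $J(f_{b_0})$ along $T(f_{b_0})$, giving landing), and the discreteness of the parabolic set to rule out an accumulation continuum; packaging these is the crux of the proof, while the passage from "non-repelling periodic with nontrivial wake" to "parabolic" is then essentially a multiplier bookkeeping argument.
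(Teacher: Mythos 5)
Your proposal follows exactly the route the paper takes: the paper's own proof of Proposition~\ref{per para ray} simply declares the argument to be ``analogous to Lemma~\ref{lem.fund region}'' (using Proposition~\ref{para separable} in place of Lemma~\ref{fund wakes}), and you have spelled out that analogy — take an accumulation parameter $b_0$, observe that landing of the periodic dynamical bubble ray at a repelling cycle would persist near $b_0$ and be obstructed along the parabubble ray, conclude non-repelling, invoke Proposition~\ref{prop.separate} to upgrade this to parabolic, and use discreteness of the parabolic set to get a single landing point.

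One small bookkeeping slip worth fixing: since $\hat\cR^{\bfB}_t = \Psi^{-1}(\bR_t(\q)\setminus D(\q))$ and $\Psi(b)=\psi_b(co_b)$, the co-critical point $co_b$ lies on $\bR_t(f_b)$, not on $\bR_{t+1/2}(f_b)$ as you wrote; it is the \emph{free critical point} $b$ that then sits in the way of $\bR_{t+1/2}(f_b)$, which is the periodic bubble ray whose landing is obstructed (this matches the wording of Lemma~\ref{lem.fund region}, where $t=1/2$ and $t+1/2=0$). The logic of your argument is unaffected once the labels are corrected.
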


\begin{proof}
The proof is analogous to the one given in \lemref{lem.fund region}, and hence, will be omitted.
\end{proof}

\begin{defn}\label{def.renor-parawake}
    A parameter wake whose based point is the landing point of a periodic parabubble ray is referred to as a {\it renormalizable wake}. The terminology is justified by the following result.

\end{defn}

\begin{prop}\label{para separable}
Let $b_0 \in \cS_{\fun} \cap \cC_\infty(\theta)$. Then $f_{b_0}$ is separable if and only if $b_0$ is contained in a renormalizable wake, or is equal to its base point, which is a parabolic parameter.
\end{prop}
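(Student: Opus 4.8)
The plan is to prove the two implications separately, using the dynamical/parameter dictionary established in \thmref{paratree} and \propref{limb ext} together with the separation results of \S\ref{sec:ext rays}. Throughout, write $\bR = \bR(f_{b_0})$ for a periodic bubble ray realizing the separability of $f_{b_0}$.

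For the ``only if'' direction, suppose $f_{b_0}$ is separable, so it admits a periodic bubble ray $\bR$ with a non-trivial wake arc $\Arc_W(\bR) = [s,t]$. By \propref{prop.separate}(i), the external rays $R^\infty_s$ and $R^\infty_t$ are periodic and are precisely the rays landing at the landing point $x$ of $\bR$; by \propref{prop.separate}(ii), after replacing $\bR$ by a forward iterate we may assume the free critical point $b_0$ lies in $W_{s,t}(f_{b_0})$, and moreover $g = f_{b_0}^p|_{U'}$ is a quadratic-like map with $\beta(g) = x$. Now I would run the standard ``following the co-critical point'' argument (as in the proof of \thmref{paratree} and \lemref{lem.fund region}): the co-landing of $R^\infty_s$ and $R^\infty_t$ at $x$ persists over a maximal open set of parameters, its boundary in $\cS_{\fun}$ being cut out by the locus where the critical point $b$ hits one of these rays; this boundary piece is exactly a parameter wake $\cW_{s',t'}$ (with $s',t'$ the angles of the parameter rays dual to the configuration), and $b_0 \in \cW_{s',t'}$ or $b_0$ equals its base point. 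It remains to identify $\cW_{s',t'}$ as renormalizable, i.e. to show its base point is the landing point of a \emph{periodic} parabubble ray. Since $\bR$ is periodic with non-trivial wake, the bubble ray $\psi_{b_0}(\bR) = \bR_\tau(\q)$ is periodic in the $\q$-tree, hence (via $\Psi^{-1}$) the parameter bubble dual-ray $\hat\cR^{\bfB}_\tau$ is periodic, and by \propref{per para ray} it lands at a parabolic parameter, which one checks coincides with the base point of $\cW_{s',t'}$. If $b_0$ happens to be the base point itself, then $f_{b_0}$ has a parabolic orbit (the quadratic-like map $g$ degenerates), giving the ``equal to its base point'' clause.

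For the ``if'' direction, suppose $b_0$ lies in a renormalizable wake $\cW$, or is its (parabolic) base point. By \defnref{def.renor-parawake} the base point of $\cW$ is the landing point of a periodic parabubble ray $\cR^{\bfB}_t$, so the dual angle $t + 1/2$ is periodic under doubling; translating back through $\Psi$, in the dynamical plane the corresponding bubble ray $\bR_t(f_{b})$ is periodic for every $b$ in $\cW$. I claim this bubble ray has a non-trivial wake arc for $b_0 \in \cW$. Indeed, a parameter wake is, by construction (cf. the proof of \thmref{paratree} and \lemref{fund wakes}), precisely the set of parameters for which the free critical point $b$ is captured inside the dynamical wake $W_{s,t}(f_b)$ whose base point is the landing point of $\bR_t(f_b)$; so for $b_0 \in \cW$ the critical point lies strictly inside $W_{s,t}(f_{b_0})$, which by \propref{prop.separate} (or directly) forces $\Arc_W(\bR_t(f_{b_0}))$ to be non-trivial — a trivial wake arc would mean $b_0$ is a periodic co-landing point, putting $b_0$ on the parabubble ray's boundary rather than in the wake. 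Hence $f_{b_0}$ is separable. If $b_0$ is the base point of $\cW$, then by \propref{per para ray} it is parabolic, and the periodic bubble ray still has non-trivial wake arc (the parabolic fixed point is multiply accessible), so $f_{b_0}$ is again separable.

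The main obstacle I anticipate is the careful bookkeeping in the ``only if'' direction identifying the parameter wake produced by the co-critical-point-following argument as a \emph{renormalizable} one — i.e.\ matching up the periodicity of the dynamical bubble ray $\bR$ with the periodicity of the parabubble ray landing at the wake's base point, and ruling out the degenerate possibility that the relevant parameter rays land somewhere on a parabubble ray itself rather than bounding a wake. This requires knowing that the landing pattern of $R^\infty_s, R^\infty_t$ is stable across the whole wake and that no further critical collision intervenes, which is where \thmref{yang lc} (a priori bounds near $\partial D(f_b)$) and the non-separability analysis of \propref{nonsep} do the real work; the rest is a transcription of Milnor-type arguments \cite{Mi} already invoked elsewhere in the paper.
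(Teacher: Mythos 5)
The overall structure matches the paper's: invoke \propref{prop.separate} to set up the dynamical wake, run a Milnor-type stability argument to cut out a parameter wake $\cW_{s',t'}$ with parabolic base point $\hat b$, and then check that $\cW_{s',t'}$ is a renormalizable wake. But you have a genuine gap exactly at the hinge of that last step, and it is in fact the main technical content of the paper's proof.

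You write that the periodic parabubble ray ``lands at a parabolic parameter, which one checks coincides with the base point of $\cW_{s',t'}$.'' This is the nontrivial claim, and ``one checks'' hides the bulk of the argument. \propref{per para ray} gives you landing at \emph{some} parabolic parameter, but nothing a priori forces that landing point to be $\hat b$; the parameter external rays $\cR^\infty_{s'}$, $\cR^\infty_{t'}$ and the parabubble ray are constructed from different data (external angles of dynamical rays vs. the co-critical point sitting on a bubble tree), and matching them up requires work. The paper proves this by an orbit correspondence / parabolic implosion argument (Tan Lei \cite{Ta}): at $\hat b$ the fixed point of $f_{\hat b}^p$ is a nondegenerate parabolic point (because one of the two critical orbits is trapped on $\partial D(f_{\hat b})$), so perturbed Fatou petals depend continuously, and one can shoot the free critical point into a fixed bubble $B_b$ of the holomorphically moving finite sub-bubble ray, producing parameters on the parabubble ray arbitrarily close to $\hat b$. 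This is what pins the parabubble ray's landing point to $\hat b$; the Milnor \cite{Mi} stability argument and the a priori bounds of \thmref{yang lc} that you cite as ``where the real work happens'' only cover the external-ray part of the picture, not the parabubble ray landing.

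A smaller but real issue: you identify the periodic parabubble ray as $\hat\cR^{\bfB}_\tau$ where $\tau$ is the angle with $\psi_{b_0}(\bR) = \bR_\tau(\q)$. If $\tau$ is $p$-periodic under doubling, then $\tau + 1/2$ is in general only strictly pre-periodic, so by the paper's convention (a parabubble ray $\cR^{\bfB}_u$ is periodic iff $u + 1/2$ is periodic) your $\cR^{\bfB}_\tau$ is \emph{not} a periodic parabubble ray. The correct object is the parabubble ray attached to the \emph{co-critical} dual bubble ray $\tilde{R}^{\bfB}$, the unique non-$f_{b_0}$-periodic preimage of $f_{b_0}(\bR)$; it has angle $\tilde\tau = \tau + 1/2$, and $\tilde\tau + 1/2 = \tau$ is periodic, so $\cR^{\bfB}_{\tilde\tau}$ is the periodic parabubble ray you want. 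This is more than cosmetic: the parabubble ray is built by following the co-critical point, which sits on $\tilde{R}^{\bfB}$, not on $\bR$ itself, when $b$ is inside the wake.

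The ``if'' direction is essentially fine, though the reasoning ``a trivial wake arc would mean $b_0$ is a periodic co-landing point'' is imprecise; it is cleaner to say that for $b$ in the wake $\cW$ the external rays $R^\infty_s(f_b)$, $R^\infty_t(f_b)$ with $s\neq t$ persistently co-land at the (repelling) landing point of $\bR(f_b)$, so $\Arc_W(\bR(f_b))\supset[s,t]$ is non-trivial by definition and $f_b$ is separable; the parabolic case at the base point follows because the two rays still co-land at the parabolic point.
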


\begin{proof}
Suppose that in the dynamical plane of $f_{b_0}$, the critical point $b_0$ is contained in a dynamical wake $W_{s,t}(f_{b_0})$ bounded between the external rays $R^\infty_s(f_{b_0})$ and $R^\infty_t(f_{b_0})$, which co-land at the landing point $x_{b_0}$ of some $p$-periodic bubble ray $\bR(f_{b_0})$. If $x_{b_0}$ is repelling, then the above objects persist in the dynamical plane of $f_b$ for all $b$ near $b_0$, and $b \in W_{s,t}(f_b)$.

Note that the critical point $b$ is contained in $R^\infty_s(f_b)$ in the dynamical plane for $f_b$ if and only if the parameter $b$ is contained in $\cR^\infty_{s'_i}$, with $s'_i := s+ i/3$ for some $i \in \{1, 2\}$. Note that only one of $\cR^\infty_{s'_1}$ or $\cR^\infty_{s'_2}$ is contained in $\cS_{\fun}$. Analogous statement holds for the external angle $t$. Let $i \in \{1,2\}$ and $j\in \{1,2\}$ be the indices so that $ \cR^\infty_{s'_i} \cup \cR^\infty_{t'_j} \subset \cS_{\fun}$. Arguing as in the proof of Theorem 3.1 in \cite{Mi}, we conclude that $ \cR^\infty_{s'_i}$ and $\cR^\infty_{t'_j}$ co-land at some parabolic parameter $\hat b$ to cut out a parameter wake $\cW_{s'_i, t'_j}$ containing all parameters $b$ for which there exists a dynamical wake $W_{s,t}(f_b) \ni b$ based at the landing point $x_b$ of $\bR(f_b)$, and $x_b$ is repelling.

Let $\tilde R^{\bB}(f_{b_0})$ be the unique non $f_{b_0}$-periodic bubble ray for $f_{b_0}$ such that
$$
f_{b_0}(\tilde R^{\bB}(f_{b_0})) = f_{b_0}(\bR(f_{b_0})).
$$
Consider the parabubble ray $\cR^{\bfB} := \Psi^{-1} \circ \psi_{b_0}(\tilde R^{\bB}(f_{b_0}))$. We claim that $\cR^{\bfB}$ lands at the parabolic parameter $\hat b$.

 In the dynamical plane of $f_{\hat b}$, the bubble ray $R^{\bB}(f_{\hat b})$, and the external rays $R^{\infty}_s(f_{\hat b})$ and $R^{\infty}_t (f_{\hat b})$ coland at the parabolic periodic point $z_{\hat b}$. Notice that $z_{\hat b}$ is a parabolic fixed point of $f_{\hat b}^p$ with multiplier one and it is non degenerated, i.e. we have Taylor expansion:
 $$f_{\hat b}^p(z)-z_{\hat b} = z+\omega z^2+O(z^3),\,\,\omega\not=0.$$
 This is because that $f_{\hat b}^p$ only has two critical orbit, and one of then is contained in the Siegel disk boundary $\partial D(f_{\hat b})$.
 
 Thus the "Orbit Correspondence" argument (see \cite{Ta}) can be applied. When one perturbs $\hat b$ to $b$ in a certain region (where the parabolic implosion occurs), the perturbed Fatou petal $P_b$ can be defined for $f_b^p$ and varies continuously. Moreover $\overline{P_b}$ converges to the closure of the union of the attracting and repelling petal for $f_{\hat b}^p$. Let us fix any bubble $B_{\hat b}\subset {R}^\bB(f_{\hat b})$. Denote by $\check{R}^\bB(f_{\hat b})\subset R^\bB(f_{\hat b})$ the finite sub-bubble ray consisting of all bubbles with generation less than $\mathrm{gen}(B_{\hat b})$. Then for $b$ close enough to $\hat b$, $\check{R}^\bB(f_{\hat b})$ moves holomorphically under $\psi_{b}^{-1}\circ \psi_{\hat b}$ to $\check{R}^\bB(f_{b})$ (recall Theorem \ref{cubic tree model}). Set $B_b = \psi_{b}^{-1}\circ \psi_{\hat b}(B_{\hat b})$. Therefore $\overline{B_b}$ will enter $P_b$ for $b$ close to $\hat b$. By \cite[Prop. 2.2]{Ta}, there exists $b'$ such that $b'$ is shooted to $\overline{B_{b'}}$ under the action of $f_{b'}^p$. This implies that $\mathcal{R}^\bB$ lands at $\hat b$.
\end{proof}

\begin{prop}
Every periodic parabubble ray lands at the base of a renormalizable wake.
\end{prop}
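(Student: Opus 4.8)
The plan is to combine \propref{per para ray} with the wake‑construction already carried out in the proofs of \lemref{lem.fund region} and \propref{para separable}. Let $\cR^{\bfB}_t$ be a periodic parabubble ray and set $\tau := t + 1/2$, which by definition is periodic — of some period $p \ge 2$ under angle doubling, since $0$ is the only doubling‑fixed angle and $t = 1/2$ is the excluded spine case. By \propref{per para ray}, $\cR^{\bfB}_t$ lands at a parabolic parameter $\hat b \in \cC_\infty(\theta)$. The goal is then to show that $\hat b$ is the base point of some parameter wake $\cW$; once this is done, Definition~\ref{def.renor-parawake} immediately identifies $\cW$ as a renormalizable wake, because its base point $\hat b$ is the landing point of the periodic parabubble ray $\cR^{\bfB}_t$, and the proposition follows at once.

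To produce such a wake I would first analyze the dynamical plane of $f_{\hat b}$. Taking the limit along $\cR^{\bfB}_t$ of its defining condition $co_b \in \bR_t(f_b)$ as $b \to \hat b$ — this is the separability argument from the proof of \lemref{lem.fund region} — shows that $f_{\hat b}$ is separable: a bubble ray of period $p$ (essentially $\bR_\tau(f_{\hat b})$), which lands by \propref{prop.rational-bubble} and is identified through $\psi_{\hat b}$ with a periodic bubble ray for $\q$ via \thmref{cubic tree model}, has a non‑trivial wake arc and lands at a point $z$ of the parabolic cycle. By \propref{prop.separate} there are exactly two $p$‑periodic external rays $R^\infty_\sigma(f_{\hat b})$ and $R^\infty_{\sigma'}(f_{\hat b})$ co‑landing at $z$, a quadratic‑like restriction $g := f_{\hat b}^p|_{U'}$ with $\beta(g) = z$, and the critical point $\hat b$ lies on the boundary of the associated dynamical wake. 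As in the proof of \propref{para separable}, the fact that one critical orbit of $f_{\hat b}^p$ is confined to $\partial D(f_{\hat b})$ leaves $z$ with a single petal, so $z$ is a non‑degenerate parabolic fixed point of $f_{\hat b}^p$ of multiplier $1$ — precisely the configuration to which parabolic implosion applies.

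The final step transfers this picture to the parameter plane, and is routine given the machinery of \secref{sec:para bubble rays}. By a stability argument modeled on the proof of Theorem~3.1 in \cite{Mi} — already used in \propref{para separable} — the landing of $R^\infty_\sigma$ and $R^\infty_{\sigma'}$ at the repelling continuation of $z$ persists on an open set of parameters bounded by two parameter external rays $\cR^\infty_{\sigma_i}$ and $\cR^\infty_{\sigma'_j}$, namely the suitable tripling‑preimages of $\sigma$ and $\sigma'$ lying in $\cS_{\fun}$ (the cubic ambiguity being resolved exactly as in \propref{para separable}); these co‑land at a parabolic parameter and cut out a parameter wake $\cW$. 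Running the ``orbit correspondence'' / parabolic‑implosion argument of \cite{Ta} as in \propref{para separable}, with the bubbles of $\bR_\tau(f_{\hat b})$ moving holomorphically into the perturbed Fatou petal, shows that $\cR^{\bfB}_t$ itself lands at the base point of $\cW$; by \propref{per para ray} this base point must be $\hat b$, so $\hat b$ is the base of the parameter wake $\cW$ and we are done. I expect the only genuine difficulty to be the dynamical analysis at $\hat b$ in the second step — establishing non‑degeneracy of the limiting parabolic cycle and correctly locating the two co‑landing periodic external rays together with the critical point relative to their wake; the parameter‑plane bookkeeping in the last step merely repeats arguments used repeatedly in \secref{sec:para bubble rays}.
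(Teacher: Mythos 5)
Your plan is essentially the paper's: the published proof is a single sentence asserting that one should repeat the argument of \lemref{lem.fund region} with \propref{para separable} in place of \lemref{fund wakes}, and your proposal carries that program out, correctly reducing to \propref{per para ray}, passing through separability of $f_{\hat b}$, the dynamical analysis of the parabolic cycle, and the Milnor/Tan stability and orbit-correspondence machinery already deployed in \propref{para separable}. There is one small inaccuracy: at the parabolic parameter $\hat b$ the free critical point lies \emph{inside} the dynamical wake $W_{\sigma,\sigma'}(f_{\hat b})$ (it sits in the parabolic basin, which is itself inside the wake), not on its boundary.

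The one place where your route diverges in spirit from the paper's cue is the final step. The paper intends the shortcut of \lemref{lem.fund region}: extend a smooth path $\gamma$ from $\hat b$ on which $\bR_\tau(f_b)$ and its two co-landing external rays persistently land at a repelling $p$-periodic point (Milnor's Theorem~4.1 trick), observe that every $b\in\gamma\setminus\{\hat b\}$ is separable, and then invoke \propref{para separable} as a black box to place $\gamma\setminus\{\hat b\}$ inside a renormalizable wake $\cW$, whence $\hat b\in\partial\cW$ and hence $\hat b$ is its base (the only parabolic parameter on $\partial\cW$). Instead, you reconstruct $\cW$ by hand and re-run the Tan orbit-correspondence argument from the \emph{interior} of \propref{para separable}'s proof to show that $\cR^{\bfB}_t$ lands at the base of $\cW$. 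That is a valid alternative, but note that the orbit correspondence must be run at the base of $\cW$, whose identity with $\hat b$ is exactly what you are trying to establish; as written (``with the bubbles of $\bR_\tau(f_{\hat b})$ moving holomorphically into the perturbed Fatou petal'') this reads as implicitly assuming the conclusion. Either replace it with the path-$\gamma$ argument, or make explicit that you first observe (by the same dynamical analysis) that the base of $\cW$ is a parabolic parameter with identical combinatorics, run orbit correspondence there, and only then invoke the uniqueness of the landing point from \propref{per para ray} to identify it with $\hat b$.
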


\begin{proof}
The proof is analogous to the one given in \lemref{lem.fund region} (using \propref{para separable} instead of \lemref{fund wakes}), and hence will be omitted.
\end{proof}

\begin{prop}\label{pre per para ray}
Every strictly pre-periodic parabubble ray $\cR^{\bfB}_s$ lands at a Misiurewicz parameter $b_0$. Moreover, if $t\in \bbQ/\bbZ$ is the external angle of the unique dynamical external ray for $f_{b_0}$ that lands at the co-critical point $co_{b_0}$, then the parameter external ray $\cR^\infty_t$ lands at $b_0$.
\end{prop}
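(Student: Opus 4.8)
The plan is to show that the accumulation set $\omega(\cR^{\bfB}_s):=\bigcap_{N}\overline{\bigcup_{i\geq N}\cB_i}$ --- where $\{\cB_i\}_{i\geq1}$ is the chain of parabubbles making up $\cR^{\bfB}_s$, and $\Psi(\cB_i)=B_i(\q)$ is the $i$-th bubble of the corresponding bubble ray $\bR_s(\q)$ for $\q$ --- is a single Misiurewicz parameter $b_0$, and then that $\cR^\infty_t$ lands at $b_0$. Since $\cR^{\bfB}_s$ is strictly pre-periodic, $\bR_s(\q)$ is a rational (periodic or strictly pre-periodic) bubble ray for $\q$, hence lands (\corref{cor.landing.quadratic}) at a (pre-)periodic point $w\in J(\q)$; as $w\notin T(\q)$, also $w\notin\partial D(\q)$. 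For $b\in\cB_i$ the co-critical point $co_b$ lies in the bubble $B_i(f_b):=\psi_b^{-1}(B_i(\q))\subset T(f_b)$ (\propref{unif cap}, \propref{limb ext}, \thmref{paratree}), and $\bR_s(f_b):=\psi_b^{-1}(\bR_s(\q))$ is a rational bubble ray for $f_b$ landing (\propref{prop.rational-bubble}) at a (pre-)periodic point $z_b\notin\partial D(f_b)$; since $b$ lies in a parabubble, $f_b$ has no parabolic cycle, so $z_b$ is repelling. The crux is the claim that $co_{b_0}=z_{b_0}$ for every $b_0\in\omega(\cR^{\bfB}_s)$: granting it, $z_{b_0}\in J(f_{b_0})$ forces $b_0\in J(f_{b_0})$ (via $f_{b_0}(b_0)=v_{b_0}=f_{b_0}(co_{b_0})$), so $f_{b_0}$ has no parabolic cycle, $z_{b_0}=co_{b_0}$ is repelling, and $b_0$ is strictly pre-periodic under $f_{b_0}$ (a critical point cannot lie on a repelling cycle), i.e. $f_{b_0}$ is Misiurewicz.

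To prove the claim I would pick $b_n\to b_0$ with $b_n\in\cB_{i_n}$, $i_n\to\infty$, so $co_{b_n}\to co_{b_0}$ by holomorphic dependence. What is needed is uniform shrinking of the tail: a neighborhood $V$ of $b_0$ and, for each $\eps>0$, an index $N$ with $\diam B_i(f_b)<\eps$ and $B_i(f_b)\subset\D_\eps(z_b)$ for all $b\in V$ and $i\geq N$. This yields $co_{b_0}=\lim_n co_{b_n}=\lim_n z_{b_n}=z_{b_0}$ (the uniform shrinking also gives continuity of $b\mapsto z_b$), and it yields $\omega(\cR^{\bfB}_s)\cap\overline{\cB_j}=\varnothing$ for every $j$, so $co_{b_0}$ cannot instead sit in a finite-generation bubble. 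Since $z_b\notin\partial D(f_b)$ is repelling with multiplier bounded uniformly off the unit circle for $b$ near $b_0$, the tail bubbles $B_i(f_b)$ are iterated $f_b$-preimages --- along branches avoiding the critical points --- of a fixed bubble accumulating on $z_b$, so Koebe distortion control together with linearization at the repelling cycle yields the uniform geometric decay of $\diam B_i(f_b)$. (Had $\bR_s$ forced the bubbles toward $\partial D(f_b)$ --- which \corref{cor.landing.quadratic} excludes for a rational ray --- one would instead invoke the a priori bounds for puzzle disks near a bounded-type Siegel boundary, \thmref{yang lc} and \cite{Y}.) Along the way I would check that $\psi_b$ is defined far enough along $\bR_s(f_b)$ for all $b$ near $b_0$, which holds since the relevant critical value stays off $T(f_b)$.

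For the landing of $\cR^{\bfB}_s$, the relation $co_{b_0}=z_{b_0}$ together with $f_{b_0}(co_{b_0})=f_{b_0}(b_0)$ produces an equation $f_{b_0}^{\,m+p}(b_0)=f_{b_0}^{\,m}(b_0)$ with definite $m\geq 0$, $p\geq 1$; the holomorphic function $b\mapsto f_b^{\,m+p}(b)-f_b^{\,m}(b)$ on $\bbC^*$ is not identically zero (near $\infty$ the free critical point escapes), hence has discrete zero set, while $\omega(\cR^{\bfB}_s)$ is a nested intersection of connected compacta and so is connected; therefore $\omega(\cR^{\bfB}_s)=\{b_0\}$ and $\cR^{\bfB}_s$ lands at the Misiurewicz parameter $b_0$. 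For the external ray, $co_{b_0}$ is a repelling pre-periodic point in $T(f_{b_0})$ equal to the landing point of an infinite bubble ray, hence not a root point, hence uni-accessible by \corref{access to tree}; a unique external ray $R^\infty_t(f_{b_0})$ lands there, with $t$ pre-periodic under tripling, so $t\in\bbQ/\bbZ$. Since $b_0\in\partial\cC(\theta)$ it is accumulated by $\cA_\infty(\theta)$, and the classical Douady--Hubbard/Milnor argument for parameter rays at Misiurewicz parameters --- run through the tautological identity $\Phi^\infty(b)=\phi^\infty_b(co_b)$ of \thmref{uniformization of escape} and the stability of $R^\infty_t(f_b)$ landing at the holomorphically moving repelling continuation of $co_{b_0}$, exactly as in \cite{Mi} --- then shows $\cR^\infty_t$ accumulates only at $b_0$ and lands there. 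I expect the main obstacle to be the uniform-shrinking step of the second paragraph: controlling, uniformly in the parameter, the geometry of the deep bubbles near the landing point of $\bR_s(f_b)$, hence ruling out that $co_b$ ``runs off'' in the limit; the rest is soft, or a transcription of standard parameter-ray arguments.
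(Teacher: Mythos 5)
Your overall strategy matches the paper's: show that any accumulation point $b_0$ of $\cR^{\bfB}_s$ satisfies $co_{b_0}=z_{b_0}$, the landing point of $\bR_s(f_{b_0})$, which is an iterated preimage of a repelling periodic point, so $b_0$ is Misiurewicz; deduce that the accumulation set is a singleton; finish with the Douady--Hubbard stability argument for the parameter ray. Your write-up is more explicit than the paper's terse ``it follows by continuity'' at two places: you supply a Koebe-plus-linearization argument for uniform shrinking of the tail of $\bR_s(f_b)$ near $b_0$, and you supply a clean discreteness argument (nonvanishing of $b\mapsto f_b^{m+p}(b)-f_b^m(b)$ together with connectedness of $\omega(\cR^{\bfB}_s)$) for the singleton claim.

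There is, however, one circularity that must be broken. Your uniform-shrinking step explicitly assumes that the periodic cycle attracting the tail of $\bR_s(f_b)$ has multiplier bounded uniformly away from the unit circle for $b$ near $b_0$ --- including $b_0$ itself. But you only deduce that $z_{b_0}$ is repelling \emph{after} granting the claim $co_{b_0}=z_{b_0}$, which is precisely what the uniform shrinking is meant to establish. If the relevant cycle were parabolic at $b_0$, the multipliers of $z_{b_n}$ would tend to a root of unity and the geometric decay you invoke would degenerate as $b_n\to b_0$, breaking the claim's proof. The paper excludes this possibility \emph{before} the continuity step: by Proposition~\ref{para separable} (and Proposition~\ref{per para ray}), the cycle can fail to be repelling only if $b_0$ is the landing point of some periodic parabubble ray, and $b_0\in\omega(\cR^{\bfB}_s)$ cannot lie on the closure of a second parameter bubble ray without disconnecting the escape locus. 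You need to insert this (or an equivalent) argument prior to the shrinking step, not just as a corollary of the claim. A minor further point: $co_{b_0}$ is the landing point of an infinite bubble ray, so it lies in $\overline{T(f_{b_0})}\setminus T(f_{b_0})$ and Corollary~\ref{access to tree} does not literally apply to it; the uniqueness of the co-landing external ray needs a separate (standard, repelling-point) justification. Once these two points are patched, your proposal reproduces the paper's proof with the details spelled out.
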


\begin{proof}
Let $\bR_{s+1/2}(\q)$ be a strictly pre-periodic bubble ray for $\q$ for some $s \in \bbQ/\bbZ$. Note that $\bR_{s+1/2}(f_b) := \psi_b^{-1}(\bR_{s+1/2}(\q))$ is a well-defined bubble ray for $f_b$ landing at a pre-periodic point $x_b$ as long as $\psi_b(b)$ is not contained in either $\bR_{s+1/2}(\q)$ or an iterated image of $\bR_{s+1/2}(\q)$ under $\q$. Moreover, by \propref{para separable}, $x_b$ is an iterated preimage of a repelling point $y_b$ unless the parameter $b$ is the landing point of some periodic parabubble ray.

Let $b_0$ be a parameter in the accumulation set of the parameter bubble ray $\cR_s^{\bfB}$. Then $b_0$ is disjoint from the closure of any other parameter bubble ray (otherwise, it would disconnect the escape locus). Hence, in the dynamical plane for $f_{b_0}$, the strictly pre-periodic bubble ray $\bR_{s+1/2}(f_{b_0})$ lands at an iterated pre-image $x_{b_0}$ of a repelling point $y_{b_0}$. Moreover, this persists for all parameters $b$ near $b_0$. Consider a sequence $\{b_i\}_{i=1}^\infty$ converging to $b_0$ such that $b_i$ is contained in a parabubble in $\cR^{\bfB}_s$. It follows by continuity that $co_{b_0}$ coincides with the landing point $z_{b_0}$ of $\bR_s(f_{b_0})$. Thus, $\cR^{\bfB}_s$ must land at $b_0$.

The dynamical external ray $R^\infty_t(f_{b_0})$ for $f_{b_0}$ lands at $z_{b_0} = co_{b_0}$. Since $z_{b_0}$ is an iterated preimage of a repelling point $y_{b_0}$, it follows that for all $b$ near $b_0$, the ray $R^\infty_t(f_b)$ lands at the iterated preimage $z_b$ of $y_b$. The fact that the parameter external ray $\cR^\infty_t$ lands at $b_0$ follows immediately.
\end{proof}

\section{Combinatorial tools}\label{sec:combinatorial-tools} 

In this section, we recall two combinatorial tools as well as their a priori bounds, which will be used later in the proof of the Rigidity Theorem (Theorem \ref{thm.rigidity}). The first tool is puzzle disks constructed by Yang \cite{Y}. Its a priori bound (Proposition \ref{prop.puzzle.disk} (\romannumeral3)) controls the geometry of the Julia set near the Siegel disk boundary. The second tool is Modified principal nest by Kahn and Lyubich \cite{KaLy} (or favorite nest by Avila-Kahn-Lyubich-Shen \cite{AvKaLySh}). Its a priori bound controls the geometry near the free critical point. The application of the second tool is almost identical to \cite{AvKaLySh}. The application of the puzzle disks is not immediate, since its combinatorics is far more complicated than that of the classical Yoccoz puzzle pieces and thus the tableau rule does not hold for puzzle disks. We will prove in \S \ref{subsec.modified-puzzle-disks} that the first hit map to puzzle disks has bounded degree. This fact plays an important role in the proof of the Rigidity Theorem.

\subsection{Puzzle disks}\label{subsec.puzzle-disks}
\text{ }\\
In \cite{Y}, Yang constructs "puzzle disks" for the associated Blaschke product, and shows that the puzzle disks along the unit circle shrink to a single point. \thmref{yang lc} then follows by a quasi-conformal surgery argument (a generalization of Proposition \ref{prop.surgery} to arbitrary degree).

To simplify the presentation, we recall some important results of \cite{Y} restricting to our special case: the cubic Siegel polynomials. From now on til the end of the article, we always make the following assumption on $f:=f_b$ if there is no exception:

\begin{assum}\label{assum.well-defined}
$b\in \mathcal{S}_{\mathrm{fun}}\cap\partial{\mathcal{C}}(\theta)$ and satisfies 
\begin{itemize}
    \item[(\romannumeral1)] $b$ is non-separable.
    \item [(\romannumeral2)]$f^n(b)\not\in\overline{{R^\bB_0(f)}}$ for all $n\geq0$.
\end{itemize}
\end{assum}

In particular, if $b$ satisfies Assumption \ref{assum.well-defined}, then $R^\bB_\tau(f)$ is well-defined for all $\tau\in\mathscr{T}$ (recall $\mathscr{T}$ at the end of \S \ref{sec:quadratic Siegel}). 

\begin{lem}\label{lem.loc.alpha}
    Let $f$ satisfy Assumption \ref{assum.well-defined}, then the wake impression $\mathrm{Imp}_W(R^\bB_0)$ is trivial, i.e. equals $\{\alpha\}$, the landing point of $R^\bB_0$.
\end{lem}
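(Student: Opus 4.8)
The plan is to argue by contradiction: suppose the wake impression $\mathrm{Imp}_W(R^\bB_0)$ is non-trivial, which forces the wake arc $\mathrm{Arc}_W(R^\bB_0) = [s_*, t_*]$ to be non-degenerate. The fixed bubble ray $R^\bB_0(f)$ is periodic (indeed fixed) under $f$, so by \propref{prop.separate}, the two external rays $R^\infty_{s_*}$ and $R^\infty_{t_*}$ bounding the wake would be fixed, hence their external angles must lie in $\{0, 1/2\}$; thus the wake is cut out by $R^\infty_0(f)$ and $R^\infty_{1/2}(f)$, co-landing at the fixed point $\alpha$. Then \propref{prop.separate}(ii) produces a quadratic-like restriction $g = f^1|_{U'}$ with $\beta(g) = \alpha$ and $b \in U'$; in particular $f$ would have a dynamical wake based at the repelling fixed point $\alpha$ that contains the critical point $b$, which is precisely the condition for $f$ to be \emph{separable}. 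This contradicts Assumption \ref{assum.well-defined}(i).

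More carefully, I would first invoke Assumption \ref{assum.well-defined}(ii), which guarantees that $f^n(b) \notin \overline{R^\bB_0(f)}$ for all $n \geq 0$, so that $R^\bB_0(f)$ is a genuine infinite bubble ray (no bubble in it ever hits the free critical point), and its wake impression and wake arc are well-defined. Since $R^\bB_0$ is fixed, $\mathrm{Arc}_W(f^n(R^\bB_0)) = \mathrm{Arc}_W(R^\bB_0)$ for all $n$, so the mechanism of \propref{prop.irrational wake trivial} (where tripling the endpoints each step gives a contradiction for a bubble ray whose orbit avoids the critical point) does \emph{not} directly apply here — the wake arc could a priori be non-trivial. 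That is exactly why one needs \propref{prop.separate}: in the periodic case a non-trivial wake arc is not immediately absurd, but it forces a quadratic-like renormalization and hence separability.

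The one subtlety to address is that \propref{prop.separate} is stated for a $p$-periodic bubble ray and concludes with a wake based at $f^n(x)$ for some $0 \le n < p$; here $p = 1$ and $x = \alpha$ is already fixed, so $n = 0$ and the conclusion is clean: $b \in W_{s_*, t_*}(f)$ with $\alpha$ the repelling base point of a quadratic-like map $g = f|_{U'}$. By the definition of separability (``$f_b$ has a periodic bubble ray with a non-trivial wake arc''), the existence of such $R^\bB_0$ makes $f$ separable outright, contradicting Assumption \ref{assum.well-defined}(i). Therefore $\mathrm{Arc}_W(R^\bB_0)$ is trivial, i.e.\ a singleton, and consequently $\mathrm{Imp}_W(R^\bB_0)$ is the single point at which $R^\bB_0$ co-lands with its two bounding external rays. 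It remains only to identify that point: by \corref{cor.landing.quadratic} and \thmref{cubic tree model} (or directly by \propref{prop.rational-bubble}), the rational bubble ray $R^\bB_0$ lands, and its landing point is the fixed point $\alpha$; since the impression is trivial it coincides with $\alpha$.

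I expect the main (and essentially only) obstacle to be bookkeeping: making sure that the hypotheses of \propref{prop.separate} are met — in particular that $R^\bB_0(f)$ is honestly a periodic bubble ray under Assumption \ref{assum.well-defined} (so that its root points and wakes are defined and $f$ maps them as expected) — and that ``a periodic bubble ray with non-trivial wake arc'' is literally the definition of separable, so the contradiction with Assumption \ref{assum.well-defined}(i) is immediate rather than requiring the full quadratic-like machinery. Given the earlier results this is short; no delicate analysis or estimates are needed.
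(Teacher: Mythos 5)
Your proposal is correct and follows essentially the same route as the paper: both argue by contradiction from a non-trivial impression, conclude that the wake arc of the fixed bubble ray $R^\bB_0$ would then be non-trivial (the paper phrases this via a non-$0$ external angle accumulating on the impression, which is equivalent), and observe that a periodic bubble ray with non-trivial wake arc makes $f$ separable by definition, contradicting Assumption~\ref{assum.well-defined}(i). Your excursion through \propref{prop.separate} is, as you yourself note, unnecessary — the definition of separability already gives the contradiction — and you do not need \corref{cor.landing.quadratic} (which concerns $\q$, not $f_b$) at the end; \propref{prop.rational-bubble} suffices.
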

\begin{proof}
    If $\mathrm{Imp}_W(R^\bB_0)$ is non trivial, then there will be an external ray $R^\infty_t(f)$ with $t\not = 0$ accumulating to $\mathrm{Imp}_W(R^\bB_0)$. However since $f$ is non-separable, the wake arc $\mathrm{Arc}_W(R^\bB_0)$ is trivial. This leads to a contradiction.
\end{proof}

Let $\textbf{R}^\infty$ be the collection of all the external rays landing at the end point of $R^\bB_0$ and $R^\bB_{1/2}$.

Fix $r>1$, define the dynamical graph of depth $n\geq0$ by
\begin{equation}\label{eq.dyna.graph}
    I_0:= R^\bB_0\cup R^\bB_{1/2}\cup \overline{\textbf{R}^\infty}\cup E^\infty_r,\,\, I_n:={f}^{-n}(I_0),\,\,I_\infty := \bigcup_{k\geq0}I_k,
\end{equation}
where $E^\infty_r$ is the equipotential of level $r$ in the immediate basin of $\infty$. For convenience, we also set 
\begin{equation}\label{eq.equipotential>r}
    E^\infty_{>r} := \bigcup_{r'>r} E^\infty_{r'}. 
\end{equation}

A {\it puzzle piece} $P_n$ of depth $n$ is a connected component of $\C\setminus I_n$. The local connectivity of $J(f)$ at $z\in\partial D(f)$ (Theorem \ref{yang lc}) follows directly from the following shrinking property for puzzle pieces attached at $\partial D(f)$:
\begin{thm}{\cite[Cor. 8.10]{Y}}\label{thm.jonguk}
    Pick any $z\in\partial D(f)$. If $z$ is a joint, denote by $P^+_n(z)$ (resp. $P^-_n(z)$) the right-hand side (resp. left-hand side) puzzle piece such that $z\in \partial P^+_n(z)$ (resp. $z\in \partial P^-_n(z)$). If not, denote by $P_n(z)$ the unique puzzle piece such that $z\in\partial P_n(z)$. For the first case,
    $\bigcap_{n\geq0}\overline{P^+_{n}(z)} = \{z\}$, $\bigcap_{n\geq0}\overline{P^-_{n}(z)} = \{z\}$; for the second case $\bigcap_{n\geq0}\overline{P_{n}(z)} = \{z\}$. In particular, the Julia set $J(f)$ is locally connected at $\bigcup_{k\geq0}f^{-k}(\partial D(f))$.
\end{thm}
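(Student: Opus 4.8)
The plan is to transfer the shrinking problem onto the Blaschke model of \propref{prop.surgery} and then invoke the complex a priori bounds available for bounded type critical circle maps; this is the route taken in \cite{Y}, and I outline the steps below, indicating where the genuine difficulty lies. Recall that $f = \varphi^{-1}\circ P\circ\varphi$, where $\varphi$ is quasiconformal on $\C$ and conformal on $B_\infty(f)$, with $\varphi(D(f)) = \D$; and $P$ agrees with the degree-$5$ Blaschke product $Bl$ outside $\overline{\D}$, while $P = h^{-1}\circ R_\theta\circ h$ on $\overline{\D}$. Since $Bl$ is a degree-$1$ circle map with a single cubic critical point at $1$ and rotation number $\theta$, the restriction $Bl|_{\partial\D}$ is a critical circle map, and $\theta$ being of bounded type brings into play the full renormalization theory of such maps.

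First I would record the geometric input. Writing $c_j := Bl^j(1)$ and $q_n$ for the continued-fraction denominators of $\theta$, the dynamical partition $\mathcal{D}_n$ of $\partial\D$ cut out by $\{c_j\}_{j=0}^{q_n+q_{n+1}-1}$ has real a priori bounds (adjacent arcs are comparable uniformly in $n$, and $\mathcal{D}_n$ refines to points) and, crucially, the renormalizations of $Bl|_{\partial\D}$ enjoy \emph{beau} complex bounds: they extend to holomorphic commuting pairs on complex neighborhoods of definite conformal size. These are exactly the ingredients from which \cite{Y} builds the puzzle disks --- for each depth $n$, a family of Jordan domains straddling $\partial\D$, defined dynamically from $P$ together with the equipotential structure coming from $E^\infty_r$, such that the depth-$n$ puzzle disk $Q_n(w)$ meeting $\partial\D$ at $w$ lies in a neighborhood of bounded eccentricity around the arc of $\mathcal{D}_n$ containing $w$. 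Under $\varphi$, the exterior halves of these disks correspond to the puzzle pieces $P_n(z)$ and $P^\pm_n(z)$ of the statement, with joints $z$ mapping to points of the backward critical orbit of $Bl|_{\partial\D}$.

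Granting this, the shrinking along $\partial\D$ is a Gr\"otzsch-type argument: the complex bounds furnish an integer $k = k(\theta)$ and $m = m(\theta) > 0$ such that for every $n$ the annulus $Q_n(w)\setminus\overline{Q_{n+k}(w)}$ is nondegenerate with $\mod \geq m$; summing these moduli to infinity forces $\diam Q_n(w)\to 0$, hence $\bigcap_n\overline{Q_n(w)} = \{w\}$, and similarly for the two sides $Q^\pm_n(w)$ at a critical-orbit point. The hard part is twofold. First, one must control the arcs of $\mathcal{D}_n$ adjacent to the critical point $1$: these are the ``long'' arcs, across which $Bl$ is $3$-to-$1$ rather than a local diffeomorphism, so one cannot simply pull back and apply Koebe; their shrinkage, and the definite moduli around them, come instead from the beau geometry of the near-critical renormalization, which bounded type keeps from degenerating into a persistent quadratic-like return. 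Second, one must verify that the puzzle-disk combinatorics of \cite{Y} refines in step with the graph $\{I_n\}$ used here --- that the edges of $I_n$ accumulating on $\partial D(f)$ are forced, via $\varphi$ and the minimality of $R_\theta$, to cut exactly the depth-$n$ arcs of $\mathcal{D}_n$ and nothing finer --- so that each $P_n(z)$ is trapped between the two puzzle disks attached at the two $\mathcal{D}_n$-vertices straddling $\varphi(z)$. This analytic control of the near-critical renormalization is the main obstacle; the combinatorial matching is bookkeeping.

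Finally, I would transfer back and pull forward. Since $\varphi$ is quasiconformal on $\C$, hence locally H\"older, a nested sequence of domains with diameters tending to $0$ is carried to one with diameters tending to $0$; applying this to the $Q_n(w)$ gives $\bigcap_n\overline{P^\pm_n(z)} = \{z\}$ at joints and $\bigcap_n\overline{P_n(z)} = \{z\}$ at every other $z\in\partial D(f)$, whence $J(f)$ is locally connected at $\partial D(f)$. For the last assertion, if $z' \in f^{-k}(\partial D(f))$ with $f^k(z') = z$, then $P_n(z')$ is the connected component containing $z'$ of $f^{-k}(P_{n-k}(z))$; since $f^k$ is a branched cover of bounded local degree and $\diam P_{n-k}(z)\to 0$, these components shrink to $z'$, so $J(f)$ is locally connected there as well.
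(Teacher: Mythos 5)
The paper does not prove this statement: it is imported verbatim as \cite[Cor.\ 8.10]{Y}, and no proof is given here. What the paper does supply — later, in \S\ref{subsec.puzzle-disks} — is a summary of the machinery from \cite{Y} that yields it: puzzle silhouettes (Definition~\ref{def.puzzle-silhouette}), puzzle disks $D^n$ (Definition~\ref{def.puzzle-disks}) and their a priori bound $\mathrm{mod}(D^n\setminus\overline{D^{n+2}})>\mu$ (\propref{prop.puzzle.disk}(iii)), and the modified puzzle disks $C^n$ whose exterior halves are genuine puzzle pieces $Q^n_\pm=P^{F,\pm}_{q_{n+1}}(1)$ (\eqref{eq.Qn-are-puzzlepieces}). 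Your sketch follows the same overall strategy — surgery to the Blaschke model via \propref{prop.surgery}, complex a priori bounds for the bounded-type circle dynamics, Gr\"otzsch, and quasiconformal transfer — and your final paragraph (pulling the shrinking through $\varphi$ and through the fixed maps $f^k$) is correct.

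Two things are off, though. First, you describe the complex input as \emph{beau} bounds for the renormalizations of $Bl|_{\partial\D}$, realized as holomorphic commuting pairs on definite domains. That is a relative of what is used, but it is not the object \cite{Y} actually estimates: the a priori bound there is for the \emph{puzzle disks} $D^n$, which are built by a specific pull-back scheme that mixes the circle dynamics with the external graph (bubble rays, external rays, equipotentials); see \propref{prop.puzzle.disk}(iii) and the definitions preceding it. Second, your identification ``depth-$n$ puzzle disk $\leftrightarrow$ depth-$n$ puzzle piece $P_n(z)$'' is wrong: by \eqref{eq.Qn-are-puzzlepieces} the exterior halves of $C^n$ are puzzle pieces of depth $q_{n+1}$, and the disk-index $n$ and graph-depth run at very different rates. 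The correct containment (say at $z=\varphi^{-1}(1)$) is that a graph-depth-$n$ piece sits inside the exterior half of a disk of disk-depth $m(n)$ with $q_{m(n)+1}\le n$, and one concludes because $m(n)\to\infty$. Reconciling the two indexings, and matching the puzzle-disk combinatorics to the graph $I_n$, is precisely what \lemref{lem.Cn-Dn} and \eqref{eq.Qn-are-puzzlepieces} accomplish here; it is a genuine step, not ``bookkeeping.''
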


The two following Corollaries follows from a standard argument, see \cite{Yam}.

\begin{cor}\label{cor.landing.two.external}
    Let $B$ be a bubble of $f$ of generation $n\geq 1$, $x$ be its root. Then there are exactly two external rays landing at $x$.
\end{cor}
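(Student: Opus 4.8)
The plan is to adapt the classical argument of Yampolsky for $\q$ \cite{Yam}, substituting the shrinking of puzzle disks along $\partial D(f)$ (Theorem \ref{thm.jonguk}) for the global local connectivity of the Julia set. Write $n := \gen(B)$ and $x := \rt(B)$, so that $\gen(x) = n-1$ and $f^{n-1}(x) = 1/b$. First I would reduce to the base case $x = 1/b$: by the definition of a root (Lemma \ref{root}), $f^i(x) \neq b$ for all $i \geq 0$, and by minimality of the generation $f^i(x) \neq 1/b$ for $0 \leq i < n-1$; since $b$ and $1/b$ are the only critical points of $f$, the iterate $f^{n-1}$ is a local homeomorphism at $x$, carrying a neighborhood of $x$ onto one of $1/b$ and sending external rays to external rays while preserving co-landing. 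Hence the number of external rays landing at $x$ equals the number landing at $1/b$. Moreover $J(f)$ is locally connected at $x$ and at $1/b$ by Theorem \ref{thm.jonguk}, since both points lie in $\bigcup_{k \geq 0} f^{-k}(\partial D(f))$; consequently the number of rays landing at either point equals the number of accesses to it from $B_\infty(f)$.

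It then remains to count the accesses at $1/b$. For the lower bound, the Siegel disk $D(f)$ and the generation-one bubble $D'$ rooted at $1/b$ are two Jordan domains meeting only at $1/b$, so the complement of $\overline{D(f)} \cup \overline{D'}$ has two local sectors at $1/b$, giving at least two accesses. For the upper bound, observe that any bubble $\tilde B$ with $1/b \in \partial \tilde B$ is sent by $f$ to a bubble whose boundary contains $f(1/b)$; since $f|_{\partial D(f)}$ is conjugate to the irrational rotation $R_\theta$ (Corollary \ref{cor.quasi Siegel boundary}), the point $1/b$ is not periodic, so $f(1/b)$ is not a joint, and hence $D(f)$ is the only bubble whose boundary contains $f(1/b)$. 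Thus $\tilde B$ must be a component of $f^{-1}(D(f))$; as $1/b$ is a simple critical point, there are exactly two such components touching $1/b$ --- namely $D(f)$ and $D'$ --- and they meet only at $1/b$. Therefore near $1/b$ the filled Julia set $K(f)$ is $\overline{D(f)} \cup \overline{D'}$ together with bubbles attached to one of these two pieces, so there are precisely two local sectors and hence, by local connectivity, exactly two external rays landing at $1/b$, and the same count transfers back to $x$.

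The step I expect to be most delicate is the last one: verifying that each of the two sectors at $1/b$ is a genuine external access rather than being filled in by an accumulation of small bubbles. This follows by pulling back through the local double cover $f$: the point $f(1/b) \in \partial D(f) \subset T(f_b)$ is not a root point, hence uni-accessible by Corollary \ref{access to tree}, so the single complementary sector at $f(1/b)$ is an access from $B_\infty(f)$, and each of the two sectors at $1/b$ maps onto it and is therefore an access as well. The lower bound and the pull-back reduction via $f^{n-1}$ are routine; the content lies in converting local connectivity together with the non-periodicity of $\partial D(f)$ into the exact count. (Alternatively, once the reduction to $1/b$ is made one may simply quote Corollary \ref{access to tree}, since $x$ is a root point of $T(f_b)$ and is therefore bi-accessible, which is exactly the assertion.)
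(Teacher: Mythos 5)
Your proof is correct and follows the standard argument the paper has in mind when it refers to Yampolsky \cite{Yam}: pull back through the locally conformal iterate $f^{n-1}$ to the base joint $1/b$, and then use local connectivity of $J(f)$ along the Siegel boundary (Theorem~\ref{thm.jonguk}) to convert the landing-ray count into an access count, which the two local sectors of $\bbC\setminus(\overline{D(f)}\cup\overline{D'})$ at the simple critical point $1/b$ show to be exactly two. Two small remarks: the claim that $D(f)$ is the only bubble whose boundary contains $f(1/b)$ rests on the unstated but standard fact that a bubble can attach to $\partial D(f)$ only at a critical point of $f|_{\partial D(f)}$, i.e.\ at a joint; and the step you flag as delicate (showing each sector is a genuine access) can be short-circuited, since Theorem~\ref{thm.jonguk} already presupposes exactly two puzzle pieces $P_n^{\pm}(1/b)$ abutting a joint and their shrinking $\bigcap_n\overline{P_n^{\pm}(1/b)}=\{1/b\}$ gives the two-access count directly.
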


We denote by $W_x$ the wake cut by the two external ray in Corollary \ref{cor.landing.two.external}. Denote by $L_x := \overline{W_x}\cap K(f)$ the corresponding {\it limb}. 

\begin{cor}\label{cor.decomposition}
    Let $B,x$ be as in Corollary \ref{cor.landing.two.external}. We have decompositions $K(f) = \overline{D(f)}\cup \bigcup_{x'} L_{x'}$ where $x'$ runs over the joints on $\partial D(f)$; $L_x = \overline{B}\cup \bigcup_{x''} L_{x''}$, where $x''$ runs over the joints on $\partial D(f)\setminus\{x\}$.
\end{cor}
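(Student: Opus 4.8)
The plan is to derive both decompositions from the local connectivity of $J(f)$ along $\partial D(f)$ --- that is, from \thmref{yang lc}, or equivalently from the shrinking of the puzzle pieces attached to $\partial D(f)$ in \thmref{thm.jonguk} --- together with the facts that $\partial D(f)$ is a quasicircle (\corref{cor.quasi Siegel boundary}) and that exactly two external rays land at every joint (\corref{cor.landing.two.external}). The two decompositions are one assertion at two scales: the second is the first applied inside the limb $L_x$, with the bubble $\overline B$ in place of $\overline{D(f)}$ and the joints of $\partial B$ other than $x$ in place of the joints of $\partial D(f)$. To see that the argument transfers, observe that under Assumption~\ref{assum.well-defined} the forward orbit of every bubble avoids both critical points before landing in $D(f)$ (part (ii) gives $f^n(b)\notin\overline{D(f)}$ for all $n\geq 0$, so $b$ lies in no bubble, while $1/b$ lies in no bubble's interior); hence $f^{\gen(B)}$ is locally conformal along $\partial B$, save for a local branched cover near the joints, so the local connectivity of $J(f)$ at $\partial D(f)$ pulls back to local connectivity at $\partial B$. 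It therefore suffices to prove the first decomposition.

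For the first decomposition, note first that $K(f)$ is connected (as $\mathcal{C}(\theta)$ is closed, $b\in\partial\mathcal{C}(\theta)\subset\mathcal{C}(\theta)$), hence a continuum containing the sub-continuum $\overline{D(f)}$; so every connected component $U$ of $K(f)\setminus\overline{D(f)}$ satisfies $\overline U\cap\partial D(f)\neq\varnothing$. The crux is to show $\overline U$ meets $\partial D(f)$ in a single point $x_U$: if $\overline U\cap\partial D(f)$ contained distinct points $p$ and $q$, then for $n$ large the puzzle pieces $\overline{P^{\pm}_n(p)}$ would lie in a disk about $p$ of radius $<|p-q|/3$ by \thmref{thm.jonguk}, and one checks --- tracking which arcs of the graph $I_n$ bound them, and using $\overline{D(f)}\subset I_n$ --- that their union is a cross-cut near $p$ disconnecting the connected set $U$, a contradiction. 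Next, $\overline U\cap\overline{D(f)}=\{x_U\}$ forces $x_U$ to be a cut point of $K(f)$ separating $U$ from $D(f)$ (a soft argument); since $J(f)$ is locally connected at $x_U\in\partial D(f)$, such a cut point is landed by at least two external rays, so by the accessibility dichotomy \corref{access to tree} the point $x_U$ is a root point of $T(f)$, hence a joint. Finally, the two external rays at $x_U$ bound the wake $W_{x_U}$; since $U$ is connected, disjoint from $D(f)$ and from both rays, and meets $\partial D(f)$ only at the endpoint $x_U$, it lies in $W_{x_U}$, so $U\cup\{x_U\}\subset L_{x_U}$. Conversely $L_{x'}\setminus\{x'\}\subset K(f)\setminus\overline{D(f)}$ for every joint $x'$ on $\partial D(f)$ because $W_{x'}$ does not contain $D(f)$, and wakes based at distinct joints of $\partial D(f)$ are pairwise disjoint, so distinct limbs meet only in $\partial D(f)$. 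Combining these yields $K(f)=\overline{D(f)}\cup\bigcup_{x'}L_{x'}$.

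The main obstacle is the single-point-attachment step; all the rest is soft topology together with bookkeeping about external rays and wakes. That step is where \thmref{thm.jonguk} --- hence the a priori bounds for puzzle disks along $\partial D(f)$ --- is consumed, and the mechanism is the same as in the quadratic Siegel case treated by Yampolsky \cite{Yam}. The only genuinely new point in the cubic setting is that, in identifying $x_U$ as a joint, one cannot use a direct combinatorial count and must instead invoke the accessibility dichotomy \corref{access to tree} to rule out a component of $K(f)\setminus\overline{D(f)}$ accumulating onto a uni-accessible, and hence non-joint, point of $\partial D(f)$.
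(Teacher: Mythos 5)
Your overall skeleton is the standard Petersen--Yampolsky argument the paper points to (the paper itself offers only a citation, no proof): reduce the second decomposition to the first by pulling back along $f^{\gen(B)}$; for each component $U$ of $K(f)\setminus\overline{D(f)}$ show the attachment set $\overline{U}\cap\partial D(f)$ is a single point $x_U$; identify $x_U$ as a joint via bi-accessibility and \corref{access to tree}; and conclude $U\subset W_{x_U}$. The reduction, the continuum-theory step, and the bi-accessibility step are all fine. But the decisive step --- that $\overline{U}\cap\partial D(f)$ is a singleton --- is not actually established by the cross-cut argument as you state it, and this is precisely where all the work of the corollary lies.

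The difficulty is that $\partial P^{\pm}_n(p)\subset I_n\cup E^\infty$, and the graph $I_n$ contains bubble rays, which lie inside $K(f)$. Hence $\partial\bigl(\overline{P^+_n(p)}\cup\overline{P^-_n(p)}\bigr)$ is \emph{not} disjoint from $K(f)\setminus\overline{D(f)}$, and it does not ``disconnect the connected set $U$'': $U$ is free to leave the puzzle piece by running along a bubble ray. To close the gap one needs a further argument, for instance the following. The bubble-ray portions of $\partial P_n(p)$ emanate from the two joints $c',c''\in\partial D(f)$ that are the endpoints of the arc $A_n:=\partial P_n(p)\cap\partial D(f)$ (which contains $p$ in its interior). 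If $U$ meets the bubble ray $\beta$ emanating from, say, $c'$, then since $\beta\setminus\overline{D(f)}$ is connected and lies in $K(f)\setminus\overline{D(f)}$, while $U$ is a connected component of that set, we get $U\supset\beta\setminus\overline{D(f)}\subset W_{c'}$. But $U$ also accumulates at $p$, and $p\notin\overline{W_{c'}}$ because $\overline{W_{c'}}\cap\partial D(f)=\{c'\}$ and $p\neq c'$. Thus $U$ meets both $W_{c'}$ and the exterior of $\overline{W_{c'}}$, so it must intersect $\partial W_{c'}=R^\infty_{s}\cup R^\infty_{t}\cup\{c'\}$ --- impossible, since $U$ avoids $B_\infty(f)$ and $\overline{D(f)}$. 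With this repair the rest of your argument goes through, and the same care is needed when you transport the argument to $\partial B$ for the second decomposition.
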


In order to construct puzzle disks, we need to work with the Blaschke product $F :=Bl$ associated to $f$ (Proposition \ref{prop.surgery}). Recall that $F$ is quasiconformaly conjugate to $f$ on $\C\setminus\overline{\D}$ and is conjugate to it self by $\kappa: z\mapsto1/\overline{z}$. Let $c_b$ be the unique critical point of $F$ in $\C\setminus\overline{\D}$. Denote $g := F|_{\partial\D}$. Notice that $g:\partial\D\to\partial\D$ is an analytic homeomorphism with a unique critical point $z=1$ and $g$ has bounded type rotation number 
$$\theta = [a_1,a_2,...,a_n,...] := \cfrac{1}{a_1+\cfrac{1}{a_2+\cfrac{1}{a_3+...}}},\quad a_n\text{ is uniformly bounded from above}.$$
Denote by $p_n/q_n = [a_1,...a_n]$ the $n$-th approximant of $\theta$, where $q_n$ is called the $n$-th closest return moment. It is classical that $q_n$ satisfies $q_n = a_{n-1}q_{n-1} +q_{n-2}$. The following lemma is an elementary exercise.
\begin{lem}\label{lem.property-q_n}
    Set $r_n = q_{n}+q_{n+1}$, $\mathbf{r}_n = \Sigma_{i=1}^{n}r_n$. For $n\geq3$, we have 
    $$q_{n+2}\geq r_{n}>\mathbf{r}_{n-2}.$$
\end{lem}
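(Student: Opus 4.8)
The plan is to obtain both inequalities directly from the standard recursion for the convergent denominators of $\theta$, namely $q_{n+1}=a_{n+1}q_n+q_{n-1}$ with integer partial quotients $a_j\geq 1$ (equivalently, in the indexing of the excerpt, $q_n=a_{n-1}q_{n-1}+q_{n-2}$; the argument does not depend on the shift). The two ingredients are the trivial consequence $q_{n+1}\geq q_n+q_{n-1}$ and a telescoping estimate for partial sums of the $q_i$.

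For the first inequality $q_{n+2}\geq r_n$, I would simply write $q_{n+2}=a_{n+2}q_{n+1}+q_n\geq q_{n+1}+q_n=r_n$, using $a_{n+2}\geq 1$.

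For the second inequality $r_n>\mathbf{r}_{n-2}$, the key step is the observation that $q_{j+2}-q_{j+1}=(a_{j+2}-1)q_{j+1}+q_j\geq q_j$ for every $j\geq -1$ (with the conventions $q_{-1}=0$, $q_0=1$, $q_1=a_1$). Telescoping this over $j=0,\dots,m$ gives $\sum_{j=0}^{m}q_j\leq q_{m+2}-q_1$. I would then split
$$
\mathbf{r}_{n-2}=\sum_{i=1}^{n-2}(q_i+q_{i+1})=\sum_{i=1}^{n-2}q_i+\sum_{i=2}^{n-1}q_i,
$$
and bound the two sums using the telescoped estimate with $m=n-2$ and $m=n-1$ respectively, obtaining $\mathbf{r}_{n-2}\leq(q_n-q_1)+(q_{n+1}-q_1)=r_n-2q_1<r_n$, since $q_1=a_1\geq 1>0$. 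The hypothesis $n\geq 3$ serves only to guarantee that the index range $1\leq i\leq n-2$ is nonempty and that the sums and telescoping are meaningful; for $n\leq 2$ the claim $r_n>\mathbf{r}_{n-2}$ is vacuous or trivial.

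I expect no real obstacle here: this is a routine continued-fraction estimate, and the only thing requiring attention is index bookkeeping and matching the convention for $q_n$ used elsewhere in the paper.
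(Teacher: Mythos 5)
Your argument is correct. The paper gives no proof of this lemma (it is simply labelled ``an elementary exercise'' after stating the recursion $q_n = a_{n-1}q_{n-1}+q_{n-2}$), so there is no written proof to compare against; your derivation is exactly the kind of routine continued-fraction bookkeeping the authors had in mind. Both inequalities follow from the single inequality $q_{k+2} \geq q_{k+1}+q_k$: the first directly as $q_{n+2}\geq q_{n+1}+q_n = r_n$, and the second by telescoping $q_{j+2}-q_{j+1}\geq q_j$ to get $\sum_{j\leq m}q_j \leq q_{m+2}-q_1$ and then splitting $\mathbf r_{n-2}$ into two partial sums of $q_i$'s bounded by $q_n-q_1$ and $q_{n+1}-q_1$, yielding $\mathbf r_{n-2}\leq r_n - 2q_1 < r_n$. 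The index conventions check out ($q_0=1$, $q_1=a_1\geq 1$), and you are right that the off-by-one in the paper's stated recursion is immaterial since only $a_j\geq 1$ is used.
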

For $k\in\mathbb{Z}$, denote $c_k := g^k(1)$. For $x,y\in \partial\D$ such that $x\not=-y$, denote by $(x,y),[x,y]$ the unique open/closed interval with end points $x,y$ and with length less than $\pi$. For $m,n\in\mathbb{Z}$, $x = g^m(1)$, $y = g^n(1)$, define $(m,n)_c := (x,y)$, $[m,n]_c := [x,y]$. Denote $J^\pm_n := ({\pm q_n},{\pm q_{n+1}})_c$. By Swi\c{a}tek's real a priori bound \cite{Sw}, $g$ is quasi-symmetrically conjugate to the rigid rotation. In particular, we have the following obvious relation:
\begin{equation}\label{eq.JnJn+1}
    J_{n+1}^\pm\subset J_{n}^\pm,\,\,J_{n+2}^\pm\Subset J_{n}^\pm,\,\,J_{n+1}^+\Subset J_{n}^-.
\end{equation}

By Proposition \ref{prop.surgery}, $f_b|_{\bbC\setminus D(f_b)}$ is quasiconformally conjugate to a Blaschke product $Bl$ by $\varphi$. Define
$$
J_1(Bl) := J(Bl)\setminus(\cup_{n\geq0}Bl^{-n}(\D))=\varphi(J(f_b))
$$
Denote the reflection about $\partial\D$ by $\kappa:z\mapsto1/\overline{z}$.
An external bubble ray $R^{\bB}(Bl)$ is defined to be $\varphi(R^{\bB}(f_b)\setminus{D(f_b)})$.

Let ${D'}$ be the unique connected component in $\C\setminus\D$ of $F^{-1}(\D)$ such that $1\in \partial D'$. The $n$-th backward dynamical partition of $\partial\D$ and $\partial D'$  are defined to be
\begin{equation}\label{eq.dynamical.partition}
        \mathcal{T}_n := \{g^{-i}(1);\,0\leq i<q_{n}+q_{n+1}\},\,\,\mathcal{T}_n' := (F|_{\partial D'})^{-1}(F(\mathcal{T}_n)).
\end{equation}

Let $\textbf{R}^\infty(F)$ be the collection of all the external rays landing at the end point of $R^\bB_0(F)$ and $R^\bB_{1/2}(F)$. Fix $r>1$, define the dynamical graph of depth $n\geq0$ by
\begin{equation}\label{eq.dyna.graph.Blaschke}
    I_0(F):= R^\bB_0(F)\cup R^\bB_{1/2}(F)\cup \overline{\textbf{R}^\infty(F)}\cup E^\infty_r(F),\,\, I_n(F):=({F}|_{\C\setminus\D})^{-n}(I_0(F)),
\end{equation}
where $E^\infty_r(F)$ is the equipotential of potential $r$ in the immediate basin of $\infty$. The {\it puzzle piece} $P^F_n(z)$ of depth $n$ containing $z$ is the connected component of $\C\setminus I_n(F)$ containing $z$. Denote by $P^{F,-}_n(1),P^{F,+}_n(1)\subset\C\setminus\overline{\D}$ the left/right-hand side puzzle piece containing $z=1$ on its boundary. 

\begin{figure}[ht]
\centering 
\includegraphics[width=0.85\textwidth]{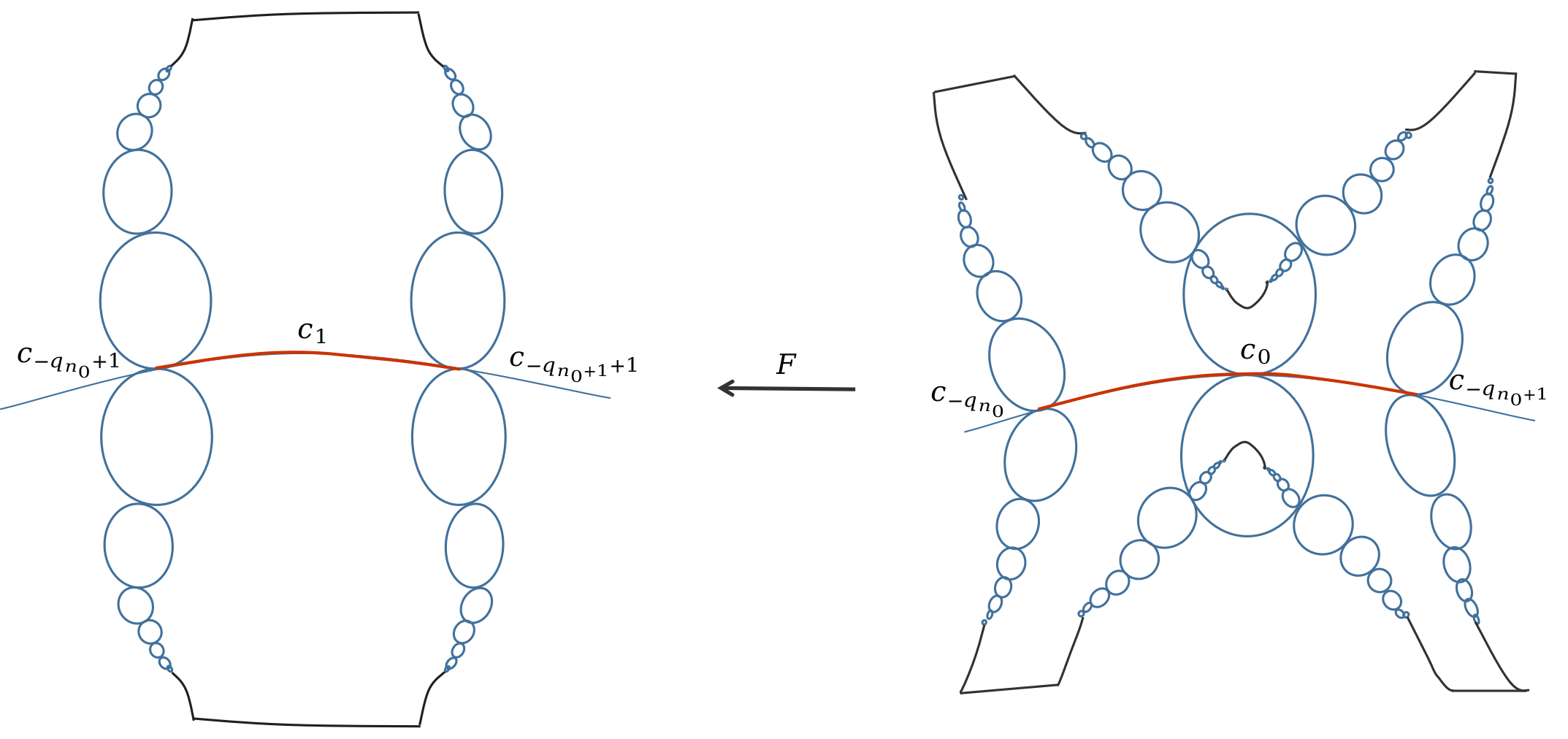} 
\caption{A schematic picture of the map $F|_{D^{n_0}}$, with its image being the puzzle silhouette $S_{Jv_{n_0}}$ based on $(-q_{n_0}+1,-q_{n_0+1}+1)_c$. The figure on right is $D^{n_0}$; the figure on the left is the puzzle silhouette $S_{Jv_{n_0}}$.} 
\label{fig.Dn0} 
\end{figure}
\begin{figure}[ht]
\centering 
\includegraphics[width=0.95\textwidth]{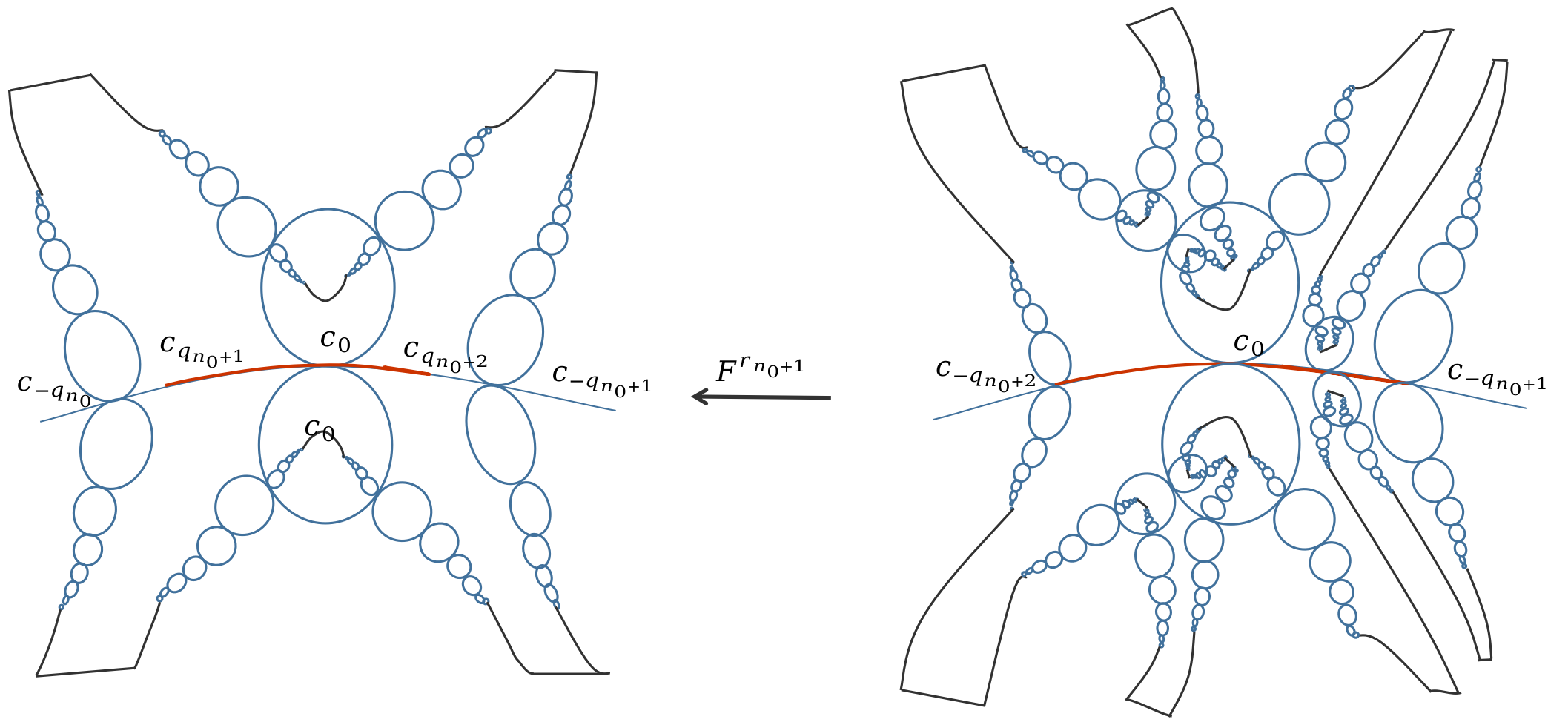} 
\caption{A schematic picture of the map $F^{r_{n_0+1}}:D^{n_0+1}\to D^{n_0}|_{J_{n_0+1}^+}$.}  
\label{fig.Dn0+1} 
\end{figure}
\begin{figure}[ht]
\centering 
\includegraphics[width=0.9\textwidth]{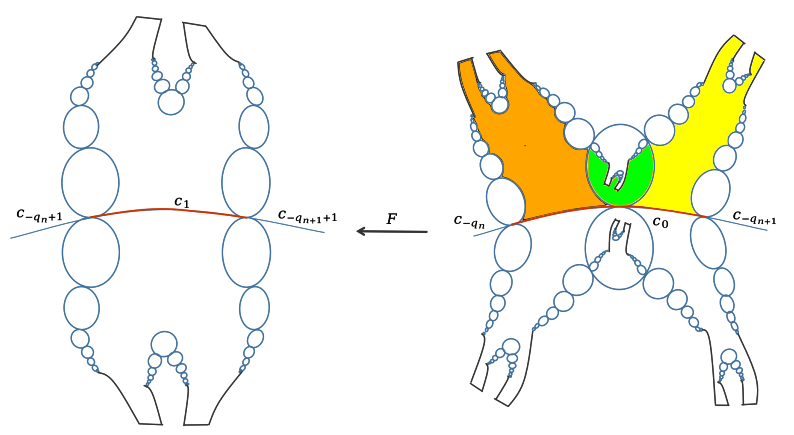} 
\caption{The construction of the modified puzzle disk $C^n$. The figure on the right is $C^n$: the regions $Q^{n}_-,Q^{n},Q^{n}_+$ are marked out by color orange, green and yellow respectively. The figure on the left is the topological disk $S^n$.}  
\label{fig.Cn} 
\end{figure}

\begin{defn}[Puzzle silhouette]\label{def.puzzle-silhouette}
    For any $n\geq 0$, let $r_n = r/3^{q_{n+1}-1}$. The {\it puzzle silhouette} based on $Jv_n := (-q_n+1,-q_{n+1}+1)_c$, denoted by $S_{Jv_n}$, is defined to be the connected component of $$\C\setminus(\mathbf{V}_n\cup E^\infty_{r_n}\cup\kappa(\mathbf{V}_{{n}}\cup E^\infty_{r_n}))$$
    such that $({-q_{n}+1},{-q_{n+1}+1})_c\subset S_{Jv_n}$, where $\mathbf{V}_{n}$ is the union of all bubble rays in $I_{q_{n+1}-1}(F)$ rooted at $c_{-q_{n}+1}$ and $c_{-q_{n+1}+1}$.
\end{defn}

We fix $n_0$ large enough, such that
\begin{itemize}
    \item $({-q_{n_0}},{-q_{n_0+1}})_c\cap({-q_{n_0}+1},{-q_{n_0+1}+1})_c = \emptyset$.
    \item the puzzle silhouette based on $({-q_{n_0}+1},{-q_{n_0+1}+1})_c$ does not contain $F(c_b)$.
\end{itemize}
\begin{defn}[Puzzle disks]\label{def.puzzle-disks}
    The {\it puzzle disk} $D^{n_0}$ of the initial level $n_0$ is defined to be the connected component of $F^{-1}(S_{Jv_{n_0}})$ containing $w=1$. For $n>n_0$, the puzzle disk $D^n$ of level $n>n_0$ is constructed by pulling back $D^{n-1}$ along $\partial\D$ with two slits cut out in $\partial\D$: define $D^{n}$ to be the connected component of $F^{-r_n}({D}^{n-1}|_{J_n^+})$ that intersects with $\partial\mathbb{D}$, where ${D}^{n-1}|_{J_n^+} := D^{n-1}\setminus(J_{n-1}^-\setminus \overline{J_n^+})$.
\end{defn}

\begin{lem}\label{lem.no-detached-bubble}
    There exists $n_0$ large enough, such that for all $n\geq n_0+1$, the orbit $$D^n\to F(D^{n})\to...\to D^{n-1}|_{J_n^+}$$
    does not contain the free critical value $F(c_b)$. Symmetrically it does not contain $\kappa(F(c_b))$. In particular, $\partial D^n$ does not intersect the closure of any bubble in $\C\setminus\overline{\D}$ that is not connected to $\D$ by a bubble ray.
\end{lem}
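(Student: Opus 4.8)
\textbf{Proof proposal for Lemma \ref{lem.no-detached-bubble}.}

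The plan is to prove the statement by an inductive transversality/measure argument controlling where the forward orbit of the free critical value $F(c_b)$ can sit relative to the puzzle disks. First I would fix the initial level $n_0$ large enough that the two conditions preceding Definition \ref{def.puzzle-disks} hold, together with one extra condition: the puzzle silhouette $S_{Jv_{n_0}}$, and hence the puzzle disk $D^{n_0}$, is disjoint from the finite orbit segment $\{F(c_b),F^2(c_b),\dots,F^{r_{n_0}}(c_b)\}$ that does not already belong to $\partial\D$ or to a bubble ray attached to $\partial\D$. This is possible because, by Assumption \ref{assum.well-defined}(ii) together with non-separability (Lemma \ref{lem.loc.alpha}), the free critical value does not land on $\overline{R^\bB_0(F)}$ nor does its orbit accumulate there in a way that would force intersection with a silhouette of bounded depth; equivalently, one uses that the silhouettes $S_{Jv_n}$ are nested and shrink toward $1\in\partial\D$ (which follows from \thmref{thm.jonguk} transported to $F$ via $\varphi$ and the real a priori bounds \eqref{eq.JnJn+1}), so only finitely many orbit points can possibly interfere and we choose $n_0$ past them.

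Next I would run the induction on $n\geq n_0+1$. The puzzle disk $D^n$ is, by Definition \ref{def.puzzle-disks}, the component of $F^{-r_n}(D^{n-1}|_{J_n^+})$ meeting $\partial\D$; the orbit $D^n\to F(D^n)\to\cdots\to D^{n-1}|_{J_n^+}$ consists of $r_n$ steps. At each step the relevant iterate of $F$ restricts to a proper map, and it fails to be univalent precisely when the corresponding domain contains a critical point of $F$; in $\C\setminus\overline{\D}$ the only critical point is $c_b$, and by the symmetry $\kappa$ the only obstruction inside $\D$ is $\kappa(c_b)$. So the assertion that the orbit segment avoids $F(c_b)$ is exactly the assertion that $F(c_b)\notin F^j(D^n)$ for $0\le j\le r_n$, i.e. $c_b$ itself is never swept by these domains before the last step. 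The key geometric input is that the puzzle disks along $\partial\D$ shrink: $\bigcap_{n}\overline{D^n}\cap\partial\D=\{1\}$ (this is \thmref{thm.jonguk}, the a priori bound of \cite{Y}, pushed to the Blaschke model), and more quantitatively the domains $F^j(D^n)$ for $0\le j< r_n$ are, by the dynamical partition \eqref{eq.dynamical.partition} and \eqref{eq.JnJn+1}, comparable to puzzle pieces based on intervals of the backward partition $\mathcal{T}_n$, whose total measure on $\partial\D$ is summable in a controlled way. Combining this with the choice of $n_0$ in the first paragraph — which removed the finitely many ``bad'' orbit points of $F(c_b)$ — forces $F^j(D^n)\not\ni c_b$ for all $0\le j\le r_n$ and all $n\geq n_0+1$: if some such intersection occurred for infinitely many $n$, the shrinking of the $D^n$'s would pin an accumulation point of $\{F^{-j}(c_b)\}$ onto the orbit of $1$ under $g=F|_{\partial\D}$, contradicting Assumption \ref{assum.well-defined}(ii) (that the critical orbit avoids $\overline{R^\bB_0(F)}$, which corresponds under $\varphi$ to the orbit of $1$ and its attached bubble rays).

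For the final sentence of the lemma, once we know the orbit $D^n\to\cdots\to D^{n-1}|_{J_n^+}$ contains neither $F(c_b)$ nor (by $\kappa$-symmetry) $\kappa(F(c_b))$, each map $F:F^{j}(D^n)\to F^{j+1}(D^n)$ in the chain is univalent, so $D^n$ is obtained from the ``model'' puzzle region $S^n$ (Figure \ref{fig.Cn}) by a conformal pullback; in particular $\partial D^n$ is the conformal image of $\partial D^{n_0}$'s relevant boundary arcs, which by construction of the silhouette (Definition \ref{def.puzzle-silhouette}) consists of bubble rays rooted at $c_{-q_{n_0}+1}$, $c_{-q_{n_0+1}+1}$, equipotentials, and their $\kappa$-images. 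A univalent preimage of such a boundary can only meet the closure of a bubble that is itself attached — by a bubble ray — to the part of $\partial\D$ inside the silhouette; a detached bubble (one not joined to $\partial\D$ by a bubble ray) would have to arise from the orbit hitting the free critical value, which we have just excluded. Hence $\partial D^n$ meets no such detached bubble, as claimed.

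\textbf{Main obstacle.} The hardest point is the first paragraph's claim that a \emph{single} finite choice of $n_0$ simultaneously kills all future interference — i.e. that the orbit of $F(c_b)$ meets only finitely many silhouettes $S_{Jv_n}$. This requires genuinely combining the a priori bounds for puzzle disks from \cite{Y} (to get the silhouettes shrinking to $\{1\}$ at a definite rate) with the dynamical partition estimates \eqref{eq.JnJn+1}, \lemref{lem.property-q_n}, and a careful use of Assumption \ref{assum.well-defined}(ii) to rule out the critical orbit shadowing the orbit of the critical point $1$ of $g$; everything else is then a bookkeeping induction along the $r_n$-step pullback chains.
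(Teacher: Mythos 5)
Your approach diverges substantially from the paper's, and unfortunately it contains gaps that I do not think can be repaired along the lines you sketch.

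The paper's actual argument is purely combinatorial: using the quasiconformal conjugacy $\varphi$ it transports Corollary \ref{cor.decomposition} to the Blaschke model, so that $F(c_b)$ lies in two fixed nested limbs $L^F_{x_1}\Supset L^F_{x_2}$ with $x_1\in\partial\D$, $x_2\in\partial B_{x_1}$ both iterated preimages of $1$ of some finite ``generations'' $s_1,s_2$. It then chooses $n_0$ with $q_{n_0}>\max\{s_1,s_2\}$, which forces the base intervals $D^{n}\cap\partial\D=J^-_{n}$ and $F(D^n\cap\partial D')=F(J^-_n)$ of the puzzle disk orbit to miss the roots $x_1$ (and the appropriate preimage of $x_2$) for all $n\geq n_0+1$; since $F(c_b)$ is sealed off from $\partial\D$ by those roots, the orbit never reaches it. Nothing analytic or measure-theoretic is involved, and $n_0$ depends only on the two integers $s_1,s_2$.

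Your first paragraph's key claim — that ``only finitely many orbit points can possibly interfere'' with $S_{Jv_{n_0}}$, so one can take $n_0$ past them — is false in the generality needed. Assumption \ref{assum.well-defined}(ii) only forbids the critical orbit from \emph{landing on} $\overline{R^\bB_0}$; it places no restriction on the orbit \emph{accumulating} to $\partial\D$. In fact the orbit accumulating to $1/b$ (i.e.\ to $1$ in the Blaschke model) is exactly Case (3) of Lemma \ref{lem.zero.measure}, one of the central situations the whole paper is designed to treat. In that case the forward orbit of $F(c_b)$ enters any fixed neighborhood of $1$, in particular $S_{Jv_{n_0}}$, infinitely often, so your choice of $n_0$ cannot work as described. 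For the same reason, your second paragraph's contradiction step — ``would pin an accumulation point of $\{F^{-j}(c_b)\}$ onto the orbit of $1$ under $g$, contradicting Assumption \ref{assum.well-defined}(ii)'' — is a non sequitur: accumulation of the critical orbit (or of some backward branch) near $\partial\D$ simply does not contradict (ii). The resolution in the paper is that it never needs to control the whole forward orbit of $F(c_b)$; it controls the \emph{location} of the single point $F(c_b)$ (in a fixed limb), which automatically handles the accumulation scenario.

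The last paragraph is also incorrect as stated: the chain $D^n\to F(D^n)\to\cdots\to D^{n-1}|_{J^+_n}$ is \emph{not} univalent. By Proposition \ref{prop.puzzle.disk}(i) one has $1\in J^-_n=D^n\cap\partial\D$, and $1$ is a double critical point of the Blaschke product $F$; hence $F|_{D^n}$ already has local degree $3$ at $1$. Proposition \ref{prop.puzzle.disk}(ii) guarantees only that the degree of $F^{r_n}:D^n\to D^{n-1}|_{J^+_n}$ is \emph{uniformly bounded}, not that it is $1$. The correct deduction of the final sentence of the lemma is that avoiding $F(c_b)$ and $\kappa(F(c_b))$ restricts the branching to the critical point $1$ on $\partial\D$, whose preimages stay on $\partial\D$ and on bubble rays attached to it; it is that, not univalence, that prevents $\partial D^n$ from meeting detached bubbles.

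In short: the combinatorial localization of $F(c_b)$ via the wake/limb decomposition (Corollary \ref{cor.decomposition}) is the idea you are missing, and it replaces both of the analytic arguments you propose. I would recommend reworking the proof around that decomposition, choosing $n_0$ according to the generations $s_1,s_2$ of the roots of the limbs containing $F(c_b)$.
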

\begin{proof}
 Since $F$ and $f$ are quasiconformally conjugate on $\C\setminus D(f)$, the wakes $W^F_x$ and limbs $L^F_x$ in $\C\setminus\D$ for $F$ are naturally defined, and one has the parallel version of Corollary \ref{cor.decomposition} for $F$. Hence $F(c_b)$ is contained in two nested limbs $L^F_{x_1}\Subset L^F_{x_2}$, such that $x_1\in\partial\D$ being a preimage of $1$ and $x_2\in\partial B_{x_1}\setminus\{x_1\}$ also being a preimage of $1$, where $B_{x_1}\subset\C\setminus\overline{\D}$ is the bubble rooted at $x_1$. From the construction of $D^n$, it is easy to see that $$D^{n-1}\cap \partial\D=J^{-}_{n-1},\,\, F(D^{n-1}\cap\partial D') = F(J^-_{n-1}).$$
    Let us assume that $F^{s_1}(x_1) = 1$ and $F^{s_2}(F^{l}(x_2)) = 1$, where $l\geq 1$ is the smallest integer such that $F^l(x_2)\in\partial\D$. We take $n_0$ large enough, such that $q_{n_0}>\max\{s_1,s_2\}$. Thus $x_1\in\mathcal{T}_{n_0}$ and $F^{l-1}(x_2)\in\mathcal{T}'_{n_0}$ (recall (\ref{eq.dynamical.partition})). Moreover $F^{l-1}(x_2)\not\in D^{n}\cap\partial\D$. The Lemma then follows easily.
\end{proof}

From now on we always fix such $n_0$ in Lemma \ref{lem.no-detached-bubble}. We summarize the main results in \cite[\S 5]{Y} and \cite[Thm 7.1]{Y} as follows:
\begin{prop}\label{prop.puzzle.disk}
    The puzzle disk $D^n$, $n\geq n_0$, is a topological open disk containing $1$ such that
    \begin{enumerate}
        \item[$(\mathrm{\romannumeral1})$] $\kappa D^n = D^n$, where $\kappa:w\mapsto1/\overline{w}$; $D^{n}\cap \partial\D=J^{-}_n$; $F(D^n\cap\partial D') = F(J^-_n)$. 
        \item[$(\mathrm{\romannumeral2})$]  $F^{r_n}:D^n\to D^{n-1}|_{J^+_n}$ is a proper map with uniformly bounded degree (not depending on $n$).
        \item[$(\mathrm{\romannumeral3})$]   $D^{n+1}\subset D^n$, $D^{n+2}\Subset D^n$, $\mathrm{mod}(D^n\setminus\overline{D^{n+2}})>\mu$ for some uniform constant $\mu>0$ not depending on $n$.
    \end{enumerate}
\end{prop}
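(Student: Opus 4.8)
The statement to prove is Proposition~\ref{prop.puzzle.disk}, which collects the key properties of the puzzle disks $D^n$: that each $D^n$ is a topological disk containing $1$, that the return map $F^{r_n}: D^n \to D^{n-1}|_{J_n^+}$ is proper of uniformly bounded degree, that the $D^n$ are nested with $D^{n+2} \Subset D^n$, and that $\mathrm{mod}(D^n \setminus \overline{D^{n+2}})$ is bounded below uniformly in $n$. Since the proposition is explicitly attributed to \cite[\S 5]{Y} and \cite[Thm.\ 7.1]{Y}, the plan is not to reprove it from scratch, but to explain how the statements for the cubic Siegel polynomial $f_b$ follow from Yang's general construction via the Blaschke model. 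So the first move is to invoke Proposition~\ref{prop.surgery}: $f_b|_{\bbC \setminus D(f_b)}$ is quasiconformally conjugate (via $\varphi$) to the degree-$5$ Blaschke product $F = Bl$, with $1$ a double critical point on $\partial\D$, and $g = F|_{\partial\D}$ carrying the bounded-type rotation number $\theta$. Yang's paper constructs puzzle disks precisely in this Blaschke setting, so all three items are, a priori, statements about $F$.

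The plan for item $(\mathrm{\romannumeral1})$ is to track the inductive definition in Definition~\ref{def.puzzle-disks}. The symmetry $\kappa D^n = D^n$ follows because the initial puzzle silhouette $S_{Jv_{n_0}}$ is $\kappa$-symmetric by construction (it is cut out by $\mathbf V_n \cup E^\infty_{r_n} \cup \kappa(\mathbf V_n \cup E^\infty_{r_n})$), $F$ commutes with $\kappa$, and $J_n^+$ is sent to $J_n^-$ under $\kappa$, so each pullback step preserves the symmetry; the identities $D^n \cap \partial\D = J_n^-$ and $F(D^n \cap \partial D') = F(J_n^-)$ are read off by induction exactly as in Lemma~\ref{lem.no-detached-bubble}, using the combinatorial relation $q_{n+2} = a_{n+1}q_{n+1} + q_n$ and \eqref{eq.JnJn+1} to see that pulling back $D^{n-1}|_{J_n^+}$ by $F^{r_n}$ lands the boundary arc on exactly $J_n^-$. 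For item $(\mathrm{\romannumeral2})$, the uniform degree bound is the content of Lemma~\ref{lem.no-detached-bubble} together with Proposition~\ref{prop.puzzle.disk}$(\mathrm{\romannumeral2})$ as stated in \cite{Y}: because the orbit $D^n \to F(D^n) \to \cdots \to D^{n-1}|_{J_n^+}$ avoids the free critical value $F(c_b)$ and its reflection, the only critical point that can be encountered is the one on $\partial\D$, and $g$ restricted to the dynamical partition intervals $J_n^\pm$ has the bounded-type structure (Świątek's real a priori bounds \cite{Sw}) that forces the degree to be bounded by a constant depending only on $\sup_n a_n$. The bounded degree also gives properness, since $F^{r_n}$ maps $\partial D^n$ onto $\partial(D^{n-1}|_{J_n^+})$ componentwise by the boundary analysis of item $(\mathrm{\romannumeral1})$.

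Item $(\mathrm{\romannumeral3})$ is the heart of the matter and the one I expect to be the main obstacle. The nesting $D^{n+1} \subset D^n$ is automatic from the definition of $D^{n+1}$ as a pullback of $D^n|_{J_{n+1}^+} \subset D^n$; the strict nesting $D^{n+2} \Subset D^n$ follows from $J_{n+2}^+ \Subset J_n^-$ in \eqref{eq.JnJn+1} together with the fact that the slits cut genuinely interior pieces. The uniform lower bound $\mathrm{mod}(D^n \setminus \overline{D^{n+2}}) > \mu$ is Yang's a priori bound for puzzle disks (Theorem~7.1 in \cite{Y}), which is a Koebe/covering-lemma style argument: one writes $D^n \setminus \overline{D^{n+2}}$ as a union of pullbacks of a fixed annular region, uses the bounded degree from item $(\mathrm{\romannumeral2})$ to prevent the modulus from degenerating under pullback, and uses the real a priori bounds on $g$ to get a definite-space estimate at the base level. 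The hard part is genuinely that the combinatorics of puzzle disks is much richer than classical Yoccoz puzzle pieces — as the authors note, the tableau rule fails — so one cannot simply quote the standard unicritical modulus estimates; the argument must be localized near $\partial\D$ and must handle the two-sided slitting carefully. Since all of this is done in \cite{Y}, the proof here is simply: translate the hypotheses of Assumption~\ref{assum.well-defined} into the Blaschke picture, verify that Yang's standing hypotheses (non-separability, the orbit of $b$ avoiding $\overline{R^\bB_0}$, and the choice of $n_0$ in Lemma~\ref{lem.no-detached-bubble}) are met, and then cite \cite[\S 5]{Y} and \cite[Thm.\ 7.1]{Y} for the three conclusions. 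The final sentence of the proof pulls everything back to the dynamical plane of $f_b$ via $\varphi$ on $\bbC \setminus D(f_b)$, noting that $\varphi$ is quasiconformal (hence preserves the topological and up-to-constant the conformal assertions) and conformal on $B_\infty(f_b)$.
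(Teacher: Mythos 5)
Your proposal is correct and matches the paper's own treatment: Proposition~\ref{prop.puzzle.disk} is explicitly presented as a \emph{summary} of \cite[\S 5]{Y} and \cite[Thm.\ 7.1]{Y}, with no independent proof given, which is exactly what you propose. (Minor imprecision: the uniform degree bound in (\romannumeral2) is primarily a combinatorial consequence of the bounded-type condition together with Lemma~\ref{lem.no-detached-bubble}, whereas Swi\c{a}tek's real a priori bound \cite{Sw} is what underlies the geometric modulus estimate in (\romannumeral3), not the degree count.)
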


The combinatorics of puzzle disks $D^n$ is not compatible with the puzzle pieces $P_m^F(z)$: as $n$ grows, $D^n$ will develop more and more preimages of $P_m^F(z)$ inside bubbles of $F$ (see Figure \ref{fig.Dn0+1}). We introduce the notion of {\it modified puzzle disks}, which has a much simpler combinatorics.

\begin{defn}[Modified puzzle disks]\label{def.modified-disks}
    Fix $n_0$ to be the initial level of puzzle disks. For $n\geq n_0$, let $P^n\subset\C\setminus\D$ be the puzzle piece of depth $q_{n+1}-1$, such that $\partial P^n\cap\partial\D = Jv_n$. Define a topological disk $S^n = P^n\cup Jv_n\cup\kappa P^n$ (notice that 
    $F(1)\in S^n$). The modified puzzle disk $C^n$ is defined to be the connected component of $F^{-1}(S^n)$ that contains $w=1$. See Figure \ref{fig.Cn}.
\end{defn}

The following Lemma follows immediately from the construction of $C^n$:
\begin{lem}\label{lem.base.Cn}
    $\partial C^n\cap\partial\D = J_n^-$, $\partial C^n\cap\partial D' = F(J_n^-)$. In particular, $D^n\cap\partial\D = C^n\cap\partial\D$, $D^n\cap\partial D' = C^n\cap\partial D'$.
\end{lem}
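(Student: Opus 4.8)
The statement is a direct computation from the definition of $C^n$ as the component of $F^{-1}(S^n)$ containing $w=1$, together with the known description of how $F$ acts on $\partial\D$ and on $\partial D'$. First I would recall that $\partial P^n\cap\partial\D = Jv_n = (-q_n+1,-q_{n+1}+1)_c$ by the defining property of the puzzle piece $P^n$ of depth $q_{n+1}-1$, and hence $S^n\cap\partial\D = P^n\cup Jv_n\cup\kappa P^n$ meets $\partial\D$ exactly in $Jv_n$ (the interiors of $P^n$ and $\kappa P^n$ lie off $\partial\D$ since $P^n\subset\C\setminus\overline{\D}$). So $S^n$ is a Jordan domain straddling $\partial\D$ with trace $Jv_n$ on the circle, and it contains $F(1)=c_1$.

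Next I would pull back through $F$. On the unit circle, $F|_{\partial\D}=g$, so $C^n\cap\partial\D$ is the component of $g^{-1}(Jv_n) = g^{-1}((-q_n+1,-q_{n+1}+1)_c) = (-q_n,-q_{n+1})_c = J_n^-$ that abuts $w=1$ (i.e. contains $1$ on its closure, since $g(1)=c_1\in S^n$); as there is a unique such component and $(-q_n,-q_{n+1})_c$ is an interval with $1$ at one endpoint, we get $\partial C^n\cap\partial\D = J_n^-$. For the other slit, recall that $D'$ is the unique component of $F^{-1}(\D)$ in $\C\setminus\D$ with $1\in\partial D'$, and that $F|_{\partial D'}$ is a homeomorphism onto $\partial\D$ with $F|_{\partial D'}^{-1}$ and $F|_{\partial\D}$ sharing the same forward images (this is the symmetry recorded via $\kappa$ and the defining equation $\mathcal{T}_n' := (F|_{\partial D'})^{-1}(F(\mathcal{T}_n))$ in \eqref{eq.dynamical.partition}). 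The boundary arc $C^n\cap\partial D'$ is the component of $(F|_{\partial D'})^{-1}(S^n\cap\partial\D)=(F|_{\partial D'})^{-1}(Jv_n)$ adjacent to $1$; applying $F$ to both sides and using that $F(Jv_n)$ is an arc of $\partial\D$, we obtain $F(C^n\cap\partial D') = F(Jv_n)$. But $Jv_n = g(J_n^-)$ up to endpoints (since $g^{-1}(Jv_n)\supset J_n^-$ shows $F(J_n^-)=F(Jv_n)$ after one more application of $F$; more directly $F(J_n^-)$ and $F(Jv_n)$ have the same endpoints, being forward orbits of $1$), so $F(C^n\cap\partial D')=F(J_n^-)$, as claimed. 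The last two equalities $D^n\cap\partial\D = C^n\cap\partial\D$ and $D^n\cap\partial D' = C^n\cap\partial D'$ then follow by comparing with Proposition \ref{prop.puzzle.disk}(i), which gives $D^n\cap\partial\D = J_n^-$ and $F(D^n\cap\partial D')=F(J_n^-)$.

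The only mild subtlety — and the one point that deserves care rather than the routine endpoint bookkeeping — is the claim that the relevant preimage \emph{component} is unique and is the one attached at $w=1$: one must check that $S^n$ contains $F(1)=c_1$ in its interior (immediate since $c_1\in P^n\cup Jv_n\cup\kappa P^n=S^n$ and in fact $c_1\in \partial P^n$ lies on the arc $Jv_n$, so $c_1$ is an interior point of $S^n$ as $S^n$ straddles $\partial\D$), that $F$ is a local homeomorphism near $w=1$ away from the critical point (here $1$ \emph{is} the critical point of $g$ on $\partial\D$, so $F^{-1}$ of the arc $Jv_n$ near $c_1$ produces the two arcs of $J_n^-$ meeting at $1$ — exactly matching that $J_n^-$ is a single interval with $1$ as an endpoint, the branching being absorbed by the slit structure), and that no other component of $F^{-1}(S^n)$ shares the point $1$. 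All of this is already implicit in the construction of $D^n$ in Definition \ref{def.puzzle-disks} and Proposition \ref{prop.puzzle.disk}, so I would simply invoke the analogous reasoning. This makes the lemma essentially a restatement, and the proof is short.
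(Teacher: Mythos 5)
The paper does not actually spell out a proof — it records the statement as ``follows immediately from the construction of $C^n$'' — so your argument is the only reasonable way to fill in the details, and your overall conclusion is correct. However, two of the ``subtle points'' you flag are handled with a wrong picture, and one line has a slip.

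First, you assert that ``$(-q_n,-q_{n+1})_c$ is an interval with $1$ at one endpoint'' and speak of ``two arcs of $J_n^-$ meeting at $1$, the branching being absorbed by the slit structure.'' Both claims misread the setup. The closest-return points $c_{-q_n}$ and $c_{-q_{n+1}}$ lie on \emph{opposite} sides of $1=c_0$, so $1$ sits in the \emph{interior} of $J_n^-=(c_{-q_n},c_{-q_{n+1}})_c$ (indeed $Q^n_\pm=P^{F,\pm}_{q_{n+1}}(1)$ are the two half-pieces attached to $1$ on either side, as in \eqref{eq.Qn-are-puzzlepieces}). Moreover $g=F|_{\partial\D}$ is a homeomorphism of the circle — the vanishing derivative at $1$ does not create any local branching on $\partial\D$ — so $g^{-1}(Jv_n)$ is a single arc, namely $J_n^-$, with no monodromy to ``absorb.'' The identification $C^n\cap\partial\D=J_n^-$ then follows cleanly: $F^{-1}(S^n)\cap\partial\D=g^{-1}(Jv_n)=J_n^-$ is one arc containing $1$, hence it lies in the component $C^n$ of $F^{-1}(S^n)$ through $1$ and exhausts $C^n\cap\partial\D$. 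No delicacy about endpoints or branching is needed.

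Second, in the $\partial D'$ computation you write ``applying $F$ to both sides\dots we obtain $F(C^n\cap\partial D')=F(Jv_n)$.'' Applying $F$ to $C^n\cap\partial D'=(F|_{\partial D'})^{-1}(Jv_n)$ gives $F(C^n\cap\partial D')=Jv_n$, not $F(Jv_n)$. The fix is immediate — you already observe $g(J_n^-)=Jv_n$, so $Jv_n=F(J_n^-)$ and the claimed equality $F(C^n\cap\partial D')=F(J_n^-)$ follows — but as written the line is an error, and the subsequent sentence (``$Jv_n=g(J_n^-)$ up to endpoints\dots'') is used to patch a slip rather than being a genuine step.

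In short: your route (compute $S^n\cap\partial\D$, pull back by $g$ on $\partial\D$ and by $F|_{\partial D'}$ on $\partial D'$, compare with Proposition~\ref{prop.puzzle.disk}(i)) is exactly what the paper intends, and your ``In particular'' step is fine, but the description of what happens at the critical point is incorrect and should be removed — it makes the easy part of the argument look delicate for the wrong reason.
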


From Lemma \ref{lem.base.Cn}, it is obvious that $[C^n\cap(\C\setminus\overline{\D})]\setminus(C^n\cap\partial D')]$ has exactly three connected components: $Q^n_-,Q^n,Q^n_+$, such that $Q^n_-$ (resp. $Q^n_+$) is on the left-hand (resp. right-hand) side of 1, $Q^n\subset D'$. Notice that we have 
\begin{equation}\label{eq.Qn-are-puzzlepieces}
    Q^n_\pm = P^{F,\pm}_{q_{n+1}}(1)
\end{equation}
We have the following decomposition
\begin{equation}\label{eq.Dn-decomposition}
        C^n\cap(\C\setminus\overline{\D}) = \mathrm{int}(\overline{Q^n_-\cup Q^n\cup Q^n_+});\,\,
        F(\partial Q^n_\pm\cap \partial Q^n) = F(\partial Q^n_\mp\cap\partial\D).
\end{equation}

The following crucial lemma says that $C^n$, $D^n$ are combinatorially equivalent.


\begin{lem}\label{lem.Cn-Dn}
    $C^{n+3}\subset D^n\subset C^{n-2}$. In particular $C^{n+7}\Subset C^{n}$, $\mathrm{mod}(C^{n}\setminus\overline{C^{n+7}})>\mu$.
\end{lem}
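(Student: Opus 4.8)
The plan is to compare the two nested families of topological disks $\{C^n\}_{n\ge n_0}$ and $\{D^n\}_{n\ge n_0}$ through their traces on $\partial\D$ together with a bookkeeping of combinatorial depth. By \propref{prop.puzzle.disk}\,(i) and \lemref{lem.base.Cn}, $D^n$ and $C^n$ have the same trace $J^-_n$ on $\partial\D$ and the same trace $F(J^-_n)$ on $\partial D'$, and both are $\kappa$-symmetric; since moreover $J^-_m\subset J^-_{m'}$ whenever $m\ge m'$ by (\ref{eq.JnJn+1}), I would first reduce each of the two inclusions to the corresponding inclusion for the ``outer parts'' in $\C\setminus\overline{\D}$. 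By (\ref{eq.Dn-decomposition})--(\ref{eq.Qn-are-puzzlepieces}), the outer part of $C^m$ is the explicit region $\mathrm{int}\big(\overline{Q^m_-\cup Q^m\cup Q^m_+}\big)$, whose two exterior components $Q^m_\pm=P^{F,\pm}_{q_{m+1}}(1)$ are the puzzle pieces of depth $q_{m+1}$ adjacent to $1$; in other words $C^m$ is cut out by the dynamical graph $I_{q_{m+1}}(F)$ (with the equipotential level $r_m$ calibrated in Definition~\ref{def.puzzle-silhouette} so that the levels nest), plus two slits along $\partial\D$.

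Next I would establish a depth estimate for $D^n$. Unwinding Definition~\ref{def.puzzle-disks}: $\partial D^{n_0}$ lies in $I_{q_{n_0+1}}(F)$ together with two arcs of $F$-preimages of $\partial\D$, and passing from $D^{m-1}$ to $D^m$ amounts to taking a connected component of $F^{-r_m}(\,\cdot\,)$ with $r_m=q_m+q_{m+1}$; hence $\partial D^n$ lies in $I_{d_n}(F)$ (plus preimages of $\partial\D$) where
$$
d_n:=q_{n_0+1}+\sum_{k=n_0+1}^{n}r_k,\qquad r_n\le d_n\le\mathbf r_n
$$
(the upper bound holding since $q_{n_0+1}<r_{n_0}\le\mathbf r_{n_0}$, with $\mathbf r_m=\sum_{i\le m}r_i$ as in \lemref{lem.property-q_n}). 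Applying \lemref{lem.property-q_n} with index $n+2$ gives $q_{n+4}\ge r_{n+2}>\mathbf r_n\ge d_n$, while trivially $d_n\ge r_n\ge q_n\ge q_{n-1}$. Thus the graph cutting out $C^{n+3}$ refines the graph (plus slit preimages) cutting out $D^n$, which in turn refines the one cutting out $C^{n-2}$; since all three disks contain $1$ and agree on $\partial\D\cup\partial D'$, the monotonicity of puzzle pieces under refinement would yield $C^{n+3}\subset D^n\subset C^{n-2}$.

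The step that needs genuine care --- and which I expect to be the main obstacle --- is that $\partial D^n$ does not merely run along puzzle-piece boundaries of a fixed depth: as remarked before Definition~\ref{def.modified-disks}, the outer part of $D^n$ acquires infinitely many extra components buried inside bubbles of $F$. One must show that these are tame: by \lemref{lem.no-detached-bubble} they involve only bubbles joined to $\D$ by a bubble ray, rooted at iterated preimages of $1$ on $\overline{J^-_n}\subset\overline{J^-_{n-2}}$, and such a root, lying deep inside a single partition interval adjacent to $1$, necessarily has generation exceeding $q_{n-1}$, so the bubble hanging from it is invisible to the graph $I_{q_{n-1}}(F)$ and is therefore swallowed by one of the pieces $Q^{n-2}_\pm$ constituting $C^{n-2}$; dually one checks that the finer pieces $Q^{n+3}_\pm$ are engulfed by the ramified set $D^n\cap(\C\setminus\overline{\D})$. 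This is a finite combinatorial verification carried out by tracking the $F$-action on the dynamical partitions $\mathcal T_m,\mathcal T'_m$ and matching bubble-ray roots to partition points, in the same spirit as the proof of \lemref{lem.no-detached-bubble}.

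Finally, the ``in particular'' clause is routine index-chasing. Applying $C^{m+3}\subset D^m\subset C^{m-2}$ with $m=n+4$ and $m=n+2$ and combining with the nesting $D^{m+1}\subset D^m$, $D^{m+2}\Subset D^m$ from \propref{prop.puzzle.disk}\,(iii), one gets
$$
C^{n+7}\subset D^{n+4}\subset D^{n+2}\subset C^{n},\qquad\overline{D^{n+4}}\subset D^{n+2},
$$
so that $D^{n+2}\setminus\overline{D^{n+4}}$ is a non-peripheral sub-annulus of $C^{n}\setminus\overline{C^{n+7}}$; since $\operatorname{mod}\big(D^{n+2}\setminus\overline{D^{n+4}}\big)>\mu$ by \propref{prop.puzzle.disk}\,(iii), one concludes $C^{n+7}\Subset C^n$ and $\operatorname{mod}\big(C^{n}\setminus\overline{C^{n+7}}\big)>\mu$.
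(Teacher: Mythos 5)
Your supporting steps match the paper: reducing to the region outside $\partial\D\cup\partial D'$ via the equal traces (\propref{prop.puzzle.disk}\,(i), \lemref{lem.base.Cn}, \eqref{eq.JnJn+1}), invoking \lemref{lem.property-q_n} for the arithmetic $q_{n+4}>\mathbf r_n$, and the derivation of the ``in particular'' clause from $C^{n+7}\subset D^{n+4}\Subset D^{n+2}\subset C^n$ are all fine. But the heart of the lemma --- the two set inclusions --- is only sketched, and the mechanism you propose does not go through as stated. ``Refinement of graphs implies inclusion of puzzle pieces'' has no literal meaning for $D^n$, precisely because (as you note) $\partial D^n$ is not contained in any graph $I_m(F)$: it contains infinitely many arcs running along bubble boundaries deep inside the wakes, plus $\kappa$-reflected data, plus slit preimages, so $D^n$ is not the connected component of the complement of anything of the form $I_m(F)\cup\kappa(I_m(F))$. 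You flag this as ``the main obstacle'' and defer it to a claimed ``finite combinatorial verification,'' but that verification is never carried out, and the specific assertion that bubbles on $\partial D^n$ are ``rooted at iterated preimages of $1$ on $\overline{J^-_n}$'' with ``generation exceeding $q_{n-1}$'' is not justified: pulling back a slit in $\partial\D$ through $r_n$ iterates of $F$ produces bubble boundaries whose roots are not obviously confined to $J^-_n$. This is a genuine gap.

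The paper argues differently. Each inclusion is proved by contradiction, iterating a hypothetical boundary-crossing point $w_0$ forward under $F$. For $C^{n+3}\subset D^n$: from the $C$-construction, $F(w_0)$ lies inside a puzzle piece of depth $q_{n+4}-1$, so $F^{q_{n+4}}(w_0)\notin I_0(F)$; from the $D$-construction and \propref{prop.puzzle.disk}\,(ii), $F^{\mathbf r_n}(w_0)\in\partial I_0(F)\cup\partial\kappa(I_0(F))$, and since $\partial I_0(F)\cup\partial\kappa(I_0(F))$ is forward-invariant and $q_{n+4}>\mathbf r_n$, one gets a contradiction. That forward-invariance observation is what replaces your ``refinement'' principle, and it completely sidesteps the need to classify the bubbles on $\partial D^n$. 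For $D^n\subset C^{n-2}$: the analogous iteration kills the case $w_0$ on a ray or equipotential, but when $w_0$ lies on a bubble boundary the paper builds an infinite bubble chain (none of whose members can be connected to $\D$ by finitely many bubbles, yet all contained in $D^n$ by \lemref{lem.no-detached-bubble}) co-landing with an external ray that still crosses $\partial C^{n-2}$, reducing to the ray case. This bubble-chain reduction is the other essential ingredient missing from your sketch.
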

\begin{proof}
    First we prove $C^{n+3}\subset D^n$. Suppose the contrary. By Proposition \ref{prop.puzzle.disk} (\romannumeral1), Lemma \ref{lem.base.Cn} and (\ref{eq.JnJn+1}), we have $$C^{n+3}\cap(\partial\D\cup\partial D')\Subset D^n\cap(\partial\D\cup\partial D').$$ 
    Thus there exists $w_0\in\partial D^n\cap C^{n+3}$, $w_0\not\in\partial\D\cup\partial D'\cup\kappa(\partial D')$. By symmetry, we may assume that $w_0\in\C\setminus(\overline{\D\cup D'})$. From the construction of $C^{n}$, $F(w_0)$ belongs to a puzzle piece $P_{q_{n+4}-1}(F(w_0))$ of depth $q_{n+4}-1$. Hence $F^{q_{n+4}}(w_0)\not\in I_0(F)$. On the other hand, from the construction of $D^n$ and Proposition \ref{prop.puzzle.disk} (\romannumeral2), $F^{\mathbf{r}_n}(w_0)\in \partial I_0(F)\cup\partial(\kappa I_0(F))$. Notice that $\partial I_0(F)\cup\partial(\kappa I_0(F))$ is forward invariant by $F$. Thus for all $n\geq\mathbf{r}_n$, $F^n(w_0)\in\partial I_0(F)\cup\partial(\kappa I_0(F))$. However $q_{n+4}>\mathbf{r}_n$ by Lemma \ref{lem.property-q_n}. This leads to a contradiction.

    Next we prove $D^n\subset C^{n-2}$. Suppose the contrary. Similarly as above, we have $$D^{n}\cap(\partial\D\cup\partial D')\Subset C^{n-2}\cap(\partial\D\cup\partial D').$$ 
    Thus there exists $w_0\in\partial C^{n-2}\cap D^{n}$, $w_0\not\in\partial\D\cup\partial D'\cup\kappa(\partial D')$. By symmetry, we may assume that $w_0\in\C\setminus(\overline{\D\cup D'})$. Without loss of generality, we assume further more that $w_0\in \partial Q_+^{n-2}$.
    
    From the construction of $C^n$, $F^{q_{n}}(w_0)$ belongs to $\partial I_0(F)$. On the other hand, from the construction of $D^n$ and Proposition \ref{prop.puzzle.disk} (\romannumeral2), $F^{{r}_n}(w_0)\in D^{n-1}|_{J_n^+}$. Recall that $r_n = q_n+q_{n+1}$. Hence $F^{r_n}(w_0)\in\partial\D\cup\partial D'$ since $D^{n-1}\cap\partial I_0(F)\subset \partial\D\cup\partial D'$. Now if $w_0\in\partial C^{n-2}$ belongs to some external ray or some equipotential of $F$, then we get a contradiction. If not, then $w_0$ belongs to $\partial B$, where $B\subset\C\setminus\overline{\D}$ is some bubble of $F$ such that $F^{q_n}(B)\subset I_0(F)$. Then there exists an infinite bubble chain $(B=)B_0,B_1,...,B_i,...$ such that $\partial B_i\cap\partial C^n\not = \emptyset$ and $\bigcup \overline{B_i}$
    colands with an external ray $R^\infty_t(F)$ that also intersects $\partial C^{n-2}$. 
    
    We claim that $B_0$ cannot be connected to $\D$ by finitely many bubbles. Indeed, otherwise it is connected to $D'$ or $\D$ by bubbles contained in one of the two bubble rays that bound $Q^{n-2}_+$ (See Figure \ref{fig.Cn}). Hence $\overline{B_0}$ does not intersect $D^n$, a contradiction. Thus the claim is proved. On the otherhand, by Lemma \ref{lem.no-detached-bubble}, $\partial D^n$ does not intersect the closure of any bubble that is not connected to $\partial\D$. Hence $\overline{B_0}\subset D^n$, and $\overline{B_i}\subset D^n$ for all $i$. Thus $R^\infty_t(F)\cap D^n\not = \emptyset$. Hence we can take $w'_0\in R^\infty_t(F)\cap D^n$ and we get a contradiction by applying the same argument for $w_0$ to $w_0'$.
\end{proof}

The next lemma says that nested topological disks around $\partial\D$ with a priori bounds are comparable to round disk at all scales.
\begin{lem}\label{lem.commensurable-round-disk}
    Fix any $c_l$ with $l\in\Z$. Let $U^{n+1}\subset U^{n}$ be a nested sequence of Jordan disks such that $U^n\cap\partial\D = (-q_{n+1}+l,-q_n+l)_c$, and there exists $m\geq0$, $\nu>0$, such that $U^{n+m}\Subset U^{n}$,
    $\mathrm{mod}(U^n\setminus U^{n+m})>\nu$. Let $B^n(c_l)$ be the round disk centered at $c_l$, with radius $|c_{-q_n+l}-c_l|$. Then there exists ${N} ={N}(m,\nu)$ such that 
    $$U^{n+2N}\subset B^{n+N}(c_l)\subset U^{n}.$$
\end{lem}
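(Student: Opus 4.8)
The plan is to deduce both inclusions from a single \emph{scale estimate}, obtained by applying the standard modulus (Teichm\"uller) bound at three consecutive shifted levels $n$, $n+N$, $n+2N$. Throughout I write $A^n:=U^n\cap\partial\D=(-q_{n+1}+l,-q_n+l)_c$ and use two elementary combinatorial facts, both immediate from the definition of the dynamical partition \eqref{eq.dynamical.partition} and the alternating behaviour of the convergents $q_n\theta$: $(\mathrm a)$ $c_l$ lies in the interior of $A^n$, so in particular $c_l\in U^n\subset\overline{U^n}$; and $(\mathrm b)$ $c_{-q_n+l}$ and $c_{-q_{n+1}+l}$ are the two endpoints of the open arc $A^n$, so in particular $c_{-q_n+l}\in\partial U^n\subset\C\setminus U^n$. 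I also use the standard fact that there is a decreasing function $\Theta\colon(0,\infty)\to(0,\infty)$ with $\Theta(M)\to0$ as $M\to\infty$ such that every ring domain $A\subset\C$ with bounded complementary continuum $E$ and unbounded complementary continuum $F$ satisfies $\diam(E)\le\Theta(\operatorname{mod}A)\cdot\dist(E,F)$.

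Fix $N:=mk$, where $k=k(\nu)$ is the least integer with $\Theta(k\nu)<1$; thus $N=N(m,\nu)$. The first step is the scale estimate
\begin{equation}
\diam(U^{n})\ \le\ \Theta(k\nu)\,\bigl|c_{-q_{n-N}+l}-c_l\bigr|\qquad\text{for all }n.\tag{$\star$}
\end{equation}
To prove it, iterate $U^{a+m}\Subset U^a$ to get $\overline{U^{n}}\subset U^{n-N}$, so that $A:=U^{n-N}\setminus\overline{U^{n}}$ is a ring domain; viewing $A$ as containing the $k$ disjoint, nested, essential ring domains $U^{n-im}\setminus\overline{U^{n-(i-1)m}}$ $(1\le i\le k)$ and using superadditivity of the modulus gives $\operatorname{mod}A\ge k\nu$. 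Now apply the modulus bound with $E=\overline{U^{n}}$ (which contains $c_l$, by $(\mathrm a)$) and $F=\C\setminus U^{n-N}$ (which contains $c_{-q_{n-N}+l}$, by $(\mathrm b)$): since $\dist(\overline{U^n},\C\setminus U^{n-N})=\dist(\overline{U^n},\partial U^{n-N})\le\bigl|c_l-c_{-q_{n-N}+l}\bigr|$, we obtain $(\star)$.

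Granting $(\star)$, both inclusions follow. For $B^{n+N}(c_l)\subset U^n$: by $(\mathrm a)$, $c_l\in\overline{U^{n+N}}$, and as above $\operatorname{mod}(U^n\setminus\overline{U^{n+N}})\ge k\nu$, so the modulus bound gives $\dist(c_l,\partial U^n)\ge\dist(\overline{U^{n+N}},\partial U^n)\ge\diam(U^{n+N})/\Theta(k\nu)$. Since $\diam(U^{n+N})\ge\bigl|c_l-c_{-q_{n+N}+l}\bigr|$ (both points lie in $\overline{U^{n+N}}$, by $(\mathrm a)$ and $(\mathrm b)$), the latter being the radius of $B^{n+N}(c_l)$, and $\Theta(k\nu)<1$, we conclude $\dist(c_l,\partial U^n)>\bigl|c_l-c_{-q_{n+N}+l}\bigr|$, hence $B^{n+N}(c_l)\subset U^n$. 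For $U^{n+2N}\subset B^{n+N}(c_l)$: by $(\mathrm a)$, $c_l\in U^{n+2N}$, so $(\star)$ applied with $n$ replaced by $n+2N$ yields, for every $z\in U^{n+2N}$,
\[
|z-c_l|\ \le\ \diam(U^{n+2N})\ \le\ \Theta(k\nu)\,\bigl|c_l-c_{-q_{n+N}+l}\bigr|\ <\ \bigl|c_l-c_{-q_{n+N}+l}\bigr|,
\]
which is the radius of $B^{n+N}(c_l)$; as $c_l\in U^{n+2N}$ this gives $U^{n+2N}\subset B^{n+N}(c_l)$.

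The only genuine content is the scale estimate $(\star)$. The point requiring the most care is the recognition that one should \emph{not} attempt to prove $\diam(U^n)\asymp\bigl|c_{-q_n+l}-c_l\bigr|$ directly: without a priori control on the shape of $U^n$ it could carry a long thin tentacle making its diameter much larger than $\bigl|c_{-q_n+l}-c_l\bigr|$, and it is precisely the comparison across the three levels $n,n+N,n+2N$ that absorbs this defect. (The real a priori bounds of \cite{Sw} would in fact give $\diam(U^n)\asymp\bigl|c_{-q_n+l}-c_l\bigr|$ with uniform constants, but that is not needed.)
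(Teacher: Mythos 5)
Your proof is correct and follows essentially the same route as the paper's: both accumulate modulus across $N$ nested levels (your superadditivity step makes explicit what the paper leaves implicit in the phrase ``take $N$ large enough''), and then convert large modulus into roundness at comparable scales, the paper via McMullen's round-annulus theorem and you via a direct Teichm\"uller--Gr\"otzsch estimate, which are equivalent tools. One small imprecision: the bound $\diam(E)\le\Theta(\operatorname{mod}A)\cdot\dist(E,F)$ with $\Theta$ finite on all of $(0,\infty)$ is false for small modulus (two long parallel slits close together give $\diam E/\dist(E,F)$ arbitrarily large while $\operatorname{mod}A\to 0$); the correct statement holds only above a universal modulus threshold, but this is harmless here since you only invoke it at modulus $k\nu$ with $k$ chosen large.
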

\begin{proof}
    We need the following result due to McMullen:
\begin{thm}[{\cite[Thm. 2.1]{Mc}}]\label{thm.mcmullen}
    Let $A\subset\C$ be an annulus and $z_0\in\C$ such that $z_0\not\in A$. Denote by $\Omega_1,\Omega_2$ the bounded/unbounded component of $\C\setminus\overline{A}$. If $A$ has sufficiently
large modulus, then it contains the round annulus $A'$ centered at $z_0$, with inner (resp. outer) radius equals $\sup_{z\in \overline{\Omega_1}}|z|$ (resp. $\inf_{z\in \overline{\Omega_2}}|z|$). Moreover, $\mathrm{mod}(A) = \mathrm{mod}(A')+O(1)$.
\end{thm}
 Thus we can take $N = N(m,\nu)$ large enough, such that $A=U^n\setminus\overline{U^{n+N}}$ has large modulus, so that it contains $A'$ described in the above Theorem. Let $\Tilde{B}$ be the inner component of $\C\setminus\overline{A'}$. Then $B^{n+N}(c_l)\subset\Tilde{B}$ since $U^{n+N}\subset\Tilde{B}$ and $U^{n+N}\cap\partial\D = (-q_{n+N+1}+l,-q_{n+N}+l)_c$. Thus $B^{n+N}\subset U^n$. On the other hand, $U^{n+N}\subset A'\cup\overline{\Tilde{B}}$. Notice that $(A'\cup\overline{\Tilde{B}})\subset B^n(c_l)$, since $U^{n}\cap\partial\D = (-q_{n+1}+l,-q_{n}+l)_c$. Thus $U^{n+N}\subset B^n(c_l)$. The result follows.
\end{proof}

We say that two sequences of bounded sets $\{K_n\},\{K'_n\}$ in $\C$ have {\it commensurable size} if there exists a constant $M>1$ such that
$$\frac{1}{M}\leq\frac{\mathrm{diam}(K_n)}{\mathrm{diam}(K_n')}\leq{M}.$$
\begin{cor}\label{cor.commensurable}
$D'\cap C^n$ contains a round disk $V^n$ that has commensurable size with $C^n$.
\end{cor}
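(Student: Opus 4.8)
The plan is to combine the a priori bounds for the modified puzzle disks (Lemma \ref{lem.Cn-Dn}) with the round-disk comparison result (Lemma \ref{lem.commensurable-round-disk}), applied to the restriction of $C^n$ to the component $D'$ of $F^{-1}(\D)$ containing $1$ in its boundary. First I would set $U^n := C^n \cap D'$ (equivalently, $Q^n$ in the notation of \eqref{eq.Dn-decomposition}, up to the slit along $\partial D'$). By Lemma \ref{lem.base.Cn}, we have $U^n \cap \partial D' = F(J_n^-) = F((-q_{n+1},-q_n)_c)$; transporting the real a priori bounds for $g$ through $F|_{\partial D'}$, these are intervals of the form $(-q_{n+1}+l, -q_n+l)_c$ for a fixed integer $l$ (the one with $F(1)=c_l$ on $\partial D'$), nested as $n$ grows. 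Lemma \ref{lem.Cn-Dn} gives $C^{n+7}\Subset C^n$ with $\mathrm{mod}(C^n\setminus\overline{C^{n+7}})>\mu$, and intersecting with the (quasidisk) $D'$ preserves a definite modulus: since $C^n\cap D' = C^n\cap\overline{D'}\setminus\partial D'$ is a quasidisk (being a component of a preimage puzzle region), the annulus $U^n\setminus\overline{U^{n+7}}$ still has modulus bounded below by some uniform $\nu>0$. Hence $\{U^n\}$ satisfies the hypotheses of Lemma \ref{lem.commensurable-round-disk} with $m=7$ and this $\nu$.

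Applying Lemma \ref{lem.commensurable-round-disk} then yields an integer $N=N(7,\nu)$ and the sandwich $U^{n+2N}\subset B^{n+N}(c_l)\subset U^n$, where $B^{n+N}(c_l)$ is the round disk centered at $c_l$ of radius $|c_{-q_{n+N}+l}-c_l|$. I would then take $V^n$ to be a slightly shrunk round disk concentric with (a suitable translate in the index of) $B^{n+N}(c_l)$, chosen so that $V^n\subset D'\cap C^n$: concretely, set $V^n := B^{n+N}(c_l)$, which lies in $U^n = D'\cap C^n$ by the right inclusion above. It remains to check commensurability of $\diam(V^n)$ and $\diam(C^n)$. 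For the upper bound, $V^n\subset C^n$ gives $\diam(V^n)\le\diam(C^n)$ trivially. For the lower bound, the left inclusion $U^{n+2N}\subset B^{n+N}(c_l) = V^n$ gives $\diam(V^n)\ge\diam(U^{n+2N})$; combining with Lemma \ref{lem.Cn-Dn} (which bounds $\mathrm{mod}(C^{n+2N-k}\setminus\overline{C^{n+2N}})$ below for a fixed offset, hence forces $\diam(C^{n+2N})$ and $\diam(C^n)$ to be commensurable with a constant depending only on $N$ and $\mu$) together with the elementary fact that a quasidisk $U^{n+2N}$ cut out of the quasidisk $C^{n+2N}$ along the fixed quasicircle $\partial D'$, and containing the fixed-index boundary interval $(-q_{n+2N+1}+l,-q_{n+2N}+l)_c$, has diameter commensurable with that of $C^{n+2N}$ (by a normal-families/compactness argument after rescaling, using the a priori bounds), we obtain $\diam(V^n)\gtrsim\diam(C^n)$ with a uniform constant. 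This gives the desired $M$.

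The main obstacle I anticipate is the last commensurability step: controlling $\diam(U^{n+2N})$ from below by a definite fraction of $\diam(C^{n+2N})$, and similarly passing between $\diam(C^{n+2N})$ and $\diam(C^n)$. Both are standard consequences of definite moduli plus the quasidisk geometry (one rescales the configuration $(C^n, C^{n+2N}, \partial D')$ so that $\diam(C^n)=1$ and extracts a limit; the a priori moduli bounds prevent collapse of the inner pieces, and Świątek's real a priori bounds keep the relevant boundary arcs of definite size), but they do require care because $D'$ is a fixed quasidisk whose boundary the sets $C^n$ straddle, so one cannot simply quote McMullen's round-annulus theorem directly for the ratio $\diam(U^n)/\diam(C^n)$ — one needs the combinatorial input that $\partial C^n\cap\partial D' = F(J_n^-)$ has a definite relative size inside $\partial C^n\cap\partial\D = J_n^-$, which follows from Proposition \ref{prop.puzzle.disk}(i) and the bounded geometry of $g$.
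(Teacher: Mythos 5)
Your plan has a gap right at the application of Lemma~\ref{lem.commensurable-round-disk}. That lemma requires a nested family of Jordan disks $U^n$ with $U^n\cap\partial\D = (-q_{n+1}+l,-q_n+l)_c$, i.e.\ the disks must straddle $\partial\D$ and cut out the specified intervals of the critical-point orbit on $\partial\D$. Your candidate $U^n := C^n\cap D'$ lies entirely inside $D'$: it meets $\partial D'$ in the arc $F(J_n^-)$ and touches $\partial\D$ only at the single point $1$. So the hypotheses of the lemma fail outright. Pushing forward by $F|_{D'}$ would put the intersection on $\partial\D$, but then the round disk produced by the lemma lives in the image plane and does not pull back to a round disk. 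In addition, the bound $\mathrm{mod}\bigl(U^n\setminus\overline{U^{n+7}}\bigr)>\nu$ you assert is unjustified: slicing a nested pair of disks by $D'$ need not preserve a definite modulus, and in fact $\overline{U^{n+7}}\subset U^n$ already fails because both $U^{n+7}$ and $U^n$ share boundary along $\partial D'$.

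The paper avoids all of this by applying Lemma~\ref{lem.commensurable-round-disk} directly to the modified puzzle disks $C^n$ themselves (which do satisfy $C^n\cap\partial\D = J_n^-$), centered at $c_0 = 1$. This yields a round disk $B^{n+N}(1)$ with $C^{n+2N}\subset B^{n+N}(1)\subset C^n$; Świątek's real a priori bounds for bounded type $\theta$ then give that $B^{n+N}(1)$ and $C^n$ have commensurable diameters. The final ingredient — the one your write-up tries to replace by a vague compactness/rescaling argument — is that $1$ is not a cusp of $\partial D'$: since $1$ is a double critical point of the Blaschke product, $F$ has local degree three at $1$, so $\partial D'$ meets $\partial\D$ at a definite positive angle. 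Hence $B^{n+N}(1)\cap D'$ contains a round disk $V^n$ of radius a definite fraction of $\mathrm{radius}(B^{n+N}(1))$, and the corollary follows. Your proposal thus misses both the correct object to feed into Lemma~\ref{lem.commensurable-round-disk} and the elementary angle observation that finishes the proof.
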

\begin{proof}
By Proposition \ref{prop.puzzle.disk},(\romannumeral3) and Lemma \ref{lem.Cn-Dn}, we get that $C^{n}\setminus \overline{C^{n+6}}$ has modulus bounded from below. By Lemma \ref{lem.commensurable-round-disk}, there exists $N$ such that 
$$C^{n+2N}\subset B^{n+N}(1)\subset C^{n}.$$ Since $C^n\cap\partial\D = J_n^-$, we have $J^-_{n+2N} < |\gamma_n|< J^-_{n}$, where $\gamma_n:=B^{n+N}(1)\cap \partial\D$. Since $\theta$ is of bounded type, $J^-_{n+N}$ and $J^-_n$ have commensurable size by Swi\c{a}tek's real a priori bound \cite{Sw}. Thus $C^{n+N}$, $\gamma_n$ and $B^{n+N}(1)$ also have commensurable size. Since 1 is not a cusp of $\partial D'$, $B^{n+N}(1)\cap D'$ contains a round disk $V^n$ that has commensurable size with $B^{n+N}(1)$.
\end{proof}

\subsection{First hit map to puzzle disks has bounded degree}\label{subsec.modified-puzzle-disks}
\text{ }\\
Recall that we always fix some $f$ satisfying Assumption \ref{assum.well-defined} and consider its associated Blaschke product $F$, which is quasiconformally conjugate to $f$ on $\C\setminus\D$ by $\varphi$. Recall that $J_1(F) =: \varphi(J(f))$. 
\begin{lem}\label{lem.bounded.degree.puzzledisk}
    Fix any integer $m\geq0$. Let $C^{n,k}$ be the pull back of $C^n$ along $\partial\D$, (that is, the connected component of $F^{-k}(C^{n})$ that intersects with $\partial\D$) where $0\leq k\leq q_{n+m}$. Then $F^{k}:C^{n,k}\to C^n$ has bounded degree in $n$ (only depend on $m$ and $\theta$).
\end{lem}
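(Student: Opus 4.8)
The plan is a Riemann--Hurwitz count, after controlling how often a critical point of $F$ can appear among the pullbacks. Since $g=F|_{\partial\D}$ is an orientation-preserving circle homeomorphism, $F^{-i}(C^n)\cap\partial\D=g^{-i}(J_n^-)$ is a single arc, so there is a \emph{unique} component of $F^{-i}(C^n)$ meeting $\partial\D$, namely $C^{n,i}$; it contains that arc, and $F\colon C^{n,i}\to C^{n,i-1}$ is proper and onto, so $F^k$ factors through the chain $C^{n,k}\to C^{n,k-1}\to\cdots\to C^{n,0}=C^n$. Each $C^{n,i}$ is simply connected --- being, up to a bounded-depth bubble/puzzle-piece thickening, a neighborhood of the arc $g^{-i}(J_n^-)$ --- so $\deg(F\colon C^{n,i}\to C^{n,i-1})=1+c_i$, where $c_i$ counts the critical points of $F$ in $C^{n,i}$ with multiplicity; hence $\deg(F^k|_{C^{n,k}})=\prod_{i=1}^k(1+c_i)\le 2^{\Sigma}$ with $\Sigma:=\sum_{i=1}^k c_i$, and it suffices to bound $\Sigma$ by a constant depending only on $m,\theta$.

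The relevant critical points of $F$ are the double critical point $1\in\partial\D$ and the simple free critical point $c_b\in\C\setminus\overline\D$ together with its reflection $\kappa(c_b)=1/\overline{c_b}\in\D$: the superattracting critical points $0,\infty$ lie in basins on which $F$ is conjugate to $w\mapsto w^3$, and every $C^{n,i}$ meets $\partial\D\subset J_1(F)$, hence avoids those basins. Since $\kappa$ fixes $\partial\D$ pointwise, commutes with $F$, and $\kappa(C^n)=C^n$ (because $S^n=P^n\cup Jv_n\cup\kappa P^n$ is $\kappa$-symmetric), each $C^{n,i}$ is $\kappa$-symmetric, so $c_b\in C^{n,i}\iff\kappa(c_b)\in C^{n,i}$. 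Thus $\Sigma\le 2N_1+2N_c$ with $N_1:=\#\{1\le i\le k: 1\in C^{n,i}\}$ and $N_c:=\#\{1\le i\le k: c_b\in C^{n,i}\}$.

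To bound $N_1$: $1\in C^{n,i}$ iff $c_i=F^i(1)\in C^n$, and as $c_i\in\partial\D$ this means $c_i\in C^n\cap\partial\D=J_n^-$ (\lemref{lem.base.Cn}). By Swi\c{a}tek's real a priori bound $g$ is quasisymmetrically conjugate to $R_\theta$, under which $J_n^-$ corresponds to a union of boundedly many atoms of the $n$-th dynamical partition; the combinatorics of bounded-type rotations then show that consecutive visits of any orbit to such a set are separated by at least $q_{n-1}$ iterates, while $q_{n+m}\le(A+1)^{m+1}q_{n-1}$ with $A:=\sup_j a_j<\infty$. Hence $N_1\le(A+1)^{m+1}+1=:C_1$.

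The main difficulty is bounding $N_c$, which is precisely the analysis of orbits accumulating to $\partial D$ flagged in the introduction. If $c_b\in C^{n,i}$ then $F(c_b)\in C^{n,i-1}$, so I would track the orbit $v_j:=F^{\,j+1}(c_b)$ and show that $v_j\in C^{n,j}$ for at most $C_2(m,\theta)$ values of $j\le q_{n+m}$. By \lemref{lem.no-detached-bubble} and Assumption~\ref{assum.well-defined}, $v_0=F(c_b)$ lies in a limb $L^F_{x_2}\Subset L^F_{x_1}$ with $x_1\in\partial\D$ a preimage of $1$ and $x_2\in\partial B_{x_1}\setminus\{x_1\}$ a preimage of $1$ on the boundary of the gen-$1$ bubble $B_{x_1}$; the choice of $n_0$ ensures the forward orbit of $x_2$ first reaches $\partial\D$ at a bounded time $j_0<q_{n_0}$, so that for $j\ge j_0$, away from critical collisions, $F^j(x_2)=c_{j-j_0}$ and $v_j\in L^F_{c_{j-j_0}}$, a limb rooted at the circle point $c_{j-j_0}$. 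Using the limb decomposition (\corref{cor.decomposition} and its Blaschke analogue) together with the fact from \lemref{lem.no-detached-bubble} that $\partial D^n$ --- hence, via $C^{n+3}\subset D^n\subset C^{n-2}$, a bounded-depth neighborhood of $\partial C^n$ --- avoids the closure of every bubble in $\C\setminus\overline\D$ detached from $\D$, one concludes that $v_j\in C^n$ forces the root $c_{j-j_0}$ of its limb to lie in $\overline{J_n^-}$; by the previous paragraph's estimate (applied to a window of length $\le q_{n+m}$) there are at most $C_1$ such $j$, giving $N_c\le j_0+C_1$. Two points need care: a critical collision of $(v_j)$ with $1$ before time $i$ is again a circle return, so is already counted in $N_1$, while a collision with $c_b$ or $\kappa(c_b)$ records an \emph{earlier} occurrence of the critical point, which an inductive bookkeeping of occurrences absorbs. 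Altogether $\Sigma\le 2N_1+2N_c\le C(m,\theta)$, hence $\deg(F^k|_{C^{n,k}})\le 2^{C(m,\theta)}$. The hard part is this last paragraph: making the limb-tracking and the exclusion of ``sideways'' entry into $C^n$ rigorous, which is where the structure of $\partial D^n$ coming from the a priori bounds for puzzle disks is essential.
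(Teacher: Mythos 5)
You have the right skeleton — Riemann--Hurwitz, $\kappa$-symmetry, and bounded-type return combinatorics — and your $N_1$ estimate is exactly the paper's argument ($1\in C^{n,i}$ iff $c_i\in J_n^-$, bounded by the rotation combinatorics). The gap is in the $N_c$ bound, where you miss the much more direct mechanism the paper uses: by (the Blaschke analogue of) Corollary~\ref{cor.decomposition}, the free critical point $c_b$ itself lies in a \emph{single} wake $W_x$ rooted at a joint $x\in\partial\D$, a fixed iterated preimage of $1$. Limbs rooted at distinct circle points are disjoint, and the portion of $\partial C^{n,i}$ lying in $K(F)\cap(\C\setminus\overline\D)$ consists of bubble-ray arcs attached to $\partial\D$ at joints in $\overline{g^{-i}(J_n^-)}$ together with an arc on the preimage bubble $D'_i$ rooted at $c_{-i}\in g^{-i}(J_n^-)$; so $c_b\in C^{n,i}$ forces $x\in\overline{g^{-i}(J_n^-)}$. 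Thus $N_c$ is bounded by the \emph{same} return count as $N_1$, applied to $x$ instead of $1$, and no iteration of $F(c_b)$ is needed.

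Your alternative route of pushing $v_j=F^{j+1}(c_b)$ forward through the limb decomposition is both harder and not carried out. There is an indexing slip ($c_b\in C^{n,i}$ gives $v_j\in C^{n,i-1-j}$, so the quantity to control is $\#\{j : v_j\in C^n\}$, not $\#\{j : v_j\in C^{n,j}\}$). The step $v_j\in L^F_{c_{j-j_0}}$ fails every time the orbit passes through a wake containing a critical point, and your ``inductive bookkeeping'' for those collisions is not supplied. Finally, the ``exclusion of sideways entry,'' which you flag as the hard part, is precisely what the wake-root observation for $c_b$ itself settles at once; replacing the entire $v_j$-tracking by that one observation closes the gap and brings the argument in line with the paper's three-sentence proof.
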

\begin{proof}
    By Corollary \ref{cor.decomposition}, there exists a unique wake $W_x\subset \C\setminus\overline{\D}$ rooted at $x\in\partial\D$ that contains the free critical point $c_b$. Since $\theta$ is of bounded type, the orbit $C^{n,0}\cap\partial\D,C^{n_1}\cap\partial\D,...,C^{n,q_{n+m}}\cap\partial\D$ only meets $x$ and $1$ bounded many times that only depend on $m$ and $\theta$. Hence the bounded degree statement follows.
\end{proof}

\begin{lem}\label{lem.infinitesurround-cb}
    There exists an infinite sequence of nested puzzle pieces $P^F_{l_{i+1}}(c_b)\Subset P^F_{l_i}(c_b)$.
\end{lem}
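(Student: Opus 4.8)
The plan is to produce the nested sequence $P^F_{l_{i+1}}(c_b)\Subset P^F_{l_i}(c_b)$ by showing that there is no uniform upper bound on the depths of puzzle pieces for $F$ containing the free critical point $c_b$, and then, from such a sequence of puzzle pieces of increasing depth containing $c_b$, extracting a genuinely nested subsequence. The first point rests on a standard dichotomy: either the puzzle pieces $P^F_n(c_b)$ stabilize to a common piece for all large $n$ — which means the graph $I_n(F)$ never separates $c_b$ from its images beyond a certain generation and, by forward invariance of $\partial I_0(F)\cup\partial(\kappa I_0(F))$ together with the fact that $b$ (hence $c_b$) does not eventually land on $\overline{R^\bB_0(f)}$ by Assumption~\ref{assum.well-defined}(\romannumeral2), leads to a contradiction — or the depths of the pieces containing $c_b$ are unbounded.

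More concretely, I would argue as follows. Suppose for contradiction that there is $N$ such that $P^F_n(c_b)=P^F_N(c_b)=:P$ for all $n\ge N$; equivalently $F^{-n}(I_0(F))\cap P=\varnothing$ for all $n\ge 0$ after passing to the sub-piece, so the forward orbit of $c_b$ never crosses $I_0(F)$ beyond depth $N$. Since $I_0(F)$ contains the bubble rays $R^\bB_0(F)$ and $R^\bB_{1/2}(F)$ and a full cycle of external rays, and since $F(c_b)$ is — by Lemma~\ref{lem.no-detached-bubble} and its proof — contained in a limb $L^F_{x_1}$ rooted at a preimage $x_1\in\partial\D$ of $1$, the pullback structure forces some forward iterate of $c_b$ to approach either $\partial\D$ or the graph $I_0(F)$. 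Combined with the fact that $F$ is non-separable (Assumption~\ref{assum.well-defined}(\romannumeral1), transported to $F$ via $\varphi$) so that no forward iterate of $c_b$ lands on a root point of a periodic bubble ray, one deduces that infinitely many forward iterates of $c_b$ fall into deeper and deeper pieces, contradicting stabilization. This yields an infinite sequence of integers $n_1<n_2<\cdots$ with $\operatorname{depth}(P^F_{n_i}(c_b))$ strictly increasing, i.e. $P^F_{n_{i+1}}(c_b)\subsetneq P^F_{n_i}(c_b)$.

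To upgrade $\subsetneq$ to $\Subset$ one uses the usual puzzle argument: the boundary $\partial P^F_n(c_b)$ lies on $I_n(F)$, and if $P^F_{n+1}(c_b)$ is strictly contained in $P^F_n(c_b)$ then there is a piece of $I_{n+1}(F)\setminus I_n(F)$ separating $c_b$ from $\partial P^F_n(c_b)$; iterating along the strictly-increasing subsequence and discarding finitely many indices if necessary, we get a subsequence $l_1<l_2<\cdots$ with $\overline{P^F_{l_{i+1}}(c_b)}\subset P^F_{l_i}(c_b)$, as desired. The main obstacle I anticipate is the \emph{stabilization-implies-contradiction} step: one must rule out the possibility that $c_b$ eventually maps into a piece that is invariant in the relevant sense, which is exactly where non-separability and Assumption~\ref{assum.well-defined}(\romannumeral2) must be used carefully — in particular, one should check that a stabilized piece $P$ around $c_b$ would, via the dynamics $F^{r_n}$ of Proposition~\ref{prop.puzzle.disk}(\romannumeral2) and the orbit correspondence for limbs (Corollary~\ref{cor.decomposition}), force $c_b$ into a bubble connected to $\D$, contradicting Lemma~\ref{lem.no-detached-bubble}. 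The rest is routine puzzle bookkeeping.
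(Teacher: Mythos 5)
The step upgrading $\subsetneq$ to $\Subset$ is the entire content of the lemma, and it is not routine puzzle bookkeeping; this is where your proposal has a genuine gap. Boundaries of nested puzzle pieces $P^F_l(c_b)$ can share points persistently as $l\to\infty$ --- joints and roots on bubble boundaries, and landing points of bubble rays that are iterated preimages of $\alpha$ --- since these are graph vertices that a deeper level of the graph need not separate from $c_b$ without a shrinking theorem. So one can have $P^F_{l+1}(c_b)\subsetneq P^F_l(c_b)$ while $\overline{P^F_{l+1}(c_b)}\cap\partial P^F_l(c_b)\neq\varnothing$ for every $l$, and discarding finitely many indices cannot fix this. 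Your stabilization-versus-nonstabilization dichotomy also does not isolate the difficulty: the equipotential $E^\infty_{r^{1/3^l}}$ in $I_l(F)$ moves strictly inward at every level, so stabilization never occurs, and non-stabilization gives only strict inclusion, which is where you started.

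The paper's proof attacks the failure of compact containment directly. Suppose for some $l_0$ no $l_1>l_0$ gives $P^F_{l_1}(c_b)\Subset P^F_{l_0}(c_b)$. Then $\overline{P^F_l(c_b)}$ meets $\partial P^F_{l_0}(c_b)$ for all $l>l_0$, and a common boundary point must persist into $\bigcap_l\overline{P^F_l(c_b)}$: either a landing point $\alpha_{l_0}$ of a bubble ray pre-periodic to $R^\bB_0(F)$, or a point of $\partial B$ for some bubble $B$. In the first case Lemma~\ref{lem.loc.alpha} (triviality of the wake impression of $R^\bB_0$) forces $\bigcap_l\overline{P^F_l(c_b)}=\{\alpha_{l_0}\}$, and in the second Theorem~\ref{thm.jonguk} forces $\bigcap_l\overline{P^F_l(c_b)}\subset\partial B$; since $c_b$ lies in this intersection, either alternative places $c_b$ on the closure of a bubble ray or on a bubble boundary, contradicting Assumption~\ref{assum.well-defined}~(\romannumeral2). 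It is exactly these two shrinking results that deliver the compact containment; your proposal references related lemmas only in passing (via Lemma~\ref{lem.no-detached-bubble}) and never for the decisive step.
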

\begin{proof}
    It suffices to prove that for any $l_0$, there exists $l_1>l_0$ such that $P^F_{l_1}(c_b)\Subset P^F_{l_0}(c_b)$. Notice that $c_b\in\bigcap_l\overline{P^F_{l}(c_b)}$. Thus if there does not exist such $l_1$, then 
    \begin{itemize}
        \item  either there exists $\alpha_{l_0}\in\partial P^F_{l_0}(c_b)$ which is a landing point of some bubble ray pre-periodic to $R^\bB_0(F)$, and $\alpha_{l_0}\in\partial P^F_{l}(c_b)$ for all $l$ large enough; 
        \item  or there exists a bubble $B$ such that $\partial B\cap\partial P^F_{l}(c_b)\not=\emptyset$ for all $l$ large enough.
    \end{itemize}
  The first case contradicts Lemma \ref{lem.loc.alpha}, which implies that $\bigcap_l\overline{P^F_{l}(c_b)} = \{\alpha_0\}$; the second contradicts Theorem \ref{thm.jonguk}, which implies that $\bigcap_l\overline{P^F_{l}(c_b)}\subset\partial B$. The lemma is proved. 
\end{proof}

In the sequel, we always assume that $w\in J_1(F)$ such that $\{F^n(w)\}$ accumulates to $\partial\D$ (it might also hit $\partial\D$). For any $n\geq n_0$, consider the first hit map of $w$ to $C^n$
$$F^{m_n}:C^{n,m_n} \to C^{n,0} := C^{n},$$
 where we denote by $C^{n,k}$ the $k$-the preimage of $C^{n,0}$ along the orbit. Fix any $1\leq m\leq n-n_0$, consider the pullback of $C^{n-m}$ along the orbit of $C^{n,k}$ by $F^{m_n}$, we get an orbit 
$$C^{n-m,m_n},C^{n-m,m_n-1},...,C^{n-m}.$$
Let $0\leq k_n\leq m_n$ be the largest integer such that $C^{n-m,k_n}\cap\partial\D\not=\emptyset$.
\begin{lem}\label{lem.bounded.degree.alongD}
  Suppose that there exists $N\geq0$ such that for all $l\geq 1$, $F^l(w)\not\in P^F_N(c_b)$. Then there exists a constant $M(m,\theta,N)$ such that for $n\geq M(m,\theta,N)$, the following map has uniformly bounded degree that only depends on $m$ and $\theta$:
   $$F^{k_n}:C^{n-m,k_n}\to C^{n-m}.$$
\end{lem}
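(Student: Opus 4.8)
The plan is to separate the orbit $C^{n-m,k_n}, C^{n-m,k_n-1},\dots,C^{n-m}$ into two regimes and control the degree in each. First I would observe that after the largest time $k_n$ at which the pullback still touches $\partial\D$, the remaining pieces $C^{n-m,j}$ for $k_n > j \geq 0$ are honest puzzle pieces disjoint from $\partial\D$, so $F$ restricted to each such piece is a branched cover whose only possible branching comes from the free critical point $c_b$ (the Siegel critical point lives on $\partial\D$ and is irrelevant here). Since the orbit of $w$ avoids $P^F_N(c_b)$ by hypothesis, the pieces $C^{n-m,j}$ that meet $c_b$ must be ``large'', i.e. they must contain some fixed puzzle piece $P^F_N(c_b)$; by the shrinking/nesting of puzzle pieces around $c_b$ from Lemma~\ref{lem.infinitesurround-cb} together with the a priori bounds in Proposition~\ref{prop.puzzle.disk}(iii) and Lemma~\ref{lem.Cn-Dn}, this can happen at most a bounded number of times, the bound depending only on $N$, $m$ and $\theta$. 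Hence the composition $F^{k_n-j_0}$ from the first such ``critical'' moment $j_0$ down to $C^{n-m}$ has bounded degree, and it remains to analyze the piece $C^{n-m,k_n}$ itself.

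For the remaining part, the piece $C^{n-m,k_n}$ touches $\partial\D$, so I would use that $C^{n-m,k_n}$ is contained in the pullback of $C^{n-m}$ along $\partial\D$ after $k_n$ steps, and that $k_n \leq m_n$. Here I would want to bound $k_n$ (or rather $m_n - k_n$, the number of ``non-$\partial\D$'' steps) in terms of $n$ and $m$; the natural bound is $m_n \leq$ something like $q_{n+m'}$ for a controlled $m'$, because the first hit time to $C^n$ of a point accumulating to $\partial\D$ is comparable to a closest-return time by Świątek's real a priori bounds. Once $k_n$ is under control by $q_{n-m+m'}$ for a fixed $m'$ depending on $m$ and $\theta$, Lemma~\ref{lem.bounded.degree.puzzledisk} applies directly to the pull-back along $\partial\D$: it says precisely that $F^{k_n}:C^{n-m,k_n}\to C^{n-m}$ has degree bounded in terms of how many times the orbit of the base interval meets the free critical joint $x\in\partial\D$ and the Siegel critical point $1$, and since $\theta$ is bounded type this count is bounded by a function of the index shift $m'$ and $\theta$ only.

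Combining the two regimes, the total degree $\deg(F^{k_n}:C^{n-m,k_n}\to C^{n-m})$ is the product of the degree of the ``along-$\partial\D$ part'' (bounded by Lemma~\ref{lem.bounded.degree.puzzledisk}) and the degree of the ``interior part'' (bounded because $c_b$ is hit boundedly often, using the avoidance of $P^F_N(c_b)$ and the nesting of Lemma~\ref{lem.infinitesurround-cb}), and each factor depends only on $m$, $\theta$ and $N$ once $n \geq M(m,\theta,N)$ is large enough that the relevant puzzle-disk inclusions $C^{n+7}\Subset C^n$ and the comparison with $P^F_N(c_b)$ are in force.

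The main obstacle I anticipate is making precise the claim that the orbit segment $C^{n-m,k_n}\to\cdots\to C^{n-m}$ encounters the free critical point $c_b$ only boundedly often. The subtlety is that these are modified puzzle disks, not ordinary puzzle pieces, so the usual tableau argument is unavailable (as the authors warn in \S\ref{sec:combinatorial-tools}); one must instead argue that a critical encounter forces $C^{n-m,j}$ to swallow $P^F_N(c_b)$ and then use a pigeonhole/combinatorial-depth estimate — counting how many pieces in a forward orbit can contain a fixed puzzle piece — which is where the a priori bounds on moduli (Proposition~\ref{prop.puzzle.disk}(iii), Lemma~\ref{lem.Cn-Dn}) and the lower bound $N$ on the escape from the critical piece both get used. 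Turning ``the orbit escapes $P^F_N(c_b)$'' into ``$c_b$ is hit boundedly often along this particular pull-back orbit'' is the step that needs the most care.
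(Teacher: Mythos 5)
Your decomposition of the lemma is built on a misreading of the orbit structure, and this leads the rest of the argument astray. Since $k_n$ is the \emph{largest} index with $C^{n-m,k_n}\cap\partial\D\neq\emptyset$ and $F(\partial\D)=\partial\D$, every $C^{n-m,j}$ with $0\leq j\leq k_n$ also meets $\partial\D$; the pieces disjoint from $\partial\D$ are the ones with $j>k_n$, which belong to Lemma~\ref{lem.bounded.first.hit}, not to this lemma. So there is no ``interior regime'' inside the map $F^{k_n}:C^{n-m,k_n}\to C^{n-m}$; the whole thing is a pullback along $\partial\D$, and the only thing standing between you and Lemma~\ref{lem.bounded.degree.puzzledisk} is a bound on $k_n$.

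That bound is the real content, and your sketch does not give it. The first-hit time $m_n$ is not controlled by closest-return times --- $w$ can wander arbitrarily long before entering $C^n$ --- so the asserted $m_n\lesssim q_{n+m'}$ is simply false, and even the weaker claim ``$k_n$ comparable to a closest-return time by \'Swi\c{a}tek'' is not something you can read off directly. The paper's argument runs differently: it builds, purely from the circle combinatorics, a nested sequence of puzzle pieces $\tilde Q_s(\pm)$ and then $A_1,A_2,\dots$ such that if $k_n$ exceeded $\sum_{j\leq m+1} a_{n-m+2j-1}q_{n-m+2j-1}$, then some $w_{l_{m+1}}$ would lie in a piece $A_{m+1}\subset C^n$, violating the definition of $m_n$ as the \emph{first} hit of $C^n$. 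This yields $k_n<q_{n+m+6}$, after which Lemma~\ref{lem.bounded.degree.puzzledisk} closes the argument. To set up the $A_i$'s one needs to know that $c_b\notin\tilde C^{n,k}$ for $k\leq a_{n-m+1}q_{n-m+1}$ once $n\geq M(m,\theta,N)$, and \emph{this} is where the hypothesis $F^l(w)\notin P^F_N(c_b)$ enters, together with Lemma~\ref{lem.infinitesurround-cb} and the bounded-degree statement of Lemma~\ref{lem.bounded.degree.puzzledisk}. So the avoidance hypothesis is not there to count ``interior'' critical encounters (there are none in this lemma); it is there to make the combinatorial pullback that bounds $k_n$ behave. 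You did correctly identify Lemma~\ref{lem.bounded.degree.puzzledisk} as the endpoint and correctly sensed that controlling encounters with $c_b$ is the crux, but the mechanism --- a first-hit-time contradiction driven by a nested-piece construction, not a pigeonhole count of critical hits --- is missing from the proposal.
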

\begin{proof}
The key point of the proof is to bound $k_n$. Set  $\Tilde{C}^{n} := C^{n-m}$, $\Tilde{Q}^n_\pm := Q^{n-m}_\pm$. $w_0 := F^{m_n}(w)\in\Tilde{Q}^n_+$, $w_k = F^{m_n-k}(w)$ for $0\leq k\leq m_n$. To be definite, suppose that
$$w_0\in \Tilde{Q}^{n}_+,\,\,\partial\Tilde{Q}^{n}_+\cap\partial\D = [0,-q_{n-m}]_c.$$ 
Define $\Tilde{Q}_0(+) := \Tilde{Q}^{n}_+$. Assume that for $0\leq s\leq a_{n-m+1}-1$, $\Tilde{Q}_s(+)$ is defined, and that 
$$\partial\Tilde{Q}_s(+)\cap\partial\D = [0,-sq_{n-m+1}-q_{n-m}]_c.$$
Then $(F|_{\C\setminus\overline{\D}})^{-q_{n-m+1}}(\Tilde{Q}_s(+))$ has exactly two connected components whose boundary intersects with $\partial\D$. Denote by $\Tilde{Q}_{s+1}(+),\Tilde{Q}_{s+1}(-)$ the one on the right/left-hand side of $1$. It follows that $$\partial\Tilde{Q}_{s+1}(+)\cap\partial\D = [0,-(s+1)q_{n-m+1}-q_{n-m}]_c,\,\, \partial\Tilde{Q}_{s+1}(-)\cap\partial\D = [-q_{n-m+1},0]_c.$$
If $k_n\leq a_{n-m+1}q_{n-m+1}$, then we are done. Otherwise, we make the following claim: 
\begin{claim*}
There exists a constant $M(m,\theta,N)$ such that for $n\geq M(m,\theta,N)$ and $k\leq a_{n-m+1}q_{n-m+1}$, $\Tilde{C}^{n,k}$ does not contain $c_b$. 
\end{claim*}
\begin{proof}
    Suppose the contrary, then let $k_0\leq a_{n-m+1}q_{n-m+1}$ be the smallest integer such that $F(c_b)\in\Tilde{C}^{n,k_0}$. While by Lemma \ref{lem.bounded.degree.puzzledisk}, $F^{k_0}:\Tilde{C}^{n,k_0}\to\Tilde{C}^{n}$ has bounded degree $d(m,\theta)$. Thus the number of connected components of $F^{-k_0}(\Tilde{C}^n\setminus(\partial\D\cup\partial D'))$ in $\Tilde{C}^{n,k_0}$ is bounded by a constant $N(m,\theta)$. By the non recurrent condition on $w$, there exists $N$ (not depending on $n$) such that $F^{m_n-k_0}(w)\not\in P^F_N(c_b)$. By Lemma \ref{lem.infinitesurround-cb}, $c_b$ is surrounded by $P^F_{l_{i+1}}(c_b)\Subset P^F_{l_i}(c_b)$. We may assume that $l_0 = N$. Then any path in $\Tilde{C}^{n,k_0}$ that connects $c_b$ to $F^{m_n-k_0}(w)$ passes through at least $i+1$ puzzle pieces of depth $l_i$. On the other hand, by (\ref{eq.Qn-are-puzzlepieces}), there exists a uniform constant $M(m,\theta,N)$, such that for $n\geq M(m,\theta,N)$, every component of $(F|_{\C\setminus\overline{\D}})^{-k_0}(\tilde{Q}^n_+\cup \tilde{Q}^n_-)$ in $\Tilde{C}^{n,k_0}$ is contained in a puzzle piece of depth $l_{N(m,\theta)}$. Other components of $F^{-k_0}(\tilde{C}^{n}\setminus(\partial\D\cup\partial D'))$ are contained in bubbles of $F$. Thus it leads to a contradiction if we take $i\geq N(m,\theta)$.
\end{proof}
Thus if $k_n>a_{n-m+1}q_{n-m+1}$, there exists $q_{n-m+1}\leq l_1\leq a_{n-m+1}q_{n-m+1}$ such that $w_{l_1}$ belongs to a puzzle piece $A_1$, where $A_1$ equals some $\Tilde{Q}_{s+1}(-)$ or $\Tilde{Q}_{a_{n-m+1}}(+)$. Notice that $\partial A_1\cap\partial\D = [-q_{n-m+1},0]_c$ or $[0,-q_{n-m+2}]_c$ and $F^{l_1}(A_1) = A_0$, where we set $A_0 :=\Tilde{Q}^{n}_+$. Inductively, for $i\geq 1$, we can construct $A_i$ and $l_i$ as long as 
$$k_n>\Sigma_{j=1}^{i}a_{n-m+2j-1}q_{n-m+2j-1},$$
such that
\begin{equation}\label{eq.estimate-l_i}
    \begin{aligned}
    &\partial A_i\cap\partial\D \text{ is one of }[-q_{n-m+i},0]_c,....,[0,-q_{n-m+2i}]_c;\\
    &\Sigma_{j=1}^{i}q_{n-m+j}\leq l_i\leq \Sigma_{j=1}^{i}a_{n-m+2j-1}q_{n-m+2j-1};\\
    &w_{l_{i}}\in A_i,\,\,F^{l_i}(A_i)= A_0.
\end{aligned}
\end{equation}
Notice that by (\ref{eq.estimate-l_i}) $A_{m+1}$ is a puzzle piece of depth at least $q_{n+1}$, while $Q^n_\pm$ are puzzle pieces of depth $q_{n+1}$ (recall (\ref{eq.Qn-are-puzzlepieces})), hence $A_{m+1}\subset C^{n}$. Since $F^{m_n}:C^{n,m_n}\to C^n$ is the first hit map, we must have $$k_n\leq \Sigma_{j=1}^{m+1}a_{n-m+2j-1}q_{n-m+2j-1}\leq \mathbf{r}_{n+m+2}<q_{n+m+6}.$$
Thus the lemma follows by Lemma \ref{lem.bounded.degree.puzzledisk}.
\end{proof}

\begin{lem}\label{lem.bounded.first.hit}
Suppose that there exists $N\geq0$ such that for all $l\geq 1$, $F^l(w)\not\in P^F_N(c_b)$.
There exists a constant $M(m,\theta,N)$, such that for $n\geq M(m,\theta,N)$, the orbit ${C}^{n-m,m_n-1},...,{C}^{n-m,{k}_n+1}$
does not contain $c_b$. In particular, $F^{m_n}:{C}^{n-m,m_n}\to C^{n-m}$ has uniformly bounded degree $d(m,\theta)$
\end{lem}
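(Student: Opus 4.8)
The plan is to split the orbit of the first-hit map into two pieces at the last time $k_n$ that the orbit touches $\partial\D$, analyze each piece separately, and combine. By Lemma \ref{lem.bounded.degree.alongD}, once $n \geq M(m,\theta,N)$ the map $F^{k_n} : C^{n-m,k_n} \to C^{n-m}$ has uniformly bounded degree $d_1(m,\theta)$, and that lemma already established (in its internal Claim) that $c_b$ does not lie in any of the sets $C^{n-m,k}$ for $0 \le k < k_n$, since $\Tilde C^{n,k}$ meeting $c_b$ would force (via Lemma \ref{lem.bounded.degree.puzzledisk} and the non-recurrence of $w$) a puzzle-piece separation that is impossible once $n$ is large. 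So it remains to handle the ``tail'' piece $C^{n-m,m_n-1}, \ldots, C^{n-m,k_n+1}$, which by maximality of $k_n$ lies entirely in $\C \setminus \overline{\D}$ (more precisely, off $\partial\D$), and to show this tail contains no iterated preimage of $c_b$ either. Once both claims are in place, the first-hit map $F^{m_n} : C^{n-m,m_n} \to C^{n-m}$ is a composition $F^{m_n - k_n} \circ F^{k_n}$ of two maps of uniformly bounded degree, hence has uniformly bounded degree $d(m,\theta)$.

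First I would observe that for indices $k$ with $k_n < k \le m_n$, the set $C^{n-m,k}$ is the connected component of $F^{-(k-k_n)}(C^{n-m,k_n})$ through $F^{m_n - k}(w)$, and it is disjoint from $\partial\D$; by Corollary \ref{cor.decomposition} (in its Blaschke-product version, as used in the proof of Lemma \ref{lem.no-detached-bubble}) each such $C^{n-m,k}$ lies in a single limb $L^F_{x}$ off $\partial\D$, so the only way the orbit could pick up extra degree is by passing through $c_b$ itself, i.e. some $C^{n-m,k}$ containing $c_b$, or equivalently some $C^{n-m,k-1}$ containing $F(c_b)$. Suppose toward a contradiction that $F(c_b) \in C^{n-m,k-1}$ for some $k_n < k \le m_n$. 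Then $F(c_b)$ lies in the puzzle silhouette-type region obtained by pulling $C^{n-m}$ back, and the argument parallels the Claim inside Lemma \ref{lem.bounded.degree.alongD}: by Lemma \ref{lem.bounded.degree.puzzledisk} the first-hit-to-this-stage map has bounded degree, so $F^{m_n - k + 1}(w) = F^{\,\cdot\,}(F(c_b))\text{-close point}$ must be separated from $c_b$ inside $C^{n-m,k-1}$ by at most finitely many nested puzzle pieces of bounded depth; but by the non-recurrence hypothesis $F^l(w) \notin P^F_N(c_b)$ and Lemma \ref{lem.infinitesurround-cb} (the infinitely many nested $P^F_{l_i}(c_b) \Subset P^F_{l_{i-1}}(c_b)$), for $n$ large enough a path from $c_b$ to $F^{m_n-k+1}(w)$ inside $C^{n-m,k-1}$ must cross more puzzle pieces than the bounded-degree count allows — contradiction. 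This gives the constant $M(m,\theta,N)$.

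The main obstacle I expect is bookkeeping the geometry of the tail component $C^{n-m,k-1}$ off the circle: unlike in Lemma \ref{lem.bounded.degree.alongD}, where one exploits the explicit structure $Q^{n-m}_\pm = P^{F,\pm}_{q_{n-m+1}}(1)$ and the dynamical partition $\mathcal T_n, \mathcal T'_n$ of $\partial\D$ and $\partial D'$, here the relevant component sits inside a limb and I must argue that pulling $C^{n-m}$ back along an orbit that stays off $\partial\D$ still behaves like pulling back a puzzle piece — in particular that the components of the preimage of $C^{n-m} \setminus (\partial\D \cup \partial D')$ are each contained in a single puzzle piece of controlled depth, or else inside a bubble of $F$ (where $c_b$ cannot lie, by Lemma \ref{lem.no-detached-bubble} and the non-separability of $b$ from Assumption \ref{assum.well-defined}). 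Handling the case where $w_0 = F^{m_n}(w)$ lands in $Q^{n-m} \subset D'$ rather than in $Q^{n-m}_\pm$ will require the symmetric statement under $\kappa$ and the fact, noted before Corollary \ref{cor.commensurable}, that $1$ is not a cusp of $\partial D'$. Once this is set up, the final degree bound is immediate: $F^{m_n} = F^{m_n - k_n} \circ F^{k_n}$ with each factor of degree bounded by a function of $m$ and $\theta$ alone, so $\deg(F^{m_n}|_{C^{n-m,m_n}}) \le d(m,\theta)$ as claimed.
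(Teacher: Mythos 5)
Your proposal is correct and follows essentially the same route as the paper, which simply says to adapt the internal Claim of Lemma~\ref{lem.bounded.degree.alongD} to show that the tail $C^{n-m,m_n-1},\dots,C^{n-m,k_n+1}$ avoids $c_b$, and then compose with the bounded-degree map $F^{k_n}$ supplied by Lemma~\ref{lem.bounded.degree.alongD}. Two small imprecisions worth tidying: (i)~the internal Claim of Lemma~\ref{lem.bounded.degree.alongD} only rules out $c_b$ from $C^{n-m,k}$ for $k\leq a_{n-m+1}q_{n-m+1}$, not for all $k<k_n$ as you state parenthetically; this is harmless because what the composition argument really needs is only the bounded-degree conclusion of Lemma~\ref{lem.bounded.degree.alongD}, not conformality on that initial segment. (ii)~In the tail contradiction argument, the bounded degree of $F^{k-1}\colon C^{n-m,k-1}\to C^{n-m}$ (needed to bound the number of preimage components before invoking the nested puzzle pieces of Lemma~\ref{lem.infinitesurround-cb}) does not come directly from Lemma~\ref{lem.bounded.degree.puzzledisk} (which governs pullbacks along $\partial\D$ only); rather it follows from Lemma~\ref{lem.bounded.degree.alongD} for the first $k_n$ iterates together with the minimality of the first tail index meeting $c_b$, which makes the intermediate pullback conformal.
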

\begin{proof}
   One can adapt the same argument in the proof for the claim in Lemma \ref{lem.bounded.degree.alongD} to prove that the orbit ${C}^{n-m,m_n-1},...,{C}^{n-m,{k}_n+1}$ does not contain $c_b$. The bounded degree statement then follows from Lemma \ref{lem.bounded.degree.alongD}.
\end{proof}

\subsection{Favorite Nest}
\text{ }\\
We recall the classical Branner-Hubbard-Yoccoz puzzle theory. Let $\textbf{V} = \bigsqcup_i V_i$, $\textbf{U} = \bigsqcup_i U_i$ both be finite union of disjoint Jordan disks, with $\textbf{U}\subset\textbf{V}$.  Let $G:\textbf{U}\to \textbf{V}$ be a holomorphic proper map, such that all its critical points are contained in the filled-in Julia set $K(G):= \bigcap_{n\geq0}G^{-n}(\textbf{V})$. Denote by $C(G)$ (might be empty) the collection of critical points of $F$. A connected component of $G^{-n}(\textbf{V})$ is called a puzzle piece of depth $n$. A puzzle piece of depth $n$ containing $z$ is denoted by $P_n(z)$.

From now on, we add the assumption that $G$ has only one critical point $c$. Let $z\in K(G)$. The {\it tableau} $T_G(z)$ is the two dimensional array $\{P_{n,l}(z)\}_{n,l\geq0}$ with $P_{n,l}(z) = P_n(G^l(z))$. The position $(n,l)$ is called {\it critical} if $c\in P_{n,l}(z)$. The tableau $T_G(z)$ is called {\it critical} if for all $n\geq 0$, there exists $l_0\geq0$ such that $(n,l)$ is critical. It is called {\it periodic} if there exists $k\geq 1$ such that $P_{n,k}(z) = P_{n,0}(z)$ for all $n\geq 0$; otherwise it is called {\it aperiodic}.  We say that $P_{n+k}(c)$ ($k\geq 1$) is a child of $P_n(c)$ if $G^{k-1}:P_{n+k-1}(G(c))\to P_{n}(c)$ is conformal. All children of $P_n(c)$ are naturally ordered by inclusion of sets. Hence if $P_n(c)$ has at least one child, the {\it first child} of $P_n(c)$ is well-defined.

We say that $x$ combinatorially accumulates to $y$, written as $x\to_c y$, if for any $n\geq0$, there exists $m> n$ such that $P_{m}(F^{m-n}(x)) = P_n(y)$. An aperiodic critical tableau $T_G(c)$ (or a critical point $c$) is called {\it recurrent} if $c\to_c c$.\\

\paragraph{\textbf{Construction of $\textbf{U},\textbf{V},G$.}} 
Now we come back to our specific context of cubic Siegel polynomials. Let $f:=f_b$ satisfy Assumption \ref{assum.well-defined}. Define the dynamical graph by (\ref{eq.dyna.graph}). Define $\textbf{V}$ to be $E^\infty_{<r}\setminus I_0$, $\textbf{U} := f^{-1}(\textbf{V})$, $G := f$. Thus the critical point $1/b$ is always on the graph $I_0$ and the puzzle piece $P_n(b)$ is well-defined for all $n\geq 0$.

\begin{lem}\label{lem.compactly.contain}
   For any $z\in K(f)$ satisfying $f^{n}(z)\not\in\overline{R^\bB_0}$ for all $n\geq0$, there exists $n_z\geq1$ such that $P_{n_z}(z)\Subset P_0(z)$.
\end{lem}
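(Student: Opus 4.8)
The plan is to argue by contradiction: suppose that $P_n(z)$ touches $\partial P_0(z)$ for every $n\geq 0$, i.e.\ $\overline{P_n(z)}\cap\partial P_0(z)\neq\varnothing$ for all $n$. Since $\partial P_0(z)\subset I_0 = R^\bB_0\cup R^\bB_{1/2}\cup\overline{\mathbf R^\infty}\cup E^\infty_r$, and the equipotential piece $E^\infty_r$ is crossed transversally by the dynamics (preimages of $E^\infty_r$ are nested deeper equipotentials, so $P_n(z)$ for $n\geq 1$ is bounded away from $E^\infty_r$ unless $z\in B_\infty(f)$, which is excluded since $z\in K(f)$), the accumulation must occur along $R^\bB_0$, $R^\bB_{1/2}$, or one of the finitely many external rays in $\mathbf R^\infty$ — hence along the closure of the grand orbit of $\partial D(f)$ together with the landing points $\alpha$ of $R^\bB_0$ and $\alpha'$ of $R^\bB_{1/2}$. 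By \corref{cor.decomposition}, $\overline{P_n(z)}\cap\partial P_0(z)$ accumulates either on a point of $\bigcup_{k\geq 0}f^{-k}(\partial D(f))$, or on an iterated preimage of $\alpha$ or $\alpha'$ (which are, by \lemref{lem.loc.alpha} and its analogue for $R^\bB_{1/2}$, the trivial wake impressions).

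First I would rule out accumulation on the grand orbit of $\partial D(f)$. If infinitely many $\overline{P_n(z)}$ met $\partial P_0(z)$ in a neighborhood of a point $\zeta\in f^{-k}(\partial D(f))$, then passing to a Hausdorff limit we would get a point $\zeta_\infty\in f^{-k}(\partial D(f))$ with $\bigcap_n \overline{P_n(z)}$ containing $\zeta_\infty$; but pushing forward by $f^k$ and applying \thmref{thm.jonguk} (the shrinking of puzzle pieces at $\partial D(f)$), the nested pieces $\overline{P_{n}(f^k(z))}$ shrink to $f^k(z)$ if $f^k(z)\in\partial D(f)$, or — if $f^k(z)$ lies off $\partial D(f)$ but $\zeta_\infty\in\partial D(f)$ is a limit of boundary touchings — the diameters of the arcs of $\partial D(f)$ cut out shrink to zero, forcing $P_n(z)$ to separate from $\partial D(f)$ for large $n$, a contradiction. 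The point here is that the a priori bounds of \thmref{thm.jonguk} give genuine shrinking near $\partial D(f)$, which is exactly why that theorem is quoted. Second, I would rule out accumulation on (an iterated preimage of) $\alpha$ or $\alpha'$: since $f$ is non-separable, by \lemref{lem.loc.alpha} the wake impression $\Imp_W(R^\bB_0)=\{\alpha\}$ is trivial, so the external ray landing at $\alpha$ is unique (and similarly at $\alpha'$, $1/2$ being a fixed bubble ray angle). Thus a neighborhood of $\alpha$ in $I_0$ is a simple arc, and the nested pieces $\overline{P_n(z)}$ abutting $\partial P_0(z)$ near $\alpha$ would have $\bigcap_n\overline{P_n(z)}\ni \alpha$; but since $\alpha$ is uni-accessible (only one external ray, and $R^\bB_0$ meets it) and $z\neq\alpha$ because $f^n(z)\notin\overline{R^\bB_0}$, this is impossible — removing a small puzzle piece around $\alpha$ disconnects $z$ from $\partial P_0(z)$ at level $n$. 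The same argument handles preimages of $\alpha$ and $\alpha'$ using the forward orbit condition $f^n(z)\notin\overline{R^\bB_0}$ together with $z\in K(f)$.

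Combining the two cases, for $n$ large enough $\overline{P_n(z)}$ is disjoint from $\partial P_0(z)$, i.e.\ $P_n(z)\Subset P_0(z)$; take $n_z$ to be any such $n$. The main obstacle I anticipate is the first case — making precise that accumulation of the boundary-touchings on $\bigcup_k f^{-k}(\partial D(f))$ is genuinely obstructed. The subtlety is that $z$ itself need not lie on $\partial D(f)$ or even in its grand orbit; the orbit of $z$ merely accumulates there. One must therefore run a compactness/normal-families argument on the sequence of pieces together with \thmref{thm.jonguk}, tracking how an arc of $\partial D(f)$ (or of one of its finitely many iterated preimages appearing in $\overline{P_0(z)}$) that separates $z$ from the rest of $P_0(z)$ must have diameter bounded below if the touching persists — contradicting the shrinking. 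I would phrase this cleanly by choosing, for each large $n$, a crosscut of $P_0(z)$ inside $\overline{P_n(z)}\cap\partial P_0(z)$, extracting a limit crosscut landing on $\partial D(f)$'s grand orbit, and deriving the contradiction from the a priori bounds. The external-ray part of $\partial P_0(z)$ contributes nothing new because those rays are covered by the non-separability hypothesis (trivial wake arcs, hence unique landing), which is precisely \lemref{lem.loc.alpha}.
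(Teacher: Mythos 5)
Your route is by contradiction (assume $\overline{P_n(z)}\cap\partial P_0(z)\neq\varnothing$ for all $n$, take $\zeta\in\bigcap_n\overline{P_n(z)}\cap\partial P_0(z)\cap K(f)$, and rule out the two possible locations of $\zeta$), whereas the paper's proof is a direct construction: Corollary~\ref{cor.decomposition} places $z$ in a nested pair of wakes $W_x\supset W_y$, and then two bubble rays $R^\bB_{\tau_1},R^\bB_{\tau_2}\subset I_{n_z}$ rooted at joints on $\partial B_x$ flanking $y$, together with their co-landing external rays, cut out an explicit region $S$ with $P_{n_z}(z)\subset S\Subset P_0(z)$; the edge case $n_x\in\{0,1\}$ is reduced to the generic case by using Lemma~\ref{lem.loc.alpha} to push $z$ into a side limb at finite depth. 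Your Case~A (the touching point lies in $f^{-k}(\partial D(f))$) is logically sound but uses much heavier machinery: you invoke the full shrinking statement of Theorem~\ref{thm.jonguk} -- push forward by $f^k$, use $\bigcap_n\overline{P_n(f^k(\zeta))}=\{f^k(\zeta)\}$, and note $f^k(z)\neq f^k(\zeta)$ because $f^k(z)\notin\partial D(f)\subset\overline{R^\bB_0}$ by hypothesis -- where the paper gets by with only the landing-structure consequences (Corollaries~\ref{cor.landing.two.external}, \ref{cor.decomposition}) plus an explicit separating configuration.

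The genuine gap is in your Case~B (touching at $\alpha$ or $\alpha'$). You correctly cite Lemma~\ref{lem.loc.alpha} (so $\Imp_W(R^\bB_0)=\{\alpha\}$) and uni-accessibility of $\alpha$, but the concluding sentence -- ``removing a small puzzle piece around $\alpha$ disconnects $z$ from $\partial P_0(z)$ at level $n$'' -- is not an argument; nothing you have said shows that the graph $I_n$ eventually separates $z$ from $\alpha$. What is actually needed is the following chain: since $z\in K(f)$, $z\neq\alpha$, and $\Imp_W(R^\bB_0)=\{\alpha\}$, the nested wakes $W_{x_i}$ at the roots of the bubbles $B_i\subset R^\bB_0$ cannot all contain $z$, so by Corollary~\ref{cor.decomposition} there is a finite $N$ and a joint $y'\in\partial B_N$ (with the attached bubble $B'\not\subset R^\bB_0$) such that $z\in W_{y'}$. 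One must then \emph{produce}, inside $I_n$ for $n$ large enough, dyadic bubble rays rooted on $\partial B'$ (or $\partial B_N$) together with their co-landing external rays, whose union encloses $z$ and excludes $\alpha$. This is precisely the paper's Case~1 construction applied one level down, which the paper invokes explicitly in its Case~2. Your proposal reaches the ``$z$ falls off the fixed bubble ray'' observation but never builds the separating graph edges, so the deduction $\alpha\notin\overline{P_n(z)}$ for large $n$ is left unjustified.
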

\begin{proof}
   By Corollary \ref{cor.decomposition}, let $W_x$ with $x\in\partial D(f)$ be the wake containing $z$. Thus there exists $n_x\geq0$ such that $[x] = (e^{-2\pi in_x\theta};0)$.  Let $B_x$ be the bubble with root $x$. Again by Corollary \ref{cor.decomposition}, let $W_y$ with $y\in\partial B_x$ be the wake containing $z$. Consider two cases respectively:
    \begin{enumerate}
        \item $n_x\not = 0,1$. Take two joints $z_1,z_2\in\partial B_x$ such that $y$ is contained in the open interval $(z_1,z_2)\subset\partial B_x$ not containing $x$. Notice that there exists $n_z\geq 1$ such that there are bubble rays $R^{\bB}_{\tau_1},R^{\bB}_{\tau_2}\subset I_{n_z}$ containing $z_1,z_2$ respectively. Let $R^\infty_{t_1},R^\infty_{t_2}$ be the external ray colanding with $R^{\bB}_{\tau_1},R^{\bB}_{\tau_2}$ respectively, $S$ be the connected component of $\C\setminus (R^{\bB}_{\tau_1}\cup R^{\bB}_{\tau_2}\cup\overline{R^\infty_{t_1}\cup R^\infty_{t_2}})$ such that $z\in S$. Thus by construction $P_{n_z}(z)\subset S\Subset P_0(z)$. 
        \item $n_x = 0$ or $1$. We only prove for $n_x=0$ and it will the the same for $n_x=1$. By Lemma \ref{lem.loc.alpha}, $z$ does not belong to the infinite nested sequence of wake associated to $R^\bB_0(f)$. Thus by Corollary \ref{cor.decomposition}, there exists a bubble $B\subset R^\bB_0(f)$, a joint $y'\in\partial B$ and a bubble $B'$ attached at $y'$, such that $B'\not\subset R^\bB_0(f)$, and $z\in W_{y'}$. We can then repeat the argument in Case 1.
    \end{enumerate}
\end{proof}

\begin{lem}
    If the tableau $T_{f}(b)$ is periodic, then $f$ is separable.
\end{lem}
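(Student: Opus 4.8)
The plan is to turn periodicity of $T_f(b)$ into a quadratic-like renormalization of $f$, and then to recognize that renormalization through a periodic bubble ray. Write $p\ge 1$ for the period, so that $P_n(f^p(b))=P_n(b)$ for every depth $n\ge 0$. Running the Branner--Hubbard--Yoccoz argument in the periodic case --- the thickening construction used in the proof of \propref{prop.separate} (cf.\ \cite{Mi}), with \lemref{lem.compactly.contain} supplying the room to thicken the graph around a critical puzzle piece $P_m(b)$ --- one obtains a quadratic-like map $g=f^p|_{U'}\colon U'\to U$, of degree $2$ by minimality of $p$. Since $b\in\bigcap_{k\ge 0}g^{-k}(U)=K(g)$, the set $K(g)$ is connected, so $f$ is renormalizable. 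If $K(g)=\{b\}$ then $f^p(b)=b$, so $b$ is a superattracting periodic point, $f$ is $J$-stable with connected Julia set, and $b\in\Int\mathcal C(\theta)$ against Assumption~\ref{assum.well-defined}; hence $K(g)$ is a non-degenerate continuum.

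Next I would locate the periodic bubble ray through the $\beta$-fixed point $x:=\beta(g)$, a repelling periodic point of $f$ lying on $\partial K(g)\subset J(f)$. The quadratic-like construction (again as in \propref{prop.separate} and \cite{Mi}) exhibits exactly two external rays of $f$ landing at $x$, bounding a dynamical wake that contains $b$; in particular $x$ is bi-accessible. Being repelling periodic and in $J(f)$, $x$ is neither a joint, nor a root point, nor an interior point of a bubble, so $x\notin T(f)$. Also $co_b\notin T(f)$: if it were, then via \thmref{cubic tree model}, the conjugacy $\psi_b$, and backward invariance of $T(\q)$ we would get $b\in T(f)$, contradicting that $b$ avoids the whole graph $I_\infty\supseteq T(f)$ under Assumption~\ref{assum.well-defined} (here $b\in\partial\mathcal C(\theta)$ is used). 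Hence \propref{prop.wake-impression} applies and produces an infinite bubble ray $\bR$ with $x\in\Imp_W(\bR)$.

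It remains to pin down $\bR$. Using that $f$ carries the wake at each root of $\bR$ onto the wake at its image, one gets $f(\Imp_W(\bR))\subseteq\Imp_W(f(\bR))$, so $x$ lies in $\Imp_W(f^{jq}(\bR))$ for every $j\ge 0$, where $q$ is the period of $x$. If $\bR$ were irrational, the rays $f^{jq}(\bR)$ would be pairwise distinct, and by \propref{prop.irrational wake trivial} their wake arcs would eventually be trivial, hence their wake impressions would collapse to single points --- impossible, since $x$ lies in all of them. Thus $\bR$ is (pre-)periodic. If its wake arc were trivial, $\Imp_W(\bR)$ would be the single, uni-accessible landing point of $\bR$, contradicting bi-accessibility of $x$. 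Therefore $\bR$ is (pre-)periodic with non-trivial wake arc; replacing it by a forward iterate (still a bubble ray, still of non-trivial wake arc, periodic after discarding the preperiod) gives a periodic bubble ray for $f$ with non-trivial wake arc. Hence $f$ is separable.

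The step I expect to be the main obstacle is the claim that a bubble ray with trivial wake arc has a one-point wake impression --- used both for the irrational case and the trivial-wake-arc case above. Unlike for $\q$, where $K(\q)$ decomposes cleanly into $T(\q)$ and the landing points of infinite bubble rays (\corref{cor.landing.quadratic}), this is not a formal consequence of local connectivity for $f$, and must be extracted, as in the proof of \lemref{lem.loc.alpha}, from the fact that only one external ray can accumulate on such an impression. A secondary point is making the renormalization honest in the present combinatorial setting (thickening a graph of bubble rays, the degree-$2$ count, and producing the two external rays at $x$), but this follows the template already in place for \propref{prop.separate}.
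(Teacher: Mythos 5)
Your proposal takes a genuinely different and considerably longer route than the paper's, and it contains two gaps that the paper's argument sidesteps entirely.

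The paper's proof is a short symbolic-dynamics argument. It encodes $b$ by its two-symbol itinerary with respect to the level-0 graph $I_0$ (there are exactly two level-$0$ puzzle pieces $P_+,P_-$), observes that a periodic critical tableau forces this itinerary to be periodic, and then supposes for contradiction that $f$ is non-separable, so that Proposition~\ref{nonsep} places $b$ in the wake impression of a bubble ray $\bR$ which is either strictly pre-periodic or irrational. In the first case the impression is the strictly pre-periodic landing point and the itinerary of $b$ is strictly pre-periodic; in the second the itinerary of $b$ coincides with that of the accumulation set of $\bR$, which is aperiodic. Both contradict periodicity, and no renormalization is ever constructed.

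Your route — thickening the puzzle at $P_n(b)$ to obtain $g=f^p|_{U'}$, locating $\beta(g)$, and then applying Proposition~\ref{prop.wake-impression} to $\beta(g)$ rather than to $b$ — is a sensible idea, but two of your steps are genuine gaps rather than minor bookkeeping. First, the bi-accessibility of $\beta(g)$: you cite Proposition~\ref{prop.separate}, but that proposition goes in the opposite direction (from a periodic bubble ray with non-trivial wake arc \emph{to} a quadratic-like map); it says nothing about the accesses of the $\beta$-point of a renormalization produced by thickening puzzle pieces whose boundaries are made of bubble rays, and you would need to argue separately that $\beta(g)$ is the co-landing point of exactly two external rays of $f$. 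Second — as you flag yourself — ``trivial wake arc implies singleton impression'' is not available for irrational bubble rays; Lemma~\ref{lem.loc.alpha} is proved only for $R^\bB_0$ and its proof exploits the specific position of $\alpha$. Without it, your chain ``wake arcs trivial $\Rightarrow$ impressions are singletons $\Rightarrow$ infinitely many distinct bubble rays land at $x$ $\Rightarrow$ contradiction'' fails at the first arrow (and even the last arrow needs a remark about a repelling periodic point landing only finitely many rays). The paper's itinerary argument needs neither fact, which is why it is so much shorter; if you want to keep your route you would have to establish both independently, whereas the itinerary argument contradicts periodicity directly at the symbolic level.
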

\begin{proof}
Let $P_+,P_-$ be the two level zero puzzle pieces with respect to the graph $I_0(f)$ (\ref{eq.dyna.graph}). For a point $z\in J(f)$ such that $f^n(z)\not\in I_0(f)$ for all $n\geq 0$, we define its itinerary with respect to $I_0(f)$ by assigning the binary expansion $\epsilon_0\epsilon_1...$ such that $\epsilon_i = 0$ if $f^i(z)\in P_-$ and $\epsilon_i = 1$ if $f^i(z)\in P_+$. Thus $f$ acts on the itinerary as a left shift. From the definition of periodic tableau, the itinerary of $b$ is periodic under the action of $f$. Suppose the contrary that $f$ is not separable. Then by Proposition \ref{nonsep}, $b$ is contained in some wake impression $\mathrm{Imp}_W(R^\bB)$ where $R^\bB$ is either a strictly pre-periodic bubble ray or an irrational bubble ray. In the first case, $\mathrm{Imp}_W(R^\bB)$ is equal to $\{b\}$, and thus the itineray of $b$ is strictly pre-periodic. In the second case, by Proposition \ref{prop.irrational wake trivial}, the wake arc $\mathrm{Arc}_W(R^\bB)$ is trivial. Hence the itineray of $b$ is the same to that of the accumulation set of $R^\bB$, which is not periodic. Thus in both cases we get a contradiction. This finishes the proof.
\end{proof}

\begin{lem}\label{lem.depth.difference}
    Suppose $T_f(b)$ is recurrent (in particular not periodic). Let $P_{n_j}(b)$ be any sequence of puzzles pieces with $n_j\to+\infty$ as $j\to+\infty$. Let $P_{m_j}(b)$ be the first child of $P_{n_j}(b)$. Then $m_j-n_j\to+\infty$. 
\end{lem}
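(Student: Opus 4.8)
The plan is to argue by contradiction, following the classical Yoccoz/Branner--Hubbard machinery as adapted in \cite{AvKaLySh} and \cite{KaLy}. Suppose the conclusion fails: then, passing to a subsequence, there exists a fixed integer $k\ge 1$ such that the first child $P_{m_j}(b)$ of $P_{n_j}(b)$ satisfies $m_j - n_j = k$ for all $j$. By the definition of ``first child'', this means that the pullback
$$
f^{k-1}: P_{n_j+k-1}(f(b)) \longrightarrow P_{n_j}(b)
$$
is conformal, while no earlier deep pullback produces a univalent preimage of $P_{n_j}(b)$ containing $b$; equivalently, the map $f^{k}: P_{n_j+k}(b)\to P_{n_j}(b)$ is a proper map of degree $2$ (the free critical point $b$ is picked up exactly once, at the first step), and $k$ is the least such period.

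First I would translate the fixed value $k$ into a combinatorial statement about the tableau $T_f(b)$. That $f^{k}:P_{n+k}(b)\to P_n(b)$ is degree $2$ for a cofinal sequence of depths $n=n_j$ means that the critical positions in the $b$-column of $T_f(b)$ become ``stably $k$-spaced'' near the top of those rows; combined with recurrence (so that every row $n$ contains a critical position at some level), a standard pigeonhole/persistence argument (identical to the analysis of periodic tableaux in the classical theory) forces the tableau to eventually be \emph{periodic of period $k$}: for all $n$ large and all levels $l$, $P_{n,l+k}(b)=P_{n,l}(b)$. But $T_f(b)$ is recurrent, hence aperiodic by hypothesis --- contradiction. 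The point where I expect to spend real effort is making this ``stably $k$-spaced $\Rightarrow$ periodic'' step airtight: in our setting the tableau rules are \emph{not} available for the puzzle disks $D^n$ (as emphasized in \S\ref{subsec.modified-puzzle-disks}), so one must run this argument purely with the polynomial puzzle pieces $P_n(b)$ of the Branner--Hubbard--Yoccoz family $G=f$ on $\mathbf V = E^\infty_{<r}\setminus I_0$, where the usual tableau rules \emph{do} hold. This is legitimate because the statement of the lemma is entirely about these $P_n(b)$'s, so the puzzle-disk subtleties do not enter here.

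Alternatively --- and this is probably the cleanest route to write up --- I would use the renormalization dichotomy directly. If $m_j-n_j=k$ along a subsequence, then for those $j$ one has a degree-two proper map $g_j := f^{k}: P_{n_j+k}(b)\to P_{n_j}(b)$, and since $P_{n_j+k}(b)\subset P_{n_j+1}(b)\Subset P_{n_j}(b)$ for $j$ large (by \lemref{lem.compactly.contain}, applicable since $b$ satisfies \ref{assum.well-defined}), $g_j$ is a bona fide quadratic-like map with connected filled Julia set. Its filled Julia set is a nondegenerate continuum through $b$, invariant under $f^{k}$ and compactly contained in $P_{n_j}(b)$; this exhibits $f$ as renormalizable in the sense of \thmref{defn.ren} (after passing to the standard quadratic-like restriction), hence $f$ is separable by the tableau-periodicity lemma proved just above (the one asserting ``$T_f(b)$ periodic $\Rightarrow$ $f$ separable'', whose proof in fact produces exactly such a quadratic-like map). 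This contradicts Assumption \ref{assum.well-defined}(i). The main obstacle in either approach is the same: verifying that a \emph{fixed} spacing $k$ of the first child over a cofinal set of depths genuinely forces $g_j$ to be quadratic-like (equivalently, forces eventual periodicity of the tableau) rather than merely giving degree-two maps between shrinking pieces with no control; here one invokes recurrence of $b$ to guarantee the critical positions keep returning, \lemref{lem.compactly.contain} to get the compact containment $P_{n_j+k}(b)\Subset P_{n_j}(b)$, and the a priori bounds of \propref{prop.puzzle.disk} / \lemref{lem.Cn-Dn} only insofar as they are needed to see the pieces actually shrink. Once quadratic-likeness is in hand, separability and the contradiction with non-separability are immediate.
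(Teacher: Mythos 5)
Your first approach is exactly what the paper does, and you have correctly identified the target claim (``fixed spacing of the first child forces $T_f(b)$ to be periodic''). However, the step you flag as the one likely to take ``real effort'' is in fact a one-line observation, not a pigeonhole/persistence argument. Since $P_{m_j}(b)$ is a child of $P_{n_j}(b)$ with $m_j-n_j=k$, we have $f^{k}(b)\in P_{n_j}(b)$, i.e.\ $P_{n_j}(f^{k}(b))=P_{n_j}(b)$, so the tableau position $(n_j,k)$ is critical for every $j$. As $n_j\to\infty$, the nesting of puzzle pieces immediately gives $P_n(f^{k}(b))=P_n(b)$ for every $n\geq 0$, which is precisely the definition of $T_f(b)$ being periodic of period $k$. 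Recurrence is only needed to ensure children exist in the first place; it plays no role in the ``fixed spacing $\Rightarrow$ periodicity'' deduction. You are right that none of the puzzle-disk subtleties from \S\ref{subsec.modified-puzzle-disks} enter here, since the statement lives entirely in the Branner--Hubbard--Yoccoz puzzle $P_n(b)$.

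Your alternative route via renormalization has a genuine gap: you assert $P_{n_j+1}(b)\Subset P_{n_j}(b)$ ``by \lemref{lem.compactly.contain}'', but that lemma only produces a single depth $L$ with $P_L(b)\Subset P_0(b)$; it says nothing about consecutive puzzle pieces, which in this family typically share boundary arcs. To make $f^{k}\colon P_{n_j+k}(b)\to P_{n_j}(b)$ into an honest quadratic-like restriction you would need $k\geq L$ (and then a pullback of the initial compact containment along the critical orbit), which is not available when the whole point is to assume $k$ small. So the tableau argument is both the correct and the simplest route, and the renormalization detour should be dropped.
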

\begin{proof}
    Suppose the contrary. Up to taking a subsequence, we may assume that $m_j-n_j$ is constant. Notice that $P_{m_j}(f^{m_j-n_j}(b))$ is critical for all $j$, hence $T_f(b)$ is periodic.
\end{proof}

Next we recall the second key ingredient in our proof: the {\it favorite nest}.

\begin{defn}[\cite{KaLy}]\label{def.GPL.map}
    A {\it generalized polynomial-like map}
(GPL map) is a holomorphic map $$g:\bigcup_i W_i\to V,$$ 
where $V \subset \C$ is a topological disk
and $W_i \Subset V$ are topological disks with disjoint closures such that the restrictions
$g: W_i \to V$ are branched coverings, and moreover, all but finitely many of them
have degree one. A GPL map $g$ is called {\it unicritical} if it has a single critical point. 
\end{defn}

By Lemma \ref{lem.compactly.contain}, there exists $L>0$ such that $P_{L}(b)\Subset P_0(b)$. Now suppose $T_f(b)$ is recurrent. Then there exists a sequence of pairs of nested puzzle pieces $P_{n_i+L}(b)\Subset P_{n_i}(b)$ with $n_i\to+\infty$. By Lemma \ref{lem.depth.difference}, we may suppose that $n_0\geq L$ is large enough so that the first child (denoted by $P_{m_0}(b)$) of $P_{n_0}(b)$ satisfies $m_0-n_0\geq L$. Hence $P_{m_0}(b)\Subset P_{n_0}(b)$. Consider the first return map
$$R_{V}:\mathrm{Dom}\,R_V\to V,\,\, V := {P_{m_0}(b)},$$
where $\mathrm{Dom}\,R_V := \{z\in V;\,\exists n\geq 1\text{ such that }f^n(z)\in V\}$ and $R_V(z) := f^k(z)$, where $k\geq 1$ is the smallest integer such that $f^k(z)\in V$. 

Let $z\in\mathrm{Dom}\,R_V$ and $W\subset \mathrm{Dom}\,R_V$ be the connected component containing $z$. 
\begin{itemize}
    \item We first claim that $W\Subset V$. Let $k$ be the first return moment of $z$. Since $P_{m_0}(b)$ is the first child of $P_{n_0}(b)$, we must have $k\geq m_0-n_0$. If we pull back $P_{n_0}(b)$ along the orbit of $z$, we will get $P_{n_0+k}(z)$, which compactly contains $W=P_{m_0+k}(z)$. Notice that $P_{n_0+k}(z)\subset P_{m_0}(b)$ since $n_0+k\geq m_0$. Thus $W\Subset P_{m_0}(b)$.
    
    \item Next we claim that $\overline{W}\cap \overline{W'} = \emptyset$ for any two different components $W,W'$ containing $x$ and $x'$ respectively. Suppose the contrary. Take any $w_0\in\overline{W}\cap \overline{W'}$. Let $k,k'$ be the first return moment of $x,x'$ respectively. Notice that $k\not = k'$, otherwise $\partial W$ and $\partial W'$ will be sent to $\partial V$ by $f^k$, and hence $w_0$ is a critical point of $f^k$, however the graph $I_\infty$ (recall (\ref{eq.dyna.graph})) contains no critical point of $f$. We may thus assume that $k>k'$. Let $z_0 = f^{k'}(w_0)$. Then $z_0,f^{k-k'}(z_0)\in\partial V$. On the other hand, $f^{i}(\partial V)\cap \partial V = \emptyset$ for $0<i<m_0-n_0$, since $V$ is the first child of $P_{n_0}(b)$ and $V\Subset P_{n_0}(b)$. By invariant property of the graph $I_n$, $f^i(\partial V)\cap\partial V = \emptyset$ for all $i\geq m_0-n_0$. Thus $f^{k-k'}(z_0)\not\in\partial V$, a contradiction.
\end{itemize} 
The above reasoning confirms that $R_{V}:\mathrm{Dom}\,R_V\to V$ is a GPL map. For a unicritical GPL map whose critical tableau is recurrent, Kahn and Lyubich construct the {\it Modified principal nest} \cite{KaLy} around the critical point
\begin{equation}\label{eq.modifiedprincipal}
    V=:V^0\Supset W^0\Supset V^1\Supset W^1\Supset...\ni b,
\end{equation}
where $V^i,W^i$ are carefully chosen puzzle pieces containing $b$. They prove the complex a priori bound of this nest and show the local connectivity of the Julia set \cite[Thm. B]{KaLy}. In \cite{AvKaLySh}, the modified principal nest (\ref{eq.modifiedprincipal}) is slightly adjusted to the {\it favorite nest}: each $W^i$ is the first child of $V^i$ and $V^{i+1}$ is a carefully chosen child of $V^i$ (called the favorite child, see \cite[\S 2]{AvKaLySh}). In particular, they are also puzzle pieces containing $b$. For the favorite nest, one also has the a priori bound:

\begin{prop}[{\cite[Prop. 2.5]{AvKaLySh}}]\label{prop.apriori.bound}
    There exists a uniform constant $\mu>0$ such that $\mathrm{mod}(V^i\setminus \overline{W^i})>\mu$.
\end{prop}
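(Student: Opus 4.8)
The plan is to deduce the statement directly from the a priori bound for the favorite nest proved by Avila--Kahn--Lyubich--Shen \cite[Prop. 2.5]{AvKaLySh} (which in turn rests on \cite[Thm. B]{KaLy}), after checking that our situation meets the hypotheses of that result. Recall that \cite[Prop. 2.5]{AvKaLySh} applies to the favorite nest $V^0\Supset W^0\Supset V^1\Supset\cdots$ built, as in \cite[\S 2]{AvKaLySh}, from a unicritical GPL map whose critical tableau is recurrent. We have already exhibited such a GPL map, namely the first return map $R_V:\mathrm{Dom}\,R_V\to V$ to $V=P_{m_0}(b)$, and verified that all but finitely many of its restrictions are conformal. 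So the first thing I would record is that $T_{R_V}(b)$ is recurrent: since the favorite nest is constructed under the standing hypothesis that $T_f(b)$ is recurrent, and a forward orbit returns arbitrarily close to $b$ under $f$ if and only if it does so under $R_V$, recurrence of $T_f(b)$ immediately gives recurrence of $T_{R_V}(b)$.

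Next I would check that the nest pieces are genuinely compactly nested, so that the annuli $V^i\setminus\overline{W^i}$ are non-degenerate. The initial inclusion $W^0=V=P_{m_0}(b)\Subset P_{n_0}(b)=V^0$ holds by construction, since $P_{m_0}(b)$ is the first child of $P_{n_0}(b)$ with $m_0-n_0\ge L$, using Lemma \ref{lem.compactly.contain} together with Lemma \ref{lem.depth.difference}. For the later inclusions $W^i\Subset V^i$ (where $W^i$ is the first child of $V^i$) and $V^{i+1}\Subset V^i$ (where $V^{i+1}$ is a favorite child of $V^i$), I would pull $P_{n_0}(b)$ back along the orbit defining the child in question; the pullback argument in the proof of Lemma \ref{lem.compactly.contain} produces a puzzle piece that compactly contains the child, and this piece is contained in $V^i$ because its depth exceeds that of $V^i$. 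With recurrence and compact nesting in hand, the construction of the favorite nest and all of its estimates --- the Quasi-Additivity Law and the Covering Lemma of \cite{KaLy} --- transfer verbatim, and the desired uniform lower bound $\mu$ follows.

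The step I expect to be the main obstacle is making sure that the infinitely many domains of $R_V$, which accumulate onto the grand orbit of $\partial D(f)$, cause no harm. My point would be that the GPL formalism of \cite{KaLy} was designed exactly to accommodate this: the a priori bound of \cite[Prop. 2.5]{AvKaLySh} depends only on the finitely many critical first-return times appearing in the nest and on the modulus of the initial annulus, not on the total number of domains of $R_V$ nor on the geometry of $J(f)$ near $\partial D(f)$. Hence no Siegel-specific input enters at this stage --- the interaction of the free critical orbit with the Siegel boundary is controlled separately, through Lemma \ref{lem.bounded.degree.alongD} and the puzzle disks of \S \ref{subsec.puzzle-disks} --- and the proposition is established by citation once the two routine verifications above are carried out.
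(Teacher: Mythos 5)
Your proposal takes the same approach as the paper, which presents the proposition as a direct citation of Avila--Kahn--Lyubich--Shen with the paragraphs immediately preceding it verifying that $R_V$ is a unicritical GPL map whose critical tableau is recurrent; your additional checks of compact nesting and of the harmlessness of infinitely many return domains are exactly the implicit hypotheses being relied upon. One labeling slip: the paper sets $V^0:=V=P_{m_0}(b)$ with $W^0$ the first child of $V^0$, whereas you write $V^0=P_{n_0}(b)$ and $W^0=V=P_{m_0}(b)$; this off-by-one does not affect the substance, since the initial modulus bound still comes from the compact inclusion $P_{m_0}(b)\Subset P_{n_0}(b)$ supplied by Lemmas \ref{lem.compactly.contain} and \ref{lem.depth.difference}.
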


We will apply the following theorem later in the proof of the rigidity in the recurrent case \S \ref{subsec.recurrentcase}:

\begin{thm}[{\cite[Thm. 3.1]{AvKaLySh}}]\label{thm.teichmuller}
    Let $f,\Tilde{f}$ satisfy Assumption \ref{assum.well-defined}. Suppose both $T_{{f}}(b),T_{\Tilde{f}}(\Tilde{b})$ are recurrent. Let $(V^i,W^i)$, $(\tilde{V}^i,\Tilde{W}^i)$ be their corresponding favorite nest. Suppose there exists a homeomorphism $h:\C\to\C$ such that $h(V^i) = \Tilde{V}^i$ for $0\leq i\leq m$ and $h\circ f(z) = \Tilde{f}\circ h(z)$ for $z\not\in V^m$. Assume furthermore that
    \begin{enumerate}
        \item[$(\mathrm{1}).$] there exists $\delta>0$ such that $\mathrm{mod}(V^i\setminus \overline{W^i})>\delta$, $\mathrm{mod}(\tilde{V}^i\setminus \overline{\Tilde{W}^i})>\delta$ for $0\leq i\leq m-1$;
        \item[$(\mathrm{2}).$] $h|_{\partial V^0}$ extends to a $K$-qc map $(V^0,b)\to (\Tilde{V}^0,\Tilde{b})$.
    \end{enumerate}
    Then $h|_{\partial V^m}$ extends to a $K'$-qc map $(V^m,b)\to (\Tilde{V}^m,\Tilde{b})$ with $K' = K'(\delta,K)$.
\end{thm}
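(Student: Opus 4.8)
The statement is quoted verbatim from \cite[Thm.\ 3.1]{AvKaLySh}, so the plan is to reproduce their argument and check that it transfers to our setting, where the only structural novelty is the presence of a persistent bounded-type Siegel disk. The strategy is a pull-back/spreading argument run along the favorite nest $V^0\Supset W^0\Supset V^1\Supset\cdots\Supset V^m$. One begins with the $K$-qc extension $H_0:(V^0,b)\to(\tilde V^0,\tilde b)$ supplied by hypothesis~(2), whose boundary values agree with $h$ and which is compatible with $h$ on $\partial V^0$. Passing from $V^i$ to $V^{i+1}$, one writes the first return map $R_{V^i}:\mathrm{Dom}\,R_{V^i}\to V^i$ as a unicritical generalized polynomial-like map (as established just before Proposition \ref{prop.apriori.bound}), and recalls that $W^i$ is its first child while $V^{i+1}$ is a child of $V^i$; thus $f^{p_i}:V^{i+1}\to V^i$ and $f^{p_i-1}:f(V^{i+1})\to W^i$ (or the analogous picture for the favorite child) are the relevant branched covers. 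Lifting $H_i$ through these maps on the non-critical pieces and interpolating a fresh qc map on the central piece $W^i$ using the a priori bound $\mathrm{mod}(V^i\setminus\overline{W^i})>\delta$ produces $H_{i+1}:(V^{i+1},b)\to(\tilde V^{i+1},\tilde b)$. The key point is that the dilatation of $H_{i+1}$ does not blow up: outside the central domain the lift is as good as $H_i$ because $f,\tilde f$ are conformal, and on the central annulus $V^i\setminus\overline{W^i}$ one invokes the standard fact that a qc map between annuli of modulus $\geq\delta$, with a controlled qc map prescribed on the outer boundary, extends inward with dilatation bounded in terms of $\delta$ and the previous bound (this is the Teichm\"uller-space contraction estimate of \cite{AvKaLySh}, or equivalently a Gr\"otzsch-type argument). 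Iterating $m$ times gives $H_m$ with $K'=K'(\delta,K)$, and since $h\circ f=\tilde f\circ h$ outside $V^m$, the map $H_m$ agrees with $h$ on $\partial V^m$, which is the desired conclusion.

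The one genuinely new issue, compared to the unicritical setting of \cite{AvKaLySh}, is that the GPL structure $R_{V^i}$ is built from the puzzle pieces associated to the graph $I_0$ of \eqref{eq.dyna.graph}, which contains bubble rays accumulating on $\partial D(f)$ rather than just external rays; one must make sure that the domains $V^i,W^i$ and their return pull-backs remain genuine Jordan disks with the puzzle pieces nested the way the argument requires. Here the point is that the critical point $1/b$ lies permanently on the graph and is trapped in the Siegel boundary, so it never interferes with the pull-backs near $b$; moreover the relevant pieces $V^i$ compactly contain $b$ and, by Lemma \ref{lem.compactly.contain}, stay uniformly away from $\partial D(f)$ along the finitely many iterations $0,\dots,p_i$ involved in each step. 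Thus all the branched covers appearing in one step of the nest have degree bounded independently of $i$ (in fact degree one off the critical piece and degree two on it), exactly as in \cite{AvKaLySh}, and no Siegel-disk geometry enters the estimate; the a priori bound needed is precisely Proposition \ref{prop.apriori.bound}, supplied to us as hypothesis~(1).

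I would organize the write-up as: (i) recall the structure of one step of the favorite nest and identify the branched covers $f^{p_i}:V^{i+1}\to V^i$ and $f^{p_i-1}:f(V^{i+1})\to W^i$; (ii) state the one-step lemma---given a $K_i$-qc $H_i$ on $V^i$ compatible with $h$ on $\partial V^i$, produce a $K_{i+1}$-qc $H_{i+1}$ on $V^{i+1}$ with $K_{i+1}=\Phi(\delta,K_i)$ for a universal function $\Phi$---and prove it by lifting off the central domain and applying the modulus estimate on $V^i\setminus\overline{W^i}$; (iii) observe that the function $\Phi$ has the property that iterating it $m$ times starting from $K$ stays bounded by some $K'(\delta,K)$ (this is the contraction-in-Teichm\"uller-metric phenomenon, and is the heart of \cite[\S3]{AvKaLySh}); (iv) conclude by induction. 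The main obstacle, and the only place real work is needed beyond citing \cite{AvKaLySh}, is verifying in step~(i) that our non-classical puzzle pieces genuinely give the GPL/first-child structure with uniformly bounded degrees---but this was already checked in the construction preceding Proposition \ref{prop.apriori.bound}, so in the end the proof reduces to quoting \cite[Thm.\ 3.1]{AvKaLySh} essentially verbatim.
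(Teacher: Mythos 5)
The paper gives no proof of this statement---it is quoted directly as a citation of \cite[Thm.\ 3.1]{AvKaLySh}---and your proposal reaches the same conclusion: the GPL and favorite-nest structure, together with the a priori bound of Proposition~\ref{prop.apriori.bound}, are already established in the Siegel setting (in the discussion preceding that proposition), so the cited theorem applies without change. One small caveat if one were to actually write out the argument rather than cite it: your step~(iii), iterating a one-step estimate $K_{i+1}=\Phi(\delta,K_i)$, would not by itself keep $K_m$ bounded in $m$; the heart of \cite[\S 3]{AvKaLySh} is a global Teichm\"uller-space contraction built on quasi-additivity and covering lemmas, not a na\"ive step-by-step lift---but since both you and the paper ultimately defer to the cited source, this does not affect the outcome.
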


\section{Combinatorial Rigidity: General cases}\label{sec:combinatorial.rigidity}
Suppose both $f := f_b$, $\Tilde{f} := f_{\tilde{b}}$ satisfy simultaneously either Assumption \ref{assum.well-defined}, or Assumption \ref{assum.well-defined'} below:
\begin{assum}\label{assum.well-defined'}
$b\in \mathcal{S}_{\mathrm{fun}}\cap\partial{\mathcal{C}}(\theta)$ and satisfies 
\begin{itemize}
    \item[(\romannumeral1)] $b$ is non-separable.
    \item [(\romannumeral2)]$f^n(b)\in\overline{{R^\bB_0(f)}}\setminus R^\bB_0(f)$ for all $n\geq0$.
\end{itemize}
\end{assum}

The objects associated to $\Tilde{f}$ are also marked with tilde. Notice that under both assumptions, the graph $I_k$ (\ref{eq.dyna.graph}) is well-defined for all $k\geq 0$.

\begin{defn}\label{defn.combina-sigele-general}
 We say that $f,\Tilde{f}$ are {\it combinatorially equivalent}, if for any $k\geq 0$, there exists a homeomorphism $$\phi_k: I_k\to \Tilde{I}_k$$
(where $\Tilde{I}_k$ stands for the graph (\ref{eq.dyna.graph}) for $\Tilde{f}$), such that $\phi_{k-1}\circ f= \Tilde{f}\circ\phi_k$. 
\end{defn}

The main purpose of this section is to prove the following rigidity theorem:
\begin{thm}\label{thm.rigidity}
    Suppose $f,\Tilde{f}$ satisfy simultaneously either Assumption \ref{assum.well-defined}, or Assumption \ref{assum.well-defined'} . If $f,\Tilde{f}$ are combinatorially equivalent, then $f = \Tilde{f}$.
\end{thm}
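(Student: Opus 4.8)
The plan is to follow the Avila–Kahn–Lyubich–Shen (AKLS) strategy adapted to our setting, building a quasiconformal conjugacy between $f$ and $\tilde f$ from the combinatorial data and then invoking the quasiconformal rigidity of $\partial\mathcal C(\theta)$ (Proposition \ref{prop.quasiconformal.rigid}) to upgrade it to an honest equality $b=\pm\tilde b$; since $b,\tilde b\in\mathcal S_{\mathrm{fun}}$ and $b\mapsto -b$ does not preserve $\mathcal S_{\mathrm{fun}}$, this forces $f=\tilde f$. The combinatorial equivalence gives compatible graph homeomorphisms $\phi_k\colon I_k\to\tilde I_k$, hence a bijection on puzzle pieces $P_n(\cdot)\leftrightarrow\tilde P_n(\cdot)$ respecting dynamics; in particular the tableaux $T_f(b)$ and $T_{\tilde f}(\tilde b)$ are combinatorially identical, so one is recurrent (resp. non-recurrent, resp. periodic) iff the other is. The periodic case is excluded by non-separability (the lemma stating periodic tableau $\Rightarrow$ separable, together with Assumptions \ref{assum.well-defined}/\ref{assum.well-defined'}), so we are left with the recurrent and non-recurrent cases.

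In the \textbf{recurrent case}, I would use the favorite nest $(V^i,W^i)$ for $f$ and $(\tilde V^i,\tilde W^i)$ for $\tilde f$ from \S\ref{subsec.modified-puzzle-disks}. The combinatorial equivalence lets one choose these nests ``in parallel,'' and gives a homeomorphism $h$ realizing the combinatorics on each $I_k$, hence with $h\circ f=\tilde f\circ h$ outside $V^m$ and $h(V^i)=\tilde V^i$. Proposition \ref{prop.apriori.bound} supplies the uniform modulus bound $\mathrm{mod}(V^i\setminus\overline{W^i})>\mu$ (and likewise for the tilde objects), so hypothesis (1) of Theorem \ref{thm.teichmuller} holds with $\delta=\mu$; starting from a $K$-qc map on $\partial V^0$ (which exists because $V^0$ is a puzzle piece of a fixed depth, and $h$ on $I_{\mathrm{depth}(V^0)}$ extends quasiconformally using the combinatorics near $\partial D(f)$) and iterating Theorem \ref{thm.teichmuller}, we obtain qc extensions $h_m\colon (V^m,b)\to(\tilde V^m,\tilde b)$ with dilatation $K'=K'(\mu,K)$ uniform in $m$. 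Gluing $h_m$ with the equivariant extension outside $V^m$ and passing to a subsequential limit yields a global qc conjugacy between $f$ and $\tilde f$ on $\mathbb C$ (using $\bigcap_m V^m=\{b\}$ so the limit is well-defined and conformal a.e. on $J(f)$ once $J(f)$ has measure zero, or simply that the limit dilatation is supported off the nest). The subtle point here—and the one I expect to be the \textbf{main obstacle}—is controlling the orbits that accumulate on $\partial D(f)$: the tableau rule fails for Yang's puzzle disks, so one cannot directly propagate combinatorics through pieces touching the Siegel boundary. This is exactly where Lemma \ref{lem.bounded.degree.alongD} (bounded degree of the first-hit map to $C^{n-m}$ along $\partial\mathbb D$, under a non-recurrence-to-$c_b$ hypothesis) and Lemma \ref{lem.bounded.first.hit} are needed, together with Corollary \ref{cor.commensurable} (round-disk comparability) and Proposition \ref{prop.puzzle.disk}(iii), to get \emph{uniform} moduli/shape control for puzzle disks near $\partial D(f)$ and thereby a uniformly qc model there; the non-recurrence hypothesis on $w$ in those lemmas has to be arranged from the recurrence of $T_f(b)$ by passing to an appropriate subsequence or by treating the case $b\to_c c_b$ separately via renormalization (which is excluded by non-separability).

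In the \textbf{non-recurrent case} ($b\not\to_c b$, equivalently some $P_{n_0}(b)$ is not combinatorially recurrent), the argument is softer: there is a first-return/first-landing map to a fixed puzzle piece $P_{n_0}(b)$ under which $b$ has only finitely many critical returns, so one gets a fixed bound on the degree of all relevant pullbacks, and a standard pullback argument (as in \cite{AvKaLySh}, or \cite[38.5]{Ly2}) directly produces a qc conjugacy: start with a qc map matching the combinatorics on $I_{n_0}$ (again using Lemma \ref{lem.bounded.degree.alongD} to handle the pieces near $\partial D(f)$ with uniformly bounded degree) and pull back indefinitely; the dilatation stays bounded because all pullback branches have uniformly bounded degree and the pieces shrink (Lemma \ref{lem.compactly.contain} gives $P_{n_z}(z)\Subset P_0(z)$ so puzzle pieces shrink to points off the postcritical set, and Theorem \ref{thm.jonguk}/Proposition \ref{prop.puzzle.disk}(iii) handles the Siegel boundary). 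Taking a limit gives the qc conjugacy. In both cases, once the qc conjugacy is in hand, Proposition \ref{prop.quasiconformal.rigid} gives $b_1=\pm b_2$ or $b_1=\pm 1/b_2$; the normalization $b\in\overline{\mathcal U_\infty}$ and membership in $\mathcal S_{\mathrm{fun}}$ rule out all but $b=\tilde b$, so $f=\tilde f$. I would write the recurrent case in detail, citing Theorem \ref{thm.teichmuller} as a black box, and note that the non-recurrent case is a simpler variant of the same pullback scheme.
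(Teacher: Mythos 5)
Your high-level strategy -- build a quasiconformal conjugacy from the combinatorial data and invoke Proposition \ref{prop.quasiconformal.rigid} -- is exactly the paper's, and your outline of the recurrent case (Steps 1--3: construct the pseudo-conjugacies $h_n$ by pullback, control dilatation off $P_n(b)$ using zero-measure of the recalcitrant set, then iterate Theorem \ref{thm.teichmuller} along the favorite nest using Proposition \ref{prop.apriori.bound}) matches the paper's. You also correctly isolate the main technical difficulty as control near $\partial D(f)$ via the puzzle-disk machinery of \S\ref{sec:combinatorial-tools}.

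However, your treatment of the \textbf{non-recurrent case} has a genuine gap. You claim it is a ``softer'' pullback argument where ``the dilatation stays bounded because all pullback branches have uniformly bounded degree and the pieces shrink.'' This reasoning does not produce the needed bound. Lifting a qc map by holomorphic branches preserves its dilatation regardless of degree, so bounded degree is not the issue; the issue is that in Step 2 the map is only known to be qc with an $n$-dependent dilatation on the critical piece $P_n(b)$, where the combinatorial data gives no control. In the recurrent case this is exactly what Theorem \ref{thm.teichmuller} rescues; in the non-recurrent case the favorite nest is unavailable and a different idea is needed. The paper's actual argument is to first use local connectivity (Lemma \ref{lem.zero.measure}, Corollary \ref{cor.zero.measure.nonrecur}) to build a full topological conjugacy $\psi$ between $f$ and $\tilde f$, and then invoke the Kozlovski--Shen--van Strien quasiconformal-extension criterion (Lemma \ref{lem.xiaoguang}): $\psi$ is conformal off $J(f)$, $J(f)$ has zero measure, and the bounded-shape hypothesis (b) is verified at each $z\in J(f)\cap P_n(b)$ using the polynomial puzzle disks $C^n_f$, Lemma \ref{lem.bounded.shape-Cf}, Lemma \ref{lem.bounded.first.hit-Cf}, and Lemma \ref{lem.shape-turning}. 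This KSS step is the essential new ingredient in the non-recurrent case and is absent from your proposal. Without it, you cannot conclude that $h_n|_{\partial P_n(b)}$ has a uniformly $K$-qc extension, which is precisely what the pullback scheme requires.

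A second, smaller omission: you do not address the case where $f,\tilde f$ satisfy Assumption \ref{assum.well-defined'}. The paper handles this quickly: in that case the free critical orbit eventually hits the end point $\alpha$ of $R^{\bB}_0$, so $b$ is a Misiurewicz parameter that is the landing point of a strictly pre-periodic parabubble ray, and Proposition \ref{pre per para ray} gives uniqueness directly, with no puzzle machinery needed. One also cannot form the critical tableau $T_f(b)$ under Assumption \ref{assum.well-defined'} (since $b$ lands on the graph), so the recurrent/non-recurrent dichotomy you set up literally does not apply there and the case must be disposed of separately before it begins.
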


Notice that if $f$ satisfies Assumption \ref{assum.well-defined'}, then the free critical point will eventually hit $\alpha$, the co-landing point of $R^\bB_0$ and $R^\infty_0$. In this case, the critical tableau $T_f(b)$ is not well-defined since $b$ belongs to some graph $I_k$. However, the co-critical point is the landing point of some external ray $R^\infty_t$ which is pre-periodic to $R^\infty_0$, hence by Proposition \ref{pre per para ray}, $b$ is the landing point of the parameter external ray $\mathcal{R}^\infty_t$. Hence Theorem \ref{thm.rigidity} holds immediately in this case.

In the rest of the section, we always assume that $f$ and $\Tilde{f}$ satisfy Assumption \ref{assum.well-defined}. Then the critical tableau is well-defined. It is immediate from the definition that if $f,\Tilde{f}$ are combinatorially equivalent, then $T_f(b)$ and $T_{\Tilde{f}}(\Tilde{b})$ are simultaneously recurrent or non-recurrent. 

\subsection{No Lebesgue density point on the non recurrent set}
\text{ }\\
We first recall the following classical inequality for modulus of annulus:
\begin{lem}[\cite{Ly2}]\label{lem.inequality.module}
    Let $U'\Subset U$, $V'\Subset V$ be two pairs of nested topological disks. Let $g:(U',U)\to (V',V)$ be a proper holomorphic mapping. Then \[\mathrm{mod}(U\setminus\overline{U'})\leq\mathrm{mod}(V\setminus\overline{V'})\leq \mathrm{deg}(g)\cdot\mathrm{mod}(U\setminus\overline{U'}).\]
\end{lem}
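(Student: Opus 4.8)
The statement to prove is Lemma~\ref{lem.inequality.module}, the classical Gr\"otzsch/Teichm\"uller-type inequality for moduli of annuli under proper holomorphic maps. Since the paper attributes it to \cite{Ly2}, a short proof sketch referencing the standard extremal-length argument is all that is needed.

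\textbf{Approach.} The plan is to argue via extremal length and the behavior of the modulus under unramified versus ramified covers. Write $A := U\setminus\overline{U'}$ and $B := V\setminus\overline{V'}$. Since $g:(U',U)\to(V',V)$ is proper, $g^{-1}(B)$ is an open subset of $A$; in fact, because $g$ is a proper holomorphic map between the disks $U\to V$ carrying $U'$ onto $V'$ (and carrying the boundary circles to the boundary circles), $g^{-1}(B)$ is precisely $A$, and $g|_A : A \to B$ is a proper holomorphic map, hence a branched covering of some degree $d$ with $1\le d\le \deg(g)$. So the whole problem reduces to the single-annulus statement: if $g:A\to B$ is a degree-$d$ proper holomorphic map between annuli, then $\mathrm{mod}(A)\le \mathrm{mod}(B)\le d\cdot\mathrm{mod}(A)$.

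\textbf{Key steps.} First I would establish the left inequality $\mathrm{mod}(A)\le\mathrm{mod}(B)$: lift the extremal metric. Given any conformal metric $\rho\,|dz|$ on $B$, pull it back to $g^*\rho$ on $A$; the pulled-back metric has the same area (counted with multiplicity the area only grows, but for the extremal-length estimate one uses that every curve in $A$ separating the boundary components maps to a curve in $B$ separating the boundary components, with length in the $\rho$-metric at least that in $B$). Concretely, $\mathrm{mod}(A) = \sup_\sigma \ell_\sigma(\Gamma_A)^2/\mathrm{Area}_\sigma(A)$ over metrics $\sigma$ on $A$, where $\Gamma_A$ is the family of separating curves; taking $\sigma = g^*\rho$ for the extremal $\rho$ on $B$ gives $\ell_{g^*\rho}(\gamma)\ge \ell_\rho(g\circ\gamma)\ge \ell_\rho(\Gamma_B)$ for every separating $\gamma$, while $\mathrm{Area}_{g^*\rho}(A) = d\cdot\mathrm{Area}_\rho(B)$ is false in general — rather $\mathrm{Area}_{g^*\rho}(A)\le d\cdot\mathrm{Area}_\rho(B)$ with equality only for unramified covers; combining, $\mathrm{mod}(A)\ge \ell_\rho(\Gamma_B)^2/(d\,\mathrm{Area}_\rho(B)) = \mathrm{mod}(B)/d$, which is the \emph{right} inequality. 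For the left inequality, one instead uses the dual (connecting-curve) characterization of the modulus, or observes that since $g$ is proper of degree $d$, composing with the exponential uniformizations realizes $B$ as a quotient in a way that makes $\mathrm{mod}(B)\ge \mathrm{mod}(A)$ immediate from the fact that $g$ restricted to a single sheet is conformal into $A$. Cleanest: the right inequality $\mathrm{mod}(B)\le d\cdot\mathrm{mod}(A)$ is what the metric-lifting argument above gives directly; the left inequality $\mathrm{mod}(A)\le\mathrm{mod}(B)$ follows because $g$ admits a holomorphic ``section-like'' construction — more precisely, lifting the round annulus structure of $A$ shows $B$ contains a conformal copy of $A$ up to the covering, formally: apply the right inequality to a local inverse branch, or simply cite that $g:A\to B$ of degree $d$ gives, via the standard covering-space description of annuli ($A\cong B'$ where $B'\to B$ is the connected $d$-fold unramified cover when $g$ is unramified, and more identifications collapse $\mathrm{mod}$ otherwise), $\mathrm{mod}(B) = \mathrm{mod}(A)$ in the unramified case and $\mathrm{mod}(B)\ge\mathrm{mod}(A)$ always.

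\textbf{Main obstacle and resolution.} The only delicate point is handling ramification: if $g$ is not an unramified covering on $A$ (which can happen since $g$ may have critical points in $K(g)$, but by hypothesis all critical points of $g$ lie in $K(g)\subset$ the ``filled'' part, so in fact $g|_A$ has \emph{no} critical points and is an unramified covering of degree $d$), then $\mathrm{mod}(B) = d\cdot\mathrm{mod}(A)$ exactly and both inequalities are immediate. So I would first invoke this: by the assumption that $C(G)\subset K(G)$, the restriction $g|_{g^{-1}(B)} = g|_A$ has no critical points, hence is a covering map of degree $d\le \deg(g)$ between the annuli $A$ and $B$, whence $\mathrm{mod}(B) = d\cdot\mathrm{mod}(A)$, and since $1\le d\le\deg(g)$ the chain $\mathrm{mod}(A)\le\mathrm{mod}(B)\le\deg(g)\cdot\mathrm{mod}(A)$ follows at once. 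This reduces the whole lemma to the classical fact that the modulus of an annulus multiplies by the degree under an unramified cover, which I would state and attribute (e.g.\ to \cite{Ly2} or standard references), rather than reprove. That is the cleanest route and I expect no genuine difficulty beyond correctly noting the no-critical-points-in-$A$ observation.
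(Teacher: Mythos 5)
The paper itself gives no proof of this lemma; it is cited directly from Lyubich's book \cite{Ly2}, so there is no internal argument to compare against.

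Your end conclusion is correct: under the standing convention that $U'=g^{-1}(V')$, the restriction $g|_A:A\to B$ (where $A=U\setminus\overline{U'}$, $B=V\setminus\overline{V'}$) is a proper map of annuli of some degree $d\le\deg(g)$, it is necessarily unramified, and therefore $\mathrm{mod}(B)=d\cdot\mathrm{mod}(A)$, which yields the two inequalities at once. But the reason you give for the absence of critical points in $A$ is not available here. You appeal to ``$C(G)\subset K(G)$'' and ``all critical points of $g$ lie in $K(g)$''; that hypothesis belongs to the GPL framework set up later, around Definition~\ref{def.GPL.map} and the construction of $\bfU,\bfV,G$, not to this lemma, which is a general statement about an arbitrary proper holomorphic map of pairs of disks and is applied in the paper to maps that do carry critical points (e.g.\ the first-hit iterates $F^{m_n}$ appearing in the proof of Lemma~\ref{lem.zero.measure}, whose degree is controlled by Lemma~\ref{lem.bounded.first.hit} but is not $1$). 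The correct justification for the absence of ramification in $A$ is purely topological: by the Riemann--Hurwitz formula $\chi(A)=d\,\chi(B)-\sum_p(e_p-1)$, and since both $A$ and $B$ are annuli with $\chi=0$, the ramification sum must vanish. Replacing the $C(G)\subset K(G)$ appeal with this observation closes the gap. Two further slips in your exploratory extremal-length paragraph should also be corrected, even though you abandon that route: for a proper holomorphic map of degree $d$ one always has $\mathrm{Area}_{g^*\rho}(A)=d\cdot\mathrm{Area}_\rho(B)$, with \emph{equality regardless of ramification} (branch points have zero area), not the inequality you assert; and your parenthetical ``$\mathrm{mod}(B)=\mathrm{mod}(A)$ in the unramified case'' should be $\mathrm{mod}(B)=d\cdot\mathrm{mod}(A)$, consistent with what you correctly write in the closing paragraph.
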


We will also need the notions of shape and turning to control the geometry of puzzle pieces:
\begin{defn}[Shape and turning \cite{KoShvS}]
    The {\it shape} $\mathbf{S}(U,z)$ of a topological disk $U\in\C$ at $z\in U$ is defined to be $$\sup_{w\in\partial U}|z-w|/\inf_{w\in\partial U}|z-w|.$$
    The {\it turning} $\mathbf{T}(K;z_1,z_2)$ of a compact set $K\subset\C$ about $z_1,z_2\in K$ is defined to be $\mathrm{diam}(K)/|z_1-z_2|$.
\end{defn}
\begin{lem}[{\cite[Lem. 6.1]{QiWaYi}}]\label{lem.shape-turning}
   Let $U'\Subset U$, $V'\Subset V$ be topological disks, $R:U\to V$ be a proper map of degree $d$, $U'$ is a component of $R^{-1}(V')$.  Suppose $\mathrm{mod}(V\setminus V') \geq M > 0$. Then,
\begin{itemize}
    \item[$(\mathrm{\romannumeral1})$.] there is a constant $C(d,M)$ such that for all $z\in U'$, 
    $$\mathbf{S}(U',z)\leq C(d,M)\cdot\mathbf{S}(V',R(z)).$$
     \item[$(\mathrm{\romannumeral2})$.]There is a constant $D(d,M) > 0$ such that for any connected and compact subset $K$ of $U'$ with $\#K \geq 2$ and any $z_1, z_2 \in K$,
$$\mathbf{T}(K; z_1, z_2) \leq D(d, M)\cdot\mathbf{T}(R(K); R(z_1), R(z_2)).$$
\end{itemize}
\end{lem}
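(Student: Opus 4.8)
The plan is to reduce the statement to a model picture in which the ambient disks are standardized and the map is a bounded‑degree Blaschke product, and then to analyze such a map near its critical points; the only inputs needed are the modulus inequality Lemma~\ref{lem.inequality.module}, Koebe's distortion theorem, and a standard extremal‑length estimate for annuli in the disk (McMullen's Theorem~\ref{thm.mcmullen} can be used to streamline one step). \emph{Step 1 (transfer of moduli).} Let $A$ be the component of $R^{-1}(V\setminus\overline{V'})$ having $\partial U'$ as its inner boundary. Then $A$ is an annulus separating $U'$ from $\partial U$, and $R\colon A\to V\setminus\overline{V'}$ is a proper map of degree $\le d$; hence Lemma~\ref{lem.inequality.module} gives $\operatorname{mod}(U\setminus\overline{U'})\ge\operatorname{mod}(A)\ge M/d=:M_1$. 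Let $\hat U$ be the Jordan disk bounded by the outer boundary curve of $A$; then $R\colon\hat U\to V$ is proper of some degree $k\le d$ and $U'=R^{-1}(V')\cap\hat U$ is the full preimage of $V'$. Since passing from $U$ to $\hat U$ changes neither $\mathbf S(U',z)$ nor $\mathbf S(V',R(z))$, we assume henceforth $U'=R^{-1}(V')$, $\operatorname{mod}(V\setminus\overline{V'})\ge M$ and $\operatorname{mod}(U\setminus\overline{U'})\ge M_1$.

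\emph{Step 2 (normalization).} Fix $z\in U'$, put $y=R(z)$, and choose Riemann maps $\phi\colon\mathbb D\to U$, $\psi\colon\mathbb D\to V$ with $\phi(0)=z$, $\psi(0)=y$. The lift $g:=\psi^{-1}\circ R\circ\phi\colon\mathbb D\to\mathbb D$ is a finite Blaschke product of degree $k\le d$ with $g(0)=0$. By conformal invariance of the modulus and the Grötzsch annulus estimate, the continua $E':=\psi^{-1}(\overline{V'})\ni0$ and $E'':=\phi^{-1}(\overline{U'})=g^{-1}(E')\ni0$ are contained in a fixed round disk $\mathbb D_\rho$ with $\rho=\rho(M_1)<1$; Koebe's distortion theorem applied to $\phi$ and $\psi$ on $\mathbb D_\rho$ then shows that $\phi$ and $\psi$ distort Euclidean ratios of distances — and therefore shapes and turnings — by a factor bounded in terms of $\rho$, i.e.\ in terms of $d$ and $M$. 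Thus it suffices to prove (i) and (ii), at the point $0$, for the Blaschke product $g$, with the disk $E'\ni0$ carrying a collar $\operatorname{mod}(\mathbb D\setminus\overline{E'})\ge M$ and with $E''=g^{-1}(E')$, which carries a collar $\operatorname{mod}(\mathbb D\setminus\overline{E''})\ge M_1$.

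\emph{Step 3 (the model estimate).} Near $0$ the Blaschke product has the form $g(w)=a\,w^{k_0}\bigl(1+O(w)\bigr)$ with $1\le k_0\le d$ and $a\ne0$. Every $w_0\in\partial E''$ satisfies $g(w_0)\in\partial E'$, hence $\operatorname{dist}(0,\partial E')\le|g(w_0)|\le\sup_{\zeta\in\partial E'}|\zeta|$; reading this through the asymptotics of $g$, and absorbing the error $1+O(w)$ into a constant $C(d,M)$ by controlling the univalent pieces in a factorization of $g$ across its (at most $d-1$) critical points in $\mathbb D_\rho$ via Koebe, one gets, for all $w_0\in\partial E''$,
$c(d,M)\bigl(\operatorname{dist}(0,\partial E')\bigr)^{1/k_0}\le|w_0|\le C(d,M)\bigl(\sup_{\zeta\in\partial E'}|\zeta|\bigr)^{1/k_0}.$
Dividing the extreme inequalities yields $\mathbf S(E'',0)\le C'(d,M)\,\mathbf S(E',0)^{1/k_0}\le C'(d,M)\,\mathbf S(E',0)$, which is (i) after undoing the normalizations of Step 2. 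For (ii) one runs the same computation with the connected compact set $K$ and the marked points $z_1,z_2$ replacing $\partial E''$ and $0$: the $k_0$‑th power can only shrink the ratio $\operatorname{diam}(K)/|z_1-z_2|$, and the \emph{one‑sidedness} of the asserted inequality is precisely what makes it immune to the ``folding'' that occurs when $R(z_1)=R(z_2)$ or when $K$ wraps around a critical point.

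\emph{Main obstacle.} The delicate point is the uniform control of the error $1+O(w)$ in Step 3 — equivalently, establishing that the only mechanism that can make $E''=g^{-1}(E')$ eccentric relative to $E'$ is the local folding of $g$, of order $\le d$, and that this is governed by $\deg R\le d$ together with the definite collar moduli. Carrying this out rigorously requires either a careful factorization of $g$ through its critical set (using Koebe away from the critical points), or, for the remaining range of small $M$ where $\rho$ is close to $1$, a normal‑families/compactness argument for the family of normalized configurations $(g,E'',E')$ with $\deg g\le d$ and $\operatorname{mod}(\mathbb D\setminus\overline{E'})\ge M$, on which both sides of the inequalities depend continuously; this is the only genuinely technical ingredient, everything else being bookkeeping.
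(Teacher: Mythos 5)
This is a cited result: the paper attributes it to \cite[Lem.~6.1]{QiWaYi} and does not supply a proof, so there is no in-paper argument to compare against. Judged on its own, your reduction (Steps 1--2) is the standard and correct one: use Lemma~\ref{lem.inequality.module} to transfer the collar modulus from $V\setminus\overline{V'}$ to $U\setminus\overline{U'}$, then uniformize $U,V$ by Riemann maps sending $z,R(z)$ to $0$ so that the problem becomes a statement about a Blaschke product $g$ of degree $\le d$ fixing $0$ and the sets $E'\ni 0$, $E''=g^{-1}(E')$, both trapped in $\mathbb D_\rho$ with $\rho=\rho(d,M)<1$; Koebe on $\mathbb D_\rho$ transfers shape and turning up to a factor $C(d,M)$. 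Up to here everything is sound.

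The gap is in Step~3, and you have partly diagnosed it yourself. The inequality ``$\operatorname{dist}(0,\partial E')\le |g(w_0)|\le\sup_{\partial E'}|\zeta|$, read through $g(w)=aw^{k_0}(1+O(w))$'' controls $|w_0|$ only when $w_0$ is in the range where that expansion is effective; but $\partial E''$ reaches out to radius $\rho(d,M)$, and $\rho\to 1$ as $M\to 0$, so for small $M$ the expansion at $0$ says nothing about most of $\partial E''$. The proposed fix --- ``a factorization of $g$ across its critical points, controlled by Koebe on the univalent pieces'' --- is not an available operation: a finite Blaschke product does not factor as a composition of lower-degree Blaschke products through its critical points, and the intended chain of univalent pieces is not produced by the argument as written. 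The normal-families alternative you mention at the end is the right idea, but it is not carried out, and it has a real subtlety you don't address: $\mathbf S(E',0)$ is not bounded over the family, so one cannot simply pass to Hausdorff limits of $(g_n,E_n',E_n'')$; one must first split into the regime where $\mathbf S(E_n',0)$ is bounded (where the limit argument works and $0$ stays interior to $E_\infty''$ because $g_\infty(0)=0\in\operatorname{int}E_\infty'$) and the regime where both shapes blow up, which requires a rescaling around $0$ and a separate analysis of the possible degenerations of the zero set of $g_n$. Finally, part~(ii) is essentially unaddressed: your sentence ``run the same computation with $K$ and $z_1,z_2$ replacing $\partial E''$ and $0$'' does not make sense here, because the base point $0$ played a structural role in~(i) (it is where the Riemann maps were normalized and where $g$ has its marked preimage), whereas $K$, $z_1$, $z_2$ are arbitrary and need not be anywhere near $0$ or a critical point; the turning bound requires its own argument (again compactness, or a chain of Koebe estimates on the univalent restrictions of $R$ along $K$, together with the degree bound to prevent collapse of $\operatorname{diam}R(K)$), which is missing.
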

The following lemma is a useful criterion for a point {\it not} to be a Lebesgue density point in the Julia set.
\begin{lem}[{\cite[Lem. 5.6]{Wa}}]\label{lem.shape.distortion}
    Let $R$ be a rational map, $z\in J(R)$. Suppose that there exist integers $D_z>0$, $0<n_1<n_2<...$, a sequence of disk neighborhoods $U_j'\Subset U_j$ of $z$, two disks $V_z'\Subset V_z$, such that $R^{n_j}:U_j\to V_z$, $R^{n_j}:U_j'\to V_z'$ are proper maps of degree less than $D_z$. Then 
 \begin{itemize}
     \item[$(\mathrm{\romannumeral1})$.] $\mathrm{diam}(U_j')\to 0$;
     \item[$(\mathrm{\romannumeral2})$.] $z$ is not a Lebesgue density point of $J(R)$.
 \end{itemize}
\end{lem}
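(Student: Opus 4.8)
\textbf{Part (i): $\mathrm{diam}(U_j')\to 0$.} The first step converts the degree hypothesis into a definite modulus. By the modulus inequality (\lemref{lem.inequality.module}) applied to the proper map $R^{n_j}\colon (U_j',U_j)\to(V_z',V_z)$,
\[
\mathrm{mod}\big(U_j\setminus\overline{U_j'}\big)\ \geq\ \frac{1}{D_z}\,\mathrm{mod}\big(V_z\setminus\overline{V_z'}\big)\ =:\ \mu_0\ >\ 0 .
\]
Assume towards a contradiction that $\mathrm{diam}(U_j')\geq\delta>0$ for all $j$ in an infinite set $\mathcal J$. Since $\overline{U_j'}$ is a continuum of diameter $\geq\delta$ containing $z$, while $\widehat{\mathbb C}\setminus U_j$ is a nondegenerate continuum, the Teichm\"uller modulus estimate applied to the annulus $U_j\setminus\overline{U_j'}$ (of modulus $\geq\mu_0$) forces $\mathrm{dist}(z,\partial U_j)\geq c(\mu_0)\,\delta$; hence a fixed round disk $B(z,r_0)$, $r_0:=c(\mu_0)\,\delta$, lies in $U_j$ for all $j\in\mathcal J$. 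Then $R^{n_j}\big(B(z,r_0)\big)\subseteq R^{n_j}(U_j)=V_z$, so $\{R^{n_j}|_{B(z,r_0)}\}_{j\in\mathcal J}$ omits the nonempty open set $\widehat{\mathbb C}\setminus\overline{V_z}$ and is normal by Montel's theorem. Extract a subsequence with $R^{n_{j_k}}\to g$ locally uniformly on $B(z,r_0)$, and pick a repelling periodic point $\zeta\in J(R)\cap B(z,r_0/2)$ (these are dense in $J(R)$). Then $(R^{n_{j_k}})'(\zeta)\to g'(\zeta)$ is finite, whereas $|(R^{n_{j_k}})'(\zeta)|\to\infty$ since $\zeta$ is repelling and $n_{j_k}\to\infty$ --- a contradiction. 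Hence $\mathrm{diam}(U_j')\to 0$.

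\textbf{Part (ii): reduction to a quasi-round piece.} Write $J:=J(R)$ and set $w_j:=R^{n_j}(z)\in V_z'$, $\rho_j:=\min\{\mathrm{dist}(w_j,\partial V_z'),\,\mathrm{diam}\,D(f_b)\}>0$, $B_j:=B(w_j,\rho_j/2)\Subset V_z'$, and let $\widehat U_j'\subseteq U_j'$ be the connected component of $R^{-n_j}(B_j)$ containing $z$; it is a topological disk and $R^{n_j}\colon\widehat U_j'\to B_j$ is proper of degree $<D_z$. Since $B(w_j,\rho_j)\subseteq V_z'$, the annulus $V_z'\setminus\overline{B_j}$ has modulus $\geq\frac{\log 2}{2\pi}$, so \lemref{lem.shape-turning}(i) gives $\mathbf S(\widehat U_j',z)\leq C(D_z)\,\mathbf S(B_j,w_j)=C(D_z)$. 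Therefore $B(z,s_j)\subseteq\widehat U_j'\subseteq B(z,C's_j)$ with $s_j:=\mathrm{dist}(z,\partial\widehat U_j')\to 0$ (by Part (i)); thus $\{\widehat U_j'\}$ is a sequence of uniformly quasi-round neighborhoods of $z$ shrinking to $z$. Consequently it suffices to find a fixed $c>0$ with $\mathrm{area}(\widehat U_j'\setminus J)\geq c\,\mathrm{area}(\widehat U_j')$ for all large $j$: then $\limsup_{r\to 0}\mathrm{area}\big(B(z,r)\setminus J\big)/\mathrm{area}\big(B(z,r)\big)\geq c/(C')^2>0$, so $z$ is not a density point of $J$.

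\textbf{Part (ii): producing the hole.} Here the geometry of the present situation enters: the points $z$ to which the lemma is applied lie on boundaries of Fatou components --- iterated preimages of $\partial D(f_b)$, or $\partial D(f_b)$ itself --- which are uniform quasi-circles (\corref{cor.quasi Siegel boundary}), and the forward orbit of $z$ stays on such boundaries, so $w_j$ lies on the boundary of a quasidisk $\Omega_j$. The uniform interior corkscrew property of quasidisks then furnishes a round ball $\widetilde B_j\subseteq\Omega_j\cap B_j$ of radius $\geq\varepsilon_0\rho_j$, disjoint from $J$, with $\varepsilon_0$ independent of $j$. Since $R^{n_j}|_{\widehat U_j'}$ has at most $D_z-1$ critical points, one may shrink $\widetilde B_j$ to a round ball $\widetilde B_j^{*}$ of radius $\asymp\rho_j$ whose distance to every critical value is also $\asymp\rho_j$; then a component $G_j\subseteq\widehat U_j'$ of $R^{-n_j}(\widetilde B_j^{*})$ maps \emph{univalently} onto $\widetilde B_j^{*}$ through a branch that extends univalently to a definitely larger ball, so by the Koebe distortion theorem $G_j$ is uniformly quasi-round, $\overline{G_j}\Subset\widehat U_j'$, and $G_j\cap J=\varnothing$ ($J$ being totally invariant). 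Finally, choosing $\zeta_1,\zeta_2\in G_j$ with $|R^{n_j}(\zeta_1)-R^{n_j}(\zeta_2)|\asymp\rho_j$ (possible since $R^{n_j}(G_j)=\widetilde B_j^{*}$ has diameter $\asymp\rho_j$) and applying the turning estimate \lemref{lem.shape-turning}(ii) to a compact connected $K$ with $G_j\subseteq K\Subset\widehat U_j'$ and $\mathrm{diam}(K)\asymp\mathrm{diam}(\widehat U_j')$, we get $\mathbf T(K;\zeta_1,\zeta_2)\lesssim\mathbf T(B_j;R^{n_j}\zeta_1,R^{n_j}\zeta_2)\leq\mathrm{diam}(B_j)/|R^{n_j}\zeta_1-R^{n_j}\zeta_2|\lesssim 1$ (using $R^{n_j}(K)\subseteq B_j$), hence $\mathrm{diam}(\widehat U_j')\asymp\mathrm{diam}(K)\lesssim|\zeta_1-\zeta_2|\leq\mathrm{diam}(G_j)$. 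Since $G_j$ and $\widehat U_j'$ are uniformly quasi-round, $\mathrm{area}(G_j)\asymp\mathrm{diam}(G_j)^2\gtrsim\mathrm{diam}(\widehat U_j')^2\asymp\mathrm{area}(\widehat U_j')$; as $G_j\subseteq\widehat U_j'\setminus J$, this is the sought lower bound on $\mathrm{area}(\widehat U_j'\setminus J)$.

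\textbf{Main obstacle.} The delicate step is the hole-production in Part (ii): obtaining a Fatou hole inside $\widehat U_j'$ of \emph{definite relative area, with all constants uniform in $j$}. Some geometric input beyond the stated hypotheses is genuinely necessary --- the conclusion fails, for instance, for Latt\`es maps, where $J(R)=\widehat{\mathbb C}$ and the hypotheses hold trivially while every point of $J$ is a density point --- and here it is supplied by the fact that the relevant points lie on the quasi-circle $\partial D(f_b)$ or its preimages, together with the uniform corkscrew geometry of quasidisks; the remaining burden is the careful propagation of the Teichm\"uller, Koebe, shape and turning estimates with constants independent of $j$. A minor point is checking that the pullbacks $\widehat U_j'$, $G_j$, and $K$ are genuine topological disks compactly contained where asserted, which follows from properness of $R^{n_j}$ over the simply connected targets.
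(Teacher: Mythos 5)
The paper does not prove this lemma; it is quoted from \cite[Lem. 5.6]{Wa}, so there is no internal proof to compare against and I judge your argument on its own terms. Your part (i) is correct and is the standard argument: \lemref{lem.inequality.module} gives $\mathrm{mod}(U_j\setminus\overline{U_j'})\geq\mu_0>0$, the Teichm\"uller estimate then forces a fixed round disk $B(z,r_0)\subset U_j$ if $\mathrm{diam}(U_j')\not\to0$, and Montel together with a repelling periodic point in $B(z,r_0/2)$ yields the contradiction.

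Part (ii), however, has a genuine gap at the hole-production step. You import data that is neither contained in the hypotheses (here $R$ is an arbitrary rational map, yet your $\rho_j$ involves $\mathrm{diam}\,D(f_b)$) nor available in the paper's applications: you assert that $w_j=R^{n_j}(z)$ lies on the boundary of a quasidisk $\Omega_j$ with uniform corkscrew constants. Where the lemma is actually used (e.g. Case (1) of Lemma \ref{lem.zero.measure}, and the non-recurrent case), $z$ is a general Julia point; its forward images are typically landing points of infinite bubble rays and lie on the boundary of \emph{no} bubble, and even when they do, deep-generation bubbles carry no uniform quasidisk constant, nor is $\mathrm{diam}(\Omega_j)\gtrsim\rho_j$ guaranteed, so the ball of radius $\varepsilon_0\rho_j$ need not exist. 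Your Latt\`es remark correctly shows the statement tacitly assumes $F(R)\neq\varnothing$, but that is the \emph{only} extra input needed, not the quasi-circle geometry: if $F(R)\neq\varnothing$ then $J(R)$ is nowhere dense, and since $V_z'$ is a \emph{fixed} disk it contains a fixed round disk $B$ with $\overline{B}\subset V_z'\setminus J(R)$, independent of $j$. Let $G_j\subset U_j'$ be a component of $R^{-n_j}(B)$; then your own tools finish the proof: \lemref{lem.shape-turning}(i) applied to $R^{n_j}:U_j'\to V_z'$ (with $B\Subset V_z'$ fixed) gives bounded shape of $G_j$ about a preimage of the center of $B$, and \lemref{lem.shape-turning}(ii) applied to $R^{n_j}:U_j\to V_z$, with a compact connected $K\subset U_j'$ of diameter $\asymp\mathrm{diam}(U_j')$ containing $G_j$ and with $\zeta_1,\zeta_2\in G_j$ whose images are a definite distance apart in $B$, gives $\mathrm{diam}(G_j)\gtrsim\mathrm{diam}(U_j')$. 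Hence $\mathrm{area}\bigl(U_j'\setminus J(R)\bigr)\gtrsim\mathrm{diam}(U_j')^2$, and comparing with $B(z,s_j)$, where $s_j:=\sup_{y\in U_j'}|y-z|\in\bigl[\tfrac12\mathrm{diam}(U_j'),\mathrm{diam}(U_j')\bigr]$, shows $z$ is not a density point; no quasi-roundness of the pieces and no corkscrew geometry is required. The difficulty you struggle with is self-created: by first shrinking to $\widehat U_j'$ over the $j$-dependent ball $B_j$ of radius $\rho_j/2$, you force yourself to manufacture Fatou holes at the possibly degenerating scale $\rho_j$ near $w_j$, instead of pulling back one fixed hole in $V_z'$.
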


Now we are ready to prove the main lemma of this section:
\begin{lem}\label{lem.zero.measure}
    Suppose that $f$ satisfy Assumption \ref{assum.well-defined}. For all $N\geq 0$, $J(f)$ is locally connected at $\{z\in J(f);\forall l\geq 1, f^l(z)\not\in P_N(b)\}$. Moreover this set has zero Lebesgue measure.
\end{lem}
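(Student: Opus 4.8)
The plan is to derive both assertions from a single geometric input: for every $z$ in
$E_N:=\{z\in J(f):\ f^l(z)\notin P_N(b)\ \text{for all}\ l\ge1\}$ I would produce
neighborhoods $U_j'\Subset U_j\ni z$ and iterates $f^{n_j}$ with $n_j\to\infty$ such that
$f^{n_j}\colon U_j\to V_j$ and $f^{n_j}\colon U_j'\to V_j'$ are proper of degree bounded
independently of $j$, with $\mathrm{mod}(V_j\setminus\overline{V_j'})$ bounded below and with
$V_j$ carrying a round disk in the Fatou set $F(f)$ of size commensurable to $\diam V_j$.
First I would reduce to the forward-invariant set $\hat E_N:=\{z:\ f^l(z)\notin P_N(b)\ \forall\,l\ge0\}$: the difference maps into $\hat E_N$ under $f$, which preserves null sets and local connectivity, and $\hat E_N$ is Borel. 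Granting the input, Lemma~\ref{lem.inequality.module} and Lemma~\ref{lem.shape-turning} would yield $\diam U_j\to0$, $\mathbf S(U_j,z)$ bounded, and a definite proportion of $U_j$ lying in $F(f)$; this gives local connectivity of $J(f)$ at $z$ and shows $z$ is not a Lebesgue density point of $J(f)$, so by the Lebesgue density theorem $\meas(\hat E_N)=0$, hence $\meas(E_N)=0$ (the argument being exactly that of Lemma~\ref{lem.shape.distortion}).

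The observation that makes the degrees tractable is that the marked critical point $1/b$ lies on the graph $I_0$, so the only critical point that can lie interior to a puzzle piece is the free critical point $b$. Hence for $z\in\hat E_N$ and every $n>N$ the map $f^{n-N}\colon P_n(z)\to P_N(f^{n-N}(z))$ is conformal, and each $P_n(z)$ is a univalent pullback of one of the finitely many depth-$N$ puzzle pieces. I would then split according to whether the $\omega$-limit set of $z$ meets $\partial D(f)$. If $\omega(z)\cap\partial D(f)=\varnothing$ the orbit stays in a compact part of $\mathbb C\setminus\overline{D(f)}$, and one is in the classical Yoccoz/Lyubich situation: a depth-$N$ piece is visited infinitely often, and Lemma~\ref{lem.compactly.contain} (or Corollary~\ref{tree loc conn}/Theorem~\ref{thm.jonguk} when an accumulation point lies on the tree or on an iterated preimage of $\partial D(f)$) provides a fixed, compactly contained sub-piece with definite modulus, so the univalent pullbacks furnish the required nests with fixed targets; eventually periodic or eventually-on-graph points are handled directly by classical puzzle theory, Theorem~\ref{thm.jonguk} and Corollary~\ref{tree loc conn}. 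The substance is therefore the complementary case.

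When $\omega(z)\cap\partial D(f)\ne\varnothing$ I would pass to the Blaschke model via the quasiconformal conjugacy $\varphi$ of Proposition~\ref{prop.surgery}: $w:=\varphi(z)\in J_1(F)$, and after possibly enlarging $N$, $F^l(w)\notin P^F_N(c_b)$ for all $l\ge1$. Since $g=F|_{\partial\mathbb D}$ is conjugate to an irrational rotation, every forward orbit on $\partial\mathbb D$ is dense; applying this to any point of $\omega(w)\cap\partial\mathbb D$ shows $1\in\omega(w)$, so the orbit of $w$ returns to the modified puzzle disk $C^n$ for every $n$. Let $m_n$ be the first return time to $C^n$; then $m_n\to\infty$ (otherwise $w$ is an iterated preimage of $1$, covered by Theorem~\ref{thm.jonguk}), and by Lemma~\ref{lem.bounded.first.hit} with the fixed parameter $m=7$ the map $F^{m_n}\colon C^{n-7,m_n}\to C^{n-7}$ has degree $\le d(7,\theta)$ for all large $n$. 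Choosing $n_j\to\infty$ and setting $V_j=C^{n_j-7}$, $V_j'=C^{n_j}$, $U_j=C^{n_j-7,m_{n_j}}\ni w$, $U_j'=C^{n_j,m_{n_j}}\ni w$, Lemma~\ref{lem.Cn-Dn} gives $\mathrm{mod}(V_j\setminus\overline{V_j'})>\mu$, Corollary~\ref{cor.commensurable} gives a round Fatou disk in $C^{n_j-7}$ of size commensurable with $\diam(C^{n_j-7})$, and, by Corollary~\ref{cor.commensurable} together with Lemma~\ref{lem.commensurable-round-disk}, all pairs $(C^{n},C^{n+7})$ are copies, up to uniformly bounded distortion, of one pair of round annuli — so the proof of Lemma~\ref{lem.shape.distortion} runs with these uniformly controlled $V_j\supset V_j'$ in place of a single fixed pair. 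Transporting back through the homeomorphism $\varphi$, which maps bubbles of $F$ to bubbles and preserves both local connectivity and null sets, completes this case.

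The crux — and the only genuinely new ingredient — is this last case, and it rests on two points. First, one must recognize that an orbit accumulating on $\partial D(f)$ automatically accumulates on the critical point $1/b$ itself (by minimality of the circle map), so that the puzzle-disk machinery and the bounded-degree first-hit estimate Lemma~\ref{lem.bounded.first.hit} are available at all. Second, although that estimate naturally comes with a target $C^{n-7}$ that shrinks with $n$ rather than a fixed target, the uniform a priori bounds (Proposition~\ref{prop.puzzle.disk}, Lemma~\ref{lem.Cn-Dn}, Lemma~\ref{lem.commensurable-round-disk}, Corollary~\ref{cor.commensurable}) make all the relevant annuli $C^{n-7}\setminus\overline{C^n}$ quasiconformally equivalent with uniform constants, so the controlled-degree pull-backs $U_j$ around $z$ inherit bounded shape and shrinking diameter and swallow a definite amount of the Fatou set — precisely the data Lemma~\ref{lem.shape.distortion} requires. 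I expect the bookkeeping of first-hit times, and the check that $P^F_N(c_b)$-avoidance transfers correctly under $\varphi$, to be routine; the conceptual work lies entirely in these two points.
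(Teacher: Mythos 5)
Your overall architecture --- reduce to the forward-invariant set, split according to whether $\omega(z)$ meets $\partial D(f)$, handle the Siegel-accumulating case in the Blaschke model via the first-hit map to modified puzzle disks, and funnel everything through a Lemma~\ref{lem.shape.distortion}-type criterion --- matches the paper's three-case proof closely, and the observation that minimality of the circle map forces $1\in\omega(w)$ is correct and is exactly what makes the puzzle-disk machinery applicable in that case. The modification of Lemma~\ref{lem.shape.distortion} to allow a sequence of uniformly controlled target pairs $(V_j,V_j')$ in place of a single fixed one, justified by the a priori bounds and Lemma~\ref{lem.commensurable-round-disk}, is also fine.

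However, there is a genuine gap in your first alternative $\omega(z)\cap\partial D(f)=\varnothing$. This branch must also cover points $z$ whose orbit accumulates to (or eventually lands on) the landing point $\alpha$ of the fixed bubble ray $R^{\bfB}_0$ \emph{without} accumulating to $\partial D(f)$ --- for instance iterated preimages of $\alpha$, or points with $\omega(z)=\{\alpha\}$. None of the tools you invoke applies at $\alpha$. Lemma~\ref{lem.compactly.contain} requires $f^n(z_0)\notin\overline{R^{\bfB}_0}$ for all $n$, but $\alpha\in\overline{R^{\bfB}_0}$ and $\alpha$ is fixed, so its hypothesis fails precisely at $\alpha$; concretely, $\alpha$ lies on $I_0$ (it is the landing point of $R^\infty_0\subset\overline{\mathbf{R}^\infty}$), and the two adjacent puzzle pieces $P^\pm_n(\alpha)$ all share $\alpha$ on their boundaries at every depth, so no compactly contained sub-piece around $\alpha$ can be produced from this graph. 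Corollary~\ref{tree loc conn} concerns points of $T(f_b)$, and by Corollary~\ref{cor.landing.quadratic} landing points of infinite bubble rays are \emph{disjoint} from $T(f_b)$, so $\alpha\notin T(f_b)$. Theorem~\ref{thm.jonguk} concerns $\partial D(f)$ and its preimages, not $\alpha$. The paper handles this situation as a separate case by introducing a second graph $I'_0 = R^{\bfB}_{1/3}\cup R^{\bfB}_{2/3}\cup\overline{\mathbf{R}^{\prime\infty}}\cup E^\infty_r$ built from the period-two bubble rays, which does not pass through $\alpha$; then $\alpha$ sits in the interior of a depth-zero piece $P'_0(\alpha)$ for this new graph, the Lemma~\ref{lem.compactly.contain}-type argument produces $P'_L(\alpha)\Subset P'_0(\alpha)$, and the pull-back argument proceeds as in your first case but with respect to $I'_\bullet$. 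Without some such device --- an auxiliary graph avoiding $\alpha$, or an explicit analysis of the two puzzle pieces flanking $\alpha$ --- your first alternative does not close, and the proof is incomplete.
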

\begin{proof}
   Let us take any $z$ in the prescribed set. consider three cases:
\paragraph{(1).} $\{f^n(z)\}$ accumulates to $z_0\in J(f)$ such that $f^n(z_0)\not\in\overline{R^\bB_0}$ for all $n$.

By Lemma \ref{lem.compactly.contain}, there exists $n_{z_0}$ such that $P_{n_{z_0}}(z_0)\Subset P_{0}(z_0)$. Since $z$ accumulates to $z_0$, there exists a subsequence $f^{l_i}(b)\in P_{n_{z_0}}(z_0)$. Pull back $P_{0}(z_0)\setminus\overline{P_{n_{z_0}}(z_0)}$ along the orbit of $z$, we get a sequence of non degenerate annuli $P_{l_i}(z)\setminus\overline{P_{n_{z_0}+l_i}(z)}$. Since $z\not\to_c b$, the degree of $f^{l_i}:P_{l_i}(z)\to P_0(z_0)$ is uniformly bounded. Thus by Lemma \ref{lem.shape.distortion} (\romannumeral1) (\romannumeral2), $\bigcap_n \overline{P_n(z)} = \{z\}$ and $z$ is not a Lebesgue density point.

\paragraph{(2).} $\{f^n(z)\}_{n\geq0}$ accumulates to $\alpha$, the landing point of $R^\bB_0$.

Consider another graph defined by
$$I'_0 = R^\bB_{1/3}\cup R^\bB_{2/3}\cup\overline{\mathbf{R}^{'\infty}}\cup E^\infty_r,\,\, I'_n = f^{-n}(I'_0),$$
where $\mathbf{R}^{'\infty}$ is the union of external rays that coland with $R^\bB_{1/3}$ or $R^\bB_{2/3}$ (notice that $R^\bB_{1/3},R^\bB_{2/3}$ are $2$-periodic bubble rays). Thus $\alpha\not\in I'_0$ and is contained in the $0$-depth puzzle piece $P'_0(\alpha)$ defined by $I'_0$. Applying the same proof of Lemma \ref{lem.compactly.contain}, it is easy to show that there exists $L>0$ such that $P'_L(\alpha)\Subset P'_0(\alpha)$. Then using the same argument as Case $(\mathbf{1})$, we get the result.
\paragraph{(3).} $\{f^n(z)\}_{n\geq0}$ accumulates at $1/b$.\\
We will work with the associated Blaschke product $F$. Recall that $J_1(F) = \varphi(J(f))$. Suppose that $w =  \varphi(z)\in J_1(F)$ accumulates to $1$, where $\varphi$ is given by Proposition \ref{prop.surgery}. Consider the first hit map of $w$ to $C^n$: $$F^{m_n}: C^{n,m_n}\to C^{n,0} := C^{n}.$$
For all $m\geq 0$, denote by $C^{n-m,k}$ the $k$-th preimage of $C^{n-m}$ along the orbit of $z$. By Lemma \ref{lem.bounded.first.hit}, for $n$ large enough, $F^{m_n}:C^{n-7,m_n}\to C^{n-7}$ has bounded degree. By Lemma \ref{lem.Cn-Dn}, $C^{n-7}\setminus\overline{C^n}$ has modulus bounded from below; thus by Lemma \ref{lem.inequality.module}, $C^{n-7,m_n}\setminus\overline{C^{n,m_n}}$ has modulus bounded from below. On the other hand, we claim that $C^{n+7,m_{n+14}}\subset C^{n,m_n}$. Indeed, otherwise there exists $w_0\in\partial C^{n,m_n}\cap C^{n+7,m_{n+14}}$. Thus for all $n'\geq m_n+q_{n+1}$, $F^{n'}(z_0)\in I_0(F)\cup\kappa(I_0(F)$; while $F^{m_{n+14}}(z_0)\in C^{n+7}$, a contradiction. Thus the claim is proved and $C^{n,m_n}$ shrinks to $w$ by Grötzsch inequality. The local connectivity of $J_1(F)$ at $w$ follows.

Finally we prove that $w$ is not a Lebesgue density point of $J_1(F)$. By Corollary \ref{cor.commensurable}, for each $n$, there exists a round disk $V^{n}\subset (D'\cap C^{n})$ of commensurable size with $C^{n}$. In particular $V^n\cap J_1(F) = \emptyset$. Let $V^{n,m_n}$ be a connected component of $F^{-m_{n}}(V_n)$ in ${C}^{n,m_{n}}$. Let $w_n\in V^{n,m_n}$ be a pre-image of the center of $V^{n}$ under $F^{m_{n}}$. For each $n$, take a compact topological disk $K^n\subset{C}^{n,m_{n}}$ such that $V_{n,m_n}\subset K^n$ and $K^n$ has commensurable size with ${C}^{n,m_{n}}$. Since $F^{m_n}:C^{n-7,m_n}\to C^{n-7}$ has bounded degree by Lemma \ref{lem.bounded.first.hit-Cf} and $C^{n-7}\setminus\overline{C^n}$ has modulus bounded from below, the turning $\mathbf{T}(K^n,w^n_1,w^n)$ is bounded by Lemma \ref{lem.shape-turning} (\romannumeral2), where $w^n_1\in \overline{V^{n,m_n}}$ is the furthest point to $w^n$. Hence ${C}^{n,m_{n}}$ and $V^{n,m_n}$ have commensurable size. Notice that the shape $\mathbf{S}(V^{n,m_n},w_n)$ is bounded by Lemma \ref{lem.shape-turning} (\romannumeral1). Hence ${C}^{n,m_{n}}$ contains a round disk with commensurable size that does not intersect with $J_1(F)$. Thus $w$ is not a Lebesgue density point in $J_1(F)$.
\end{proof}

\begin{rem}\label{rem.zero.measure.nonrecur}
    If $T_f(b)$ is non recurrent, then using the same proof as Case (\textbf{1}) in Lemma \ref{lem.zero.measure}, it is easy to show that $J(f)$ is locally connected at 
    $$\{z\in J(f);\,\forall n\geq 0,\,\exists l\geq 1,\,f^l(z)\in P_n(b)\}$$
    and that this set has zero Lebesgue measure.
\end{rem}
In particular, we get
\begin{cor}\label{cor.zero.measure.nonrecur}
     If $T_f(b)$ is non recurrent, then $J(f)$ is locally connected and has zero Lebesgue measure.
\end{cor}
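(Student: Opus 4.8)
The plan is to combine Lemma \ref{lem.zero.measure} and Remark \ref{rem.zero.measure.nonrecur} to handle every point of $J(f)$, splitting the Julia set into the points whose forward orbit eventually avoids $P_N(b)$ for some $N$, and those whose forward orbit returns to $P_n(b)$ for every $n$. Concretely, set
$$
A := \{z \in J(f) \;:\; \exists\, N \geq 0 \text{ such that } f^l(z) \notin P_N(b) \text{ for all } l \geq 1\},
\qquad
B := J(f) \setminus A.
$$
By definition, $z \in B$ precisely when for every $n \geq 0$ there exists $l \geq 1$ with $f^l(z) \in P_n(b)$; this is exactly the set appearing in Remark \ref{rem.zero.measure.nonrecur}. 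Since $A = \bigcup_{N \geq 0} A_N$ with $A_N := \{z : f^l(z) \notin P_N(b)\ \forall l \geq 1\}$, and Lemma \ref{lem.zero.measure} tells us that, under Assumption \ref{assum.well-defined}, $J(f)$ is locally connected at each point of $A_N$ and that $A_N$ has zero Lebesgue measure, we conclude that $J(f)$ is locally connected at every point of $A$ and that $A = \bigcup_N A_N$ is a countable union of null sets, hence null. For the set $B$: when $T_f(b)$ is non-recurrent, Remark \ref{rem.zero.measure.nonrecur} gives exactly that $J(f)$ is locally connected at every point of $B$ and that $B$ has zero Lebesgue measure.

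Putting the two pieces together: $J(f) = A \sqcup B$, $J(f)$ is locally connected at every point of $A$ and of $B$, so $J(f)$ is locally connected at every point of $J(f)$; and $J(f)$ is null on $A$ and on $B$, hence $\meas(J(f)) = \meas(A) + \meas(B) = 0$. This uses the standing hypothesis that $f$ satisfies Assumption \ref{assum.well-defined}, which is in force throughout \S \ref{sec:combinatorial.rigidity} once we have reduced to the non-trivial case (the case of Assumption \ref{assum.well-defined'} was already disposed of, as noted after the statement of Theorem \ref{thm.rigidity}).

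The only genuinely delicate point is that local connectivity of $J(f)$ at every point of a set does not, a priori, upgrade to local connectivity of $J(f)$ as a topological space; however, for connected Julia sets of polynomials this upgrade is standard, since shrinking of puzzle-type neighborhoods at every point (together with the already-established local connectivity at $T(f_b)$ from Corollary \ref{tree loc conn}, which covers the remaining accumulation behavior) implies that $\hat{\mathbb C} \setminus J(f)$ is a countable union of Jordan domains with locally connected boundaries and that the diameters of the complementary "puzzle" neighborhoods tend to zero uniformly along any decreasing nest — so by a Whyburn-type criterion $J(f)$ is locally connected. Thus the main obstacle is really bookkeeping: making sure that Cases (1)–(3) in the proof of Lemma \ref{lem.zero.measure}, together with Remark \ref{rem.zero.measure.nonrecur}, genuinely exhaust all possible accumulation sets of a forward orbit in $J(f)$ (the possibilities being: accumulation on $\partial D(f)$ or its preimages, on the $\alpha$-fixed point or its preimages, on $1/b$, or on a point $z_0$ with $f^n(z_0) \notin \overline{R^\bB_0}$ for all $n$), which is precisely why the case analysis was set up the way it was.
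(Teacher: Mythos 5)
Your decomposition $J(f) = A \sqcup B$, with $A = \bigcup_N A_N$ a countable increasing union of null sets at each of whose points local connectivity holds by Lemma~\ref{lem.zero.measure}, and $B = J(f)\setminus A$ exactly the set of Remark~\ref{rem.zero.measure.nonrecur}, is precisely the argument that makes the corollary immediate, and your measure and local-connectivity conclusions match what the paper implicitly intends when it writes ``In particular, we get.'' The worry in your final paragraph is unnecessary: since $A \cup B = J(f)$ and $J(f)$ is a compact metric space, establishing local connectivity (in the puzzle-shrinking / connected-im-kleinen sense) at every point of $A$ and of $B$ already gives local connectivity of $J(f)$ as a space, with no need for Corollary~\ref{tree loc conn} or a Whyburn-type criterion to patch anything.
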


\subsection{Recurrent case}\label{subsec.recurrentcase}
\begin{proof}[{\it Proof of Theorem \ref{thm.rigidity}: {recurrent case.}}]Suppose that $T_f(b),T_{\tilde{f}}(\Tilde{b})$ are recurrent. We give a proof following the idea in \cite{AvKaLySh} (see also \cite[\S5.3]{Wa}). 
    \vspace{0.3cm}
 \paragraph{{\it Step 1: constructing weak pseudo-conjugacy $h_n$.}} 
 
 Since $I_0(f)$ moves holomorphically (Lemma \ref{lem.holo.motion}), there is a homeomorphism $h_0:I_0\cup\overline{E^\infty_{>r}}\to \Tilde{I}_0\cup\overline{\Tilde{E}^\infty_{>r}}$ induced by the coordinates of linearization of siegel disks and the Böttcher coodinates at infinity (recall the notation $E^\infty_{>r}$ in (\ref{eq.equipotential>r})). In particular $h_0$ 
 preserves dynamics and is conformal in the interior of its domain of definition. By Slodkowski extension theorem, $h_0$ can be extended to a qc map $h_0:\C\to\C$. Since $f,\Tilde{f}$ are combinatorially equivalent, one can modify quasiconformally $h_0$ on $P_0(v)$, where $v$ is the critical value of $f$, so that $h_0(v) = \Tilde{v}$. Again, since $f,\Tilde{f}$ are combinatorially equivalent, it is easy to check by using the same argument as Theorem \ref{thm.rigid.Zak}, that $h_0$ admits a $(f,\Tilde{f})$-lift to $h_1$, normalised by $h_1(0) = 0$. Now we modify $h_1$ near $v$ so that $h_1(v) = \Tilde{v}$, and then we can lift by $(f,\Tilde{f})$ it to $h_2$ since again, $f$ and $\Tilde{f}$ are combinatorially equivalent. Repeat this procedure, we get a sequence of qc maps $h_n$. However, up to now, we have no control on their dilatations.

   \vspace{0.3cm}
 \paragraph{{\it Step 2: Bounding dilatation by the critical piece.}} 
 
 From Step 1, we see that $h_n$ conjugates $f$ to $\Tilde{f}$ on $I_n(f)$. In this step, we aim to prove the following statement: if $h_n|_{\partial P_n(b)}$ admits a $K$-qc extension $G_n: P_n(b)\to \Tilde{P}_n(\Tilde{b})$, then it has a further $K$-qc extension $G_n:\C\to\C$ conjugating $f$ to $\Tilde{f}$ on $\C\setminus \overline{P_n(b)}$.

By hypothesis, we can modify $h_n$ on $P_n(b)$ so that it is $K$-qc on $P_n(b)$. By Rickman's lemma (cf. \cite[Lem. 2]{DoHu2}), we may assume that the modified map, denoted by $H_{0}$, is also qc on $\C$. Let $v,\Tilde{v}$ be the critical value of $b,\Tilde{b}$ respectively. Since $f,\Tilde{f}$ are combinatorially equivalent, ${H}_{0}:\C\setminus \overline{P_{n-1}(v)}\to \C\setminus \overline{\Tilde{P}_{n-1}(\Tilde{v})}$ admits a $(f,\Tilde{f})$-lift to $$H_1:\C\setminus\overline{{P}_{n}({b})\cup P_n(cp)}\to\C\setminus\overline{\Tilde{P}_{n}({\Tilde{b}})\cup \Tilde{P}_n(\Tilde{cp})},$$ where $cp$ (resp. $\Tilde{cp}$) is the co-critical point to $b$ (resp. $\Tilde{b}$). Since $f:P_n(cp)\to P_{n-1}(v)$ is conformal, $H_0:{P_{n-1}(v)}\to {\Tilde{P}_{n-1}(\Tilde{v})}$ can be lifted to $H_1:{P_{n}(cp)}\to{\Tilde{P}_{n}(\Tilde{cp})}$. Again by Rickman's lemma, $H_1:\C\setminus \overline{P_n(b)}\to \C\setminus\overline{\Tilde{P}_n(\Tilde{b})}$ is qc with the same dilatation as $H_0$. By construction, we have $H_1|_{\partial P_n(b)} = H_0|_{\partial P_n(b)} = h_n|_{\partial P_n(b)}$.

Repeat this procedure, we get a sequence of qc maps $\{H_m\}_m$ with bounded dilatation. Hence it admits a limit qc map $G_n$. By construction, $G_n$ conjugates $f$ to $\Tilde{f}$ on $\C\setminus\overline{P_n(b)}$ and is conformal on $F(f)$, $K$-qc on the set $$\{z\in\C;\exists l\geq 1\text{ such that }f^l(z)\in P_n(b)\}.$$
Notice that $H_m$ extends continuously to $\partial P_n(b)$ and is equal to $H_0|_{\partial P_n(b)}$ for all $m$. Thus $G_n$ extends continuously to $\partial P_n(b)$ and coincides with $H_0|_{\partial P_n(b)}$. Hence $G_n$ admits a $K$-qc extension to $P_n(b)$. By Lemma \ref{lem.zero.measure}, $G_n$ is $K$-qc on $\C$.

  \vspace{0.3cm}
 \paragraph{\it Step 3: uniform $K$-qc extension to the critical piece.} Consider the favorite nest $(V^i,W^i)$ (resp. $(\Tilde{V}^i,\Tilde{W}^i)$) for $f$ (resp. $\Tilde{f}$). Let $q_i$ be the depth of $V^i,\Tilde{V}^i$. The construction of $h_n$ in Step 1 implies in particular that $h_{q_i}|_{\partial V^i}$ admits a $K_{q_i}$-qc extension to $V^i$. Hence we can apply Step 2 to get $G_{q_i}:\C\to\C$ which is also $K_{q_i}$-qc and satisfies $G_{q_i}\circ f = \Tilde{f}\circ G_{q_i}$ on $\C\setminus \overline{V^i}$. In particular, we have $G_{q_i}|_{\partial V^0} = h_{q_i}|_{\partial V^0}= h_{q_0}|_{\partial V^0}$. Thus $G_{q_i}$ admits a $K_{q_0}$-qc extension to $V^0$ and assumption (2) in Theorem \ref{thm.teichmuller} is satisfied. Moreover by the complex bound (Proposition \ref{prop.apriori.bound}), the assumption (1) in Theorem \ref{thm.teichmuller} is satisfied. Thus this theorem is applicable to $G_{q_i}$ so that $G_{q_i}|_{\partial V^i} = h_{q_i}|_{\partial V^i}$ admits a $K'$-qc extension to $V^i$ with $K'$ not depending on $i$. Again by Step 2, $h_{q_i}$ admits a $K'$-qc extension $L_{q_i}:\C\to\C$ such that $L_{q_i}\circ f = \Tilde{f}\circ L_{q_i}$ on $\C\setminus\overline{V^i}$. Thus $L_{q_i}$ admits a subsequence converging to some $K'$-qc map $L$ conjugating $f$ to $\Tilde{f}$ on $\C\setminus \overline{P_{n}(b)}$ for all $n$. Proposition \ref{prop.apriori.bound} implies that $\overline{P_n(b)}$ shrinks to $b$, thus $L \circ f = \Tilde{f}\circ L$ on $\C$. By Proposition \ref{prop.quasiconformal.rigid}, $f = \Tilde{f}$. This finishes the proof.
\end{proof}

\subsection{Puzzle disks for polynomials}
\text{ }\\
In order to deal with the non recurrent case, we introduce puzzle disk for polynomials. Roughly speaking, it is the combination of the modified puzzle disk and the round disk.

Let $f$ satisfy Assumption \ref{assum.well-defined}. Recall that by Proposition \ref{prop.surgery}, $f$ is quasiconformally conjugate to a quasi-regular map $P$ on $\C$ by $\varphi$; $P$ is quasiconformally conjugate to the rigid rotation $R_\theta$ on $\D$ by $h$, $P = F$ on $\C\setminus\D$ ($F$ is the associated Blaschke product); moreover $h$ extends to $\partial\D$ so that $h|_{\partial\D}$ is quasi-symmetric. We can extend $h$ to $\C\setminus\D$ by the reflection $\kappa:z\mapsto1/\overline{z}$, so that we obtain a quasiconformal map $h:\C\to\C$.

Recall that $n_0$ is the initial level of the puzzle disk $D^{n_0}$. For $n\geq n_0$, let $\gamma_n\subset\overline{\D}$ be the standard circular arc such that $e^{-2\pi i(q_n+1)\theta},e^{-2\pi i(q_{n+1}+1)\theta}\in\gamma_n$, and the center of the circle containing $\gamma_n$ is the midpoint of the interval $(e^{-2\pi i(q_n+1)\theta},e^{-2\pi i(q_{n+1}+1)\theta})\subset\partial\D$. Notice that $h$ sends $R_\theta^i(1)$ to $c_i=g^i(1)$ for $i\in \mathbb{Z}$, where $g = P|_{\partial\D}$. Recall the interval 
$Jv_n = (-q_{n+1}+1,-q_n+1)_c$ and the topological disk $S^n$ defined in Definition \ref{def.modified-disks}. Notice that $S^n\cap\partial\D = Jv_n$. Let $S^n_P$ be the Jordan domain bounded by $[\partial S^n\cap(\C\setminus\overline{\D})]\cup h^{-1}(\gamma_n)$. See Figure \ref{fig.SnP}. Let $O^n_P$ be the Jordan domain bounded by $h^{-1}(\gamma_n)\cup\kappa(h^{-1}(\gamma_n))$. Define the puzzle disk for $f$ of depth $n$ by 
$$C^n_f := \text{ the connected component of }f^{-1}(\varphi^{-1}(S_P^n)) \text{ that intersects }\partial\D.$$

\begin{figure}
\centering 
\includegraphics[width=0.5\textwidth]{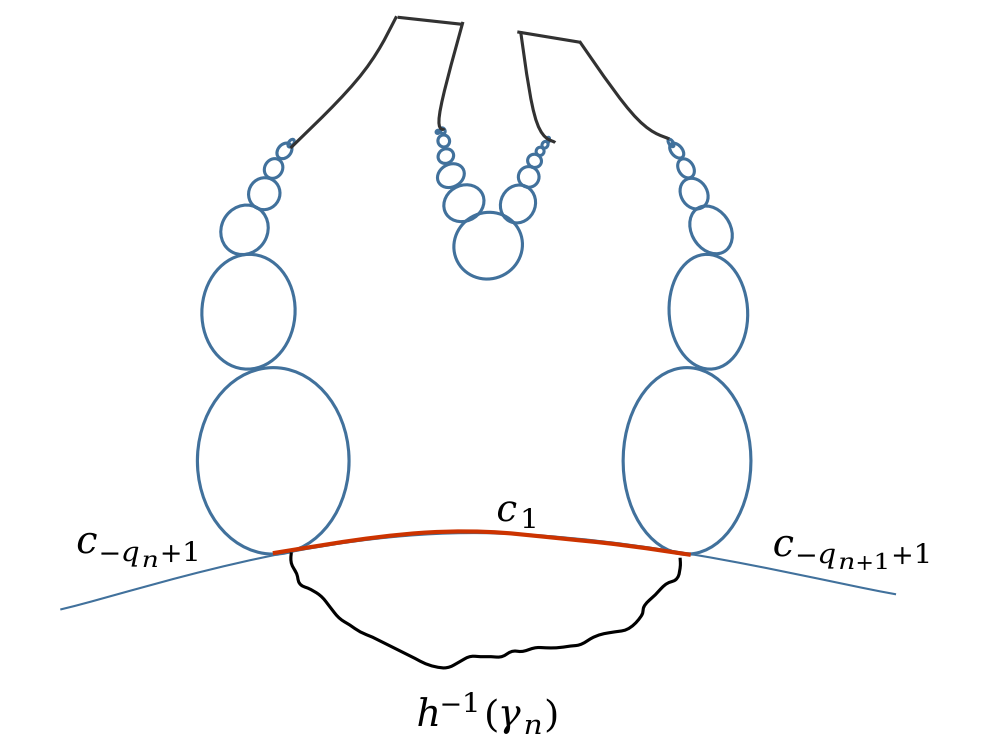} 
\caption{A schematic picture of $S^n_P$} 
\label{fig.SnP} 
\end{figure}

\begin{lem}\label{lem.modulus.S_f}
    There exists $\Tilde{N}$ and $\tilde{M}$ such that for all $n\geq n_0$, ${1}/{\tilde{M}
    }<\mathrm{mod}(C^{n}_f\setminus\overline{C^{n+\Tilde{N}}_f})<\Tilde{M}$.
\end{lem}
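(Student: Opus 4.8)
The plan is to prove the estimate by transporting the a priori bounds already established for the modified Blaschke puzzle disks $C^n$ (Lemma \ref{lem.Cn-Dn} and Proposition \ref{prop.puzzle.disk}) through the quasiconformal conjugacies of Proposition \ref{prop.surgery}, supplemented near the Siegel boundary by the elementary round-disk control coming from the real a priori bounds \cite{Sw} and bounded type. First I would pass to the model picture: since $\varphi\colon\mathbb{C}\to\mathbb{C}$ is $K_\varphi$-quasiconformal it distorts moduli of annuli by at most the factor $K_\varphi$, so it suffices to bound $\mathrm{mod}\big((\varphi C^n_f)\setminus\overline{\varphi C^{n+\tilde N}_f}\big)$ from above and below by constants. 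By construction $\varphi C^n_f$ is the component of $P^{-1}(S^n_P)$ attached to $\partial\mathbb{D}$, and along $\partial\mathbb{D}\cup\partial D'$ it decomposes into three pieces: the pieces $Q^n_\pm\subset\mathbb{C}\setminus\overline{\mathbb{D}}$, which coincide with the corresponding pieces of $C^n$ (see \eqref{eq.Qn-are-puzzlepieces}); a piece inside $D'$ which is the pullback of the $\mathbb{D}$-cap of $S^n_P$ under the univalent branch of $F|_{D'}$; and a piece inside $\mathbb{D}$ equal to $(P|_{\overline{\mathbb{D}}})^{-1}$ of that cap, where $P|_{\overline{\mathbb{D}}}=h^{-1}\circ R_\theta\circ h$ with $h$ being $K_h$-quasiconformal on $\mathbb{C}$. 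The $\mathbb{D}$-cap of $S^n_P$ is $h^{-1}$ of a round region cut off by the circular arc $\gamma_n$, and the two-sided cap $O^n_P$ is the round region bounded by $\gamma_n\cup\kappa\gamma_n$.

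For the lower bound I would fix $\tilde N$ at least as large as the constant $7$ of Lemma \ref{lem.Cn-Dn}, split the annulus $C^n_f\setminus\overline{C^{n+\tilde N}_f}$ along a fixed-modulus collar of $\partial D(f)$ into an outer and an inner sub-annulus, and estimate each separately, combining them through the series law for extremal length. On the outer part the disk $\varphi C^n_f$ agrees with $C^n$ on the $Q^n_\pm$-side and, on the $D'$-side, is the univalent $F|_{D'}^{-1}$-pullback of the $\mathbb{D}$-cap, whose nesting across the levels is controlled by the real bounds in the same way as the internal structure of $C^n$ (in particular $\kappa P^n$ and the round cap are quasiround with uniform constant, by Lemma \ref{lem.commensurable-round-disk}); hence Lemma \ref{lem.Cn-Dn}, together with Theorem \ref{thm.mcmullen} to carry the nesting across $\partial\mathbb{D}$, gives a modulus $\geq\mu'>0$ there. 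On the inner part, since $\theta$ is of bounded type the angular gaps satisfy $|e^{-2\pi i q_n\theta}-1|\asymp 1/q_{n+1}$ and hence shrink by a definite factor over a bounded number of levels, so that $\mathrm{mod}(O^n_P\setminus\overline{O^{n+k}_P})\geq\nu_0>0$ for a fixed $k$ (Theorem \ref{thm.mcmullen} again); applying the $K_h$-quasiconformal map $h^{-1}$ and then the $K_h^2$-quasiconformal map $(P|_{\overline{\mathbb{D}}})^{-1}$ keeps this modulus bounded below. Combining the two halves yields $\mathrm{mod}(C^n_f\setminus\overline{C^{n+\tilde N}_f})>1/\tilde M$ for a uniform $\tilde M$.

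For the upper bound, the footprint $A_{n+\tilde N}:=C^{n+\tilde N}_f\cap\partial D(f)$ is a nonempty arc, and by the real bounds and bounded type it satisfies $|A_{n+\tilde N}|\gtrsim|A_n|\asymp\mathrm{diam}(C^n_f)$, the last comparison being the argument of Corollary \ref{cor.commensurable} applied to $C^n_f$ (which pinches it between round disks of commensurable radii centered at $1/b$). An annulus whose two complementary continua are linked and have comparable diameters has bounded modulus, so $\mathrm{mod}(C^n_f\setminus\overline{C^{n+\tilde N}_f})<\tilde M$ after enlarging $\tilde M$ if necessary.

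I expect the main obstacle to be the combinatorial and topological bookkeeping of the comparison, rather than the modulus estimates themselves. Unlike $C^n$, whose interior cap is the reflected puzzle piece $\kappa P^n$ with its full bubble structure, $C^n_f$ has that cap replaced by a genuinely round region pulled back by the merely quasiconformal map $P|_{\overline{\mathbb{D}}}$, so one must check with care that the three constituent pieces fit together along $\partial\mathbb{D}$ and $\partial D'$ so that the inclusion $C^{n+\tilde N}_f\Subset C^n_f$ holds with the whole boundary in the interior and not merely up to $\partial\mathbb{D}$; this uses Lemma \ref{lem.no-detached-bubble} and Lemma \ref{lem.Cn-Dn} to rule out boundary contact, and the real bounds for the quantitative nesting of the arcs $J^-_n$ on $\partial\mathbb{D}$. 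Once this is in place the remaining estimates reduce to the series and parallel laws together with Theorem \ref{thm.mcmullen}.
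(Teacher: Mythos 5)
Your proposal correctly identifies the right circle of ideas — push the estimate through the quasiconformal conjugacy, apply Lemma \ref{lem.commensurable-round-disk} and Theorem \ref{thm.mcmullen}, and use bounded type together with the real a priori bounds of \cite{Sw}. But the paper's argument is cleaner than what you write, and your lower-bound step has a genuine gap.

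The paper does not decompose the annulus at all. Instead it reduces to the model by noting that $P\circ\varphi$ maps $C^n_f$ to $S^n_P$ as a degree-two quasi-regular proper map, so it suffices to estimate $\mathrm{mod}(S^n_P\setminus\overline{S^{n+\tilde N}_P})$. Then the crucial observation is that $S^n_P$ agrees with $S^n$ on $\mathbb{C}\setminus\mathbb{D}$ and with $O^n_P$ on $\mathbb{D}$. Applying Lemma \ref{lem.commensurable-round-disk} \emph{separately} to the nested sequences $\{S^n\}$ (using the modulus bound from Lemma \ref{lem.Cn-Dn}) and $\{O^n_P\}$ (using bounded type and the quasiconformality of $h$) pinches each between a round disk $B^{n+N'}(c_1)$ and a smaller one; since both pinchings use the \emph{same} round disks centered at $c_1$ and $S^n_P$ is glued from the two pieces along $\partial\mathbb{D}$, one obtains directly
$$S^{n+2N'}_P\subset B^{n+N'}(c_1)\subset S^n_P,$$
and hence a nested chain of round annuli of uniform modulus that delivers the upper and lower bounds at once. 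No splitting into sub-annuli, and no series law.

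The flaw in your lower-bound argument is topological. You propose to ``split the annulus $C^n_f\setminus\overline{C^{n+\tilde N}_f}$ along a fixed-modulus collar of $\partial D(f)$ into an outer and an inner sub-annulus, ... combining them through the series law.'' But $\partial D(f)$ passes through the interiors of \emph{both} $C^n_f$ and $C^{n+\tilde N}_f$, so $\partial D(f)$ intersects the ring domain $A:=C^n_f\setminus\overline{C^{n+\tilde N}_f}$ in two arcs, and cutting along them yields two quadrilaterals (one in $D(f)$, one outside), not two nested annuli separating the same boundary components. The Grötzsch/series inequality does not apply to such a decomposition, and there is no natural essential closed curve ``near $\partial D(f)$'' inside $A$ to cut along. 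This is precisely the difficulty that the paper's round-disk pinching sidesteps: by pinching $S^n_P$ itself between round disks centered at $c_1$, the modulus bound is obtained globally without ever dissecting the annulus across $\partial\mathbb{D}$.

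Your upper-bound sketch (linked continua with commensurable diameters forces bounded modulus) is essentially the right instinct, but observe that in the paper this comes for free from the same chain of inclusions among round annuli, so there is no need to invoke Corollary \ref{cor.commensurable} or to argue about footprints on $\partial D(f)$ separately. I would recommend replacing your two-sided decomposition with the paper's gluing of the two applications of Lemma \ref{lem.commensurable-round-disk}.
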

\begin{proof}
    Notice that $P\circ\varphi(C^n_f) = S^{n}_P$. Since $\varphi$ is quasiconformal, $P:\varphi(C^n_f)\to S^n_P$ is a quasi-regular degree two covering map, it suffices to prove the estimate $${1}/{\tilde{M}
    }<\mathrm{mod}(S^{n}_P\setminus\overline{S^{n+\Tilde{N}}_P})<\Tilde{M}.$$ 
    
    By Lemma \ref{lem.Cn-Dn}, $S^n\setminus \overline{S^{n-7}}$ has modulus bounded from below. By Lemma \ref{lem.commensurable-round-disk}, there exists $N'$ such that $$S^{n+2N'}\subset B^{n+N'}(c_1)\subset S^{n}.$$ 
    On the other hand, since $\theta$ has bounded type, the modulus of the annulus bounded by $\gamma_n\cup\kappa(\gamma_n)\cup\gamma_{n-2}\cup\kappa(\gamma_{n-2})$ is bounded from below. Since $h:\C\to\C$ is quasi-conformal, $O^n_P\setminus \overline{O^{n-2}_P}$ also has bounded modulus.
    Again by Lemma \ref{lem.commensurable-round-disk}, we have
    $$O^{n+2N'}_P\subset B^{n+N'}(c_1)\subset O^{n}_P$$
    (modify $N'$ larger if necessary). From the construction of $S^n_P$, we have $S^n_P\cap(\C\setminus\D) = S^n\cap(\C\setminus\D)$ and $S^n_P\cap\D = O^n_P\cap\D$. Thus we get $S^{n+2N'}_P\subset B^{n+N'}(c_1)\subset S^{n}_P$. Thus 
$$(B^{n+N'}(c_1)\setminus\overline{B^{n+3N'}(c_1)})\subset (S^{n}_P\setminus \overline{S^{n+4N'}_P})\subset(B^{n-N'}(c_1)\setminus\overline{B^{n+5N'}(c_1)}).$$
   If we take $\Tilde{N} = 4N'$, then the desired estimate follows.
\end{proof}
\begin{lem}\label{lem.bounded.shape-Cf}
    There exists $N_1\geq \Tilde{N}$, $M_1$ and $\nu>0$, such that for all $n\geq n_0+N_1$ and all $z\in C^{n}_f$, $\mathbf{S}(C^{n-N_1}_f,z)\leq M_1$.
    
\end{lem}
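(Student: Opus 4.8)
The statement asserts a bounded-shape estimate for the polynomial puzzle disks $C^n_f$: there is a fixed depth-gap $N_1$ and constants $M_1$, $\nu$ so that $\mathbf{S}(C^{n-N_1}_f,z)\le M_1$ for all $z\in C^n_f$ and all large $n$. The natural strategy is to transport the estimate to the Blaschke/quasi-regular side via the quasiconformal conjugacy $\varphi$, reduce to a statement about $S^n_P$ (equivalently about $C^n$ together with the round-disk caps $O^n_P$), and then combine the a priori bounds already available (Proposition \ref{prop.puzzle.disk}(iii), Lemma \ref{lem.Cn-Dn}, Lemma \ref{lem.modulus.S_f}) with the commensurability-with-round-disks lemma (Lemma \ref{lem.commensurable-round-disk}) and the shape-control lemma (Lemma \ref{lem.shape-turning}(i)).

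First I would observe, as in the proof of Lemma \ref{lem.modulus.S_f}, that $P\circ\varphi:\varphi(C^n_f)\to S^n_P$ is a quasi-regular degree-two branched cover with dilatation bounded by that of $\varphi$, and $\varphi$ itself is a fixed quasiconformal map; hence by Lemma \ref{lem.shape-turning}(i) (whose hypotheses only require a proper map of bounded degree between nested disks with a modulus bound on the target annulus) it suffices to bound $\mathbf{S}(S^{n-N_1}_P,\zeta)$ for all $\zeta\in S^n_P$, with a modulus bound on $S^{n-N_1}_P\setminus\overline{S^n_P}$ — the latter being exactly what Lemma \ref{lem.modulus.S_f} supplies once $N_1\ge\tilde N$ (one may need to replace $N_1$ by a multiple of $\tilde N$ to get a genuine modulus bound on the full gap, but this is routine). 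So the whole problem collapses to: the nested sequence $\{S^n_P\}$ of Jordan disks based at $\partial\D$ has uniformly bounded shape at every interior point, up to a fixed depth shift.

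To prove this last claim I would invoke Lemma \ref{lem.commensurable-round-disk} with $c_l=c_1$. Its hypotheses hold for $\{S^n_P\}$: by construction $S^n_P\cap\partial\D = Jv_n = (-q_{n+1}+1,-q_n+1)_c$ (the shift by $1$ matches the statement's $(-q_{n+1}+l,-q_n+l)_c$ with $l=1$), and by Lemma \ref{lem.modulus.S_f} there is a fixed $m=\tilde N$ and $\nu>0$ with $\operatorname{mod}(S^n_P\setminus\overline{S^{n+\tilde N}_P})>\nu$. Therefore there is $N=N(\tilde N,\nu)$ with $S^{n+2N}_P\subset B^{n+N}(c_1)\subset S^n_P$ for all large $n$, where $B^{n+N}(c_1)$ is the round disk centered at $c_1$ of radius $|c_{-q_{n+N}+1}-c_1|$. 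Sandwiching $S^n_P$ between two concentric round disks $B^{n+N}(c_1)\subset S^n_P\subset B^{n-N}(c_1)$ (after replacing $n$ by $n-N$ in the inclusion chain), and using that $\theta$ is of bounded type so that consecutive radii $|c_{-q_{n}+1}-c_1|$ change by a bounded ratio (Świątek's real a priori bound \cite{Sw}, as already used in Corollary \ref{cor.commensurable}), gives $B^{n+N}(c_1)\subset S^{n-N_1}_P\subset B^{n-N_1-N}(c_1)$ for a suitable fixed $N_1$, with the ratio of the two radii bounded by some $M_1'$. For any $z\in S^n_P\subset B^{n-N}(c_1)$ one then has $\operatorname{dist}(z,\partial S^{n-N_1}_P)\le\operatorname{diam} B^{n-N_1-N}(c_1)$ while the inradius of $S^{n-N_1}_P$ at any of its points is at least that of $B^{n+N}(c_1)$; strictly, to bound $\mathbf S(S^{n-N_1}_P,z)$ one uses that $z$ lies inside $B^{n-N}(c_1)\subset B^{n-N_1-N}(c_1)$ and that $B^{n+N}(c_1)\subset S^{n-N_1}_P$ is a round disk of comparable radius, so both $\sup_{w\in\partial S^{n-N_1}_P}|z-w|$ and $\inf_{w\in\partial S^{n-N_1}_P}|z-w|$ are comparable to $|c_{-q_{n}+1}-c_1|$, giving the uniform bound $M_1$. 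Finally transporting through $\varphi$ and $P$ via Lemma \ref{lem.shape-turning}(i) yields the desired $\mathbf S(C^{n-N_1}_f,z)\le M_1$ with a (renamed) constant.

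\textbf{Main obstacle.} The delicate point is not any single estimate but making sure the round disks in Lemma \ref{lem.commensurable-round-disk} are centered \emph{at the same point} $c_1$ for the inner and outer sandwich and that the depth shifts $N$, $\tilde N$, $N_1$ are absorbed consistently; in particular one must check that $z\in S^n_P$ actually forces $z$ into a round disk about $c_1$ of radius comparable to $\operatorname{diam} S^{n-N_1}_P$, which needs the two-sided inclusion $S^{n+2N}_P\subset B^{n+N}(c_1)\subset S^n_P$ applied at \emph{two} different levels and the bounded-type comparability of the radii $|c_{-q_{n}+1}-c_1|$ to bridge the gap. A secondary technical care is the verification that Lemma \ref{lem.shape-turning}(i) applies to the quasi-regular, rather than holomorphic, map $P\circ\varphi$: this is fine because that lemma is a purely quasiconformal-geometry statement (the proof in \cite{QiWaYi} only uses boundedness of degree and a modulus bound), and $\varphi$ has fixed dilatation by Proposition \ref{prop.surgery}, so composing with it only worsens all constants by a fixed factor.
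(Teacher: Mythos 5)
Your proposal is correct, but it takes a somewhat different route from the paper's. The paper works entirely in the $f$-plane: it applies McMullen's Theorem~\ref{thm.mcmullen} directly to the annulus $C^{n-N_1}_f\setminus\overline{C^n_f}$ (whose modulus is two-sidedly controlled by Lemma~\ref{lem.modulus.S_f}), centered at the critical point $1/b$, to extract a round annulus $A^n$ with inner circle $T^n_-$ circumscribing $C^n_f$ and outer circle $T^n_+$ inscribed in $C^{n-N_1}_f$. It then shows $T^{n-N_1}_-$ and $T^n_+$ are commensurable by observing that the annulus they bound sits inside $C^{n-2N_1}_f\setminus\overline{C^n_f}$, which again has bounded modulus; the shape estimate falls out immediately since any $z\in C^n_f$ lies inside $T^n_-$ while $\partial C^{n-N_1}_f$ lies between $T^n_+$ and $T^{n-N_1}_-$. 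You instead push forward to the $P$-plane via $\varphi$ and $f$, prove the shape bound for the disks $S^n_P$ by invoking Lemma~\ref{lem.commensurable-round-disk} (itself an application of McMullen) centered at $c_1$, and then pull the estimate back through $P\circ\varphi$ using Lemma~\ref{lem.shape-turning}(i). Both routes rest on the same two a priori inputs (Lemma~\ref{lem.modulus.S_f} plus McMullen); the paper's is shorter because it avoids the forward-and-back transport, while yours makes the comparison to round disks via the already-packaged Lemma~\ref{lem.commensurable-round-disk}. Your caveat about applying the shape-distortion lemma to the quasiregular $P$ is correctly identified and can also be sidestepped by splitting the transport into the holomorphic degree-two map $f:C^n_f\to\varphi^{-1}(S^n_P)$ (to which Lemma~\ref{lem.shape-turning}(i) applies verbatim) followed by the fixed-dilatation qc map $\varphi$ (which distorts shape by only a bounded factor).
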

\begin{proof}
    By Lemma \ref{lem.modulus.S_f}, and Theorem \ref{thm.mcmullen}, there exists $N_1\geq \Tilde{N}$, $M_0>0$, such that $C^{n-N_1}_f\setminus \overline{C^{n}_f}$ contains a round annulus $A^n$ centered at $z=1/b$, such that ${1}/{M_0}<\mathrm{mod}(A^n)<M_0$. Let $T_+^n,T^n_-$ be the outer/inner component of $\partial A^n$. Then $T_+^n$, $T^n_-$ and $\mathrm{dist}(T^n_+,T^n_-)$ have commensurable size. Notice that the modulus of the annulus bounded by $T^{n-N_1}_-$ and $T^{n}_+$ is also bounded from above, since this annulus is contained in $C^{n-2N_1}_f\setminus \overline{C^{n}_f}$. Thus $T^{n-N_1}_-$ and $T^{n}_+$ have commensurable size. Thus there exists $M_1$ such that
    \begin{equation}
        \mathbf{S}(C_f^{n-N_1},z) = \frac{\sup_{y\in\partial C_f^{n-N_1}}|z-y|}{\inf_{y\in\partial C_f^{n-N_1}}|z-y|} \leq \frac{\mathrm{diam}(T^{n-N_1}_-)}{\mathrm{dist}(T^{n}_+,T^n_-)}\leq M_1.
    \end{equation}
\end{proof}

Using the same argument as in the proof for Lemma \ref{lem.bounded.degree.alongD} and \ref{lem.bounded.first.hit}, we can show that
\begin{lem}\label{lem.bounded.first.hit-Cf}
Suppose that $z\in J(f)$, the orbit of $z$ accumulates to $\partial\D$, and that there exists $N\geq0$ such that for all $l\geq 1$, $f^l(z)\not\in P_N(b)$. For any $m\geq 0$, denote by $C^{n-m,k}_f$ the pull back of $C^{n-m}_f$ along the orbit of $z$. Consider the first hit map of $z$ to $C^n_f$: $f^{m_n}: C^{n,m_n,k}_f\to C^n_f$. Then there exists uniform constants $M(m,\theta,N)$, $d(m,\theta)$, such that for $n\geq M(m,\theta,N)$, 
$$f^{m_n}:{C}^{n-m,m_n}_f\to{C}^{n-m}_f$$
has bounded degree $d(m,\theta)$.
\end{lem}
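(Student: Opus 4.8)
The plan is to carry over the proofs of Lemma~\ref{lem.bounded.degree.alongD} and Lemma~\ref{lem.bounded.first.hit} almost verbatim, once the combinatorial structure of the polynomial puzzle disks $C^n_f$ near $\partial D(f)$ has been matched with that of the modified puzzle disks $C^n$ near $\partial\D$. Recall that $\varphi$ carries $\C\setminus\overline{D(f)}$ onto $\C\setminus\overline{\D}$, conjugates $f$ to $F$ there, and maps the graph $I_k(f)$ to $I_k(F)$, hence puzzle pieces of $f$ to puzzle pieces of $F$; since $z\in J(f)$, the whole forward orbit of $z$ stays in $\C\setminus D(f)$, so all the counting can be done either in the $F$-picture or directly in the $f$-plane. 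From the definition $C^n_f\subset f^{-1}(\varphi^{-1}(S^n_P))$ one reads off that $C^n_f$ is a topological disk straddling $\partial D(f)$ whose part outside $\overline{D(f)}$ has the same combinatorial description as $C^n\cap(\C\setminus\overline{\D})$: modulo the combinatorially trivial piece inside the generation-$1$ bubble of $f$ rooted at $1/b$, it is the union $P^{f,+}_{q_{n+1}}(1/b)\cup P^{f,-}_{q_{n+1}}(1/b)$ of the two depth-$q_{n+1}$ puzzle pieces of $f$ attached to $\partial D(f)$ at the critical point $1/b$ --- the polynomial counterparts of $Q^n_\pm$ in (\ref{eq.Qn-are-puzzlepieces}) --- while $C^n_f\cap D(f)$ is bounded by a quasi-arc lying in the Siegel disk, hence disjoint from $J(f)$ and invisible to the degree count.

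First I would establish the polynomial analogue of Lemma~\ref{lem.bounded.degree.puzzledisk}: for $0\le k\le q_{n+m}$ the pullback of $C^n_f$ along $\partial D(f)$ is reached by $f^k$ with degree bounded in terms of $m$ and $\theta$ only. This follows by the same argument: the arcs $C^{n,k}_f\cap\partial D(f)$ are the $(\phi^0_b)^{-1}$-images of rotated copies of an arc of length $\asymp|J^-_n|$, so $C^{n,k}_f$ can contain a critical point of $f$ (either $1/b$, or $b$, which lies in the limb $L_x$ of Corollary~\ref{cor.decomposition} rooted at some $x\in\partial D(f)$) only when its boundary arc contains $1/b$ or $x$; by bounded type this happens for at most $C(m,\theta)$ values of $k$, and $1/b$, $b$ exhaust the critical points of $f$. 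Next I would bound $k_n$ (the largest $k\le m_n$ with $C^{n-m,k}_f\cap\partial D(f)\ne\varnothing$) by replaying the $A_i$, $l_i$ induction of Lemma~\ref{lem.bounded.degree.alongD} word for word. The intermediate claim that $C^{n-m,k}_f\not\ni b$ for $k\le a_{n-m+1}q_{n-m+1}$ --- and, more generally, the ``no $b$ in between'' statement of Lemma~\ref{lem.bounded.first.hit} --- goes through using Lemma~\ref{lem.infinitesurround-cb} (whose proof needs only Lemma~\ref{lem.loc.alpha}, Theorem~\ref{thm.jonguk} and the non-recurrence hypothesis $f^l(z)\notin P_N(b)$, all available here) together with the fact that the pieces $P^{f,\pm}_{q_{n+1}}(1/b)$ have depth $q_{n+1}$; one then obtains $k_n\le\mathbf{r}_{n+m+2}<q_{n+m+6}$, since $A_{m+1}$, of depth $\ge q_{n+1}$ and touching $\partial D(f)$ at $1/b$, must lie in one of the $P^{f,\pm}_{q_{n+1}}(1/b)\subset C^n_f$, contradicting first-hit minimality otherwise. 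Applying the Lemma~\ref{lem.bounded.degree.puzzledisk}-analogue with $k=k_n<q_{n+m+6}$, together with the ``no $b$ in between'' conclusion and the remark that for $k>k_n$ the piece $C^{n-m,k}_f$ is disjoint from $\partial D(f)$ and so cannot contain $1/b$, yields the bound $d(m,\theta)$ on the degree of $f^{m_n}:C^{n-m,m_n}_f\to C^{n-m}_f$ for $n\ge M(m,\theta,N)$.

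The step I expect to be the main obstacle is the one genuinely new point: in the polynomial setting a pullback $C^{n-m,k}_f$ carries, inside bubbles of $f$, pieces that have no counterpart in the Blaschke picture, so one must check that no stray critical point of $f$ hides in them and that the two puzzle-piece comparisons used above --- $A_{m+1}\subset C^n_f$, and every component of the relevant preimage of the pieces $P^{f,\pm}(1/b)$ inside a bounded-degree pullback being contained in a puzzle piece of controlled depth --- remain valid. This is precisely where (\ref{eq.Qn-are-puzzlepieces}), transported to the $f$-plane by $\varphi$, and Lemmas~\ref{lem.no-detached-bubble} and~\ref{lem.Cn-Dn} (which keep the puzzle disks away from stray bubbles) come in; with these structural facts in place, the combinatorial counting of Lemmas~\ref{lem.bounded.degree.alongD} and~\ref{lem.bounded.first.hit} transfers unchanged.
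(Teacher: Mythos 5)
Your proposal is correct and takes essentially the same approach as the paper. The paper's own proof of this lemma is a single sentence deferring to the arguments of Lemmas~\ref{lem.bounded.degree.alongD} and~\ref{lem.bounded.first.hit}, and you correctly identify the structural facts that make the transfer work: the part of $C^n_f$ outside $\overline{D(f)}$ corresponds under $\varphi$ to $C^n\cap(\C\setminus\overline\D)$ (with the $Q^n_\pm$ of \eqref{eq.Qn-are-puzzlepieces} playing the role of the depth-$q_{n+1}$ polynomial puzzle pieces attached at $1/b$), the piece of $C^n_f$ inside $D(f)$ lies in the Fatou set and is irrelevant to the degree count since both critical points $1/b$ and $b$ lie on $\overline{\C\setminus D(f)}$, and the bookkeeping of the $A_i$-induction and the no-$b$-in-between claim goes through verbatim using Lemma~\ref{lem.infinitesurround-cb}, Lemma~\ref{lem.no-detached-bubble}, and the non-recurrence hypothesis $f^l(z)\notin P_N(b)$.
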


\subsection{Non recurrent case}
\text{ }\\
Next we prove the rigidity theorem for non recurrent case.
\begin{proof}[{\it Proof of Theorem \ref{thm.rigidity}: {non recurrent case.}}] Recall Step 1 and Step 2 in the proof of the recurrent case in \S\ref{subsec.recurrentcase}. Notice that these two steps works without any change for the non recurrent case and we will use the same notations. It suffices to prove that there exists a constant $K>0$, such that $h_{n}|_{\partial P_{n}(b)}$ extends to a $K$-qc map to $P_{n}(b)$. To do this, we need the following criterion of quasiconformal extension (see \cite[Lem. 12.1]{KoShvS}):
\begin{lem}\label{lem.xiaoguang}
  Let $H:\overline{\Omega}\longrightarrow\overline{\Tilde{\Omega}}$ be a homeomorphism between two closed Jordan domains, $k\in(0,1)$ be a constant. Let $X$ be a subset of $\Omega$ such that both $X$ and $H(X)$ have zero Lebesgue measure. Assume:
    \begin{enumerate}
        \item[$({\mathrm{a}}).$] $|\overline{\partial}H|\leq k|\partial H|$ a.e. on $\Omega\setminus X$.
        \item[$({\mathrm{b}}).$] There is a constant $M>0$ such that for all $x\in X$, there is a sequence of open topological disks $\cdot\cdot\cdot D_2\Subset D_1$ containing $x$ such that $\bigcap_j \overline{D_j} = \{x\}$ and $\sup_j\mathbf{S}(D_j,x)\leq M$, $\sup_j\mathbf{S}(H(D_j),H(x))<\infty$.
    \end{enumerate}
Then $H|_{\partial\Omega}$ extends to a $K$-qc map on $\Omega$, where $K = K(k,M)$. 
\end{lem}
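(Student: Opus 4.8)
The plan is to establish the quasiconformal‑gluing criterion \lemref{lem.xiaoguang} by the standard argument and then feed it into the non‑recurrent case of \thmref{thm.rigidity}. For \lemref{lem.xiaoguang} itself I would show that $H$ is in fact a $K(k,M)$‑quasiconformal homeomorphism of $\Omega$ onto $\tilde\Omega$; since $H$ already maps $\overline\Omega$ onto $\overline{\tilde\Omega}$, it is then its own extension of the boundary values $H|_{\partial\Omega}$. By the analytic characterisation of quasiconformality this reduces to proving $H\in W^{1,2}_{\mathrm{loc}}(\Omega)$: once that is known, hypothesis (a) together with $|X|=0$ forces $|\overline{\partial}H|\le k|\partial H|$ a.e.\ on all of $\Omega$, so $H$ is $\tfrac{1+k}{1-k}$‑quasiconformal off $X$, and the only remaining danger is that the null set $X$ hides a failure of absolute continuity. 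Hypothesis (b) is exactly designed to exclude this: the nested disks $D_j\Subset D_{j-1}$ with $\mathbf{S}(D_j,x)\le M$ are trapped between round disks of eccentricity $\le M$ centred at $x$, and combined with $\sup_j\mathbf{S}(H(D_j),H(x))<\infty$ this shows the metric linear dilatation of $H$ is finite at every point of $X$.

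The technical core, and the step I expect to be the main obstacle, is converting (b) into the analytic statement $H\in W^{1,2}_{\mathrm{loc}}$ while keeping track of the constant $K(k,M)$. I would run the usual length--area argument: fix a rectangle $R\Subset\Omega$; to see that $H|_\ell$ is absolutely continuous for almost every horizontal segment $\ell\subset R$, cover $X\cap R$ efficiently by the bounded‑shape disks of (b) (which, up to the factor $M$, behave like round disks), use $|H(X)|=0$ together with a Besicovitch‑type covering and Fubini on the image side, and bound $\int_R|DH|^2$ by $\mathrm{Area}(H(R))$ times a constant depending only on $M$; this yields both the Sobolev regularity and the $M$‑dependence of $K$. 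An equivalent but cleaner route is to solve the Beltrami equation for $\mu_H$ (extended by $0$ over $X$), obtaining a $\tfrac{1+k}{1-k}$‑qc map $\phi\colon\Omega\to\Omega^{\ast}$, observe that $G:=H\circ\phi^{-1}$ is a homeomorphism that is weakly conformal off the null set $\phi(X)$, and prove $\phi(X)$ is conformally removable for $G$: since $\phi$ distorts shapes by a bounded amount, (b) transports to bounded‑shape nested neighbourhoods with bounded‑shape images around every point of $\phi(X)$, which is a removability hypothesis, forcing $G$ to be univalent holomorphic and hence $H=G\circ\phi$ quasiconformal. Either way this removability/ACL step is genuinely a theorem in its own right; everything else — the analytic definition of qc, continuity of $H$ up to $\partial\Omega$, matching the boundary data — is routine.

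To close the non‑recurrent case of \thmref{thm.rigidity} I would then apply \lemref{lem.xiaoguang} with $\Omega=P_n(b)$, $\tilde\Omega=\tilde P_n(\tilde b)$, $H=h_n|_{\overline{P_n(b)}}$ from Steps~1--2 (whose dilatation off the exceptional set is bounded by a fixed $k<1$ independent of $n$, being built from the initial Slodkowski extension by conformal pull‑backs), and $X=X_n:=\{z\in J(f)\cap P_n(b)\colon f^{\ell}(z)\notin P_n(b)\ \text{for all}\ \ell\ge1\}$. Then $X_n$ and $H(X_n)$ are Lebesgue‑null by \lemref{lem.zero.measure} applied to $f$ and to $\tilde f$, so (a) holds; (b) is supplied by the puzzle machinery of \secref{sec:combinatorial-tools}. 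For $x\in X_n$ whose forward orbit accumulates to a point $z_0\in J(f)$ with $f^m(z_0)\notin\overline{R^{\bfB}_0}$, or to the landing point $\alpha$ of $R^{\bfB}_0$, I would pull a compactly contained pair $P_{n_{z_0}}(z_0)\Subset P_0(z_0)$ from \lemref{lem.compactly.contain} back along the orbit and bound the shape using the bounded return‑degree and \lemref{lem.shape-turning}; for $x\in X_n$ whose orbit accumulates to $\partial D(f)$ I would instead pull the polynomial puzzle disks $C^n_f$ back along the first‑hit maps and invoke \lemref{lem.bounded.first.hit-Cf}, \lemref{lem.bounded.shape-Cf} and \lemref{lem.commensurable-round-disk} to get a uniform shape bound and shrinking to $x$. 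This produces a $K(k,M)$‑qc extension with $K$ independent of $n$; plugging it into Step~2 gives a global $K$‑qc $G_n$ conjugating $f$ to $\tilde f$ off $\overline{P_n(b)}$, and since $b\not\to_c b$ forces $b\in X_n$ and hence $\bigcap_n\overline{P_n(b)}=\{b\}$ by \lemref{lem.zero.measure}, a subsequential limit of the $G_n$ is a global $K$‑qc conjugacy, whence $f=\tilde f$ by \propref{prop.quasiconformal.rigid}. The hardest point in this last part is obtaining hypothesis (b) with one constant $M$ valid for every $x\in X_n$, in particular for orbits accumulating to $\partial D(f)$ — which is exactly why the polynomial puzzle disks $C^n_f$ and the uniformly‑bounded‑degree first‑hit estimates were developed.
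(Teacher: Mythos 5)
The paper's own ``proof'' of Lemma~\ref{lem.xiaoguang} is not a proof at all, but a citation to \cite[Lem.~12.1]{KoShvS} together with a one-line remark that passing from round disks to bounded-shape topological disks in hypothesis~(b) requires no change to the cited argument. Your proposal reconstructs a proof from scratch. Your Beltrami-composition route --- extend $\mu_H$ by zero across $X$, integrate to a $\tfrac{1+k}{1-k}$-qc map $\phi$, set $G := H\circ\phi^{-1}$, and deduce that $G$ is conformal from a removability theorem applied to the null set $\phi(X)$ carrying the shape hypothesis transported through $\phi$ --- is exactly the strategy the paper is deferring to, and your observation that the bounded-shape topological disks $D_j$ are pinched between round disks of eccentricity $\le M$ centred at $x$ (so that everything reduces to the round-disk case after absorbing the factor $M$) is precisely the content of the paper's remark. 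You correctly single out the removability/ACL step as ``genuinely a theorem in its own right''; this is the actual content of KSS Lemma~12.1, and neither you nor the paper re-proves it here --- the paper discharges it by citation, as you would need to.

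Two secondary observations. First, in your Beltrami route the dilatation comes out as $K=\tfrac{1+k}{1-k}$, independent of $M$; the constant $M$ enters only the removability step, not the dilatation bound, so the paper's $K=K(k,M)$ is either conservative or an artifact of the particular length-area estimates in \cite{KoShvS}. Second, your alternative length-area route is sketched more loosely --- the ``Besicovitch-type covering and Fubini on the image side'' step would require care since $|H(X)|=0$ controls the image side while the covering and the ACL conclusion live on the domain side --- but this is a vagueness of exposition, not an error in plan; the Beltrami route is the cleaner and correct account.
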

\begin{rem}
    In \cite[Lem. 12.1]{KoShvS}), the lemma was stated for a sequence of round disks. In the lemma above, we relax the condition to bounded shape topological disks and the same proof goes through without change. 
\end{rem}

By Lemma \ref{lem.zero.measure} and Remark \ref{cor.zero.measure.nonrecur}, $J(f),J(\Tilde{f})$ are locally connected. Thus we can construct a topological conjugacy $\psi$ between $f,\Tilde{f}$ as follows: define $\psi$ on $D(f)$ by $(\Tilde{\phi})^{-1}\circ\phi:D(f)\to{D}(\Tilde{f})$, where $\phi,\Tilde{\phi}$ are linearization coordinates of $f,\Tilde{f}$ such that $(\Tilde{\phi})^{-1}\circ\phi(1/b) = 1/\Tilde{b}$. Pull-back $\psi$ by $f,\Tilde{f}$ to spread it to the interior of $K(f)$; define $\psi$ on the super-attracting basin of infinity of $f$ to be $\Tilde{\phi}_{\infty}^{-1}\circ \phi_\infty$, where $\phi_\infty,\Tilde{\phi}_{\infty}$ are  Böttcher coordinates of $f,\Tilde{f}$. These two conjugacies extends continuously to the Julia set $J(f)$ and coincide there since $J(f)$ is locally connected and $f,\Tilde{f}$ are combinatorially equivalent. The global conjugacy $\psi$ is defined to be the sewing of these two extended maps. 

In order to apply Lemma \ref{lem.xiaoguang}, we set $$\Omega = P_n(b),\,\, X = J(f)\cap P_n(b),\,\, H = \psi.$$

By Lemma \ref{lem.zero.measure} and Remark \ref{cor.zero.measure.nonrecur}, $J(f)$ has zero measure. By construction, $\psi$ is conformal on $\C\setminus J(f)$. It remains to verify condition ($\mathrm{b}$) in Lemma \ref{lem.xiaoguang}. Let us take any $z\in J(f)$. We consider three cases:
\begin{itemize}
     \item $z$ accumulates to some $z_0\in J(f)$ such that $f^n(z)\not\in\overline{R^\mathbf{B}_0}$ for all $n$. 
     
     Suppose $f^{l_j}(z)$ converges to $z_0$. Pull back $P_0(z_0)$ along the orbit of $z$, we get a sequence of puzzle pieces $P_{l_i}(z)$. Take a small round disk $B\Subset P_0(z_0)$ containing $z_0$, such that $\mathrm{mod}(P_0(z_0)\setminus\overline{B})\geq 1$. By the non recurrent hypothesis, $f^{l_i}$ sends $P_{l_i}(z)$ to $P_0(z_0)$ within bounded degree (not depending on $i$ and $w$). Pull back $B$ along the orbit of $z$ we get a sequence of topological disks $B_{l_i}\Subset P_{l_i}(z)$ with $z\in B_{l_i}$. Thus by Lemma \ref{lem.shape-turning} (\romannumeral1), $\mathbf{S}(z,B_{l_i})$ is bounded. It is clear that $B_{l_i}$ shrinks to $z$, since $P_{l_i}(z)$ does. It remains to verify that 
     $\sup_i\mathbf{S}(\psi(B_{l_i}),\psi(z))<\infty$. To see this, we notice that $\psi$ preserves puzzle pieces. Hence $\psi(P_{l_i}(z)) = \Tilde{P}_{l_i}(\psi(z))$, $\psi({P}_{0}(z_0)) = \Tilde{P}_{0}(\psi(z_0))$. Thus $ \Tilde{f}^{l_i}: \Tilde{P}_{l_i}(\psi(z))\to \Tilde{P}_0(\psi(z_0))$ has bounded degree. The result then follows by Lemma \ref{lem.shape-turning} (\romannumeral1).

     \item $z$ accumulates to the end point of $\overline{R^\bB_0}$ or lands at the end point of $\overline{R^\mathbf{B}_0}$. 
     
     Recall ${P}'_n(\alpha)$ constructed in Case ($\mathbf{2}$) of Lemma \ref{lem.zero.measure}, which is a topological disk containing $\alpha$. We can fix $n = N$ and pull back ${P}'_N(\alpha)$ along the orbit of $z$ and use the same argument as the above case to conclude. 

     \item $z$ lands at $D(f)$ or its orbit accumulates at $1/b$.

     We may suppose that $z$ does not accumulate to the free critical point $b$, otherwise it is covered by the first case.
     
     Let $N_1$ be given in Lemma \ref{lem.bounded.shape-Cf}. Consider the first hit map of $z$ to $C^n_f$:
     $$f^{m_n}:{C^{n,m_n}_f}\to C^n_f.$$
     By Lemma \ref{lem.bounded.first.hit-Cf},
     $f^{m_n}|_{{C}^{n-2N_1,m_n}_f}$ has bounded degree $d(2N_1,\theta)$. By Lemma \ref{lem.modulus.S_f},
     $$\mathrm{mod}(C^{n-2N_1}_f\setminus \overline{C^{n-N_1}_f})\geq M_0.$$
     By Lemma \ref{lem.bounded.shape-Cf}, 
     $$\mathbf{S}(C^{n-N_1}_f,f^{m_n}(z))\leq M_1.$$ 
     By Lemma \ref{lem.shape-turning} (\romannumeral1), $\mathbf{S}({C}^{n-N_1,m_n}_f,z)$ is uniformly bounded by $M_2 = M_2(2N_1,\theta,M_0,M_1)$ (not depending on $n$ and $z$). From the construction of $C^{n}_f$, it is easy to see that $\psi(C^{n-m,k}_f) = C^{n-m,k}_{\Tilde{f}}$ for all $m$ and $k$. Thus   $\mathbf{S}(\psi({C}_{f}^{n-N_1,m_n}),\psi(z)) = \mathbf{S}({C}_{\Tilde{f}}^{n-N_1,m_n},\psi(z))$. The later is uniformly bounded in $n$ for the same reason as $\mathbf{S}({C}^{n-N_1,m_n}_f,z)$.
\end{itemize}
Thus Lemma \ref{lem.xiaoguang} is applicable to $\psi$, and hence $\psi|_{\partial P_n(b)}$ has a $K$-qc extension to $P_n(b)$. While from construction, $\psi|_{\partial P_n(b)} = h_n|_{\partial P_n(b)}$. This finishes the proof.
\end{proof}

\section{Rigidity in the Parameter Plane}\label{sec:parameterplane}

For $n\geq 0$, let
$$
S_n := \{s^n_i := i/2^{n+1} \; | \; -2^n< i < 2^n\}.
$$
If $s \in S_n$, then the parabubble ray $\cR^\bB_s$ is contained in the fundamental region $\cS_{\fun}$ (see \eqref{eq.parabubray} and Definition \ref{def.fundamentalregion}). By \propref{pre per para ray}, $\cR^\bB_s$ lands at a Misiurewicz parameter $b_s$. Moreover, there exists a unique angle $t(s) \in \bbR/\bbZ$ such that the parameter ray $\cR^\infty_{t(s)}$ colands at $b_s$.

Fix $r> 0$. For $n\geq 0$, the {\it parameter graph} $\mathcal{I}_n$ is defined as:
\begin{equation}
\I_n:=\left(\bigcup_{s\in S_n}(\mathcal{R}^{\mathbf{B}}(s)\cup \{b_s\}\cup\mathcal{R}^\infty_{t(s)})\right)\cup\mathcal{E}^\infty_{r/3^n}.
\end{equation}
For $-2^n\leq i < 2^n$, the closure $\cP^n_i$ of a connected component of $\cS_{\fun} \setminus \I_n$ bounded between $\cR^{\mathbf{B}}_{s^n_i}$ and $\cR^{\mathbf{B}}_{s^n_{i+1}}$ is referred to as a {\it parameter puzzle piece of depth $n$}.

\begin{prop}\label{tree corr 1}
Let $b$ be a parameter contained in a parameter puzzle piece of depth $n$. Then $\bR_{s^n_i}(f_b) \subset T(f_b)$ for $-2^{n-1}\leq i < 2^{n-1}$.
\end{prop}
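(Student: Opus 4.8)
The plan is to set up a holomorphic-motion argument transporting the dynamical bubble rays $\bR_{s^n_i}(f_b)$ across the parameter puzzle piece $\cP^n_j$ containing $b$, and to show that this motion cannot be obstructed (i.e. the bubble rays never crash into the free critical point) as long as $b$ stays inside $\cP^n_j$. First I would fix the parameter puzzle piece $\cP^n_j$ of depth $n$ containing $b$, together with the finite collection of angles $\{s^n_i : -2^{n-1}\le i<2^{n-1}\}$. Each such $s^n_i$ lies strictly between the two boundary angles $s^n_j, s^n_{j+1}$ of $\cP^n_j$ in the sense of the combinatorics (since $\{s^n_i : -2^{n-1}\le i<2^{n-1}\}$ is the "half-depth" subfamily, these are precisely the bubble rays that are separated from the Zakeri curve by $\cP^n_j$'s bounding parabubble rays). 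The key structural input is the correspondence $\Psi$ of \propref{limb ext} and \thmref{paratree}: a parabubble ray $\cR^\bB_s$ bounding $\cP^n_j$ records, via $\Psi$, the dynamical position of $co_b$ relative to the tree $T(\q)$, hence (by \thmref{cubic tree model}) relative to $T(f_b)$.

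The main steps I would carry out are: (1) Start from a base parameter $b_0 \in \cP^n_j \cap \cR^\infty_0$ (or more safely from $\cV_{\frac{5}{6},\frac16}\cap\cP^n_j$, using \lemref{fund wakes}(ii) and \lemref{lem.holo.motion} as the seed where all the relevant bubble rays and external rays are certainly well-defined and land). (2) Show that for this $b_0$, every $\bR_{s^n_i}(f_{b_0})$ with $-2^{n-1}\le i<2^{n-1}$ is a well-defined rational bubble ray in $T(f_{b_0})$ — this is immediate from \thmref{cubic tree model} since $co_{b_0}$ lies far from these rays combinatorially. (3) Propagate the bubble rays by analytic continuation (holomorphic motion of their bubbles and joints, via $\psi_b^{-1}\circ\psi_{b_0}$ as in \thmref{cubic tree model} and \lemref{lem.holo.motion}) as $b$ ranges over $\cP^n_j$. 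The motion can only fail when the bubble ray $\bR_{s^n_i}(f_b)$, or one of the bubbles/joints composing it, hits the free critical point $b$. (4) Rule this out: if $b\in\bR_{s^n_i}(f_b)$ (or in a bubble adjacent to it), then in parameter space $b$ would have to lie on the parabubble ray $\hat\cR^\bB_{s^n_i}$ or on a parabubble attached to it; but $\hat\cR^\bB_{s^n_i}$, together with its defining parameter external rays, separates the plane so that $b\notin\cP^n_j$ — contradicting that $b$ lies in the parameter puzzle piece cut out by $\cR^\bB_{s^n_j}$ and $\cR^\bB_{s^n_{j+1}}$. This is exactly the parameter/dynamical plane correspondence already used repeatedly (e.g. in the proofs of \propref{para separable}, \propref{pre per para ray}, \thmref{paratree}).

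The main obstacle I expect is step (4): making rigorous the statement that "if the free critical point sits on the dynamical bubble ray $\bR_{s}(f_b)$ then the parameter $b$ sits on the parabubble ray $\hat\cR^\bB_{s}$ in a way incompatible with $b\in\cP^n_j$." The subtlety is that $\bR_s(f_b)$ need not even be well-defined at such $b$ (if $co_b$ has already landed inside the corresponding limb), so one must argue via the parabubble dual-ray $\hat\cR^\bB_s = \Psi^{-1}(\bR_s(\q)\setminus D(\q))$ and its Hausdorff accumulation, using \propref{limb ext} to see that $\overline{\hat\cR^\bB_s}$ is disjoint from $\cP^n_j$, together with the fact (\propref{prop.rational-bubble}, \propref{prop.separate}, \propref{para separable}) that the bubble ray $\bR_s(f_b)$ lands at a repelling (or eventually repelling) point for all $b$ in the puzzle piece, so the motion is genuinely unobstructed and no quadratic-like obstruction appears. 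Once the motion is shown to be well-defined over all of $\cP^n_j$, each $\bR_{s^n_i}(f_b)$ is a bona fide (rational, hence landing by \propref{prop.rational-bubble}) bubble ray, so $\bR_{s^n_i}(f_b)\subset T(f_b)$ by definition of the tree of bubbles, which is the claim.
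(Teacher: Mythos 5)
Your proposal is essentially correct and rests on the same underlying mechanism as the paper's proof, but it takes a substantially more elaborate route. The paper's argument is a two-sentence observation: the bubble ray $\bR_{s^n_i}(f_b)$ fails to lie in $T(f_b)$ only if $b$ (and hence $co_b$) lands on an iterated image of that bubble ray, and by the parameter--dynamical correspondence this can happen only if $b \in \I_{n-1}$, which is excluded for $b$ in a depth-$n$ parameter puzzle piece. You arrive at the same exclusion but package it as a holomorphic-motion / analytic-continuation argument propagated from a base point in $\cV_{\frac{5}{6},\frac16}\cap\cP^n_j$, which is valid but not needed: the obstruction criterion already gives the conclusion directly, and the half-range restriction $-2^{n-1}\le i<2^{n-1}$ is precisely what guarantees that the relevant iterated-image angles fall into $S_{n-1}$, so the obstruction set is contained in $\I_{n-1}$.

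One small imprecision to fix in your Step (4): the parabubble dual-ray $\hat\cR^{\bfB}_s$ records the position of the \emph{co-critical point} $co_b$, via $\Psi(b)=\psi_b(co_b)$, not of $b$ itself. If $b$ lies on (a bubble of) $\bR_{s}(f_b)$ then it is $co_b$ that lands on the mirror bubble ray through the other preimage of $v_b$, and it is \emph{that} angle which places $b$ on a parabubble ray in $\I_{n-1}$. This is exactly why the paper phrases the obstruction as ``$b$ (and therefore, also $co_b$)'': one needs $co_b$ to lie on the tree to read off the parameter location. The dichotomy still closes correctly, but the bookkeeping should go through $co_b$.
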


\begin{proof}
We have $\bR_{s^n_i}(f_b) \subset T(f_b)$ unless $b$ (and therefore, also $co_b$) is contained in an iterated image of $\bR_{s^n_i}(f_b)$. However, this can only happen if $b \in \I_{n-1}$.
\end{proof}

Now let $f_b$ satisfy Assumption \ref{assum.well-defined}. Recall the definition of dynamical graph $I_n(f_b)$ of depth $n \geq 0$ given in \eqref{eq.dyna.graph}. By Proposition \ref{tree corr 1}, we have $I_n(f_b) \subset T(f_b)$. For $-2^n\leq i < 2^n$, denote the puzzle piece of depth $n$ for $f_b$ bounded between $\bR_{s^n_i}(f_b)$ and $\bR_{s^n_{i+1}}(f_b)$ by $P^n_i(f_b)$.

\begin{prop}\label{match puzzle}
Let $b$ satisfy Assumption \ref{assum.well-defined}.
We have $b \in \cP^n_i$ for some $n \geq 0$ and $-2^n\leq i < 2^n$ if and only if $co_b \in P^n_i(f_b)$. Moreover, for any $b,\tilde{b}\in\cP^n_i$ satisfying Assumption \ref{assum.well-defined} and any $k\leq n$, there exists a homeomorphism $H_k: I_k(f_b)\longrightarrow I_k(\tilde{f}_b)$, such that $H_{k-1}\circ f_b = \tilde{f}_b\circ H_k$.
\end{prop}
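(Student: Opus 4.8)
The plan is to exploit the correspondence between the parameter tree and the dynamical tree established in \propref{limb ext} and \thmref{paratree}, together with the dynamical puzzle machinery of \S\ref{sec:combinatorial-tools}, to transfer combinatorial information from the parameter plane to the dynamical plane. First I would prove the ``only if'' direction of the equivalence. The parabubble rays $\cR^{\bfB}_{s^n_i}$ bounding $\cP^n_i$ are, under $\Psi^{-1}$, the images of bubble rays $\bR_{s^n_i}(\q)$ for $\q$; by \propref{tree corr 1}, for $b \in \cP^n_i$ the dynamical bubble rays $\bR_{s^n_i}(f_b)$ are genuinely contained in $T(f_b)$ and hence well-defined. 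The definition of $\Psi$ (via $\Psi(b) = \psi_b(co_b)$, see \propref{unif cap}) says precisely that $co_b$ sits on the ``$b$-side'' of each $\bR_{s^n_i}(f_b)$ in the same combinatorial way that $b$ sits relative to $\cR^{\bfB}_{s^n_i}$ in the parameter plane. Concretely, I would argue: $b \in \cP^n_i$ iff $b$ lies between $\cR^{\bfB}_{s^n_i}$ and $\cR^{\bfB}_{s^n_{i+1}}$, between the parameter external rays $\cR^\infty_{t(s^n_j)}$ appropriately, and inside the equipotential $\cE^\infty_{r/3^n}$. Each of these parameter conditions translates, via \thmref{uniformization of escape} (for the external-ray/equipotential part) and via \propref{pre per para ray} plus the stability of landing at repelling/Misiurewicz points, into the statement that $co_b$ lies in the corresponding dynamical puzzle piece $P^n_i(f_b)$ cut out by $\bR_{s^n_j}(f_b)$, $R^\infty_{t(s^n_j)}(f_b)$, and $E^\infty_{r/3^n}(f_b)$. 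The ``if'' direction follows by reversing this, using that the parabubble/parameter-ray partition of $\cS_{\fun}$ is a genuine partition (no parameter is left unaccounted) so that $co_b \in P^n_i(f_b)$ forces $b$ into the unique parameter piece with matching combinatorics.

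Next I would construct the conjugacy $H_k: I_k(f_b) \to I_k(f_{\tilde b})$. The graph $I_0(f_b)$ consists of the fixed bubble ray $R^\bB_0$, its preimage $R^\bB_{1/2}$, the external rays $\overline{\bfR^\infty}$ landing at their endpoints, and the equipotential $E^\infty_r$. By \lemref{lem.holo.motion}, these objects move holomorphically over $\cS_{\fun} \cap \cE^\infty_{<r}$ (the bubble rays via $\psi_b^{-1}\circ\psi_{b_0}$, the external rays and equipotential via $(\phi^\infty_b)^{-1}\circ\phi^\infty_{b_0}$), so there is a canonical homeomorphism $H_0 : I_0(f_b) \to I_0(f_{\tilde b})$ conjugating $f_b$ to $f_{\tilde b}$ on $I_0$, obtained by evaluating the holomorphic motion — this uses $b, \tilde b \in \cP^n_i \subset \cS_{\fun}$. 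Since both $b$ and $\tilde b$ lie in the same parameter puzzle piece $\cP^n_i$, the ``only if'' direction gives $co_b \in P^n_i(f_b)$ and $co_{\tilde b} \in P^n_i(f_{\tilde b})$, i.e.\ the critical values land in combinatorially matched puzzle pieces up to depth $n$. Then I would lift $H_0$ inductively: having built $H_{k-1}$ on $I_{k-1}$ conjugating the dynamics, one pulls back by $f_b$ and $f_{\tilde b}$ to extend across $I_k = f_b^{-1}(I_{k-1})$. The only potential obstruction to lifting a homeomorphism through a degree-$3$ branched cover is at the free critical point $b$, but because $co_b$ and $co_{\tilde b}$ occupy the same position in the depth-$(k-1)$ puzzle for $k \leq n$, the local degree structure of $f_b^{-1}$ and $f_{\tilde b}^{-1}$ over each component of $I_{k-1}$ matches, so the lift exists and is unique once normalized (say by tracking the fixed external ray $R^\infty_0$). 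This is the same ``pull-back of a combinatorial isomorphism'' argument already used in the proof of \thmref{thm.rigid.Zak} and in Step 1 of \S\ref{subsec.recurrentcase}.

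The main obstacle I anticipate is the bookkeeping in the ``only if'' direction: one must carefully check that the parameter external rays $\cR^\infty_{t(s)}$ and the equipotential $\cE^\infty_{r/3^n}$ in the definition of $\I_n$ correspond exactly to the dynamical external rays $R^\infty_{t(s)}(f_b)$ and equipotential $E^\infty_{r/3^n}(f_b)$ appearing in the definition of $I_n(f_b)$, and that the landing of $\cR^\infty_{t(s)}$ at the Misiurewicz parameter $b_s$ (from \propref{pre per para ray}) is really equivalent to the statement ``$co_b$ crosses $R^\infty_{t(s)}(f_b)$''. This is where the wake combinatorics of \S\ref{sec:ext rays} and \S\ref{sec:para bubble rays} — particularly the parameter-wake/dynamical-wake correspondence implicit in \propref{para separable} and \propref{pre per para ray} — must be assembled into a clean statement. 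The rest (the holomorphic-motion construction of $H_0$ and the inductive lifting) is routine given the tools already in place. I would also note that the hypothesis that $b$ satisfies Assumption \ref{assum.well-defined} guarantees $R^\bB_\tau(f_b)$ is well-defined for all $\tau \in \mathscr T$, so all the bubble rays used in the graphs $I_k(f_b)$ genuinely exist.
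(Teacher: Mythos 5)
Your proposal is correct and, at bottom, is the same argument the paper gives: the paper's proof of this proposition is the single line ``The result follows immediately from Lemma~\ref{lem.holo.motion},'' and your construction of $H_0$ from the holomorphic motion of $I_0$ over $\cS_{\fun}\cap\cE^\infty_{<r}$, followed by inductive pullback to $I_k$ (valid for $k\le n$ precisely because $b,\tilde b\in\cP^n_i$ places $co_b$ and $co_{\tilde b}$ in matching depth-$(k-1)$ puzzle pieces so the lifting criterion is met), is exactly what that line compresses. The equivalence $b\in\cP^n_i\Leftrightarrow co_b\in P^n_i(f_b)$ that you establish via the $\Psi$-correspondence and stability of ray landings is likewise implicit in the motion: it only degenerates where a depth-$\le n$ dynamical object collides with the free critical point, and those collision loci are exactly $\I_n$.
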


\begin{proof}
The result follows immediately from \lemref{lem.holo.motion}.
\end{proof}

\begin{proof}[Proof of Theorem \ref{thm.main.siegel}]
Consider an infinite nested sequence $\{\cP^n_{i_n}\}_{n=0}^\infty$ of parameter puzzle pieces. Let
$$
\cL := \left(\bigcap_{n=0}^\infty \overline{\cP^n_{i_n}}\right)\setminus\left(\bigcup_{(s,t)\text{ renor} }\mathcal{W}_{s,t}\right),
$$
where the union of $\mathcal{W}_{s,t}$ takes over all renormalizable parameter wakes (recall Definition \ref{def.renor-parawake}). Now consider two cases:
\begin{itemize}
    \item if $\bigcap_{n=0}^\infty \partial{\cP^n_{i_n}} = \emptyset$, then any $b\in\mathcal{L}$ either satisfies Assumption \ref{assum.well-defined}, or is separable. If we are in the second situation, then by Proposition \ref{para separable}, $b$ is a parabolic parameter. Notice that parabolic parameters are discrete. Hence in both situations, if $\cL$ does not reduce to one point, then there will be at least two distinct parameters $b_1,b_2\in\cL$ satisfying Assumption \ref{assum.well-defined}. By Proposition \ref{match puzzle}, $f_{b_1}$ and $f_{b_2}$ are combinatorially equivalent. By Theorem \ref{thm.rigidity}, $b_1 = b_2$, which leads to a contradiction.
    \item if $\bigcap_{n=0}^\infty \partial{\cP^n_{i_n}} \not= \emptyset$, then by Theorem \ref{paratree}, this intersection contains exactly one point. Suppose by contradiction that $\cL$ is not a single point, then the same argument follows as in the above case.
\end{itemize}
It follows that the correspondence via $\Psi$ given in \thmref{paratree} extends homeomorphically to the closures of $\cT_\infty(\theta) / [b \sim -b]$ and $T(\q)\setminus D(\q)$.
\end{proof}

\begin{proof}[Proof of Corollary \ref{cor.main2}]
Recall the central locus $\mathcal{K}_\infty(\theta)$ defined in (\ref{eq.central_locus}). One can verify easily by definition that all parameters in $\mathbf{H}^3$ are non renormalizable, and hence by definition of $\mathcal{K}_\infty(\theta)$, we have $\overline{\bfH^3}\cap[\mathcal{P}^{cm}(\theta)]\subset[\mathcal{K}_\infty(\theta)]$. For the inverse inclusion, take any $[f]\in[\mathcal{K}_\infty(\theta)]$. We may suppose that $f\in\overline{\mathcal{U}_\infty}$. Thus by Theorem \ref{thm.main.siegel}, $f$ is accumulated by a sequence of capture components in ${\mathcal{K}}_\infty(\theta)$. On the other hand, by \cite[Thm. A]{BlChOvTi}, any capture component $\mathcal{B}$ in ${\mathcal{K}}_\infty(\theta)$ satisfies $[\mathcal{B}]\subset\partial\bfH^3$. This finishes the proof.
\end{proof}

\end{document}